\tikzset{/tikz/notestyleraw/.append style={text=black}}
\newtheorem{thm}{Theorem}[section]
\newtheorem{lem}[thm]{Lemma}
\newtheorem{defn}[thm]{Definition}
\newtheorem{prop}[thm]{Proposition}
\newtheorem{cor}[thm]{Corollary}
\newtheorem{rmk}[]{Remark}
\newcommand{\be}{\begin{eqnarray}}
\newcommand{\ee}{\end{eqnarray}}
\newcommand{\ben}{\begin{eqnarray*}}
\newcommand{\een}{\end{eqnarray*}}
\newcommand{\beal}{\begin{aligned}}
\newcommand{\enal}{\end{aligned}}
\newcommand{\beq}{\begin{equation}}
\newcommand{\eeq}{\end{equation}}
\newcommand{\R}{\mathbb{R}}
\newcommand{\N}{\mathbb{N}}
\newcommand{\Om}{\Omega}
\def\namedlabel#1#2{\begingroup
    #2%
    \def\@currentlabel{#2}%
    \phantomsection\label{#1}\endgroup
}
\title{Vanishing Discount Limits for First-Order Fully Nonlinear Hamilton-Jacobi Equations on Noncompact Domains}
\author{Son N.T. Tu$^\dagger$ and Jianlu Zhang$^*$}
\address{$^\dagger$Department of Mathematics, Baylor University\\
Waco, Texas 76706, USA}
\email{son\_tu@baylor.edu}
\address{$^*$State Key Laboratory of Mathematical Sciences, Academy of Mathematics and Systems Science, Chinese Academy of Sciences, Beijing 100190, China}
\email{jellychung1987@gmail.com}
\thanks{{\it Statements and Declarations: }The authors declare no competing interests. The work of Son Tu was supported in part by NSF DMS Grant 220472. The work of J. Zhang is supported by the National Key R\&D Program of China (No. 2022YFA1007500) and the National
Natural Science Foundation of China (No. 12231010).}
\subjclass[2020]{
35D40, 
70H20, 
35J60, 
37J40, 
49L25, 
37K99, 
}
\keywords{viscosity solution, Hamilton-Jacobi equations, effective Hamiltonian, Mather measure}
\date{\today}
\numberwithin{equation}{section}
\newif\ifsol 
\begin{document}

\begin{abstract} 
We study the asymptotic behavior  of solutions to the fully nonlinear Hamilton-Jacobi equation \( H(x, Du, \lambda u) = 0 \) in \( \mathbb{R}^n \) as \( \lambda \to 0^+ \). Under the assumption that the Aubry set is localized, we employ a variational approach to derive limiting Mather-type measures and formulate a selection principle. Central to our analysis is a modified variational formula that bridges global and local state-constraint solutions, thereby extending localization techniques to the nonlinear framework.
\end{abstract}

\maketitle

\section{Introduction}
\subsection{Motivation}
Let $H(x,p,u):\R^n\times \R^n\times \R \to \R$ be a continuous Hamiltonian. We study the asymptotic behavior, as $\lambda \to 0^+$, of the viscosity solutions to the contact Hamilton-Jacobi equations
\begin{equation}\label{eq:DP}
	H(x,Du, \lambda u) = c(H) \quad\text{in}\;\R^n
\end{equation}
posed in $\R^n$. Here $c(H)$ is the critical value or ergodic constant of $H$ in $\R^n$, defined as 
\begin{equation}\label{eq:c(H)}
	c(H) = \inf \{a\in \R: H(x,Du, 0) = a\;\text{admits a continuous viscosity subsolution in}\; \R^n\}. 
\end{equation}
As $\lambda\to 0^+$, the expected limiting problem of \eqref{eq:DP} should be
\begin{equation}\label{eq:E}
    H(x,Du, 0) = c(H) \quad\text{in}\;\R^n. 
\end{equation}

Although \eqref{eq:DP} may admit multiple solutions, a maximal solution $u_\lambda$ can be uniquely defined. We aim to show \( u_\lambda \to u \) as \( \lambda \to 0^+ \), where \( u \) solves \eqref{eq:E}. Since \eqref{eq:E} may have multiple solutions, this convergence serves as a \emph{selection principle}. Such a \emph{vanishing discount} problem for the \emph{discounted Hamiltonians} \( H(x, p, u) = H_0(x, p) + u \) has been well studied on bounded domains \cite{davini_convergence_2016, ishii_vanishing_2017-1, ishii_vanishing_2017, mitake_selection_2017}, and the unbounded case is treated in \cite{capuzzo_dolcetta_vanishing_2023,ishii_vanishing_2020}.

\medskip

Motivated by \cite{ishii_vanishing_2020}, we study \emph{contact Hamiltonians} (e.g., fully nonlinear \cite{lions_generalized_1982}) on noncompact spaces, extending the discounted case. In contrast with the duality method used in \cite{ishii_vanishing_2020}, we obtained the selection principle from weak convergence of Mather measures built via minimizing curves of certain variational formula. The curve-based measures contact Hamiltonians was developed in \cite{wang_convergence_2021}, extending the discounted case \cite{davini_convergence_2016}. 
State-constraint solutions are used as a tool (Section \ref{sec:state-constraint}). However, our representation formula differs significantly from the discounted case, complicating localization, i.e., to show that the global solution matches the state-
We introduce a modified representation that maintains this localization.

\subsection{Setting}
We adopt assumptions from \cite{ishii_vanishing_2020}, suitably modified to fit our fully nonlinear framework.

\begin{description}[style=multiline, labelwidth=1cm, leftmargin=2cm]   
    \item[\namedlabel{itm:assumptions-h1}{$(\mathcal{H}_1)$}] $H(x,\xi, u)\in C(\R^n\times \R^n \times \R ;\R)$ is nondecreasing in $u$, \emph{convex} in $\xi$ for each $(x,u) \in \R^n\times \R$, and is coercive in $|\xi|$ locally in $(x,u)$: For any compact sets \( K \subset \R^n \), \( I \subset \R \),  
    \begin{equation}\label{eq:assumptions-LocalCoercive}
        \lim_{r \to \infty} \inf \{ H(x, p, u) : x \in K, |p| \ge r, u \in I \} = +\infty.
    \end{equation}
    \item[\namedlabel{itm:assumptions-h2}{$(\mathcal{H}_2)$}] There exists $\varepsilon>0$ such that 
    \begin{equation}\label{eq:assumptions-Aubry}
        \limsup_{|x|\to \infty} \max_{|p|\leq \varepsilon} H(x,p,0) < \max_{x\in \R^n} \min_{p\in \R^n} H(x,p,0) < \infty. 
    \end{equation}
\end{description}

Solutions to \eqref{eq:E} are generally non-unique; in the bounded case, they can be characterized via the Aubry set (see Section \ref{sec:cHAubry}). Assumption \ref{itm:assumptions-h2}, from \cite{ishii_vanishing_2020}, ensures \( c(H) \) is well-defined and the Aubry set is compact. Under \ref{itm:assumptions-h1}--\ref{itm:assumptions-h2}, \eqref{eq:DP} may still lack uniqueness on unbounded domains, but a maximal locally Lipschitz solution exists. We also assume \( H \) has superlinear growth in \( p \), without loss of generality (Proposition \ref{prop:SuperlinearReduction}).
\medskip

To analyze the dynamics, we impose stronger monotonicity in \(u\) (local in \(p\)), with \(R > 0\) below.
\begin{description}[style=multiline, labelwidth=1cm, leftmargin=2cm]   
    \item[\namedlabel{itm:assumptions-h3}{$(\mathcal{H}_3)$}] 
    Assume \( H(x, p, u) \) is differentiable in \( u \), 
    with \( \underline{\kappa}_R \leq \partial_u H(x, p, u) \leq \overline{\kappa}_R\) for \( (x, p) \in \mathbb{R}^n \times \overline{B}_R(0)\),  and that there exists a modulus \( \omega_R \) such that
    \begin{align}\label{eq:assumptions-DiffC1}
		\left|\frac{H(x,p,u) - H(x,p,0)}{u} - \partial_u H(x,p,0)\right| \leq \omega_R(|u|), \quad (x,p) \in \overline{B}_R(0) \times \overline{B}_R(0). 
	\end{align}
\end{description}
Assumption \ref{itm:assumptions-h3} is adopted for convenience in stating the main result. Many intermediate steps (such as the representation formula) only require the lower bound on \( \partial_u H \), not the upper. The modulus is used solely to justify convergence via weak convergence of measures. 

A stronger condition is a uniform bound on the derivative, similar to the discounted case, though not always required to derive the selection principle.
\begin{description}[style=multiline, labelwidth=1cm, leftmargin=2cm]   
    \item[\namedlabel{itm:assumptions-h4}{$(\mathcal{H}_4)$}] 
    Assume \( H(x, p, u) \) is differentiable in \( u \), 
    with 
    \( \partial_u H(x, p, u) \leq \overline{\kappa}\) for \( (x, p) \in \mathbb{R}^n \times \R^n\). 
\end{description}
We apply the Lax-Oleinik semigroup with initial data \( u_\lambda \), producing a value function from a finite-horizon optimal control problem. Since \( u_\lambda(x) + c(H)t \) is also a solution, if a comparison principle holds, it yields the desired variational formula for $u_\lambda(x)$. This enables constructing associated measures via minimizing curves. We present three different assumption sets, each ensuring a comparison principle (see Subsection \ref{subsection:discussion}).

\begin{description}[style=multiline, labelwidth=1cm, leftmargin=2cm]   
    \item[\namedlabel{itm:assumptions-pp1}{$(\mathcal{P}_1)$}] Assume \ref{itm:assumptions-h4} and for some \(\theta \in (0,1)\), there exists \(C_\theta > 0\) such that
	\begin{equation}\label{eq:assumptions-MonotoneInP}
		H(x,\theta p,u) \leq H(x,p,u) + C_\theta, 
        \qquad
        (x,p,u)\in \R^n\times \R^n\times \R.
	\end{equation}

    \item[\namedlabel{itm:assumptions-pp2}{$(\mathcal{P}_2)$}] Assume \ref{itm:assumptions-h4}, \(H(x, p, u)\) is convex in \((p, u)\), and for some \(\theta \in (0,1)\), there is \(C_\theta > 0\) such that:
	\begin{equation}\label{eq:assumptions-JointlyConvex} 
		 H(x,p,\theta u) \leq C \quad\Longrightarrow\quad H(x,p, u) \leq C_\theta, \qquad (x,p,u) \in \R^n\times\R^n\times \R.
	\end{equation}

    \item[\namedlabel{itm:assumptions-pp3}{$(\mathcal{P}_3)$}] 

    \(H\) satisfies \ref{itm:assumptions-h3}, a uniform upper bound, bounded below, and uniform coercivity:
    \begin{align}
        &\,\sup \{H(x,p,u): x\in \R^n, |p|\leq R, |u|\leq C\} < \infty 
        \qquad\quad\quad\,
        \text{for any}\;R>0, C>0,
        \label{eq:assumptions-UniformBoundWithP} \\
        &\,\inf \{H(x,p,u_0): (x,p) \in\R^n\times \R^n\} > -\infty 
        \qquad\qquad\qquad\;
        \text{for any fixed}\;u_0\in \R, \label{eq:eq:assumptions-UniformBoundBelow} \\
         &\lim_{r \to \infty} \inf \{ H(x, p, u) : x \in \R^n, |p| \geq r, |u|\leq C \} = +\infty
        \qquad
        \text{for any}\;C>0. \label{eq:assumptions-UniformCoercive} 
    \end{align}
\end{description}
\subsection{Main Results} We present the main results, including the selection principle and a localization result inspired by \cite[Proposition 5.2]{ishii_vanishing_2020}, highlighting key differences between the nonlinear and discounted cases.

\begin{thm}\label{thm:selection-principle} Assume \ref{itm:assumptions-h1}--\ref{itm:assumptions-h3} and one of \ref{itm:assumptions-pp1}--\ref{itm:assumptions-pp3}. Then the maximal solution \( u_\lambda \in C(\mathbb{R}^n) \) to \eqref{eq:DP} converges locally uniformly to a solution \( u_0 \in C(\mathbb{R}^n) \) of \eqref{eq:E}, where $u_0 = \sup \mathcal{E}$, with
\[
    \mathcal{E} := \left\{ w \in C(\mathbb{R}^n) : w \text{ is a subsolution of \eqref{eq:E} and } \int_{\R^n\times\R^n} w(x)\, \partial_u L(x,v,0)\, d\mu \geq 0 \ \text{for all}\  \mu \in \mathfrak{M} \right\},
\]
and \( \mathfrak{M} \) is a set of probability measures analogous to classical Mather measures (see Definition \ref{defn:SMather}).
\end{thm}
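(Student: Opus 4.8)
The plan is to establish the convergence $u_\lambda \to u_0$ in three stages: (i) equicontinuity and compactness to extract subsequential limits, (ii) identification of any subsequential limit as a subsolution of \eqref{eq:E} lying in $\mathcal{E}$, hence $\leq u_0 = \sup\mathcal{E}$, and (iii) the reverse inequality, showing every $w \in \mathcal{E}$ satisfies $w \leq \liminf_{\lambda\to 0^+} u_\lambda$ pointwise, which pins down the full limit. Throughout, the workhorse is the variational (Lax–Oleinik type) representation formula for $u_\lambda$ advertised in the introduction: using the comparison principle guaranteed by one of \ref{itm:assumptions-pp1}--\ref{itm:assumptions-pp3}, one writes $u_\lambda(x)$ as an infimum over curves $\gamma$ of a discounted-type action built from the Lagrangian $L$ dual to $H$, with the discount entering nonlinearly through the $\lambda u$ slot. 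Assumption \ref{itm:assumptions-h2} localizes the Aubry set, which (via the machinery of Section~\ref{sec:state-constraint} and the modified representation formula) is what allows the minimizing curves to be confined to a fixed compact set, so that the local bounds in \ref{itm:assumptions-h1} and \ref{itm:assumptions-h3} are all that is needed.

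For stage (i): uniform coercivity of $H$ on compacta (\ref{itm:assumptions-h1}) together with the a priori $L^\infty_{loc}$ bounds on $u_\lambda$ (from the maximal-solution construction and \ref{itm:assumptions-h2}) gives a uniform local Lipschitz bound on $\{u_\lambda\}$; Arzelà–Ascoli then yields a locally uniformly convergent subsequence $u_{\lambda_j} \to u$. Stability of viscosity subsolutions under local uniform convergence, plus $\lambda_j u_{\lambda_j} \to 0$ locally uniformly, shows $u$ is a subsolution of \eqref{eq:E}. The substantive point is the Mather-measure constraint: for each $\mu \in \mathfrak{M}$, one takes the minimizing curves defining $u_{\lambda_j}$, builds the associated occupation/holonomic measures $\mu_{\lambda_j}$ on $\R^n\times\R^n$ (as in \cite{wang_convergence_2021,davini_convergence_2016}), shows tightness using the localization of the Aubry set and the superlinearity of $L$, extracts a weak-$*$ limit, and—using \eqref{eq:assumptions-DiffC1} to pass the nonlinear discount term to its linearization $\partial_u H(x,p,0)$, equivalently $\partial_u L(x,v,0)$—identifies the limit as an element of $\mathfrak{M}$ and reads off $\int w\,\partial_u L(x,v,0)\,d\mu \geq 0$ from the optimality inequality satisfied along the curves. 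Hence $u \in \mathcal{E}$ and $u \leq u_0$.

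For stage (iii), the reverse inequality, fix $w \in \mathcal{E}$. One shows $w(x) \leq u_\lambda(x) + o(1)$ as $\lambda\to 0^+$, locally uniformly, by plugging $w$ (or a suitable truncation/regularization of $w$ to handle noncompactness) into the variational formula for $u_\lambda$ as a competitor and exploiting that $w$ is a subsolution of \eqref{eq:E}: along any curve, the subsolution inequality controls $w(\gamma(0)) - w(\gamma(t))$ by the undiscounted action, and the discrepancy between the discounted and undiscounted actions is, to leading order in $\lambda$, governed by $\lambda \int w \, \partial_u L \, d(\text{occupation measure})$ along near-optimal curves; closing up requires that occupation measures of near-optimal curves for $u_\lambda$ accumulate (after normalization) on $\mathfrak{M}$, so that the constraint $\int w\,\partial_u L(x,v,0)\,d\mu \geq 0$ forces the first-order correction to be nonnegative. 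Combined with $w$ being a subsolution, this yields $w \leq u$ in the limit; taking the supremum over $w\in\mathcal{E}$ gives $u_0 \leq u$, hence $u = u_0$, and since every subsequential limit equals $u_0$ the whole family converges.

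The main obstacle is exactly the localization step flagged in the introduction: in the discounted case the representation formula for $u_\lambda$ is a clean infinite-horizon discounted action whose minimizers are trapped near the Aubry set automatically, but here the nonlinear dependence on $\lambda u$ means the naive formula does not localize, and one must instead invoke the \emph{modified} representation formula that bridges the global solution and the local state-constraint solution (Section~\ref{sec:state-constraint}) — proving that this modified formula still represents $u_\lambda$ and still forces minimizers into a fixed compact set, uniformly in small $\lambda$, is the technical heart. A secondary difficulty is the rigorous passage, via \eqref{eq:assumptions-DiffC1}, from the nonlinear discount factor $\tfrac{H(x,p,\lambda u)-H(x,p,0)}{\lambda u}$ to $\partial_u H(x,p,0)$ uniformly along the (compactly supported) minimizing curves, so that the limiting measures genuinely land in $\mathfrak{M}$ and the correction term is the claimed integral against $\partial_u L(x,v,0)$.
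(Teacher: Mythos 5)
Your proposal is correct and follows essentially the same route as the paper: extract subsequential limits by local equi-Lipschitz bounds, show each limit lies in $\mathcal{E}$ by passing the difference quotient $\tfrac{L(x,v,\lambda u_\lambda)-L(x,v,0)}{\lambda}$ to $\partial_u L(x,v,0)$ against the weak-$*$ limits of the discounted occupation measures (which is exactly how $\mathfrak{M}$ is built in Definition \ref{defn:SMather} and Proposition \ref{prop:supportRn}), and obtain the reverse inequality $w\le u$ by integrating the Fenchel--Young/subsolution inequality for $w$ along the exact minimizers of the exponential representation, with the first-order correction controlled by $\int w\,\partial_u L\,d\mu\ge 0$. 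You also correctly identify the two technical hinges the paper relies on — the uniform-in-$\lambda$ confinement of minimizers to a compact set (Proposition \ref{prop:BoundedMinimizerCompact}) and the modulus in \eqref{eq:assumptions-DiffC1} to linearize the nonlinear discount — so the sketch matches the paper's proof in both structure and substance.
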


One key tool is the state-constraint solution \( \vartheta_{\lambda,R} \) to \eqref{eq:DP} on \( B_R(0) \): it is a subsolution in \( B_R(0) \) and a supersolution on \( \overline{B}_R(0) \) (see Section \ref{sec:state-constraint}). 
In the discounted case \cite{ishii_vanishing_2020}, these solutions admit an optimal control formula with \( L = H^* \), the Legendre transform, minimizing over absolutely continuous curves \( \gamma(0) = x \):
\begin{equation*}
	u_\lambda(x) = \inf \int_{-\infty}^0 e^{\lambda s} \big( L(\gamma, \dot{\gamma}, 0) + c(H) \big)\, ds, 
    \quad 
	\vartheta_{\lambda,R}(x) = \inf_{|\gamma(s)|\leq R}
	 \int_{-\infty}^0 e^{\lambda s} \big( L(\gamma, \dot{\gamma}, 0) + c(H) \big)\, ds,
\end{equation*}
and thus it is clear that \( u_\lambda(x) = \vartheta_{\lambda,R}(x) \) if minimizers for $u_\lambda(x)$ remains in \( \overline{B}_R(0) \). For the nonlinear case, these functions are represented similarly, but with a modified exponential weight:
\begin{equation}\label{eq:representationsIntro}
    u_\lambda(x) = \int_{-\infty}^0 e^{\lambda \beta(s)} \left( L(\gamma, \dot{\gamma}, 0) + c(H) \right) \, ds, 
    \quad
    \vartheta_{\lambda,R}(x) = \int_{-\infty}^0 e^{\lambda \alpha(s)} \left( L(\gamma, \dot{\gamma}, 0) + c(H) \right) \, ds,    
\end{equation} 
where the discount indices \( \alpha \) and \( \beta \) depend on \( u_\lambda \) and \( \vartheta_\lambda \) (see Remark \ref{remark:index}). 
This complicates the deduction of \( u_\lambda(x) = \vartheta_{\lambda,R}(x) \), even if minimizers for \( u_\lambda(x) \) remain in \( \overline{B}_R(0) \). However, introducing a new discount index recovers this equality.

\begin{thm}\label{thm:localization} Assume \ref{itm:assumptions-h1}--\ref{itm:assumptions-h3} and one of \ref{itm:assumptions-pp1}--\ref{itm:assumptions-pp3}. 
Let $\vartheta_{\lambda,R}$ be the state constraint solution of \eqref{eq:DP} in $B_R(0)$. 
For any $z\in\R^n$, there exists  $R_z > 0$ and  $\lambda_z \in (0,1)$ such that $u_\lambda(z) = \vartheta_{\lambda,R}(z)$ for $R\geq R_z$ and $\lambda \in (0,\lambda_z)$. 
\end{thm}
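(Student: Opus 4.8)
The plan is to split the statement into the easy inequality $u_\lambda(z)\le\vartheta_{\lambda,R}(z)$, the hard inequality $u_\lambda(z)\ge\vartheta_{\lambda,R}(z)$, and a reduction that makes $R$ irrelevant once it is large enough. The first is soft: $u_\lambda$ is a (global, hence local) subsolution of \eqref{eq:DP} in $B_R(0)$, while $\vartheta_{\lambda,R}$ is the state-constraint solution there, i.e.\ the maximal subsolution that is also a supersolution up to the boundary; the state-constraint comparison principle, available under any of \ref{itm:assumptions-pp1}--\ref{itm:assumptions-pp3}, then gives $u_\lambda\le\vartheta_{\lambda,R}$ on $B_R(0)$ for every admissible $R$. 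The same comparison applied to the pair $\vartheta_{\lambda,R}$ (a subsolution in $B_{R'}(0)$ when $R\ge R'$) and $\vartheta_{\lambda,R'}$ (state-constraint solution on $\overline B_{R'}(0)$) yields the monotonicity $\vartheta_{\lambda,R}\le\vartheta_{\lambda,R'}$ on $B_{R'}(0)$. Hence it suffices to produce one radius $R_z$ with $z\in B_{R_z}(0)$ and a threshold $\lambda_z$ for which $\vartheta_{\lambda,R_z}(z)\le u_\lambda(z)$: for $R\ge R_z$ and $\lambda\in(0,\lambda_z)$ one then gets $u_\lambda(z)\le\vartheta_{\lambda,R}(z)\le\vartheta_{\lambda,R_z}(z)\le u_\lambda(z)$.

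So the real task is $\vartheta_{\lambda,R_z}(z)\le u_\lambda(z)$, and the natural route is to feed an optimal curve of $u_\lambda(z)$ into the state-constraint value. Let $\gamma$ be the minimizing (or a sufficiently near-minimizing) curve realizing the representation \eqref{eq:representationsIntro} of $u_\lambda(z)$, with $\gamma(0)=z$. Assumption \ref{itm:assumptions-h2}, together with the standard bound $c(H)\ge\max_x\min_p H(x,p,0)$, gives $R_0>0$ and $\delta_0>0$ with $\max_{|p|\le\eps}H(x,p,0)\le c(H)-\delta_0$ for $|x|\ge R_0$; taking Legendre transforms, $L(x,v,0)+c(H)\ge\eps|v|+\delta_0>0$ on $\{|x|\ge R_0\}\times\R^n$. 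Feeding this, the superlinearity of $L$ (legitimate by Proposition \ref{prop:SuperlinearReduction}), and the a priori bound on $u_\lambda$ (locally uniform in $\lambda\in(0,1)$, global under \ref{itm:assumptions-h2}) into the dynamic programming identity $u_\lambda(z)=\int_{-T}^0 e^{\lambda\beta(s)}\bigl(L(\gamma,\dot\gamma,0)+c(H)\bigr)\,ds+e^{\lambda\beta(-T)}u_\lambda(\gamma(-T))$, a calibration/excursion estimate (the analog of the argument behind \cite[Proposition 5.2]{ishii_vanishing_2020}) shows $\gamma$ cannot leave a ball $\overline B_{R_z}(0)$ whose radius depends only on $|z|$, $R_0$, $\delta_0$, the sup-bound on $u_\lambda$ and the coercivity data --- not on $\lambda$. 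The one new subtlety is that the discount index $\beta$ in \eqref{eq:representationsIntro} depends on $u_\lambda$ along $\gamma$; but $0\le\partial_u H\le\overline\kappa$ along $\gamma$ by \ref{itm:assumptions-h3}/\ref{itm:assumptions-h4} (using the equi-Lipschitz bound to fix the relevant $R$), so $e^{\lambda\beta}$ stays two-sidedly controlled and converges to $1$ as $\lambda\to0^+$ uniformly on each bounded time window appearing in the estimate; choosing $\lambda_z$ so that this weight stays above, say, $\tfrac12$ on those windows makes the excursion estimate go through.

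Now $\gamma$ lies in $\overline B_{R_z}(0)$, hence is admissible for $\vartheta_{\lambda,R_z}(z)$, and it remains to compare its cost in the $\vartheta$-formula with $u_\lambda(z)$. This is the crux, and the only genuine departure from the discounted setting: there the exponential weight $e^{\lambda s}$ is common to both representations, so admissibility of $\gamma$ instantly gives $\vartheta_{\lambda,R_z}(z)\le u_\lambda(z)$; here the weights $e^{\lambda\beta}$ (tied to $u_\lambda$) and $e^{\lambda\alpha}$ (tied to $\vartheta_{\lambda,R_z}$) are distinct functions determined implicitly by the two solutions, and equating the solutions is precisely the conclusion we seek, so a naive substitution is circular. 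The resolution is the modified representation formula (Section \ref{sec:state-constraint}): it re-expresses the state-constraint value through curves confined to $\overline B_{R_z}(0)$ but carrying the \emph{single} index $\beta$ already attached to $u_\lambda$, so that evaluating it along $\gamma$ returns exactly $u_\lambda(z)$ up to an error controlled by $\vartheta_{\lambda,R_z}-u_\lambda\ge0$ on $\overline B_{R_z}(0)$ (the inequality from the first paragraph), whence $\vartheta_{\lambda,R_z}(z)\le u_\lambda(z)$. Together with the first paragraph this gives $u_\lambda(z)=\vartheta_{\lambda,R_z}(z)$, and therefore $u_\lambda(z)=\vartheta_{\lambda,R}(z)$ for all $R\ge R_z$ and $\lambda\in(0,\lambda_z)$.

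I expect the bulk of the work, and the main obstacle, to be this last step --- reconciling the two solution-dependent discount indices $\alpha$ and $\beta$ --- which is exactly what the modified variational formula is built for; by contrast the confinement in the second paragraph is a mild adaptation of the classical localization argument, and the reductions in the first paragraph are soft consequences of comparison.
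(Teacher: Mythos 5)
Your architecture — (a) the soft inequality $u_\lambda\le\vartheta_{\lambda,R}$ plus monotonicity in $R$, reducing everything to $\vartheta_{\lambda,R_z}(z)\le u_\lambda(z)$ for one radius, and (b) confinement of the minimizers of $u_\lambda(z)$ in a ball $\overline B_{M_z}(0)$ uniformly for $\lambda<\lambda_z$ — is exactly the paper's (Proposition \ref{prop:BoundedMinimizerCompact} and the final comparison in Section \ref{sec:localization}), and you correctly identify the reconciliation of the two discount indices as the crux. But that crux is precisely where your proposal has a genuine gap: the mechanism you describe is not an argument and does not match what the modified representation actually does. You assert that the state-constraint value can be ``re-expressed \ldots carrying the \emph{single} index $\beta$ already attached to $u_\lambda$'' and that evaluating along $\gamma$ gives $u_\lambda(z)$ ``up to an error controlled by $\vartheta_{\lambda,R_z}-u_\lambda\ge 0$, whence $\vartheta_{\lambda,R_z}(z)\le u_\lambda(z)$.'' No such single-index representation of $\vartheta_{\lambda,R_z}$ exists in the paper (the index in the state-constraint formula is intrinsically tied to $\vartheta_{\lambda,R_z}$ itself), and an error term bounded by the \emph{nonnegative} quantity $\vartheta_{\lambda,R_z}-u_\lambda$ cannot yield the upper bound $\vartheta_{\lambda,R_z}(z)\le u_\lambda(z)$; as written the step is circular.

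What the paper actually does (Definition \ref{defn:NewIndices} and Proposition \ref{prop:LocalExponentialNegative}) is keep \emph{two} indices but re-anchor both difference quotients at the common baseline $-\lambda C_0$, where $-C_0$ is the uniform lower bound for $u_\lambda$: $\mathbf{K}_\lambda$ uses the quotient of $L$ between $\lambda u_\lambda(x)$ and $-\lambda C_0$, and $\mathbf{k}_\lambda$ the quotient between $\lambda\vartheta_\lambda(x)$ and $-\lambda C_0$. This produces representation formulas for the \emph{shifted} quantities $u_\lambda+C_0$ and $\vartheta_\lambda+C_0$ with the \emph{same} running cost $L(\cdot,\cdot,-\lambda C_0)+c(H)$, and — this is the key point your proposal is missing — the ordering $u_\lambda\le\vartheta_\lambda$ together with the monotonicity of $L$ in $u$ gives the pointwise inequality $\mathbf{K}_\lambda(x,v)\ge\mathbf{k}_\lambda(x,v)$, hence $\mathbf{B}^\lambda_\gamma\ge\mathbf{A}^\lambda_\gamma$ for the corresponding accumulated weights. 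Feeding the confined minimizer $\gamma$ of $u_\lambda(z)$ into these formulas and comparing the two exponentially weighted integrals term by term is what yields $u_\lambda(z)+C_0\ge\vartheta_{\lambda,M_z}(z)+C_0$. Without this re-anchoring and the resulting one-sided comparison of the indices, the step from ``$\gamma$ is admissible for the state-constraint problem'' to ``$\vartheta_{\lambda,R_z}(z)\le u_\lambda(z)$'' does not go through, so the proof as proposed is incomplete at its central point.
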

This localization and quantitative approximation idea is related to \cite{kim_state-constraint_2020}, which provides optimal error estimates in the discounted case between global and state-constraint solutions. Theorem \ref{thm:localization} offers a short proof of Theorem \ref{thm:selection-principle} by using the established results on vanishing discount for state-constraint problems on a bounded domain, as presented in \cite[Theorem 1.1]{tu_generalized_2024}. 

\begin{rmk}\label{remark:index} Regarding the representations \eqref{eq:representationsIntro}, one can also define \( \alpha = \beta = \partial_u L(\gamma, \dot{\gamma}, 0) \), as in \cite{chen_convergence_2024, tu_generalized_2024, wang_convergence_2021, wang_negative_2024}. However, this leads to additional error terms in the presentation formulas (differing for \( u_\lambda \) and \( \vartheta_{\lambda, R} \)) that vanish as \( \lambda \to 0^+ \), which suffices for studying the vanishing discount problem on bounded domains, but not for our localization result. To overcome this, we introduce slightly modified indices in Definition \ref{defn:NewIndices}.
\end{rmk}

\subsection{Discussion on the Assumptions} \label{subsection:discussion} 
In this subsection, we justify the additional assumptions \ref{itm:assumptions-pp1}--\ref{itm:assumptions-pp3} and clarify when \ref{itm:assumptions-h3} and \ref{itm:assumptions-h4} differ in a nontrivial way. 

\subsubsection{Additional Structural Assumptions}
We give examples for \ref{itm:assumptions-pp1} or \ref{itm:assumptions-pp2} with  
\[
H(x,p,u) = H_0(x,p) + \phi(x,u), \quad (x,p,u) \in \R^n \times \R^n \times \R,
\]
where \( H_0(x,p) \) is continuous, convex, and locally uniformly coercive (i.e., satisfying \ref{itm:assumptions-h1}--\ref{itm:assumptions-h3} with $u=0$). 
\begin{itemize}
    \item  Let \( H_0(x,p) = \mathfrak{h}(p) - f(x) \), where \( \mathfrak{h} \) is convex, coercive, and homogeneous of degree \( \tau > 0 \), and \( f \in C(\R^n) \) 
    does not attain its minimum at infinity. Assume \( \phi \in C^1(\R^n \times \R) \) with \( \underline{\kappa} \leq \phi_u(x,u) \leq \overline{\kappa} \). Then \eqref{eq:assumptions-MonotoneInP} in \ref{itm:assumptions-pp1} is satisfied, since for \( \theta \in (0,1) \),
    \begin{equation*}
        H(x,\theta p,u) \leq H(x,p,u) + (1 - \theta^\tau) \mathfrak{h}_0,  
    \end{equation*}
    where $\mathfrak{h}_0 = \min_{p \in \R^n} \mathfrak{h}(p)$.
    This homogeneity assumption is also adopted in \cite{mitake_rate_2019}.

    \item If \( \phi(x,u) = \phi(x)\cdot u \) with \( \phi \in C^1(\R^n) \) and \( 0 < \phi(x) \leq \overline{\kappa} \), and if \( H_0(x,p) \geq -c_0 \) is bounded from below, then \eqref{eq:assumptions-JointlyConvex} in \ref{itm:assumptions-pp2} holds. Indeed, for \( \theta \in (0,1) \), if \( H(x,p,\theta u) = H_0(x,p) + \phi(x)\cdot \theta u \leq C \), then
    \begin{align*}
	   H_0(x,p) + \alpha(x)\cdot u 
        \leq 
        \theta^{-1}\big( H_0(x,p) + \alpha(x)\cdot u +  c_0 \big) \leq \theta^{-1}(C+c_0).
    \end{align*}
    The convexity of \( (p, u) \mapsto H(x, p, u) \) has previously appeared in \cite[Assumption (A5)]{jing_generalized_2020}. 
\end{itemize}
Assumptions \ref{itm:assumptions-pp1} and \ref{itm:assumptions-pp2} allow constructing a subsolution \( u \) to \( H(x, Du, \lambda u_\lambda) = C \) in $\R^n$ for some constant, with \( u_\lambda(x) - u(x) \to \infty \) as \( |x| \to \infty \), which suffices to apply a comparison principle for unbounded solutions (Theorem \ref{thm:ComparisonIshiiMod}, adapted from \cite[Theorem 4.1]{ishii_asymptotic_2008}) and derive the variational formula for \( u_\lambda \).

\medskip 

Assumption \ref{itm:assumptions-pp3} resembles those in \cite{fathi_weak_2007}, which studied a weak KAM theory on noncompact domains for classical Hamiltonians.
Essentially, \eqref{eq:assumptions-UniformBoundWithP} and \eqref{eq:eq:assumptions-UniformBoundBelow} imply \( u_\lambda \) is uniformly bounded and Lipschitz. While Perron's method directly constructs such a solution in \( \mathrm{BUC}(\R^n) \), the maximal continuous solution need not coincide with it a priori. Under these assumptions, the standard comparison principle in \( \mathrm{BUC}(\R^n) \) yields the desired variational formula for \( u_\lambda \), and \eqref{eq:assumptions-UniformBoundWithP} ensures uniformly bounded velocities for minimizing curves.

\subsubsection{Assumptions on Monotonicity} The uniform monotonicity condition in \ref{itm:assumptions-h4} naturally appears in settings analogous to the discounted case, particularly when the Hamiltonian takes the form 
\( H(x,p,u) = H_0(x,p) + \phi(x) \cdot u \), with \( \phi(x) \in (\underline{\kappa}, \overline{\kappa}) \) as previously discussed. 
\medskip

Assumption \ref{itm:assumptions-h3} captures a subtler structure where monotonicity is non-uniform. A representative case is 
\begin{equation*}
    H(x,p,u) = H_0(x,p) + \big(|p|^2+1\big) \cdot \left(\arctan(u)+\pi\right) + u. 
\end{equation*}
This Hamiltonian satisfies assumption \ref{itm:assumptions-h3} but fails to satisfy \ref{itm:assumptions-h4}. When $H_0$ additionally meets assumption \ref{itm:assumptions-pp3}, this example satisfies the conclusion of Theorem \ref{thm:selection-principle}. Consequently, a selection principle applies in this case, and it differs fundamentally from the discounted scenario.
This structure reflects a nontrivial coupling between the gradient and the solution. It also highlights the role of minimizing curves with bounded velocity, a key to the variational formulation of the value function, within our framework.

\subsection{Literature Review}

The discounted case was originally introduced as a vanishing discount approximation to \eqref{eq:E}, posed on the torus and interpreted as a cell problem in homogenization. Subsequence convergence was shown in \cite{lions_homogenization_1986}, with full-sequence convergence later established in \cite{davini_convergence_2016} via Mather measures and in \cite{mitake_selection_2017} via the nonlinear adjoint method, including degenerate second-order cases (which is generalized to the contact case in \cite{zhang_limit_2024}). A duality-based approach to constructing Mather measures was developed in \cite{ishii_vanishing_2017-1,ishii_vanishing_2017}, extending to state-constraint \cite{tran_hamilton-jacobi_2021}, Dirichlet, and Neumann boundary conditions (see also \cite{al-aidarous_convergence_2016} for earlier work on the Neumann condition, and \cite{gomes_duality_2005} for the introduction of the duality framework). The whole-space case $\R^n$ was initiated in \cite{ishii_vanishing_2020}, which inspired the present work. A recent extension to nonlocal equations appears in \cite{davini_vanishing_2025}. Vanishing discount problems on variable domains were further studied in \cite{tu_vanishing_2022} for first-order cases and \cite{bozorgnia_regularity_2024} for second-order equations with super-quadratic Hamiltonians. Convergence rates remain largely unexplored, except for certain special cases treated in \cite{mitake_weak_2018}. \medskip

The vanishing discount limit for contact Hamilton--Jacobi equations was first explored in \cite{chen_vanishing_2019}, followed by several developments in \cite{ 
chen_convergence_2023, 
chen_convergence_2024, 
davini_vanishing_2021,
ni_multiple_2023, 
wang_convergence_2021,
wang_negative_2024, 
zavidovique_convergence_2022}. More recently, \cite{davini_convergencedivergence_2024} studied the general convergence/divergence behavior of solutions in the vanishing discount process without assuming monotonicity in the \( u \)-variable, building on the variational framework developed in their earlier works \cite{jin_nonlinear_2023,
wang_aubrymather_2019, wang_variational_2019CMP}.
The bounded domain case with state-constraint conditions (which we employed some results in this paper) was initiated in \cite{tu_generalized_2024}. 
\medskip

Research on noncompact domains remains relatively limited. The weak KAM theory for classical Hamiltonians on noncompact domains was first developed in \cite{fathi_weak_2007}. For contact Hamilton-Jacobi equations on noncompact manifolds, \cite{wang_variational_2019} investigated the finite horizon problem with specific initial conditions, while \cite{fleming_controlled_2006} approached similar questions from optimal control theory. The noncompact domain were treated in \cite{capuzzo_dolcetta_vanishing_2023,
ishii_vanishing_2020}.
Our work is particularly motivated by \cite{ishii_vanishing_2020}, which examined the vanishing discount problem in the discounted case.

\medskip

For a treatment of the contact case using the nonlinear adjoint method, we refer to \cite{jing_generalized_2020}. Besides, Mather measures in nonconvex settings are studied in \cite{cagnetti_aubrymather_2011,
fathi_pde_2005}, and more recently in \cite{tran_representation_2025}, though many questions remain open.

\subsection{Organization of the paper} 

This paper is organized as follows.
Section \ref{sec:notation} introduces notation, terminology, and preliminaries for the ergodic problem \eqref{eq:E}, covering the well-posedness of $c(H)$ and the Aubry set. Section \ref{sec:state-constraint} recalls the variational formula for the state-constraint problem, discusses an exponential form for the contact case, and presents the vanishing discount result on bounded domain.
Section \ref{sec:MaximalSolution} constructs the maximal solution \( u_\lambda \in C(\R^n) \) for \( \lambda > 0 \) via the Perron method.
Its variational representation is established in Section \ref{sec:variational} under additional conditions.
Section \ref{sec:selection-principle} is dedicated to the proof of Theorem \ref{thm:selection-principle}.
Section \ref{sec:localization} proves Theorem \ref{thm:localization} and demonstrates an alternative proof of Theorem \ref{thm:selection-principle} as a consequence (see Subsection \ref{subsection:alternative-thm}). 
The Appendix contains properties of Lagrangians, the comparison principle, and other technical results.

\section{Notations, Definitions, and Preliminaries for the Ergodic Problem} \label{sec:notation}
For $U\subset\R^n$, we denote by $\mathrm{AC}([0,t];U)$ the set of all absolutely continuous functions $f:[0,t]\to U$. For $(x,t) \in  \R^n\times [0,\infty)$, we define 
\begin{align*}
	\mathcal{C}^+(x;t;U) = \{\gamma\in \mathrm{AC}([0,t];U): \gamma(t)=x\} 
	\quad\text{and}\quad 
	\mathcal{C}^-(x;t;U) = \{\gamma\in \mathrm{AC}([-t,0];U): \gamma(0)=x\}. 
\end{align*}
For a function $F(x,p,u):U\times \R^n\times \R \to \R$, we denote 
\begin{equation*}
	\partial^+_uF(x,p,u) = \lim_{ h \to 0^+}\frac{F(x,p,u + h) - F(x,p,u)}{h}. 
\end{equation*}
If $u\mapsto F(x,p,u)$ is monotone, this limit always exists. \medskip

We denote by $\mathcal{P}(U)$ and $\mathcal{R}(U)$ the set of probability and Radon measures on $U$, respectively. For a sequence of Radon measures $\mu_k \in \mathcal{R}(U)$, we say $\mu_n\rightharpoonup \mu$ in the weak$^*$ topology if $\int_ U f\;d\mu_n \to \int_U f\;d\mu$ for all $f\in C_c(U)$. When \( U = \mathbb{R}^n \times \mathbb{R}^n \), we write \( \langle \mu, f \rangle := \int_{\mathbb{R}^n \times \mathbb{R}^n} f \, d\mu \). \medskip

A continuous function \( u:\R^n \to \R \) is a \emph{viscosity subsolution} (resp., \emph{supersolution}) of \eqref{eq:DP} if for every \( \varphi \in C^1(\R^n) \), the inequality \( H(x, D\varphi(x), \lambda u(x)) \leq 0 \) (resp., \( \geq 0 \)) holds at any point \( x \) where \( u - \varphi \) attains a local maximum (resp., minimum). It is a \emph{viscosity solution} if it satisfies both conditions. \medskip

\subsection{The Legendre transform} 
Under \ref{itm:assumptions-h1}, we define the Legendre transform of $H$ as:
\begin{align}\label{eq:Legendre}
	L(x,v,u) = \sup_{p\in \R^n} \Big(p\cdot v - H(x,p,u)\Big), \qquad (x,v,u)\in \R^n\times \R^n\times \R. 
\end{align}
We note that, under 
\ref{itm:assumptions-h2}, 
there exists a constant $c_0$ such that $\max_{x\in \R^n} H(x,0, 0) \leq c_0$. Therefore
\begin{align}\label{eq:BoundL}
	L(x,v, 0) \geq -H(x,0,0) \geq -\max_{x\in \R^n} H(x,0, 0)  \geq -c_0\qquad\text{for all}\;(x,v)\in \R^n\times\R^n. 
\end{align}
Under \ref{itm:assumptions-h1}, it is standard that for every fixed $u\in \R$, $x\mapsto L(x,v,u)$ is convex, and that for every compact set $K\subset \R^n$ we have
\begin{equation}\label{eq:SuperLinearLtilde}
	\lim_{|v|\to \infty} \min_{x\in K} \frac{L(x,v,u)}{|v|} = +\infty.
\end{equation}
By 
\ref{itm:assumptions-h2}, 
there exist \( \delta_0 > 0 \) and \( R_0 > 0 \) such that  $\max_{|x|\geq R_0} \left\lbrace H(x,p,0): |p|\leq \delta_0 \right\rbrace < c(H)$. 
Hence
\begin{equation}\label{eq:growthL}
	\min_{|x|\geq R_0} L(x,v, 0) 
	\geq \delta |v| - 
	\max_{|x|\geq R_0} 
	\left\lbrace H(x,p,0): |p|\leq \delta_0 \right\rbrace > \delta_0 |v| - c(H). 
\end{equation}
Also, by 
\ref{itm:assumptions-h1}, 
we can also choose $R_0>0$ such that 
\begin{equation}\label{eq:BoundLPositive}
	\limsup_{|x|\to \infty} H(x,0,0) < c(H) 
	\qquad\Longrightarrow\qquad 
	\min_{|x|\geq R_0} L(x,v, 0) + c(H) \geq \delta_0 > 0.
\end{equation}
These estimates are analogous to those in the discounted case studied in \cite{ishii_vanishing_2020}.

\subsection{The critical value, the ergodic problem and the Aubry set} \label{sec:cHAubry}
We show that the ergodic constant \( c(H) \), defined in \eqref{eq:c(H)}, is well-defined and finite.

\begin{lem}\label{lem:critical-value} Assume 
\ref{itm:assumptions-h1}, \ref{itm:assumptions-h2}. 
Then \( c(H) \), as defined by \eqref{eq:c(H)}, is finite and, in fact, a minimum with 
    \begin{equation*}
        c(H) \geq m_0=\max_{x\in \R^n} \min_{p\in \R^n} H(x,p,0).      
    \end{equation*}
\end{lem}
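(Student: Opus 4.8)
The plan is to establish both assertions of Lemma~\ref{lem:critical-value} by analyzing the set $\{a \in \R : H(x,Du,0)=a \text{ admits a continuous viscosity subsolution in }\R^n\}$ from two sides. First I would show that this set is nonempty and bounded below by $m_0$, which gives both finiteness of the infimum and the lower bound $c(H) \ge m_0$; then I would argue that the infimum is attained, so it is really a minimum.

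For the lower bound, suppose $w$ is a continuous viscosity subsolution of $H(x,Dw,0) = a$ for some $a \in \R$. The key observation is that at any point $x_0$ where $w$ has a local maximum (and by coercivity \ref{itm:assumptions-h1}, or rather by a standard mollification/sup-convolution argument, such approximate maxima exist wherever we need them — more cleanly, one tests $w$ against a constant near a maximum), the subsolution inequality forces $H(x_0, 0, 0) \le a$, hence $a \ge \min_{p} H(x_0,p,0)$. Taking $x_0$ to approach a maximizer of $x \mapsto \min_p H(x,p,0)$ (which exists and is finite by \ref{itm:assumptions-h2}, since $\max_x \min_p H(x,p,0) < \infty$ and the $\limsup$ condition prevents escape to infinity) yields $a \ge m_0$. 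Since this holds for every admissible $a$, we get $c(H) \ge m_0$. Care is needed here: a global continuous subsolution on $\R^n$ need not attain a maximum, so the correct route is to use that a subsolution of a convex coercive equation is locally Lipschitz, and to localize — pick a point $x_0$ near-maximizing $\min_p H(\cdot,p,0)$, then either $w$ has a local max nearby or one perturbs $w$ by a small linear function; \ref{itm:assumptions-h2} is exactly what guarantees the relevant maximizer lives in a compact set.

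For finiteness from above and attainment of the minimum, I would exhibit an explicit admissible $a$: the constant function $w \equiv 0$ is a (classical, hence viscosity) subsolution of $H(x,0,0) = a$ for any $a \ge \max_{x \in \R^n} H(x,0,0)$, and by \eqref{eq:BoundL} (equivalently by \ref{itm:assumptions-h2}, which bounds $\max_x H(x,0,0) \le c_0$) this maximum is finite; hence the admissible set is nonempty and $c(H) \le c_0 < \infty$. Attainment of the minimum then follows by a standard stability argument: take a minimizing sequence $a_k \downarrow c(H)$ with continuous subsolutions $w_k$ of $H(x,Dw_k,0)=a_k$; normalize $w_k(x_0)=0$ at a fixed point; by local coercivity \ref{itm:assumptions-h1} the $w_k$ are locally equi-Lipschitz, hence (Arzel\`a--Ascoli) converge locally uniformly along a subsequence to some continuous $w$, and by the standard stability of viscosity subsolutions under locally uniform limits, $w$ is a subsolution of $H(x,Dw,0)=c(H)$; thus $c(H)$ is admissible and the infimum is a minimum.

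The main obstacle is the lower bound $c(H) \ge m_0$: the subtlety is that subsolutions on the whole of $\R^n$ need not have maxima, so one cannot naively test at a global max. The resolution is precisely the role of \ref{itm:assumptions-h2} — the strict inequality $\limsup_{|x|\to\infty}\max_{|p|\le\eps}H(x,p,0) < m_0$ confines the analysis to a compact set where a near-maximizer of $\min_p H(\cdot,p,0)$ exists, and there one can run the usual ``test against an affine function'' argument at an interior local maximum of $w$ minus a small linear term; the convexity and coercivity in \ref{itm:assumptions-h1} ensure the subsolution is locally Lipschitz so that such perturbative maxima behave well. The remaining steps (nonemptiness, upper bound, stability of the minimizing sequence) are routine given \ref{itm:assumptions-h1}--\ref{itm:assumptions-h2}.
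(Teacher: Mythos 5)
Your proposal is correct and follows essentially the same route as the paper: the upper bound via the constant subsolution $w\equiv 0$ together with the finiteness of $\max_{x}H(x,0,0)$ from \ref{itm:assumptions-h2}, the lower bound by producing nearby points where the superdifferential of the subsolution is nonempty (the paper phrases this as density of $\Omega^+=\{D^+u\neq\emptyset\}$), using the local coercivity in \ref{itm:assumptions-h1} to confine the corresponding gradients to a compact set and uniform continuity of $H$ to transfer the inequality $\min_p H(\cdot,p,0)\le a+\varepsilon$ to the target point, and attainment of the minimum by equi-Lipschitz bounds plus stability of viscosity subsolutions. Your first-pass idea of testing at a local maximum of $w$ would not work on its own (such maxima need not exist, and need not sit near a maximizer of $\min_p H(\cdot,p,0)$), but you correctly identify and repair this, landing on the paper's argument.
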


\ifsol
\color{magenta}
\begin{proof} By assumption 
\ref{itm:assumptions-h2}, 
$\max_{x\in \R^n} H(x,0,0)$ exists and is finite. If $a\geq \max_{x\in \R^n} H(x,0,0)$ then $u\equiv 0$ in $\R^n$ is a subsolution to $H(x,Du,0) \leq a$ in $\R^n$. Next, we show that if $H(x,Du,0)\leq a$ in $\R^n$ admits a continuous subsolution $u\in C(\R^n)$, it must hold that
\begin{equation*}
    a \geq \max_{x\in \R^n} \min_{p\in \R^n} H(x,p,0). 
\end{equation*}
Since $u\in C(\R^n)$, we have $\Omega^+ = \{x\in \R^n: D^+u(x) \neq \emptyset\}$ is dense in $\R^n$. Take $x_0\in \Omega^+$ and $p_0\in D^+u(x_0)$, we have $a \geq H(x_0,p_0, 0) \geq \min_{p\in \R^n} H(x_0,p,0)$, thanks to the definition of subsolution. Therefore
\begin{equation}\label{eq:ergodicOmega}
    a\geq \max_{x_0\in \Omega^+}\min_{p\in \R^n} H(x_0,p,0).   
\end{equation}
By 
\ref{itm:assumptions-h1}, 
there exists $R_a > 0$ such that 
\begin{equation}\label{eq:erogdic-ishii-assumption}
    H(x,p,0)  > a \qquad\text{for}\;|x|\leq R, |p|\geq R \;\text{with}\;R>R_a. 
\end{equation}
Take $x_0\notin \Omega^+$, then for every $r>0$ there exists $x_r\in B_r(x_0) \cap \Omega^+$. Let $p_r\in D^+u(x_r)$, then $H(x_r,p_r,0) \leq a$. Choose $R=|x_0|+R_a$. For $r>0$ small enough, $|x_r|\leq R=|x_0| + R_a$, therefore by \eqref{eq:erogdic-ishii-assumption} we have $|p_r|\leq R$. Since $H$ is continuous, it is uniformly continuous on $\overline{B}_r(x_0)\times \overline{B}_{2R}(0) \times \{0\}$. For a given $\varepsilon>0$, we can find $r>0$ such that 
\begin{equation*}
    |H(x_0,p,0) - H(x,p,0)| < \varepsilon \qquad\text{if}\;x\in B_r(x_0), |p|\leq 2R. 
\end{equation*}
Applying this inequality with \(x = x_r \in \Omega^+ \cap B_r(x_0)\) and \(p = p_r \in D^+u(x_r)\), we deduce that:
\begin{equation*}
    \min_{p\in \R^n} H(x_0, p, 0) \leq H(x_0, p_r, 0) \leq H(x_r,p_r,0) + \varepsilon \leq a + \varepsilon. 
\end{equation*}
Since \( x_0 \notin \Omega^+ \) was chosen arbitrarily, we deduce, together with \eqref{eq:ergodicOmega}, that $a + \varepsilon \geq \max_{x\in \R^n} \min_{p\in \R^n} H(x, p, 0) = m_0$.
Let $\varepsilon \to 0^+$ we deduce that $a \geq m_0$. This implies that \(c(H)\) is well-defined and attains the minimum, thanks to the stability of viscosity solutions.
\end{proof}
\color{black}
\fi

We note that Lemma \ref{lem:critical-value} in fact holds without assuming the convexity of \( H \).
The following lemma relies on the convexity assumption in \ref{itm:assumptions-h1} (see \cite[Corollary 2.36]{tran_hamilton-jacobi_2021}).

\begin{lem}[{\cite[Proposition 2.1]{ishii_vanishing_2020}}]\label{lem:cutoffIshiiSiconofli}
Assume 
\ref{itm:assumptions-h1}, \ref{itm:assumptions-h2}. 
Let \( u \in C(\mathbb{R}^n) \) be a subsolution to \( H(x, Du, 0) = a \) in \( \mathbb{R}^n \), and let \( K \subset \mathbb{R}^n \) be a compact set.
\begin{itemize}
    \item[(i)] There exists a subsolution \( \tilde{v} \in C(\mathbb{R}^n) \) to \( H(x, D\tilde{v}, 0) = a \) in \( \mathbb{R}^n \) such that \( \tilde{v}(x) = -\frac{\varepsilon}{2} |x| \) for \( |x| \geq R \), for some sufficiently large \( R \).
    \item[(ii)] There exists $\tilde{u}\in C(\R^n)$ a subsolution to $H(x,D\tilde{u},0) = a$ in $\R^n$ such that $\title{u} = u$ in $K$ and $\tilde{u} \equiv \text{\rm constant}$ in $\R^n\backslash B_R(0)$ for some $R$ large enough.
\end{itemize}
\end{lem}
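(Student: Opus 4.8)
The plan is to exploit the convexity of $H$ in $\xi$ (from \ref{itm:assumptions-h1}), which makes the subsolution property stable under taking pointwise infima and under convex combinations, together with the fact that $H$ is nondecreasing in $u$ so that the sign of $u$ plays no role for the equation at level $0$. For part (i), I would start from the special subsolution provided by \ref{itm:assumptions-h2}: the constant $\varepsilon>0$ there guarantees that $x\mapsto -\tfrac{\varepsilon}{2}|x|$ (smoothed near the origin, or handled via the fact that $|x|$ is a viscosity subsolution of $|D\psi|\le 1$) satisfies $H(x,D\psi,0)\le \max_{|p|\le\varepsilon}H(x,p,0) \le c(H) \le a$ for $|x|$ large, since $\limsup_{|x|\to\infty}\max_{|p|\le\varepsilon}H(x,p,0)<m_0\le c(H)\le a$. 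So $\psi(x):=-\tfrac{\varepsilon}{2}|x|$ is a subsolution on $\{|x|\ge R_1\}$ for some $R_1$. Then I would glue: take a large constant $C$ and set $\tilde v := \min\{u+C,\ \psi + C'\}$ on an annular region, choosing the additive constants $C,C'$ so that on an intermediate annulus $u+C$ dominates and outside a larger ball $\psi+C'$ dominates (possible because $u$ is locally bounded and $\psi\to-\infty$). The minimum of two subsolutions is a subsolution (convexity is not even needed for min), and after absorbing constants one gets $\tilde v(x)=-\tfrac{\varepsilon}{2}|x|$ for $|x|\ge R$.

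For part (ii), the idea is the same gluing but now I want the outer behavior to be \emph{constant} rather than linearly decreasing, and I want to preserve $\tilde u=u$ exactly on the prescribed compact set $K$. First enlarge $K$ to a closed ball $\overline B_{R_0}(0)\supset K$. Take the subsolution $\psi$ from the analysis above which equals $-\tfrac{\varepsilon}{2}|x|$ far out; then $\psi$ tends to $-\infty$, so for a suitable constant $a_0$ the function $\min\{\psi+a_0,\ \text{const}\}$ — wait, a constant function $m$ is a subsolution of $H(x,Dm,0)=a$ iff $H(x,0,0)\le a$, which holds for $|x|$ large by \eqref{eq:BoundLPositive}-type reasoning but need not hold near the origin. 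So instead I would build $\tilde u$ directly as $\min\{u + C,\ w\}$ where $w$ is a subsolution that is globally defined, agrees with being very large on $\overline B_{R_0}$ (so the min picks $u+C$ there), and is eventually constant. To get such a $w$: use part (i)'s $\psi$ with a huge additive constant on a far annulus and cap it; concretely pick $R\gg R_0$ and set $w := \psi + M$ for $M$ large on $R_0\le |x|\le R$, and note that for $|x|\ge R$ the constant value $\psi(R)+M$ is a subsolution there because $H(x,0,0)<c(H)\le a$ for $|x|\ge R_0$ by \eqref{eq:BoundLPositive}. Taking $\min$ of the "capped $w$" with the genuinely constant tail gives a global subsolution that is constant outside $B_R(0)$, and then $\tilde u:=\min\{u+C,\ w\}$ with $C$ large enough that $u+C\le w$ on $\overline B_{R_0}(0)$ does the job. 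One checks $\tilde u=u+C$ on $K$; subtracting the harmless constant $C$ (the equation at level $0$ is invariant under adding constants to the solution only because $H$ does not depend on $u$ here — indeed we evaluate $H(x,Du,0)$, so shifting $u$ is free) yields $\tilde u=u$ on $K$.

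The main obstacle I anticipate is purely bookkeeping: arranging the several additive constants ($C$, $C'$, $M$, $a_0$) so that the $\min$ selects the intended branch on each of the three regions — $K$ (resp. $\overline B_{R_0}$), the intermediate annulus, and the exterior — simultaneously, while keeping each branch a subsolution on the region where it is active and using the stability-under-min lemma to patch across the overlaps. The geometric inputs making this possible are exactly: (a) $u$ is locally bounded, so $u+C$ can be pushed above any fixed competitor on a fixed ball; (b) the exterior subsolution $\psi$ decreases to $-\infty$, so it eventually drops below any shifted copy of $u$; and (c) \ref{itm:assumptions-h2} gives strict room $\max_{|p|\le\varepsilon}H(x,p,0)<c(H)\le a$ at infinity, which is what lets both the linear profile (part i) and a constant tail (part ii) be subsolutions near infinity. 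No genuinely hard estimate is needed beyond invoking Lemma \ref{lem:critical-value} for $a\ge c(H)\ge m_0$ and the standard "min of subsolutions is a subsolution" fact; convexity of $H$ is only truly required if one instead wanted to use convex combinations or inf-convolutions, and I would avoid that route in favor of the $\min$-gluing above.
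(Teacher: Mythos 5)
First, a point of reference: the paper does not prove this lemma at all; it is quoted from \cite[Proposition 2.1]{ishii_vanishing_2020}. So I am judging your argument against what a correct proof requires, not against an in-paper proof. Your overall strategy -- glue the given subsolution to the cone $\psi(x)=-\tfrac{\varepsilon}{2}|x|$, which \ref{itm:assumptions-h2} makes a strict subsolution on $\{|x|>R_0\}$ -- is the right one, but there is a genuine gap at the step ``outside a larger ball $\psi+C'$ dominates\ldots\ because $\psi\to-\infty$'' (restated in your closing paragraph as ``$\psi$ eventually drops below any shifted copy of $u$''). Under \ref{itm:assumptions-h1} a subsolution of $H(x,Du,0)\le a$ is only \emph{locally} Lipschitz, with a constant that may blow up as $|x|\to\infty$, so nothing prevents $u$ from decaying super-linearly. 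Concretely, $H(x,p,u)=|p|-|x|$ satisfies \ref{itm:assumptions-h1}--\ref{itm:assumptions-h2} with $c(H)=0$, and $u(x)=-|x|^2/2$ is a subsolution at the critical level; for this $u$ one has $\min\{u+C,\psi+C'\}=u+C$ near infinity for every choice of $C,C'$, so your $\tilde v$ never equals $-\tfrac{\varepsilon}{2}|x|$ there. The same failure is fatal for part (ii), where the outer branch is a constant and your min would require $u$ to be bounded below.

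The missing idea is a preliminary \emph{max}-regularization that forces at most linear decay before any min-gluing. Set $g:=\max\{u+C_1,\,-\tfrac{\varepsilon}{4}|x|\}$ with $C_1$ so large that $u+C_1>-\tfrac{\varepsilon}{4}|x|$ on $\overline{B}_{R_0+1}(0)$ (enlarging $R_0$ first so that this ball also contains $K$, for part (ii)). Then $g$ is a global subsolution (a max of subsolutions on $\{|x|>R_0\}$, and $g\equiv u+C_1$ near $\overline{B}_{R_0}(0)$ where the cone may fail to be one), satisfies $g\ge-\tfrac{\varepsilon}{4}|x|$ everywhere, and equals $u+C_1$ on $K$. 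Now $\tilde v:=\min\{g+C_2,\,-\tfrac{\varepsilon}{2}|x|\}$, with $C_2$ negative enough that the first branch is selected on $\overline{B}_{R_0+1}(0)$, is a global subsolution that equals $u+\mathrm{const}$ on $K$ and equals $-\tfrac{\varepsilon}{2}|x|$ as soon as $\tfrac{\varepsilon}{4}|x|\ge|C_2|$: the gap between $\varepsilon/4$ and $\varepsilon/2$ is exactly what makes the outer comparison work. Part (ii) then follows from $\tilde u:=\max\{\tilde v,c\}$ with $c<\min_{\overline{B}_{R_0+1}(0)}\tilde v$, since constants are subsolutions on $\{|x|>R_0\}$ because $H(x,0,0)\le\max_{|p|\le\varepsilon}H(x,p,0)<c(H)\le a$ there. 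Two smaller corrections: the minimum of two viscosity subsolutions is \emph{not} stable without convexity (that is the maximum; the min requires the a.e.\ characterization of subsolutions valid for convex coercive $H$, which is available here), and the profile that is linear inside and constant outside, needed in part (ii), is $-\tfrac{\varepsilon}{2}\min\{|x|,R\}=\max\{-\tfrac{\varepsilon}{2}|x|,-\tfrac{\varepsilon}{2}R\}$, a max rather than the min you describe.
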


\begin{defn} We define the intrinsic semi distance $S_H(\cdot, \cdot)$ in $\R^n$ by
\begin{equation*}
    S_H(x,y) = \sup \left \lbrace u(x)-u(y): u\;\text{is a subsolution to}\;H(x,Du,0)=c(H)\;\text{in}\;\R^n
    \right\rbrace, \qquad x,y\in \R^n.
\end{equation*}     
\end{defn}

\begin{lem}\label{lem:mane-potential} Assume 
\ref{itm:assumptions-h1}, \ref{itm:assumptions-h2}. 
\begin{itemize}
    \item[(i)] For each fixed $y_0\in \R^n$, $x\mapsto S_H(x,y_0)$ is well-defined and is locally Lipschitz continuous on $\R^n$. Moreover, $S_H(x,x) = 0$ for all $x\in \R^n$, and $x\mapsto S_H(x,y_0)$ is a subsolution to $H(x,Du, 0) = c(H)$ in $\R^n$. 
    \item[(ii)] For $x,y,z\in \R^n$, we have $S_H(x,y) \leq S_H(x,z)+S_H(z,y)$. 
    \item[(iii)] The map $x\mapsto S_H(x,y_0)$ is a viscosity solution to $H(x,Du,0) = c(H)$ in $\R^n\backslash \{y_0\}$. 
\end{itemize}
\end{lem}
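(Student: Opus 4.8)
The plan is to derive all three statements from two preliminary facts about the family of continuous viscosity subsolutions of $H(x,Du,0)=c(H)$ in $\R^n$: \textbf{(a)} this family is nonempty, since $c(H)$ is attained by Lemma~\ref{lem:critical-value}; and \textbf{(b)} every such subsolution is locally Lipschitz with a \emph{uniform} local Lipschitz constant, depending only on the compact set and on $H$, not on the subsolution. Fact (b) is the standard regularity consequence of the local coercivity \ref{itm:assumptions-h1}: by doubling of variables, if $u$ is a subsolution and $x\mapsto u(x)-u(y)-L|x-y|$ had an interior positive maximum at some $(\bar x,\bar y)$ with $\bar x\neq\bar y$ inside a fixed ball, then testing the subsolution inequality with the $C^1$ function $x\mapsto u(\bar y)+L|x-\bar y|$ (which is smooth near $\bar x$) gives $H(\bar x,p,0)\le c(H)$ for some $|p|=L$, contradicting \eqref{eq:assumptions-LocalCoercive} once $L$ is large enough (depending only on the ball). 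Hence $\{\,u-u(y_0): u\text{ a subsolution}\,\}$ is equi-Lipschitz on each compact set.

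With (a)--(b) at hand, part (i) is straightforward. Finiteness: for $x$ in a compact convex neighborhood $K$ of $y_0$, one has $u(x)-u(y_0)\le C_K|x-y_0|$ uniformly in $u$, so the defining supremum is finite (and $>-\infty$ since the family is nonempty). Local Lipschitz continuity: the supremum of a family that is $C_K$-Lipschitz on $K$ is itself $C_K$-Lipschitz on $K$. The identity $S_H(x,x)=\sup\{u(x)-u(x)\}=0$ is immediate. Finally, $x\mapsto S_H(x,y_0)$ is locally bounded and, being locally Lipschitz, continuous, so by the standard stability of viscosity subsolutions under locally bounded suprema it is itself a subsolution of $H(x,Du,0)=c(H)$ in $\R^n$.

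Part (ii) is immediate from the definition: for any subsolution $u$, $u(x)-u(y)=\big(u(x)-u(z)\big)+\big(u(z)-u(y)\big)\le S_H(x,z)+S_H(z,y)$, and taking the supremum over $u$ yields the triangle inequality. For part (iii) set $w:=S_H(\cdot,y_0)$. By part (i) $w$ is a subsolution on all of $\R^n$, and since $w(y_0)=0$ it is the \emph{maximal} subsolution vanishing at $y_0$ (any subsolution $v$ with $v(y_0)=0$ satisfies $v(x)=v(x)-v(y_0)\le S_H(x,y_0)=w(x)$, and $w$ lies in this class). It remains to verify the supersolution inequality at an arbitrary $x_1\neq y_0$. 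If it failed, there would be $\varphi\in C^1$ with $w-\varphi$ having a local minimum at $x_1$ and $H(x_1,D\varphi(x_1),0)<c(H)$; replacing $\varphi$ by $\varphi(\cdot)-|\cdot-x_1|^2$ we may take this minimum strict and equal to $0$. By continuity choose $r>0$ with $y_0\notin\overline{B}_r(x_1)$, with $H(x,D\varphi(x),0)<c(H)$ on $B_r(x_1)$, and with $w-\varphi\ge m>0$ on $\partial B_r(x_1)$. Then $\tilde w:=\max\{w,\varphi+m/2\}$ on $B_r(x_1)$ and $\tilde w:=w$ elsewhere is continuous (the two formulas agree near $\partial B_r(x_1)$), is a viscosity subsolution of $H(x,D\tilde w,0)=c(H)$ in $\R^n$ by locality and stability under maxima, satisfies $\tilde w(y_0)=w(y_0)=0$, yet $\tilde w(x_1)\ge\varphi(x_1)+m/2=w(x_1)+m/2>w(x_1)$, contradicting maximality of $w$. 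Hence $w$ is a supersolution, so a solution, in $\R^n\setminus\{y_0\}$; the point $y_0$ must be excised because there the normalization $w(y_0)=0$ can obstruct the supersolution test.

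The only nonroutine ingredients I expect are the uniform local Lipschitz estimate in fact (b) (which makes the supremum well behaved and underlies both finiteness and continuity in part (i)) and the bump/obstacle construction used for the supersolution property in part (iii); both are classical and carry over unchanged to the present noncompact, convex setting.
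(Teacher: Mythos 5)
Your proof is correct and follows essentially the same route as the paper's: part (i) via the uniform local Lipschitz bound coming from \eqref{eq:assumptions-LocalCoercive} and stability of subsolutions under suprema, part (ii) by a one-line manipulation of the defining supremum (you split $u(x)-u(y)$ directly, the paper equivalently uses that $S_H(\cdot,y)-S_H(z,y)$ is a subsolution vanishing at $z$), and part (iii) by the maximality/Perron bump argument that the paper invokes without writing out. The only looseness is in your sketch of the equi-Lipschitz fact (b), where the doubling argument needs the usual care to rule out the maximum sitting on the boundary of the ball; the standard fix is to first get local Lipschitz continuity of each subsolution and then read off the uniform gradient bound $|Du|\le L_K$ a.e.\ from coercivity, but this is classical and does not affect the validity of the proof.
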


\begin{proof} The first part is clear from 
    \eqref{eq:assumptions-LocalCoercive} in \ref{itm:assumptions-h1},
    and it is also clear from the definition that $S_H(x,x) = 0$. The fact that $x\mapsto S_H(y)$ is a subsolution follows from stability of viscosity subsolution. \medskip

    For the second part, let us define $w(x) = S_H(x,y) - S_H(z,y)$ for $x\in \R^n$, we observe that $w(z) = 0 = S_H(z,z)$. For $y,z$ $\in \R^n$, $w$ is a subsolution to $H(x,Dw,0)=c(H)$ in $\R^n$, thus by definition of $S_H$ we have
    \begin{equation*}
        w(x) = w(x) - w(z) \leq S_H(x,z) \qquad\Longrightarrow\qquad S_H(x,y) - S_H(z,y) \leq S_H(x,z). 
    \end{equation*}
    Finally, for the last part, the function \( w(x) = S_H(x, y_0) \) is a subsolution to \( H(x, Du, 0) = c(H) \) in \( \mathbb{R}^n \) and is locally Lipschitz in \( \mathbb{R}^n \). By virtue of Perron's method, since \( w \) is the maximal subsolution with \( w(y_0) = 0 \), we conclude that \( w \) is a viscosity solution to \( H(x, Du, 0) = c(H) \) in \( \mathbb{R}^n \setminus \{y_0\} \).
\end{proof}

\begin{defn}[The Aubry set]\label{defn:AubrySet} Under 
\ref{itm:assumptions-h1}, \ref{itm:assumptions-h2}, 
the Aubry set of $H$ in $\R^n$ is defined as follows:
\begin{equation*}
    \mathcal{A} = \{y\in \R^n: x\mapsto S_H(x,y)\;\text{is a solution to $H(x,Du,0) = c(H)$ in $\R^n$}\}. 
\end{equation*}
\end{defn}

\begin{lem}[{\cite[Proposition A.2]{ishii_vanishing_2020}}]\label{lem:strict-sub} 
Assume 
\ref{itm:assumptions-h1}, \ref{itm:assumptions-h2}. 
Given \( y \in \mathbb{R}^n \), if there exists a subsolution to \( H(x, Du, 0) = c(H) \) in \( \mathbb{R}^n \) that is strict in a neighborhood of \( y \), then \( y \notin \mathcal{A} \). Conversely, if \( y \notin \mathcal{A} \), there exists a subsolution to \( H(x, Du, 0) = c(H) \) in \( \mathbb{R}^n \) that is strict in a neighborhood of \( y \).  
\end{lem}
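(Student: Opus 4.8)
The plan is to reduce membership in $\mathcal{A}$ to a single-point test. Set $u_y := S_H(\cdot,y)$. By Lemma~\ref{lem:mane-potential}, $u_y$ is a subsolution of $H(x,Du,0)=c(H)$ on all of $\mathbb{R}^n$ and a viscosity solution on $\mathbb{R}^n\setminus\{y\}$; hence $y\in\mathcal{A}$ if and only if $u_y$ additionally satisfies the supersolution inequality at the point $y$ itself, and both implications of the lemma amount to statements about this test. I will also use freely that, by the local coercivity in \ref{itm:assumptions-h1}, every continuous subsolution of $H(x,Du,0)=c(H)$ is locally Lipschitz, that for the convex equation viscosity and a.e.\ subsolutions coincide, and that spatial mollification turns an a.e.\ subsolution into a $C^1$ one at the price of a loss in the right-hand side bounded by the local modulus of continuity of $H$ in $x$.

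I treat the converse implication first, as it is the constructive one. Assume $y\notin\mathcal{A}$, so $u_y$ fails the supersolution inequality at $y$: there is $\varphi\in C^1$ with $u_y-\varphi$ having a local minimum at $y$ and $H(y,D\varphi(y),0)<c(H)$. Normalizing $\varphi(y)=u_y(y)=0$ and subtracting $\varepsilon|x-y|^2$ for small $\varepsilon$, one may assume $H(x,D\varphi(x),0)\le c(H)-\delta$ on some $B_r(y)$, together with $\varphi<u_y$ on $B_r(y)\setminus\{y\}$. Fix $\rho\in(0,r)$ and choose $\kappa$ with $0<\kappa<\min_{\partial B_\rho(y)}(u_y-\varphi)$, the latter minimum being positive by the strict inequality just arranged. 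Define $w:=\max\{\varphi+\kappa,\,u_y\}$ on $B_\rho(y)$ and $w:=u_y$ on $\mathbb{R}^n\setminus B_\rho(y)$. Since $u_y-\varphi>\kappa$ on a neighbourhood of $\partial B_\rho(y)$, one has $w\equiv u_y$ there, so $w\in C(\mathbb{R}^n)$; $w$ is a subsolution on $\mathbb{R}^n$, being a maximum of the two subsolutions $\varphi+\kappa$ and $u_y$ on $B_\rho(y)$ and equal to $u_y$ on a neighbourhood of the complement of $B_\rho(y)$; and because $u_y(x)-\varphi(x)\to 0$ as $x\to y$ while $\kappa>0$, we have $w=\varphi+\kappa$ on a small ball $B_s(y)$, where $H(x,Dw,0)=H(x,D\varphi,0)\le c(H)-\delta$. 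Thus $w$ is a global subsolution of $H(x,Du,0)=c(H)$ that is strict near $y$.

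For the forward implication, let $w$ be a global subsolution, strict on $B_r(y)$, and suppose toward a contradiction that $y\in\mathcal{A}$, i.e.\ $u_y$ is a supersolution on all of $\mathbb{R}^n$. Using the convexity of $H$ in $\xi$ from \ref{itm:assumptions-h1}, the average $v:=\tfrac12(w-w(y))+\tfrac12 u_y$ is (pointwise a.e., hence in the viscosity sense) a subsolution of $H(x,Du,0)=c(H)$ on $\mathbb{R}^n$, is strict with constant $\delta/2$ on $B_r(y)$, and satisfies $v(y)=u_y(y)=0$; by maximality of $S_H$ and $v(y)=0$ we get $v\le u_y$ on $\mathbb{R}^n$. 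It then remains to invoke the standard principle that a strict subsolution lying below a supersolution on an open set lies \emph{strictly} below it there: mollifying $v$ to a nearby $C^1$ strict subsolution $v_\sigma$, localizing with a small quadratic to force an interior minimum of $u_y-v_\sigma$, and testing the supersolution $u_y$ from below by $v_\sigma$ contradicts strictness at any point where $v$ meets $u_y$. Since $v(y)=u_y(y)$ with $y\in B_r(y)$, this is the sought contradiction, so $y\notin\mathcal{A}$.

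The main obstacle is the regularization step underpinning the forward implication: passing from a locally Lipschitz, merely a.e.\ strict subsolution of the convex and only locally coercive equation $H(x,Du,0)=c(H)$ to a genuine $C^1$ strict subsolution, with the loss absorbed by the strictness, and with the localizing quadratic chosen so that the supersolution comparison is performed at an interior point. Everything else is bookkeeping — in particular, in the converse implication the only delicate part is that the several pieces defining $w$ glue to a continuous function and that strictness persists on a full neighbourhood of $y$. I note that, once the variational (Peierls-barrier) representation of $S_H$ on $\mathbb{R}^n$ is in hand, the forward implication can alternatively be phrased as the assertion that loops based at $y$ which leave a fixed ball have action bounded away from $0$; but the argument above stays within the tools of the present section.
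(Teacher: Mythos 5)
Your proposal is correct. The converse implication is essentially the paper's own argument: both take the $C^1$ test function $\psi$ witnessing the failure of the supersolution property of $S_H(\cdot,y)$ at $y$ and glue $\max\{S_H(\cdot,y),\psi+\varepsilon\}$ to produce a global subsolution that is strict near $y$ (your version spells out the quadratic perturbation and the choice of $\kappa$ more carefully than the paper does). The forward implication, however, takes a genuinely different route. The paper argues pointwise at $y$: for a strict subsolution $u$ near $y$, any $p_0\in D^-u(y)$ satisfies $H(y,p_0,0)<c(H)$ by convexity, and since $S_H(\cdot,y)\geq u-u(y)$ with equality at $y$, the same $p_0$ lies in $D^-S_H(\cdot,y)(y)$, contradicting the supersolution property there; the case $D^-u(y)=\emptyset$ is dispatched with an unspecified "local perturbation." You instead pass the strict subsolution (your convex average $v$ is actually superfluous, since $w-w(y)\leq S_H(\cdot,y)$ already holds with equality at $y$ by the definition of $S_H$) below the supersolution $S_H(\cdot,y)$, touching at $y$, and then run the standard mollification-plus-localizing-quadratic comparison to contradict strictness at an interior touching point. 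Your route costs the regularization machinery (Jensen's inequality for the convex Hamiltonian, the modulus-of-continuity loss, and the choice of $\eta,\sigma$ so that the minimum of $u_y-v_\sigma+\eta|x-y|^2$ is interior), but it avoids the case split on whether $D^-u(y)$ is empty and is the more robust argument; the paper's is shorter when the subdifferential is nonempty but leaves the exceptional case to the reader.
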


\ifsol
\color{magenta}
\begin{proof} Assume $y\in \mathcal{A}$ but there exists $u\in C(\R^n)$ is a subsolution to $H(x,Du,0) \leq c(H)$ in $\R^n$ and $H(x,Du(x),0) < c(H)$ in $B_r(y)$ for some $r>0$. If $D^-u(y)\neq \emptyset$ then there exists $p_0\in D^-u(y)$, and $H(y,p_0) < c(H)$. However, as $w(x) = S_H(x,y) \geq u(x)-u(y)$, we have 
    \begin{align*}
        \liminf_{x\to y} \left( \frac{w(x) - w(y) - p_0\cdot(x-y)}{|x-y|}  \right)
        \geq 
        \liminf_{x\to y} \left( \frac{u(x) - u(y) - p_0\cdot(x-y)}{|x-y|} \right) \geq 0.
    \end{align*}
    Thus $p_0\in D^-w(y)$, hence $H(y, p_0,0) \geq 0$ since we assume $y\in \mathcal{A}$. This is a contradiction, thus $y\notin \mathcal{A}$. If $D^-u(y)= \emptyset$ we can modify this by a local perturbation. 

Conversely, if $y\notin \mathcal{A}$ then $w(x) = S_H(x,y)$ fails to be a super solution to $H(x,Dw(x), 0) = c(H)$ at $y$. Hence, there exists a smooth function $\psi\in C^1(\R^n;\R)$ such that $w-\psi$ has a zero local minimum at $y$ and $H(y, D\psi(y), 0) < 0$. Let \( v(x) = \max\{w(x), \psi(x) + \varepsilon\} \) for a sufficiently small \(\varepsilon > 0\), ensuring that \( v \) is a strict subsolution to \( H(x, Dv(x), 0) = c(H) \) in some neighborhood of \( y \).
\end{proof}
\color{black}
\fi

\begin{lem}[{\cite[Proposition 4.3]{ishii_vanishing_2020}}]\label{lem:AubrySetNonempty} Assume 
\ref{itm:assumptions-h1}, \ref{itm:assumptions-h2}. 
Then there exists a viscosity solution \( u \in C(\mathbb{R}^n) \) to \( H(x,Du,0) = c(H) \) in \( \mathbb{R}^n \); equivalently, the Aubry set \( \mathcal{A} \) is a nonempty compact subset of $\R^n$.
\end{lem}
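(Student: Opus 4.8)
The plan is to prove that $\mathcal{A}$ is nonempty and compact; the existence of a continuous viscosity solution of $H(x,Du,0)=c(H)$ then follows at once, since by Definition \ref{defn:AubrySet} the map $x\mapsto S_H(x,y)$ is such a solution for any $y\in\mathcal{A}$ and is continuous by Lemma \ref{lem:mane-potential}(i). (The converse in ``equivalently'' is then automatic, both statements holding under \ref{itm:assumptions-h1}--\ref{itm:assumptions-h2}.)

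First I would produce a global subsolution that is strict outside a ball. Fixing any $y_0\in\R^n$, Lemma \ref{lem:mane-potential}(i) shows $S_H(\cdot,y_0)$ is a subsolution of $H(x,Du,0)=c(H)$ in $\R^n$, so Lemma \ref{lem:cutoffIshiiSiconofli}(i) yields a subsolution $\tilde v\in C(\R^n)$ with $\tilde v(x)=-\tfrac{\varepsilon}{2}|x|$ for $|x|\ge R$, $\varepsilon$ being the constant from \ref{itm:assumptions-h2}. For $|x|>R$ the function $\tilde v$ is smooth with $|D\tilde v(x)|=\tfrac{\varepsilon}{2}\le\varepsilon$, so $H(x,D\tilde v(x),0)\le\max_{|p|\le\varepsilon}H(x,p,0)$; combining \ref{itm:assumptions-h2} with $c(H)\ge m_0$ from Lemma \ref{lem:critical-value} gives $\delta>0$ and $R_1>R$ such that $H(x,D\tilde v(x),0)\le c(H)-\delta$ for $|x|\ge R_1$. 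Hence $\tilde v$ is a strict subsolution on the open set $\{|x|>R_1\}$, and Lemma \ref{lem:strict-sub} forces $y\notin\mathcal{A}$ whenever $|y|>R_1$; thus $\mathcal{A}\subset\overline{B}_{R_1}(0)$ is bounded. Closedness is also a consequence of Lemma \ref{lem:strict-sub}: if $y\notin\mathcal{A}$ there is a global subsolution that is strict on a neighborhood $U$ of $y$, and that same subsolution is strict near each point of $U$, so $U\cap\mathcal{A}=\emptyset$; thus $\mathcal{A}$ is closed, hence compact.

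For nonemptiness I would argue by contradiction, assuming $\mathcal{A}=\emptyset$. Then Lemma \ref{lem:strict-sub} attaches to each $y\in\overline{B}_{R_1}(0)$ an open neighborhood $U_y$ and a locally Lipschitz subsolution $w_y\in C(\R^n)$ of $H(x,Du,0)=c(H)$ in $\R^n$ that is strict on $U_y$ with some margin $\eta_y>0$; by compactness choose a finite subcover $U_{y_1},\dots,U_{y_N}$ of $\overline{B}_{R_1}(0)$, so that $\{|x|>R_1\},U_{y_1},\dots,U_{y_N}$ cover $\R^n$. Put $w=\tfrac{1}{N+1}\bigl(\tilde v+\sum_{i=1}^N w_{y_i}\bigr)\in C(\R^n)$. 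All functions involved are locally Lipschitz, so for a.e.\ $x$ one has $Dw(x)=\tfrac{1}{N+1}\bigl(D\tilde v(x)+\sum_i Dw_{y_i}(x)\bigr)$, and picking a member of the cover containing $x$ and using that $p\mapsto H(x,p,0)$ is convex (\ref{itm:assumptions-h1}),
\[
H(x,Dw(x),0)\ \le\ \frac{1}{N+1}\Bigl(H(x,D\tilde v(x),0)+\sum_{i=1}^N H(x,Dw_{y_i}(x),0)\Bigr)\ \le\ c(H)-\eta,
\]
with $\eta:=\tfrac{1}{N+1}\min\{\delta,\eta_{y_1},\dots,\eta_{y_N}\}>0$. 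Since $H(x,\cdot,0)$ is convex and $w$ is locally Lipschitz, this a.e.\ inequality upgrades to the viscosity sense, so $w$ is a continuous viscosity subsolution of $H(x,Du,0)=c(H)-\eta$ in $\R^n$, contradicting that $c(H)$ is the infimum in \eqref{eq:c(H)}. Therefore $\mathcal{A}\neq\emptyset$.

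The one delicate point is the gluing in the last paragraph: upgrading finitely many locally Lipschitz strict subsolutions to a single global strict subsolution with a \emph{uniform} margin. I would handle this via the convex-combination identity above, which is legitimate precisely because every subsolution produced by Lemmas \ref{lem:strict-sub} and \ref{lem:cutoffIshiiSiconofli} is locally Lipschitz (so the inequality can be verified pointwise a.e.\ and then promoted using convexity of $H$ in $p$), while uniformity of $\eta$ comes from the finiteness of the subcover together with the uniform gap supplied by \ref{itm:assumptions-h2} on $\{|x|\ge R_1\}$. Everything else is bookkeeping with the tools already established.
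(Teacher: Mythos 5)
Your proof is correct and follows essentially the same strategy as the paper: compactness of $\mathcal{A}$ from the subsolution equal to $-\tfrac{\varepsilon}{2}|x|$ at infinity together with \ref{itm:assumptions-h2}, and nonemptiness by contradiction via a finite cover of the compact ball by neighborhoods carrying strict subsolutions, averaged using convexity of $H$ in $p$ to produce a global strict subsolution contradicting the minimality of $c(H)$. The only (harmless) difference is in the final combination: the paper uses $\phi(x)=-\tfrac{\varepsilon}{2}|x|$ directly, which is not a global subsolution, and therefore must tune the convex weight $\lambda$ against $\alpha_0=\max_{|x|\le R_0}H(x,-\tfrac{\varepsilon}{2}\tfrac{x}{|x|},0)$, whereas your use of the globally subsolution cutoff $\tilde v$ from Lemma \ref{lem:cutoffIshiiSiconofli}(i) lets you take a plain equal-weight average.
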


\ifsol
\color{magenta}
\begin{proof} From Lemma \ref{lem:critical-value} and 
\ref{itm:assumptions-h2} we have 
\begin{equation*}
    \limsup_{|x|\to \infty} \max_{|p|\leq \varepsilon} H(x,p,0) < \max_{x\in \R^n}\min_{p\in \R^n} H(x,p,0) \leq c(H). 
\end{equation*}
Let $R_0 >0$ such that
\begin{equation*}
    \max_{|p|\leq \varepsilon} H(x,p,0) < c(H) - \varepsilon_0 \qquad\text{for}\;|x|\geq R_0. 
\end{equation*}
For each $y\in \R^n$, if $S_H(\cdot, y)$ fails to be a supersolution to $H(x,Du,0) \leq c(H)$ at $y$, then there exists $\phi_y\in C^1(\R^n)$ with $\phi_y(y) = 0$ such that $S(\cdot,y) - \phi_y(\cdot)$ has a strict minimum at $y$ and
\begin{equation*}
    H(y, D\phi_y(y),0) < c(H). 
\end{equation*}
By continuity, there exist $r_y, \varepsilon_y > 0$ such that  
\begin{equation*}
    \begin{cases}
    \begin{aligned}
        &\phi_y(x) < S_H(x,y) - \varepsilon_y &&\qquad x\in \partial B_{r_y}(y), \\ 
        &H(x,D\phi_y(x),0) < c(H) - \varepsilon_y  &&\qquad x\in  B_{r_y}(y).
    \end{aligned}
    \end{cases}
\end{equation*}
Let
\begin{equation*}
    \psi_y(x) = 
    \begin{cases}
        \max\{S_H(x,y), \phi_y(x) + \varepsilon_y\} &\qquad x\in B_{r_y}(y), \\
        S_H(x,y)  &\qquad x\notin B_{r_y}(y). 
    \end{cases}
\end{equation*}
Then $\psi_y \in C(\R^n)$ is a viscosity subsolution to $H(x,Du,0) \leq c(H)$ in $\R^n$, with 
\begin{equation*}
    H(x,D\psi_y(x),0) < c(H) - \varepsilon_y \qquad\text{in}\;B_{r_y}(y). 
\end{equation*}
For every $R>0$, we can cover the compact set $\overline{B}_R(0)$ by finitely many balls $B_{r_{y_i}}(y_i)$ with $i=1,2,\ldots, k_R$ where $y_i\in \overline{B}_R(0)$. Define 
\begin{equation*}
    \psi_R(x) = \frac{1}{k_R}\sum_{i=1}^{k_R} \psi_{y_i}(x) \qquad\text{and}\qquad \varepsilon_R = \frac{1}{k_R}\min_{1\leq i\leq k_R} \varepsilon_{y_i}. 
\end{equation*}
By convexity, $\psi_R\in C(\R^n)$ is a subsolution to $H(x,D\psi_R(x),0)\leq c(H)$ in $\R^n$ with
\begin{equation*}
    H(x,D\psi_R(x), 0) < c(H) - \varepsilon_R \qquad\text{in}\;B_R(0). 
\end{equation*}
By Lemma \ref{lem:cutoffIshiiSiconofli}, we can modify $\psi_R$ into $\tilde{\psi}_R\in C(\R^n)$ which is constant outside a compact set, and $\tilde{\psi}_R = \psi_R$ in $B_R(0)$, while $H(x,D\tilde{\psi}_R(x), 0) \leq c(H)$ in $\R^n$ and 
\begin{equation*}
    H(x,D\tilde{\psi}_R(x), 0) < c(H) - \varepsilon_R \qquad\text{in}\;B_R(0). 
\end{equation*}
Choose $R=R_0$, let $\phi(x) = -\frac{\varepsilon|x|}{2}$ for $x\in \R^n$, we have
\begin{align*}
    H(x, D\phi(x),0) = H\left(x,-\frac{\varepsilon}{2}\right) < c(H) - \varepsilon_0 \qquad\text{for}\;|x|\geq R_0. 
\end{align*}
Let 
\begin{equation*}
    \alpha_0 = \max_{|x|\leq R_0} H\left(x,-\frac{\varepsilon}{2}, 0\right). 
\end{equation*}
We adapt the idea from \cite[Proposition 8.3]{ishii_asymptotic_2008}. Define
\begin{equation*}
    w(x) = \lambda \phi(x) + (1-\lambda)\tilde{\psi}_R(x), \qquad x\in \R^n.
\end{equation*}
where $\lambda\in (0,1)$ is chosen later. By convexity assumption, $w$ is a viscosity subsolution to $H(x,Dw,0)\leq c(H)$ in $\R^n$. Since $w$ is Lipschitz, viscosity subsolution is equvalent to a.e. subsolution, and we have
\begin{equation*}
\begin{aligned}
    H(x,Dw(x),0) 
    &\leq \lambda H\left(x,-\frac{\varepsilon}{2},0\right) + (1-\lambda)H\left(x,D\tilde{\psi}_R(x),0\right) &&\qquad |x|< R_0 \\
    &\leq \lambda \alpha_0 + (1-\lambda)(c(H)-\varepsilon_{R_0}) = (1-\lambda)c(H) + \underbrace{(\lambda \alpha_0 - (1-\lambda)\varepsilon_{R_0})}_{0} &&\qquad |x|< R_0
\end{aligned}
\end{equation*}
where we choose $\lambda = \frac{\varepsilon_{R_0}}{\alpha_0 + \varepsilon_{R_0}}$. 
On the other hand, we have
\begin{equation*}
    \begin{aligned}
        H(x,Dw(x),0) 
            &\leq \lambda H\left(x,-\frac{\varepsilon}{2},0\right) + (1-\lambda)\underbrace{H\left(x,D\tilde{\psi}_R(x),0\right)}_{\leq 0 \;\text{a.e.}} 
            \leq \lambda (c(H)-\varepsilon_0) &&\qquad |x|\geq  R_0 . 
    \end{aligned}
\end{equation*}
We conclude that $w$ is a Lipschitz viscosity subsolution to $H(x,Dw,0) \leq c(H) - \tilde{\varepsilon}$ in $\R^n$  for some $\tilde{\varepsilon}>0$, which is a contradiction to the definition of $c(H)$, therefore $\mathcal{A}$ is nonempty, and furthermore $\mathcal{A}\subset \overline{B}_{R_0}(0)$. 
\end{proof}
\color{black}
\fi

\section{The state-constraint problem} \label{sec:state-constraint}
Throughout this subsection, assume \( \Omega \subset \R^n \) is open, bounded, connected, with \( C^2 \) boundary.  
We recall the variational formula for \( \vartheta_\lambda \), introduce discounted and Mather measures, and extend \cite{tu_generalized_2024} with a new exponential formula. These show \( \vartheta_\lambda \to \vartheta \) as \( \lambda \to 0^+ \), where both solve the corresponding state-constraint problems: 
\begin{equation}\label{eq:propStateConstraintBoundOmega}
    \begin{cases}
        \begin{aligned}
            H\left(x,D\vartheta_\lambda , \lambda \vartheta_\lambda \right) &\leq c_\Omega(H) && \text{in}\;\Omega, \\
            H\left(x,D\vartheta_\lambda , \lambda  \vartheta_\lambda \right) &\geq c_\Omega(H) && \text{on}\;\overline{\Omega},
        \end{aligned}
    \end{cases}
\end{equation}
and
\begin{equation}\label{eq:ErgodicStateOmega}
    \begin{cases}
        \begin{aligned}
            H(x,D\vartheta, 0) &\leq c_\Omega(H) &&\text{in}\;\Omega, \\
            H(x,D\vartheta, 0) &\geq c_\Omega(H) &&\text{on}\;\overline{\Omega},         
        \end{aligned}
    \end{cases}
\end{equation}
respectively. 
Under
\ref{itm:assumptions-h1} and \ref{itm:assumptions-h3}, 
well-posedness for \eqref{eq:propStateConstraintBoundOmega} is known (see \ref{itm:assumptions-O1} in Appendix,  \cite{capuzzo-dolcetta_hamilton-jacobi_1990,mitake_asymptotic_2008,soner_optimal_1986, tu_generalized_2024}.
 \medskip

\subsection{Solution to the state-constraint problem} 

\begin{defn}\label{defn:ergodicStateConstraint} Assume 
\ref{itm:assumptions-h1}. 
We define the ergodic constant on \( \Omega \) as
\begin{equation}\label{eq:ergodicOmegaH}
    c_\Omega(H) = \inf \{c\in \R^n: H(x,Du, 0) = c\;\text{in}\;\Omega\;\text{admits a viscosity subsolution}\;u\in C(\Omega)\}. 
\end{equation}
\end{defn}

\begin{prop}\label{prop:StateConstraintLinear} 
Assume 
\ref{itm:assumptions-h1}.
The constant \( c_\Omega(H) \), as defined in \eqref{eq:ergodicOmegaH}, is well-defined and attains a minimum. For each $\lambda\in (0,1)$, there exists a unique state-constraint viscosity solution $v_\lambda\in \mathrm{Lip}(\Omega)$ to 
\begin{equation}\label{eq:discountedStateOmega}
    \begin{cases}
        \begin{aligned}
            \lambda v_\lambda + H(x,Dv_\lambda, 0) &\leq 0 &&\text{in}\;\Omega, \\
            \lambda v_\lambda + H(x,Dv_\lambda, 0) &\geq 0 &&\text{on}\;\overline{\Omega}
        \end{aligned}
    \end{cases}
\end{equation}
with a priori estimate $\Vert\lambda v_\lambda\Vert_{L^\infty(\Omega)} + \Vert Dv_\lambda\Vert_{L^\infty(\Omega)} \leq C_\Omega$ in dependent of $\lambda$. Furthermore, 
\begin{equation*}
    \lambda v_\lambda \to -c_\Omega(H)\qquad\text{uniformly on}\;\overline{\Omega}. 
\end{equation*}
There exists \( v \in C(\Omega) \) that solves \eqref{eq:ErgodicStateOmega}, and \( c_\Omega(H) \) is the unique constant for which \eqref{eq:ErgodicStateOmega} admits a continuous viscosity solution. Moreover, \( \| v \|_{L^\infty(\Omega)} \leq C_\Omega \), where \( C_\Omega \) depends on \( \Omega \) and \( H \), as specified in \ref{itm:assumptions-h1}.
\end{prop}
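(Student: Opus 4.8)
The plan is to treat the three assertions --- well-posedness of $c_\Omega(H)$, existence/uniqueness of the discounted state-constraint solution $v_\lambda$ with $\lambda$-independent estimates, and the ergodic limit $\lambda v_\lambda \to -c_\Omega(H)$ --- in that order, since each feeds the next. First I would check that $c_\Omega(H)$ is finite: by \ref{itm:assumptions-h1}, for any $C^1$ function $\psi$ on a neighborhood of $\overline\Omega$ the quantity $\max_{x\in\overline\Omega}H(x,D\psi(x),0)$ is finite, so the defining set in \eqref{eq:ergodicOmegaH} is nonempty (take $\psi\equiv 0$); for a lower bound one argues as in Lemma \ref{lem:critical-value}, using that a subsolution $u$ on the bounded set $\Omega$ has $D^+u$ nonempty on a dense subset, giving $c\geq \inf_{\Omega}\min_p H(x,p,0)>-\infty$. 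That $c_\Omega(H)$ is attained follows from the stability of viscosity subsolutions under uniform limits (plus the local equi-Lipschitz bounds from local coercivity in \ref{itm:assumptions-h1}, so a minimizing sequence of subsolutions has a convergent subsequence on $\overline\Omega$).

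Next, for fixed $\lambda\in(0,1)$ I would obtain $v_\lambda$ by Perron's method combined with the comparison principle for the state-constraint problem \eqref{eq:discountedStateOmega}, both of which are standard under \ref{itm:assumptions-h1} (and are exactly the ingredients quoted from \cite{capuzzo-dolcetta_hamilton-jacobi_1990,soner_optimal_1986,tu_generalized_2024} in the paragraph above, using \ref{itm:assumptions-O1} in the Appendix). The a priori bound $\|\lambda v_\lambda\|_{L^\infty(\Omega)}\leq C_\Omega$ comes from comparison against the constant sub/supersolutions $\pm C_\Omega/\lambda$, where $C_\Omega$ is chosen so that $H(x,0,0)\leq C_\Omega$ and $H(x,0,0)\geq -C_\Omega$ on $\overline\Omega$ (possible by continuity and boundedness of $\overline\Omega$); note for the state-constraint \emph{supersolution} inequality on $\overline\Omega$ a large constant trivially works, and the subsolution inequality in $\Omega$ forces the sign. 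The gradient bound $\|Dv_\lambda\|_{L^\infty(\Omega)}\leq C_\Omega$ then follows from the coercivity in \ref{itm:assumptions-h1}: at a.e.\ point (or in the viscosity sense via test functions), $H(x,Dv_\lambda,\cdot)$ is bounded above by $-\lambda v_\lambda\leq C_\Omega$, so by \eqref{eq:assumptions-LocalCoercive} applied with $K=\overline\Omega$ and $I=[-C_\Omega,C_\Omega]$ the gradient cannot exceed some radius $r=r(C_\Omega)$; one has to phrase this for state-constraint solutions, i.e.\ near $\partial\Omega$ use that $\Omega$ is $C^2$ and that a Lipschitz bound propagates from the interior (this is where \ref{itm:assumptions-O1} and the $C^2$ boundary assumption enter).

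For the ergodic limit, set $w_\lambda := v_\lambda - v_\lambda(x_0)$ for a fixed $x_0\in\Omega$; by the uniform Lipschitz bound and $\Omega$ bounded, $\{w_\lambda\}$ is precompact in $C(\overline\Omega)$, and $\{\lambda v_\lambda(x_0)\}$ is bounded, so along any subsequence $\lambda_k\to 0$ we may extract $w_{\lambda_k}\to w$ uniformly and $\lambda_k v_{\lambda_k}(x_0)\to -c$ for some $w\in\mathrm{Lip}(\overline\Omega)$ and some constant $c$. Passing to the limit in \eqref{eq:discountedStateOmega} (using that $\lambda_k v_{\lambda_k}=\lambda_k w_{\lambda_k}+\lambda_k v_{\lambda_k}(x_0)\to -c$ uniformly, and stability of viscosity sub/supersolutions, including the state-constraint supersolution property up to $\overline\Omega$) shows $w$ solves \eqref{eq:ErgodicStateOmega} with constant $c$ in place of $c_\Omega(H)$. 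By the uniqueness of the ergodic constant for the state-constraint ergodic problem --- which in turn follows from the comparison principle / the standard argument that two admissible constants would violate it --- we get $c=c_\Omega(H)$, independent of the subsequence, hence the full limit $\lambda v_\lambda\to -c_\Omega(H)$ uniformly on $\overline\Omega$. This also yields the claimed $v$ solving \eqref{eq:ErgodicStateOmega} and its bound $\|v\|_{L^\infty(\Omega)}\leq C_\Omega$ (after normalizing, or directly from the same constant-barrier argument applied to \eqref{eq:ErgodicStateOmega}). I expect the main obstacle to be the $\lambda$-independent Lipschitz estimate near $\partial\Omega$ for the state-constraint problem: interior coercivity gives the bound in $\Omega$ cleanly, but controlling the gradient up to the boundary requires the $C^2$-regularity of $\partial\Omega$ together with the construction of suitable local barriers, and this is the step where the quoted well-posedness results from \cite{soner_optimal_1986,capuzzo-dolcetta_hamilton-jacobi_1990,tu_generalized_2024} and \ref{itm:assumptions-O1} do the real work.
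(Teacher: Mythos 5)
Your proposal follows essentially the same route as the paper's proof: nonemptiness of the admissible set via the zero function, a lower bound from coercivity/convexity, Perron's method plus the state-constraint comparison principle (the paper defers these to \cite{kim_state-constraint_2020} and Theorem \ref{thm:Appendix:HJconstraint}) for existence, uniqueness and the $\lambda$-independent estimates, and then precompactness plus stability and uniqueness of the ergodic constant to pass to the limit. You supply more detail than the paper does on the constant barriers and the subsequence argument for $\lambda v_\lambda \to -c_\Omega(H)$, but the structure and the key ingredients are identical, and your argument is correct.
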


\ifsol
\color{magenta}
\begin{proof}  
Assumption \ref{itm:assumptions-h1} 
implies that \( (x,p) \mapsto H(x,p,0) \) is bounded from below on \( \overline{\Omega} \times \mathbb{R}^n \), with 
\begin{equation*}
    \max_{x \in \overline{\Omega}} H(x,0,0) \leq C_\Omega. 
\end{equation*}
The boundedness from below of \( H(x,p,0) \) on \( (x,p) \in \overline{\Omega} \times \mathbb{R}^n \) implies that any admissible constant \( c \) in \eqref{eq:ergodicOmegaH} is uniformly bounded from below, and the admissible set contains $C_\Om$, as \( u=0 \) is a solution to \( H(x,Du,0) \leq C_\Om \) in \( \Omega \). Therefore, \( c_\Omega(H) \) is well-defined and attains a minimum due to the stability of viscosity solutions. Furthermore, 
\ref{itm:assumptions-h1} implies $H(x,p,0) \to +\infty$ as $|p|\to \infty$ uniformly in $x\in \overline{\Omega}$. 
By Perron's method (see, for instance, \cite[Theorem 2.1]{kim_state-constraint_2020}), there exists $v_\lambda\in \mathrm{Lip}(\overline{\Omega})$ solving \eqref{eq:discountedStateOmega} such that 
\begin{equation*}
    \Vert\lambda v_\lambda \Vert_{L^\infty(\Omega)} + \Vert Dv_\lambda\Vert_{L^\infty(\Omega)} \leq \tilde{C}_\Omega. 
\end{equation*}
The smoothness of \( \partial \Omega \) implies that a comparison principle holds (see Appendix, Theorem \ref{thm:Appendix:HJconstraint}), which ensures the uniqueness of \( v_\lambda \) as well as the ergodic constant \( c_\Omega(H) \). Using the a priori estimate for \( v_\lambda \) and the stability of viscosity solutions, we obtain the solution \( v  = \lim_{\lambda_j \to 0} (v_{\lambda_j}(\cdot) - v_{\lambda_j}(x_0))\) to \eqref{eq:ErgodicStateOmega}, and furthermore
\begin{equation*}
    \Vert v\Vert_{L^\infty(\Omega)} \leq \| Dv_{\lambda} \|_{L^\infty(\Omega)} \cdot \mathrm{diam}(\Omega) \leq C_\Omega
\end{equation*}
for some constant depending only on \( \Omega \) and \( H \), in the sense of 
\ref{itm:assumptions-h1}. 
\end{proof}
\color{black}
\fi

\begin{prop}\label{prop:UniformBoundedLocally} Assume 
\ref{itm:assumptions-h1}, \ref{itm:assumptions-h3}. 
Let \( \vartheta_\lambda \in C(\Omega)\) be the solution to the state-constraint problem \eqref{eq:propStateConstraintBoundOmega}. There exists a constant \( C_\Omega \) depending on \( \Omega \) and \( H \), as per assumptions 
\ref{itm:assumptions-h1} and \ref{itm:assumptions-h3} such that 
\begin{equation*}
	\| \vartheta_\lambda  \|_{L^\infty(\Omega)}  + \Vert D\vartheta_\lambda  \Vert_{L^\infty(\Omega)} \leq C_\Omega.
\end{equation*}
\end{prop}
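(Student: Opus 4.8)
The plan is to obtain the estimate in two stages: first a bound on $\|\vartheta_\lambda\|_{L^\infty(\Omega)}$ that is \emph{uniform in} $\lambda\in(0,1)$, and then a gradient bound deduced from it through the coercivity \eqref{eq:assumptions-LocalCoercive}. For the $L^\infty$ bound I would deliberately avoid trying to control $\vartheta_\lambda$ at its extrema from the equation itself: the sub/supersolution inequalities in \eqref{eq:propStateConstraintBoundOmega} only bound $\lambda\vartheta_\lambda$, and such a bound degenerates as $\lambda\to0^+$ because the limiting problem \eqref{eq:ErgodicStateOmega} pins down its solution only up to an additive constant. Instead I would compare $\vartheta_\lambda$ against suitable constant translates of the $\lambda=0$ state-constraint solution $v$ produced by Proposition \ref{prop:StateConstraintLinear}, exploiting only the monotonicity of $H$ in $u$ from \ref{itm:assumptions-h1}.

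In detail: let $v$ be the state-constraint solution of \eqref{eq:ErgodicStateOmega} from Proposition \ref{prop:StateConstraintLinear}, which is Lipschitz on $\overline{\Omega}$ with $\|v\|_{L^\infty(\Omega)}\le C_\Omega$, and put $\underline k:=-\sup_{\Omega}v$, $\overline k:=-\inf_{\Omega}v$, so that $v+\underline k\le 0\le v+\overline k$ on $\overline{\Omega}$ and $|\underline k|,|\overline k|\le C_\Omega$. Fix $\lambda\in(0,1)$. A one-line test-function argument shows that, since $H(x,p,\cdot)$ is nondecreasing and $\lambda(v+\underline k)\le 0$, the function $v+\underline k$ is a viscosity subsolution of $H(x,D\cdot,\lambda\,\cdot)=c_\Omega(H)$ in $\Omega$; symmetrically, since $\lambda(v+\overline k)\ge 0$, the function $v+\overline k$ is a viscosity supersolution on $\overline{\Omega}$. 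Since $\vartheta_\lambda$ is itself a subsolution in $\Omega$ and a supersolution on $\overline{\Omega}$, the state-constraint comparison principle for \eqref{eq:propStateConstraintBoundOmega} (available under \ref{itm:assumptions-h1}, \ref{itm:assumptions-h3}; see Theorem \ref{thm:Appendix:HJconstraint}) applied in both directions gives
\[
   v+\underline k\ \le\ \vartheta_\lambda\ \le\ v+\overline k\qquad\text{on }\overline{\Omega},
\]
whence $\|\vartheta_\lambda\|_{L^\infty(\Omega)}\le 2\|v\|_{L^\infty(\Omega)}\le 2C_\Omega$, with a constant independent of $\lambda$.

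For the gradient bound, note that $\|\lambda\vartheta_\lambda\|_{L^\infty(\Omega)}\le\|\vartheta_\lambda\|_{L^\infty(\Omega)}\le 2C_\Omega$ for all $\lambda\in(0,1)$. Applying \eqref{eq:assumptions-LocalCoercive} with $K=\overline{\Omega}$ and $I=[-2C_\Omega,2C_\Omega]$ yields a radius $r_0=r_0(\Omega,H)$ with $H(x,p,u)>c_\Omega(H)$ whenever $x\in\overline{\Omega}$, $|p|\ge r_0$ and $u\in I$; the subsolution inequality in \eqref{eq:propStateConstraintBoundOmega} then forces $|p|\le r_0$ for every $x\in\Omega$ and every $p\in D^+\vartheta_\lambda(x)$, so $\vartheta_\lambda$ is locally Lipschitz in $\Omega$ with constant $r_0$. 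Because $\partial\Omega$ is $C^2$, $\Omega$ is quasiconvex (its intrinsic distance is comparable to the Euclidean one), so this interior bound upgrades to $\vartheta_\lambda\in\mathrm{Lip}(\overline{\Omega})$ with $\|D\vartheta_\lambda\|_{L^\infty(\Omega)}\le C_\Omega r_0$; alternatively one can quote directly the up-to-the-boundary Lipschitz regularity of state-constraint subsolutions on $C^2$ domains \cite{soner_optimal_1986}. Adding the two estimates gives the proposition.

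I expect the uniform $L^\infty$ bound to be the only genuinely delicate point: recognizing that it must come from comparison with the already-controlled limit solution $v$ rather than from the equation at an extremum (which only controls $\lambda\vartheta_\lambda$), together with the observation that monotonicity in $u$ is precisely what makes the constant-shifted barriers admissible. The coercivity argument for the gradient and the passage from the interior Lipschitz estimate to one on $\overline{\Omega}$ are routine given the regularity of $\partial\Omega$.
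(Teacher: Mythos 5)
Your proposal is correct and follows essentially the same route as the paper: comparison of $\vartheta_\lambda$ with constant shifts of the ergodic state-constraint solution $v$ from Proposition \ref{prop:StateConstraintLinear} (the paper uses $v^\pm = v\pm\|v\|_{L^\infty(\Omega)}$, which are exactly your barriers $v+\underline k$, $v+\overline k$ up to the choice of constants), followed by the coercivity \eqref{eq:assumptions-LocalCoercive} on the bounded set $\overline{\Omega}$ for the gradient bound. The only cosmetic difference is the citation for the comparison principle (the paper invokes the contact state-constraint comparison of \cite[Theorem 2.3]{tu_generalized_2024} rather than Theorem \ref{thm:Appendix:HJconstraint}), which does not affect the argument.
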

\begin{proof} Let $v\in C(\overline{\Omega})$ be a viscosity solution to \eqref{eq:ErgodicStateOmega}, as in Proposition \ref{prop:StateConstraintLinear}. We define $v^{\pm} = v \pm \Vert v\Vert_{L^\infty(\Omega)}$. 
By assumption \ref{itm:assumptions-h3}, we have 
\begin{equation*}
\begin{cases}
    H(x, Dv^-, \lambda v^-) \leq H(x, Dv^-, 0) \leq c_\Omega(H) \qquad\text{in}\;\Omega, \\ 
    H(x, Dv^+, \lambda v^+) \geq H(x, Dv^+, 0) \geq c_\Omega(H) \qquad\text{on}\;\overline{\Omega}.
\end{cases}
\end{equation*}
By the comparison principle (see, for instance, \cite[Theorem 2.3]{tu_generalized_2024}), we have $v^- \leq \vartheta_\lambda \leq v^+ $ in $\Omega$.
Thus, \( \| \vartheta_\lambda \|_{L^\infty(\Omega)} \leq C_\Omega \). Then from \eqref{eq:assumptions-LocalCoercive} in \ref{itm:assumptions-h1} and the fact that $\Omega$ is bounded, by increasing the constant $C_\Omega$ if necessary, we deduce that $\Vert D\vartheta_\lambda\Vert_{L^\infty(\Omega)} \leq C_\Omega$.
\end{proof}

\begin{prop} Assume 
\ref{itm:assumptions-h1}, \ref{itm:assumptions-h3}. 
The state constraint solution $\vartheta_\lambda$ to \eqref{eq:propStateConstraintBoundOmega} satisfies: 
\begin{align}\label{eq:StateConstraintOptimalControl}
	\vartheta_\lambda(x)  = \inf_{\gamma\in \mathcal{C}^-(x;t;\overline{\Omega})} 
	\left\lbrace 
	\vartheta_\lambda(\gamma(-t)) 
	+ 
	\int_{-t}^0 
	\Big(L\big(\gamma(s), \dot{\gamma}(s), \lambda \vartheta_\lambda(\gamma(s))\big) + c(H)\Big)\;ds 
	\right\rbrace. 
\end{align}
\end{prop}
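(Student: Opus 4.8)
The plan is to recognize the right-hand side of the asserted identity as the value function of a \emph{frozen} (i.e.\ $u$-independent) finite-horizon state-constraint optimal control problem, to show that this value function solves the corresponding evolutionary state-constraint Hamilton--Jacobi equation with initial datum $\vartheta_\lambda$, to observe that $\vartheta_\lambda$ itself (regarded as time-independent) solves the very same problem, and then to invoke the comparison principle on the bounded domain $\Omega$ to conclude that the two coincide.

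Concretely, since $\vartheta_\lambda$ is a fixed function I would set $M(x,v):=L(x,v,\lambda\vartheta_\lambda(x))+c_\Omega(H)$, which depends only on $(x,v)$. By Proposition~\ref{prop:UniformBoundedLocally} the quantity $\lambda\vartheta_\lambda$ ranges over a fixed compact interval, so \ref{itm:assumptions-h1} (through \eqref{eq:SuperLinearLtilde}) gives that $M$ is bounded below and superlinear, $\lim_{|v|\to\infty}\min_{x\in\overline\Omega}M(x,v)/|v|=+\infty$. Defining
\[
    w(x,t):=\inf_{\gamma\in\mathcal{C}^-(x;t;\overline\Omega)}\Big\{\vartheta_\lambda(\gamma(-t))+\int_{-t}^0 M(\gamma(s),\dot\gamma(s))\,ds\Big\},\qquad x\in\overline\Omega,\ t\ge 0,
\]
the claim is exactly $w(\cdot,t)\equiv\vartheta_\lambda$ for all $t\ge 0$. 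Using the superlinearity and lower bound above, together with the $C^2$ regularity of $\partial\Omega$ (which makes minimizing sequences equi-Lipschitz and lets admissible curves remain in $\overline\Omega$ at controlled cost), I would first establish the dynamic programming principle for $w$, and from it the standard conclusion that $w$ is continuous and is a viscosity solution of $\partial_t w+H(x,D_xw,\lambda\vartheta_\lambda(x))=c_\Omega(H)$ in $\Omega\times(0,\infty)$, with the state-constraint inequality ``$\ge$'' on $\overline\Omega\times(0,\infty)$ and $w(\cdot,0)=\vartheta_\lambda$; here one uses that the convex conjugate of $v\mapsto M(x,v)$ is $p\mapsto H(x,p,\lambda\vartheta_\lambda(x))-c_\Omega(H)$.

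Next I would note that $\vartheta_\lambda$, viewed as constant in $t$, solves the same problem: $\partial_t\vartheta_\lambda=0$, while \eqref{eq:propStateConstraintBoundOmega} says precisely that $H(x,D\vartheta_\lambda,\lambda\vartheta_\lambda)\le c_\Omega(H)$ in $\Omega$ and $\ge c_\Omega(H)$ on $\overline\Omega$ in the viscosity sense, and the initial data agree. Applying the comparison principle for the finite-horizon state-constraint Cauchy problem with Hamiltonian $G(x,p):=H(x,p,\lambda\vartheta_\lambda(x))-c_\Omega(H)$ --- valid because $\partial\Omega\in C^2$ ensures the state-constraint well-posedness condition \ref{itm:assumptions-O1} of the Appendix and $G$ is continuous and coercive in $p$, cf.\ \cite{soner_optimal_1986,capuzzo-dolcetta_hamilton-jacobi_1990} and \cite[Theorem~2.3]{tu_generalized_2024} --- then yields $w\equiv\vartheta_\lambda$, which is the assertion.

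The main obstacle will be making the state-constraint boundary behavior rigorous in two places: showing that $w$ is a \emph{supersolution up to} $\partial\Omega$, which requires keeping near-optimal curves inside $\overline\Omega$ at a cost not much larger than the unconstrained one (an interior-cone construction using the $C^2$ regularity), and applying comparison to $G$, which is only as regular in $x$ as $\vartheta_\lambda$ permits (Lipschitz, by Proposition~\ref{prop:UniformBoundedLocally}), so one must check that the structural hypotheses behind the state-constraint comparison principle are met. As a sanity check, the easy inequality $\vartheta_\lambda(x)\le w(x,t)$ can be obtained directly by integrating the a.e.\ subsolution inequality along curves via the Legendre inequality $D\vartheta_\lambda(\gamma)\cdot\dot\gamma\le L(\gamma,\dot\gamma,\lambda\vartheta_\lambda(\gamma))+H(\gamma,D\vartheta_\lambda(\gamma),\lambda\vartheta_\lambda(\gamma))\le L(\gamma,\dot\gamma,\lambda\vartheta_\lambda(\gamma))+c_\Omega(H)$ (pushing curves slightly into $\Omega$ where they touch the boundary); the reverse inequality, where the supersolution property on $\overline\Omega$ genuinely enters, is the substantive one.
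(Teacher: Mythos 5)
Your proposal is correct and follows essentially the same route as the paper: freeze the $u$-argument to get $\widetilde H(x,p)=H(x,p,\lambda\vartheta_\lambda(x))$, invoke the well-posedness and Lax--Oleinik representation for the evolutionary state-constraint problem on $\overline\Omega$ (the paper cites \cite[Theorems 3.5 and 5.8]{mitake_asymptotic_2008} for exactly the DPP/viscosity-solution/comparison package you propose to establish), and then use comparison to identify the value function with $\vartheta_\lambda$. The only cosmetic difference is that you absorb $c_\Omega(H)$ into the running cost so comparison gives $w\equiv\vartheta_\lambda$ directly, whereas the paper omits it and obtains $v(x,t)=\vartheta_\lambda(x)-c(H)t$.
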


\begin{proof} Given that \( \vartheta_\lambda\in C(\overline{\Omega}) \) solves \eqref{eq:propStateConstraintBoundOmega}, we define $\widetilde{H}_\Omega(x,p) = H(x,p, \lambda \vartheta_\lambda(x))$ for $(x,p) \in \R^n\times \R^n$. Since $\vartheta_\lambda \in C(\overline{\Omega})\cap \mathrm{Lip}(\overline{\Omega})$, we can verify that $\widetilde{H}_\Omega$ satisfies standard assumptions on convexity, coercivity and continuity as in \cite{mitake_asymptotic_2008}. Using the results from \cite[Theorem 3.5, Theorem 5.8]{mitake_asymptotic_2008}, the time-dependent problem
\begin{equation}\label{eq:CauchyStateConstraint}
	\begin{cases}
	\begin{aligned}
		v_t + H(x,Dv,\lambda \vartheta_\lambda) &= 0 && \text{in}\;\Omega\times (0,\infty), \\
		v_t + H(x,Dv,\lambda \vartheta_\lambda) &\geq 0 && \text{on}\;\overline{\Omega}\times (0,\infty), \\
		v(x,0) &= \vartheta_\lambda(x) && \text{on}\;\overline{\Omega},
	\end{aligned}
	\end{cases}
\end{equation}
admits a unique viscosity solution $v \in C\left(\overline{\Omega}\times [0,\infty)\right)\cap W^{1,\infty}\left(\overline{\Omega}\times (0,\infty)\right)$, and it is given by
\begin{align}\label{eq:MinimizerStateConstraint}
	v(x,t) = \inf_{\gamma\in \mathcal{C}^-(x;t;\overline{\Omega})} 
	\left\lbrace 
	\vartheta_\lambda(\gamma(-t)) 
	+ 
	\int_{-t}^0 
	L\Big(\gamma(s), \dot{\gamma}(s), \lambda \vartheta_\lambda(\gamma(s))\Big)\;ds 
	\right\rbrace
\end{align}
for $(x,t)\in \overline{\Omega}\times [0,\infty)$. By the comparison principle \cite[Theorem 3.5]{mitake_asymptotic_2008} we obtain $v(x,t) = \vartheta_\lambda(x) - c(H)t$ for $(x,t)\in \overline{\Omega}\times [0,\infty)$. As a result we obtain the conclusion \eqref{eq:StateConstraintOptimalControl}.
\end{proof}

\begin{prop}\label{prop:ExistenceOfMinimizerTimeOmega} 
Assume \ref{itm:assumptions-h1}, \ref{itm:assumptions-h3}. 
\begin{itemize}
\item[$\mathrm{(i)}$] For $(x,t)\in \overline{\Omega}\times \R^n$, there exists a minimizer $\gamma\in \mathcal{C}^-(x,t;\overline{\Omega})$ to  \eqref{eq:StateConstraintOptimalControl}. 
\item[$\mathrm{(ii)}$] There exists $\widehat{C}_\Omega$ such that any miminimizer $\gamma\in \mathcal{C}^-(x,t;\overline{\Omega})$ to \eqref{eq:StateConstraintOptimalControl} satisfies $|\dot{\gamma}(s)| \leq \widehat{C}_\Omega$ for a.e. $s\in (-t,0)$.
\item[$\mathrm{(iii)}$] There exists $\gamma\in \mathrm{AC}((-\infty,0];\overline{\Omega})$ with $\gamma(0) = x$ such that 
\begin{align}\label{eq:ExtrtemalInftyTime}
	\vartheta_\lambda(\gamma(a)) - \vartheta_\lambda(\gamma(b)) = \int_a^b \Big(L\big(\gamma(s),\dot{\gamma}(s), \lambda \vartheta_\lambda(\gamma(s))\big)+c_\Omega(H)\Big)\;ds
\end{align}
for every $-\infty < b<a<0$, and $|\dot{\gamma}(s)| \leq \widehat{C}_\Omega$ for a.e. $s\in (-\infty,0]$.
\end{itemize}
\end{prop}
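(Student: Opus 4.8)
\emph{Part (i): existence.} The plan is the direct method of the calculus of variations. Fix $(x,t)$ and take a minimizing sequence $\gamma_k\in\mathcal{C}^-(x;t;\overline\Omega)$ for the right-hand side of \eqref{eq:StateConstraintOptimalControl}. Since $\overline\Omega$ is compact and $\|\lambda\vartheta_\lambda\|_{L^\infty(\Omega)}\le C_\Omega$ by Proposition \ref{prop:UniformBoundedLocally}, assumption \ref{itm:assumptions-h1} together with \eqref{eq:Legendre} (in the form \eqref{eq:SuperLinearLtilde}, taken uniformly over $(y,u)\in\overline\Omega\times[-C_\Omega,C_\Omega]$) yields a convex function $\Psi_0:[0,\infty)\to[0,\infty)$ with $\Psi_0(0)=0$ and $\Psi_0(r)/r\to\infty$, and a constant $C_1\ge 0$, such that
\[
L(y,v,u)+c_\Omega(H)\ \ge\ \Psi_0(|v|)-C_1\qquad\text{for all }y\in\overline\Omega,\ |u|\le C_\Omega,\ v\in\R^n .
\]
This makes the functional bounded below and, using the boundedness of $\vartheta_\lambda$, forces $\sup_k\int_{-t}^0\Psi_0(|\dot\gamma_k|)\,ds<\infty$; hence $\{\dot\gamma_k\}$ is bounded in $L^1$ and equi-integrable (de la Vall\'ee--Poussin), so along a subsequence $\gamma_k\to\gamma$ uniformly on $[-t,0]$ and $\dot\gamma_k\rightharpoonup\dot\gamma$ weakly in $L^1$, with $\gamma\in\mathrm{AC}([-t,0];\overline\Omega)$ and $\gamma(0)=x$. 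Lower semicontinuity of $\eta\mapsto\int_{-t}^0\bigl(L(\eta,\dot\eta,\lambda\vartheta_\lambda(\eta))+c_\Omega(H)\bigr)\,ds$ (Tonelli's theorem: the integrand is convex and continuous in the velocity, continuous in the state, bounded below) together with continuity of $\vartheta_\lambda$ at $\gamma(-t)$ then shows $\gamma$ realizes the infimum.

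\emph{Part (ii): velocity bound.} For a minimizer $\gamma$ on $[-t,0]$ I would first record the calibration identity
\[
\vartheta_\lambda(\gamma(a))-\vartheta_\lambda(\gamma(b))=\int_b^a\bigl(L(\gamma,\dot\gamma,\lambda\vartheta_\lambda(\gamma))+c_\Omega(H)\bigr)\,ds,\qquad -t\le b\le a\le 0,
\]
obtained from \eqref{eq:StateConstraintOptimalControl} by the usual dynamic-programming splitting (apply \eqref{eq:StateConstraintOptimalControl} to $\gamma|_{[-t,b]}$, $\gamma|_{[b,a]}$, $\gamma|_{[a,0]}$ and add; equality is forced by optimality of $\gamma$). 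Since $\vartheta_\lambda$ is Lipschitz with $\|D\vartheta_\lambda\|_{L^\infty(\Omega)}\le C_\Omega$ \emph{independently of} $\lambda$ (Proposition \ref{prop:UniformBoundedLocally}), the left-hand side is at most $C_\Omega\int_b^a|\dot\gamma|\,ds$, so with the lower bound from Part (i),
\[
\int_b^a\bigl(\Psi_0(|\dot\gamma|)-C_\Omega|\dot\gamma|\bigr)\,ds\ \le\ C_1\,(a-b).
\]
Dividing by $a-b$ and applying Jensen's inequality to the convex function $r\mapsto\Psi_0(r)-C_\Omega r$ gives $\Psi_0(\bar v)-C_\Omega\bar v\le C_1$ for the backward average speed $\bar v:=(a-b)^{-1}\int_b^a|\dot\gamma|\,ds$; superlinearity of $\Psi_0$ then bounds $\bar v$ by a constant $\widehat C_\Omega$ depending only on $\Omega$ and $H$, uniformly in $a,b$ and $\lambda$. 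Thus every backward average of $|\dot\gamma|$ on $(-t,0)$ is $\le\widehat C_\Omega$, and the Lebesgue differentiation theorem yields $|\dot\gamma(s)|\le\widehat C_\Omega$ for a.e.\ $s\in(-t,0)$.

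\emph{Part (iii): the backward calibrated curve.} For $k\in\N$ let $\gamma_k\in\mathcal{C}^-(x;k;\overline\Omega)$ be a minimizer as in Part (i); by Parts (i)--(ii) it satisfies the calibration identity on $[-k,0]$ and $|\dot\gamma_k|\le\widehat C_\Omega$ a.e. These curves are equi-Lipschitz and valued in the compact set $\overline\Omega$, so a diagonal Arzel\`a--Ascoli argument over the intervals $[-T,0]$, $T\in\N$, extracts a subsequence converging locally uniformly on $(-\infty,0]$ to some $\gamma\in\mathrm{AC}_{\mathrm{loc}}((-\infty,0];\overline\Omega)$, Lipschitz with constant $\widehat C_\Omega$, with $\gamma(0)=x$ and $\dot\gamma_k\rightharpoonup\dot\gamma$ weakly-$*$ on each $[-T,0]$. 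Passing to the limit in the calibration identity on a fixed interval $[b,a]\subset(-\infty,0]$: continuity of $\vartheta_\lambda$ handles the left-hand side, while lower semicontinuity of the action together with the calibration of the $\gamma_k$ gives $\int_b^a(L(\gamma,\dot\gamma,\lambda\vartheta_\lambda(\gamma))+c_\Omega(H))\,ds\le\vartheta_\lambda(\gamma(a))-\vartheta_\lambda(\gamma(b))$, and the reverse inequality is exactly \eqref{eq:StateConstraintOptimalControl} applied to the competitor $\gamma|_{[b,a]}$. Equality is precisely \eqref{eq:ExtrtemalInftyTime}, and the velocity bound is inherited from the $\gamma_k$.

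\emph{Where the care goes.} The genuinely non-routine point is the a.e.\ velocity bound in Part (ii): the calibration identity plus the Lipschitz bound on $\vartheta_\lambda$ only produce an \emph{averaged} estimate on $|\dot\gamma|$, and converting it into a pointwise a.e.\ bound requires Jensen's inequality, hence a \emph{convex} superlinear minorant for $L+c_\Omega(H)$ that is uniform over $\overline\Omega\times[-C_\Omega,C_\Omega]$ and independent of $\lambda$, followed by Lebesgue differentiation. Securing this uniform convex minorant (from the locally-uniform coercivity in \ref{itm:assumptions-h1} and the $\lambda$-independent $L^\infty$ bound of Proposition \ref{prop:UniformBoundedLocally}) is also exactly what powers the compactness in Part (i); the remainder is the standard combination of dynamic programming, Tonelli lower semicontinuity, and Arzel\`a--Ascoli compactness.
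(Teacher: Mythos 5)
Your proposal is correct and follows essentially the same route as the paper: the direct method (Tonelli lower semicontinuity) for existence, the dynamic-programming calibration identity combined with the $\lambda$-independent Lipschitz bound on $\vartheta_\lambda$ and the local superlinearity of $L$ for the velocity bound, and a diagonal Arzel\`a--Ascoli (or concatenation) argument for the infinite-horizon calibrated curve, both of which the paper explicitly mentions. The only cosmetic difference is in Part (ii): you bound averaged speeds via Jensen applied to a convex superlinear minorant and then invoke Lebesgue differentiation, whereas the paper differentiates the integrated inequality first (taking $b\to a$ at Lebesgue points) and applies coercivity pointwise; both are valid.
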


\begin{proof} The existence of a minimizer to \eqref{eq:MinimizerStateConstraint} follows from standard arguments, using that \( \vartheta_\lambda \in \mathrm{BUC}(\overline{\Omega}) \cap \mathrm{Lip}(\overline{\Omega}) \); see \cite{mitake_asymptotic_2008} for the role of lower semicontinuity in the proof. This minimizer also solves \eqref{eq:StateConstraintOptimalControl}, as desired.\medskip

If $\gamma\in \mathcal{C}^-(x,t;\overline{\Omega})$ is a minimizer to \eqref{eq:StateConstraintOptimalControl}, then it satisfies \eqref{eq:ExtrtemalInftyTime} by the standard Dynamic Programming Principle. 
We compute
\begin{align*}
	\vartheta_\lambda(\gamma(a)) - \vartheta_\lambda(\gamma(b)) 
	&= \int_{b}^a \Big(L(\gamma(s), \dot{\gamma}(s), \lambda \vartheta_\lambda (\gamma(s))) + c(H)\Big)\;ds
	\geq \int_{b}^a \Big(L(\gamma(s), \dot{\gamma}(s), C_\Omega) + c(H)\Big)\;ds
\end{align*}
for $-t<b<a<0$, where we use the fact that $u\mapsto L(x,v,u)$ is nonincreasing, and $\Vert \lambda \vartheta_\lambda \Vert_{L^\infty(\Omega)} \leq C_\Omega$ by Proposition \ref{prop:UniformBoundedLocally}. Using the fact that $|D\vartheta_\lambda|\leq C_\Omega$, we have
\begin{align*}
	\frac{1}{a-b}\int_{b}^a \Big(L(\gamma(s), \dot{\gamma}(s), C_\Omega) + c(H)\Big)\;ds \leq C_\Omega \cdot \frac{|\gamma(a) - \gamma(b)|}{a-b}. 
\end{align*}
Thus, for a.e. $s\in (-t,0)$, where $\gamma$ is differentiable, we have
\begin{align*}
	L(\gamma(s), \dot{\gamma}(s), C_\Omega) +c(H)\leq C_\Omega |\dot{\gamma}(s)| \qquad\text{for a.e.}\;s\in (-t,0). 
\end{align*}
Therefore
\begin{align*}
	\left(\frac{L(\gamma(s), \dot{\gamma}(s), C_\Omega)}{|\dot{\gamma}(s)|} -C_\Omega\right) \cdot| \dot{\gamma}(s)| + c(H) \leq 0. 
\end{align*}
By \eqref{eq:assumptions-LocalCoercive} in \ref{itm:assumptions-h1} 
and the fact that $\gamma(s)\in \overline{\Omega}$ is bounded, we deduce that $|\dot{\gamma}(s)| \leq \widehat{C}_\Omega$ for a.e. $s\in (-t,0)$.  \medskip

For (ii), a simple way to prove the existence of such a curve $\gamma\in \mathcal{C}^-(x,\infty;\overline{\Omega})$ is to connect curves on smaller intervals, similar to \cite[Corollary 6.2]{ishii_asymptotic_2008}. We say $\gamma\in \mathrm{AC}([-t,0];\overline{\Omega})$ is an extremal curve for $u_{\lambda,\Omega}$ on $[-t,0]$ if 
\begin{equation*}
	\vartheta_\lambda(\gamma(b)) - \vartheta_\lambda(\gamma(a)) = \int_{a}^b \Big(L(\gamma(s), \dot{\gamma}(s), \lambda \vartheta_\lambda(s))+c(H)\Big)\;ds \qquad\text{for all}\; -t<b<a<0. 
\end{equation*}
We denote by $\mathcal{E}([b,a])$ the set of all extremal curve $\gamma:[b,a]\to \overline{\Omega}$ for $u_{\lambda, \Omega}$ on $[b,a]$. For any $y\in \overline{\Omega}$, we can find $\gamma_y\in \mathcal{E}([-1,0])$ with $\gamma_y(0) = y$. 
We define the sequence of minimizer $\{\xi_j\}_{j=1}^\infty \subset \mathcal{E}([-1,0])$ by $\xi_1 = \gamma_x$, $\xi_2 = \gamma_{\xi_1(-1)}$, $\xi_3 = \gamma_{\xi_2(-1)}$, \ldots, and
\begin{equation*}
\gamma(s) = \begin{cases}
\begin{aligned}
	&\xi_1(s)   && s\in [-1,0], \\
	&\xi_2(s+1) && s\in [-2,-1], \\
	&\xi_3(s+2) && s\in [-3,-2], \\
	& \ldots 
\end{aligned}
\end{cases}
\end{equation*}
Then we obtain $\gamma\in \mathcal{E}(-\infty, 0])$ with $\gamma(0) = x$. By the construction $\gamma$ is the curve that satisfies \eqref{eq:ExtrtemalInftyTime}. Another method to construct a curve $\gamma$ satisfying \eqref{eq:ExtrtemalInftyTime} is to apply a diagonal argument to a sequence of extremal curves $\gamma_k \in \mathcal{E}([-k, 0])$ with $\gamma_k:[-k,0]\to\overline{\Omega}$ such that $\gamma_k(0) = x$. The boundedness of $\overline{\Omega}$ allows us to apply a diagonal argument to obtain the conclusion.
\end{proof}

Next, we transform \eqref{eq:StateConstraintOptimalControl} into an exponential form. 

\begin{defn}\label{defn:DiscountedIndexOmega} 
Assume \ref{itm:assumptions-h1}, \ref{itm:assumptions-h3}. 
Let $\vartheta_\lambda$ be the solution to \eqref{eq:propStateConstraintBoundOmega}. For $(x,v)\in \R^n\times\R^n$, we define
\begin{equation}\label{eq:DiscountedIndexOmega}
    \kappa_{\lambda}(x,v) = 
    \begin{cases}
    \partial_u^+ L(x,v,0) &\text{if}\; \vartheta_\lambda(x) = 0, \vspace{0.1cm}\\ 
    \dfrac{L(x,v,\lambda \vartheta_\lambda(x) - L(x,v,0)}{\lambda \vartheta_\lambda(x)} &\text{if}\;\vartheta_\lambda(x) \neq 0,
    \end{cases}
\end{equation}
where $\partial_u^+L(x,p,u)$ is the right-derivative with respect to $u$, which exists due to the monotonicity of $u\mapsto L(x,p,u)$. 
\end{defn}

We summarize the basic properties of \( \kappa_\lambda \) below; the proof is straightforward and omitted.

\begin{lem} 
Assume \ref{itm:assumptions-h1}, \ref{itm:assumptions-h3}. 
\begin{itemize}

\item[$\mathrm{(i)}$] $\kappa_\lambda(x,v)\leq 0$ for all $(x,v)\in \overline{\Omega}\times\R^n$. If $u\mapsto L(x,v,u)$ is differentiable at $0$ for a.e. $(x,v)$, then $\kappa_\lambda(x,v) \to \partial_uL(x,v,0)$ as $\lambda\to 0^+$ for a.e. $(x,v)\in \overline{\Omega}\times \R^n$.

\item[$\mathrm{(ii)}$] Assume further that, for any compact sets $K\subset \R^n$ and $I\subset \R$ then
\begin{align} \label{eq:superlinearOmega}
	\lim_{|p|\to \infty} \min_{x\in K} \left\lbrace \frac{H(x,p,u)}{|p|}: u\in I  \right\rbrace= +\infty. 
\end{align}
Furthermore, for $|v|\leq C$ then $-\overline{\kappa}_C \leq \kappa_\lambda(x,v)\leq -\underline{\kappa}_C$ for all $|v|\leq C$.
\end{itemize}
\end{lem}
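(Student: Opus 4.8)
The plan is to extract both statements from the Legendre-duality structure of $L$ in \eqref{eq:Legendre} together with the uniform bound $\|\vartheta_\lambda\|_{L^\infty(\Omega)}\le C_\Omega$ from Proposition \ref{prop:UniformBoundedLocally}. Set $t_\lambda(x):=\lambda\vartheta_\lambda(x)$, so that $|t_\lambda(x)|\le\lambda C_\Omega$; in particular $t_\lambda(x)\in[-C_\Omega,C_\Omega]$ for $\lambda\in(0,1)$, and $t_\lambda(x)\to0$ uniformly in $x$ as $\lambda\to0^+$. The single computational device behind both parts is an elementary two-sided bound on difference quotients of $u\mapsto L(x,v,u)$: if $p_1,p_2$ denote maximizers in \eqref{eq:Legendre} for parameters $u_1<u_2$ respectively, then testing each of the two variational problems against the other maximizer yields
\begin{equation*}
 -\!\int_{u_1}^{u_2}\partial_uH(x,p_1,s)\,ds\;\le\;L(x,v,u_2)-L(x,v,u_1)\;\le\;-\!\int_{u_1}^{u_2}\partial_uH(x,p_2,s)\,ds.
\end{equation*}

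For part (i): since $H$ is nondecreasing in $u$ by \ref{itm:assumptions-h1}, the map $u\mapsto L(x,v,u)$ is nonincreasing; hence $\partial_u^+L(x,v,0)\le0$, and in the case $\vartheta_\lambda(x)\ne0$ the numerator and denominator of the quotient in \eqref{eq:DiscountedIndexOmega} have opposite signs whether $t_\lambda(x)>0$ or $t_\lambda(x)<0$, so $\kappa_\lambda(x,v)\le0$ always. For the convergence, fix $(x,v)$ at which $u\mapsto L(x,v,u)$ is differentiable at $0$. If $\vartheta_\lambda(x)=0$ then $\kappa_\lambda(x,v)=\partial_u^+L(x,v,0)=\partial_uL(x,v,0)$; if $\vartheta_\lambda(x)\ne0$ then $\kappa_\lambda(x,v)$ is the difference quotient of $L(x,v,\cdot)$ between $0$ and $t_\lambda(x)$, and since $t_\lambda(x)\to0$, differentiability at $0$ forces this quotient to converge to $\partial_uL(x,v,0)$ regardless of the sign of $t_\lambda(x)$. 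Either way $\kappa_\lambda(x,v)\to\partial_uL(x,v,0)$ for a.e. $(x,v)\in\overline{\Omega}\times\R^n$.

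For part (ii): the extra hypothesis \eqref{eq:superlinearOmega} is exactly what controls the maximizers. Using $p=0$ as a competitor in \eqref{eq:Legendre} gives $L(x,v,u)\ge-H(x,0,u)$, so any maximizer $p^\ast=p^\ast(x,v,u)$ satisfies $H(x,p^\ast,u)\le|p^\ast|\,|v|+H(x,0,u)$; invoking \eqref{eq:superlinearOmega} with $K=\overline{\Omega}$ and $I=[-C_\Omega,C_\Omega]$, together with $\sup_{x\in\overline{\Omega},\,u\in I}H(x,0,u)<\infty$, produces a radius $R=R(C)$, depending only on $C$, $\Omega$, $H$ and $C_\Omega$ (hence not on $\lambda$), such that $|p^\ast(x,v,u)|\le R(C)$ for all $|v|\le C$, $x\in\overline{\Omega}$, $u\in I$. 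Inserting $|p_1|,|p_2|\le R(C)$ into the sandwich estimate and using $\underline{\kappa}_{R(C)}\le\partial_uH\le\overline{\kappa}_{R(C)}$ from \ref{itm:assumptions-h3}, the difference quotient of $L(x,v,\cdot)$ between any two points of $I$ lies in $[-\overline{\kappa}_{R(C)},-\underline{\kappa}_{R(C)}]$. Applying this to the pair $\{0,t_\lambda(x)\}$ handles the case $\vartheta_\lambda(x)\ne0$, and letting $h\to0^+$ in the quotients between $0$ and $h$ handles $\partial_u^+L(x,v,0)$; this gives the asserted bounds, with $\overline{\kappa}_C,\underline{\kappa}_C$ understood as $\overline{\kappa}_{R(C)},\underline{\kappa}_{R(C)}$.

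The one genuinely delicate point, where I expect to need care, is the uniform momentum bound $|p^\ast|\le R(C)$: local coercivity from \ref{itm:assumptions-h1} alone is insufficient, and it is precisely the uniform superlinear growth \eqref{eq:superlinearOmega} over the compact sets $\overline{\Omega}$ and $[-C_\Omega,C_\Omega]$ that makes $R(C)$ independent of $\lambda$ (and guarantees the maximizers exist). A second, minor thing to watch is that $\vartheta_\lambda$ need not have a fixed sign, so the ordering of $0$ and $t_\lambda(x)$ varies and both orderings must be covered; the rest is bookkeeping.
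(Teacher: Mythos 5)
Your proof is correct; the paper actually omits this proof entirely (it states "the proof is straightforward and omitted"), and your argument supplies exactly the intended one. The two key devices — the two-sided difference-quotient bound obtained by testing each Legendre maximization against the other maximizer, and the $\lambda$-independent momentum bound $|p^\ast|\le R(C)$ extracted from \eqref{eq:superlinearOmega} over $\overline{\Omega}\times[-C_\Omega,C_\Omega]$ — are the right ones, and your reading of $\overline{\kappa}_C,\underline{\kappa}_C$ as $\overline{\kappa}_{R(C)},\underline{\kappa}_{R(C)}$ matches how the paper uses these constants elsewhere (e.g.\ in Proposition \ref{prop:BoundedMinimizerCompact}).
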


\begin{rmk} For studying the maximal solution \( u_\lambda \) to \eqref{eq:DP}, we can assume without loss of generality that \eqref{eq:superlinearOmega} holds, as shown in \emph{Proposition \ref{prop:SuperlinearReduction}}. Under the additional assumption \eqref{eq:superlinearOmega}, it is well-known that if \( |v| \leq C \), the supremum in the definition of the Legendre transform \eqref{eq:Legendre} can be restricted to \( |p| \leq \tilde{C} \) for some \( \tilde{C} > 0 \).
\end{rmk}

\begin{prop}\label{prop:StateConstraintProperties} 
Assume \ref{itm:assumptions-h1}, \ref{itm:assumptions-h3}. 
Let $\vartheta_\lambda$ be the solution to \eqref{eq:propStateConstraintBoundOmega}. For $(x,t)\in \overline{\Omega}\times (0,\infty)$, we have 
\begin{align}\label{eq:OptimalControlStateConstraintExp}
	\vartheta_\lambda(x) = \inf_{\gamma\in \mathcal{C}^-(x,t;\overline{\Omega})} 
	\left\lbrace 
	e^{\lambda \alpha^\lambda_\gamma(-t) }\vartheta_\lambda(\gamma(-t))  + \int_{-t}^0 e^{\lambda\alpha_{\gamma}^\lambda (s)} \Big(L(\gamma(s), \dot{\gamma}(s), 0) + c_\Omega(H)\Big)\;ds
	\right\rbrace,
\end{align}
where 
\begin{equation*}
	\alpha_{\gamma}^\lambda(s) = \int_{s}^0 \kappa_{\lambda}(\gamma(\tau), \dot{\gamma}(\tau))\;d\tau \qquad 
	\text{for}\;s\in [-t,0]. 
\end{equation*}

\begin{itemize}
\item[$\mathrm{(i)}$] For $(x,t)\in \overline{\Omega}\times (0,\infty)$, there exists a minimizer $\gamma\in \mathcal{C}^-(x,t;\overline{\Omega})$ to  \eqref{eq:OptimalControlStateConstraintExp}. 

\item[$\mathrm{(ii)}$] There exists $\widehat{C}_\Omega$ such that any miminimizer to \eqref{eq:OptimalControlStateConstraintExp} satisfies $|\dot{\gamma}(s)| \leq \widehat{C}_\Omega$ for a.e. $s\in (-t,0)$.

\item[$\mathrm{(iii)}$] There exists $\gamma\in \mathrm{AC}((-\infty,0];\overline{\Omega})$ with $\gamma(0) = x$ such that 
\begin{align}\label{eq:ExtrtemalInftyExp}
	e^{\lambda \alpha^{\lambda}_\gamma(a)}	
	\vartheta_\lambda(\gamma(a)) 
	- 
	e^{\lambda \alpha^{\lambda}_\gamma(b)}
	\vartheta_\lambda(\gamma(b)) 
	= \int_a^b e^{\lambda \alpha^\lambda_\gamma(s)}\Big(L\big(\gamma(s),\dot{\gamma}(s), 0)\big)+c_\Omega(H)\Big)\;ds
\end{align}
for every $-\infty < b<a<0$, and $|\dot{\gamma}(s)| \leq \widehat{C}_\Omega$ for a.e. $s\in (-\infty,0]$.
\end{itemize}
Furthermore, there exists a minimizer to \eqref{eq:OptimalControlStateConstraintExp}, and a constant $\widehat{C}_\Omega$ such that any miminimizer to \eqref{eq:OptimalControlStateConstraintExp} satisfies 
\begin{align*}
	|\dot{\gamma}(s)| \leq \widehat{C}_\Omega \qquad\text{for a.e.}\; s \in (-t,0).
\end{align*}
\end{prop}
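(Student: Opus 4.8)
The plan is to derive \eqref{eq:OptimalControlStateConstraintExp} from the non-exponential representation \eqref{eq:StateConstraintOptimalControl} by an integrating-factor computation along curves, the bridge being the defining identity for $\kappa_\lambda$ (Definition \ref{defn:DiscountedIndexOmega}): for every $(x,v)\in\overline{\Omega}\times\R^n$,
\[
    L\big(x,v,\lambda\vartheta_\lambda(x)\big)-L(x,v,0)=\lambda\,\vartheta_\lambda(x)\,\kappa_\lambda(x,v),
\]
where both sides vanish when $\vartheta_\lambda(x)=0$. Fix $(x,t)\in\overline{\Omega}\times(0,\infty)$. Since $L(y,\cdot,0)$ is superlinear uniformly in $y\in\overline{\Omega}$ and $\vartheta_\lambda$ is bounded, the infima in \eqref{eq:StateConstraintOptimalControl} and \eqref{eq:OptimalControlStateConstraintExp} are unchanged if we restrict to $\gamma\in\mathcal{C}^-(x,t;\overline{\Omega})$ with $|\dot\gamma|$ bounded; for such $\gamma$ the function $k(s):=\kappa_\lambda(\gamma(s),\dot\gamma(s))$ is bounded and measurable, so $\alpha_\gamma^\lambda(s)=\int_s^0 k\,d\tau$ is Lipschitz with $\frac{d}{ds}\alpha_\gamma^\lambda(s)=-k(s)$, and $w(s):=\vartheta_\lambda(\gamma(s))$ is Lipschitz.

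Next I would extract a pointwise relation for $w$ from the dynamic programming principle behind \eqref{eq:StateConstraintOptimalControl}: the map $s\mapsto w(s)-\int_{-t}^{s}\big(L(\gamma,\dot\gamma,\lambda w)+c_\Omega(H)\big)\,d\tau$ is nonincreasing, and is constant exactly when $\gamma$ is a minimizer of \eqref{eq:StateConstraintOptimalControl}. Hence $w'(s)\le L(\gamma(s),\dot\gamma(s),\lambda w(s))+c_\Omega(H)$ for a.e.\ $s$, with equality a.e.\ along a minimizer. Substituting the $\kappa_\lambda$-identity gives $w'(s)-\lambda k(s)w(s)\le L(\gamma(s),\dot\gamma(s),0)+c_\Omega(H)$ a.e., and multiplying by the positive factor $e^{\lambda\alpha_\gamma^\lambda(s)}$ yields $\frac{d}{ds}\big(e^{\lambda\alpha_\gamma^\lambda(s)}w(s)\big)\le e^{\lambda\alpha_\gamma^\lambda(s)}\big(L(\gamma,\dot\gamma,0)+c_\Omega(H)\big)$ a.e. Integrating over $[-t,0]$ and using $\alpha_\gamma^\lambda(0)=0$ gives
\[
    \vartheta_\lambda(x)\le e^{\lambda\alpha_\gamma^\lambda(-t)}\vartheta_\lambda(\gamma(-t))+\int_{-t}^{0}e^{\lambda\alpha_\gamma^\lambda(s)}\big(L(\gamma,\dot\gamma,0)+c_\Omega(H)\big)\,ds,
\]
with equality whenever $\gamma$ is a minimizer of \eqref{eq:StateConstraintOptimalControl}, one of which exists by Proposition \ref{prop:ExistenceOfMinimizerTimeOmega}(i). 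Taking the infimum over $\gamma$ and combining the two directions yields \eqref{eq:OptimalControlStateConstraintExp}.

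For the remaining items, the equality case above shows every minimizer of \eqref{eq:StateConstraintOptimalControl} minimizes \eqref{eq:OptimalControlStateConstraintExp}; conversely, if $\gamma$ minimizes \eqref{eq:OptimalControlStateConstraintExp}, the displayed inequality becomes an equality, so the a.e.\ nonnegative integrand $e^{\lambda\alpha_\gamma^\lambda(s)}\big(L(\gamma,\dot\gamma,0)+c_\Omega(H)\big)-\frac{d}{ds}\big(e^{\lambda\alpha_\gamma^\lambda(s)}w(s)\big)$ integrates to zero and hence vanishes a.e., forcing $w'(s)=L(\gamma(s),\dot\gamma(s),\lambda w(s))+c_\Omega(H)$ a.e.; thus $\gamma$ also minimizes \eqref{eq:StateConstraintOptimalControl}. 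The two problems therefore share the same minimizers, and assertions (i)--(ii) (existence of a minimizer and the uniform bound $|\dot\gamma|\le\widehat{C}_\Omega$) follow from Proposition \ref{prop:ExistenceOfMinimizerTimeOmega}(i)--(ii). For (iii) I would take the calibrated curve $\gamma\in\mathrm{AC}((-\infty,0];\overline{\Omega})$ with $\gamma(0)=x$ and $|\dot\gamma|\le\widehat{C}_\Omega$ from Proposition \ref{prop:ExistenceOfMinimizerTimeOmega}(iii) — so $k$ is bounded and $\alpha_\gamma^\lambda$ is well defined on $(-\infty,0]$ — and run the same integrating-factor computation with equality on each finite subinterval $[b,a]\subset(-\infty,0)$, which reproduces \eqref{eq:ExtrtemalInftyExp}.

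I expect the only genuinely delicate step to be the passage from the dynamic programming principle to the a.e.\ differential relation for $w$: one must first check that $s\mapsto\vartheta_\lambda(\gamma(s))$ and the Lagrangian integral along $\gamma$ are absolutely continuous (using the Lipschitz bound on $\vartheta_\lambda$ and the velocity bound on $\gamma$) before differentiating, and one must make the reduction to bounded-velocity competitors so that $\alpha_\gamma^\lambda$ is finite; everything afterward is the elementary integrating-factor manipulation of a scalar linear ODE.
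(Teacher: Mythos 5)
Your proposal is correct and follows essentially the same route as the paper's proof: the one-sided inequality is obtained by the integrating-factor computation built on the defining identity of $\kappa_\lambda$, the absolute continuity of $s\mapsto\vartheta_\lambda(\gamma(s))$, and Proposition \ref{prop:curve}, and equality together with items (i)--(iii) follows from the minimizers and velocity bounds supplied by Proposition \ref{prop:ExistenceOfMinimizerTimeOmega}. Your explicit converse step (that any minimizer of \eqref{eq:OptimalControlStateConstraintExp} also minimizes \eqref{eq:StateConstraintOptimalControl}) is a small but welcome completion, since it is what the ``any minimizer'' clause of item (ii) really requires; the paper only records this identification explicitly in the analogous Proposition \ref{prop:exponentialNegative}(iii).
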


\begin{proof} For any $\gamma\in \mathcal{C}^-(x,t;\overline{\Omega})$ and $-t<b<a<0$, by Proposition \ref{prop:curve} we have
\begin{equation}\label{eq:estab}
	\vartheta_\lambda(\gamma(a)) \leq \vartheta_\lambda(\gamma(b)) +  \int_{b}^a\Big( L\left((\gamma(\tau), \dot{\gamma}(\tau), \lambda \vartheta_\lambda(\gamma(\tau))\right) + c_\Omega(H)\Big)\;d\tau.
\end{equation}
By \cite[Proposition 4.1]{mitake_asymptotic_2008}, 
$s\mapsto \vartheta_\lambda(\gamma(s))$ is absolutely continuous, thus from \eqref{eq:estab} we have 
\begin{align*}
	\frac{d}{ds}\big(\vartheta_\lambda(\gamma(s))\big) 
	\leq L(\gamma(s), \dot{\gamma}(s), \lambda \vartheta_\lambda (\gamma(s))) + c(H) \qquad\text{for a.e.}\;s\in (-t,0). 
\end{align*}
For a.e. $s\in (-t,0)$ we have
\begin{align*}
	\frac{d}{ds}\left(e^{\lambda \alpha_{\gamma}^\lambda(s)}  \vartheta_\lambda (\gamma(s))   \right) 
	&= e^{\lambda \alpha_{\gamma}^\lambda(s)}
	\left(
	-\lambda \vartheta_\lambda(\gamma(s)) \cdot \kappa_{\lambda}(\gamma(s), \dot{\gamma}(s))  
	+ 
	\frac{d}{ds}\big(\vartheta_\lambda(\gamma(s))\big) 
	\right) \\
	&\leq e^{\lambda \alpha_{\gamma}^\lambda (s)}\Big(-\lambda \vartheta_\lambda (\gamma(s)) \cdot \kappa_{\lambda }(\gamma(s), \dot{\gamma}(s))  + L(\gamma(s), \dot{\gamma}(s), \lambda \vartheta_\lambda(\gamma(s))) + c_\Omega(H)\Big) \\
	&=e^{\lambda \alpha_{\gamma}^\lambda(s)}\Big( L(\gamma(s), \dot{\gamma}(s), 0) +c_\Omega(H)\Big).
\end{align*}
Integrating over $[-t,0]$, we obtain
\begin{align}\label{eq:estimateEXPAC}
	e^{\lambda \alpha_{\gamma}^\lambda(0)}\vartheta_\lambda(\gamma(0)) 
	\leq 
	e^{\lambda \alpha_{\gamma}^\lambda(-t)}\vartheta_\lambda(\gamma(-t))  
	+ 
	\int_{-t}^0 e^{\lambda\alpha_{\gamma}^\lambda(s)} 
	\Big(L(\gamma(s), \dot{\gamma}(s), 0) + c_\Omega(H)\Big)\;ds. 
\end{align}
Equivalently, we have shown that 
\begin{align*}
	\vartheta_\lambda(x) 
	\leq \inf_{\gamma\in \mathcal{C}^-(x,t;\overline{\Omega})} \left\lbrace
	e^{\lambda \alpha_{\gamma}^\lambda(-t)}\vartheta_\lambda(\gamma(-t))  + \int_{-t}^0 e^{\lambda\alpha_{\gamma}^\lambda(s)} \Big(L(\gamma(s), \dot{\gamma}(s), 0) + c_\Omega(H) \Big)\;ds \right\rbrace.
\end{align*}
By Proposition \ref{prop:ExistenceOfMinimizerTimeOmega}, let \( \gamma \in \mathcal{C}^-(x, t; \overline{\Omega}) \) be a minimizer of \eqref{eq:StateConstraintOptimalControl}. Repeating the argument above starting from \eqref{eq:estab}, now with equality, we conclude that \( \gamma \) also minimizes \eqref{eq:OptimalControlStateConstraintExp} and satisfies the velocity bound \( |\dot{\gamma}(s)| \leq \widehat{C}_\Omega \) from Proposition \ref{prop:ExistenceOfMinimizerTimeOmega}.
\end{proof}

We observe that \ref{itm:assumptions-h3} implies both \eqref{eq:MonotoneWeakBelowH} and \eqref{eq:MonotoneWeakAboveH}.
\begin{align}
    &u\mapsto H(x,p,u) - \underline{\kappa}_C\cdot u \;\text{is nondecreasing for}\; |p|\leq C,  \label{eq:MonotoneWeakBelowH} \\
    &u\mapsto H(x,p,u) - \overline{\kappa}_C\cdot u \;\text{is nonincreasing for}\; |p|\leq C. \label{eq:MonotoneWeakAboveH} 
\end{align}
In the next proposition, we require only \eqref{eq:MonotoneWeakBelowH}.

\begin{prop}\label{prop:ExponentialForm} 
Assume \ref{itm:assumptions-h1} 
and \eqref{eq:superlinearOmega}. 
Let $\vartheta_\lambda$ be the solution to \eqref{eq:propStateConstraintBoundOmega}.
\begin{itemize}
\item[$\mathrm{(i)}$] Assume \eqref{eq:MonotoneWeakBelowH}.
For $x\in \R^n$, we have 
\begin{align}
	&\inf_{\gamma\in \mathcal{C}^-(x,\infty;\overline{\Omega})} \int_{-\infty}^0 e^{\lambda \alpha_{\gamma}^\lambda(s)} \Big(L(\gamma(s), \dot{\gamma}(s), 0) + c_\Omega(H)\Big)\;ds > -\infty,  \label{eq:BoundedBelowAC} \\
	\vartheta_\lambda(x) 
	&= \min_{\gamma(0) = x} \left\lbrace 
	\int_{-\infty}^0 e^{\lambda\alpha_{\gamma}^\lambda(s)} \Big(L(\gamma(s), \dot{\gamma}(s), 0) + c_\Omega(H)\Big)\;ds: \gamma\in \mathrm{Lip}\left((-\infty,0);\overline{\Omega}\right)  \right\rbrace. \label{eq:StateConstraintExpInfityLipschitz}
\end{align}

\item[$\mathrm{(ii)}$] 
Assume 
\begin{align}
    & u\mapsto H(x,p,u) - \underline{\kappa}\cdot u \;\text{is nondecreasing for all}\;(x,p) \label{eq:MonotoneStrongBelowH}.
\end{align}
Then  
\begin{equation}\label{eq:InfiniteExpOptimal}
	\vartheta_\lambda (x) = \inf_{\gamma\in \mathcal{C}^-(x,\infty;\overline{\Omega})} 
	\int_{-\infty}^0 e^{\lambda\alpha_{\gamma}^\lambda (s)} \Big(L(\gamma(s), \dot{\gamma}(s), 0) + c(H)\Big)\;ds.
\end{equation}	
\end{itemize}
\end{prop}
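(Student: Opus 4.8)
The plan is to run everything off the finite-horizon exponential formula \eqref{eq:OptimalControlStateConstraintExp} of Proposition \ref{prop:StateConstraintProperties}, combined with two elementary facts: since $\kappa_\lambda\le 0$ on $\overline{\Omega}\times\R^n$ one has $\alpha^\lambda_\gamma(s)=\int_s^0\kappa_\lambda(\gamma,\dot\gamma)\,d\tau\le 0$, hence $0<e^{\lambda\alpha^\lambda_\gamma(s)}\le 1$ for all $s\le 0$; and $\|\vartheta_\lambda\|_{L^\infty(\Omega)}\le C_\Omega$ by Proposition \ref{prop:UniformBoundedLocally}. For \eqref{eq:BoundedBelowAC} I would read the improper integral as $\liminf_{t\to\infty}\int_{-t}^0$. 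Given $\gamma\in\mathcal{C}^-(x,\infty;\overline{\Omega})$, apply \eqref{eq:OptimalControlStateConstraintExp} to the restriction $\gamma|_{[-t,0]}$ (legitimate since $\alpha^\lambda_\gamma(s)$ depends only on $\gamma|_{[s,0]}$): this gives $\int_{-t}^0 e^{\lambda\alpha^\lambda_\gamma(s)}(L(\gamma,\dot\gamma,0)+c_\Omega(H))\,ds\ge \vartheta_\lambda(x)-e^{\lambda\alpha^\lambda_\gamma(-t)}\vartheta_\lambda(\gamma(-t))\ge \vartheta_\lambda(x)-C_\Omega$, uniformly in $t$ and $\gamma$. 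Taking $\liminf_{t\to\infty}$ and then $\inf_\gamma$ yields \eqref{eq:BoundedBelowAC}.

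For \eqref{eq:StateConstraintExpInfityLipschitz} I would prove two inequalities. For ``$\le$'': let $\gamma$ be any Lipschitz curve valued in $\overline{\Omega}$ with $\gamma(0)=x$, say $|\dot\gamma|\le C$ a.e. By the auxiliary superlinearity \eqref{eq:superlinearOmega} and the bounds on $\kappa_\lambda$ recorded above, $\kappa_\lambda(\gamma(s),\dot\gamma(s))\le -\underline{\kappa}_C<0$ a.e., so $\alpha^\lambda_\gamma(s)\le -\underline{\kappa}_C|s|$ and $e^{\lambda\alpha^\lambda_\gamma(s)}\le e^{-\lambda\underline{\kappa}_C|s|}$; since $L(\cdot,\cdot,0)$ is bounded on the compact set $\overline{\Omega}\times\overline{B}_C(0)$, the integrand is dominated by a constant multiple of $e^{-\lambda\underline{\kappa}_C|s|}$, so the improper integral over $(-\infty,0)$ converges absolutely, and moreover $e^{\lambda\alpha^\lambda_\gamma(-t)}\vartheta_\lambda(\gamma(-t))\to 0$. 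Letting $t\to\infty$ in \eqref{eq:OptimalControlStateConstraintExp} applied to $\gamma|_{[-t,0]}$ then gives $\vartheta_\lambda(x)\le \int_{-\infty}^0 e^{\lambda\alpha^\lambda_\gamma(s)}(L(\gamma,\dot\gamma,0)+c_\Omega(H))\,ds$. For ``$\ge$'', in fact attainment: take the curve $\gamma^*\in\mathrm{AC}((-\infty,0];\overline{\Omega})$ from Proposition \ref{prop:StateConstraintProperties}(iii); it satisfies $|\dot\gamma^*|\le\widehat{C}_\Omega$, hence is Lipschitz, and satisfies the equality \eqref{eq:ExtrtemalInftyExp}. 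Sending $a\to 0^-$ (where $\alpha^\lambda_{\gamma^*}(0)=0$, $\gamma^*(0)=x$) and $b=-t\to-\infty$, the same decay estimate gives $\vartheta_\lambda(x)=\int_{-\infty}^0 e^{\lambda\alpha^\lambda_{\gamma^*}(s)}(L(\gamma^*,\dot\gamma^*,0)+c_\Omega(H))\,ds$. Thus the infimum over Lipschitz curves is attained and equals $\vartheta_\lambda(x)$.

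For part (ii) the only new ingredient is that \eqref{eq:MonotoneStrongBelowH} upgrades the bound on $\kappa_\lambda$: arguing on the Legendre side (if $p^*$ realizes $L(x,v,\lambda\vartheta_\lambda(x))$ then $L(x,v,\lambda\vartheta_\lambda(x))-L(x,v,0)\le H(x,p^*,0)-H(x,p^*,\lambda\vartheta_\lambda(x))\le -\underline{\kappa}\,\lambda\vartheta_\lambda(x)$, with the analogous inequality for the right-derivative when $\vartheta_\lambda(x)=0$), one gets $\kappa_\lambda(x,v)\le -\underline{\kappa}<0$ for all $(x,v)\in\overline{\Omega}\times\R^n$, with no velocity restriction. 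Hence $\alpha^\lambda_\gamma(s)\le -\underline{\kappa}|s|$ and $e^{\lambda\alpha^\lambda_\gamma(s)}\le e^{-\lambda\underline{\kappa}|s|}$ for every $\gamma\in\mathcal{C}^-(x,\infty;\overline{\Omega})$, Lipschitz or not, so that $e^{\lambda\alpha^\lambda_\gamma(-t)}\vartheta_\lambda(\gamma(-t))\to 0$ and, splitting the integrand into the non-negative part $e^{\lambda\alpha^\lambda_\gamma}(L(\cdot,\cdot,0)+c_0)$ and a part dominated by $|c_\Omega(H)-c_0|e^{-\lambda\underline{\kappa}|s|}$, the tail is controlled just as before. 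The ``$\le$'' argument of the previous paragraph thus applies to every such curve, and combined with attainment by $\gamma^*\in\mathcal{C}^-(x,\infty;\overline{\Omega})$ this gives \eqref{eq:InfiniteExpOptimal}.

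The main obstacle is precisely the passage $t\to\infty$ in \eqref{eq:OptimalControlStateConstraintExp}: one needs simultaneously that $e^{\lambda\alpha^\lambda_\gamma(-t)}\vartheta_\lambda(\gamma(-t))\to 0$ and that the tail of the integral vanishes, and both hinge on a strictly negative upper bound for $\kappa_\lambda$ along $\gamma$. That is why (i) must restrict to Lipschitz (equivalently bounded-velocity) curves — for which \eqref{eq:superlinearOmega} forces $\kappa_\lambda\le -\underline{\kappa}_C$ — whereas (ii) requires the uniform monotonicity \eqref{eq:MonotoneStrongBelowH} to reach arbitrary curves. The structural reason the two cases genuinely differ is that $\alpha^\lambda_\gamma(s)$ is a functional of the entire future arc $\gamma|_{[s,0]}$: along portions where the speed is very large one only knows $\kappa_\lambda\le 0$ (no decay is produced there), so without a uniform lower bound on $-\kappa_\lambda$ a non-Lipschitz competitor may keep the weight $e^{\lambda\alpha^\lambda_\gamma}$ from decaying, and the limiting identity can fail.
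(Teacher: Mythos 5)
Your treatment of \eqref{eq:StateConstraintExpInfityLipschitz} and of part (ii) is correct and is essentially the paper's own argument: pass to the limit $t\to\infty$ in the finite-horizon inequality \eqref{eq:estimateEXPAC}, using that bounded velocity (via \eqref{eq:superlinearOmega} and \eqref{eq:MonotoneWeakBelowH}) or the uniform condition \eqref{eq:MonotoneStrongBelowH} forces $\kappa_\lambda\le-\underline{\kappa}<0$ along the competitor, so that $e^{\lambda\alpha^\lambda_\gamma(-t)}\vartheta_\lambda(\gamma(-t))\to0$ and the tail is controlled; attainment comes from the infinite-horizon extremal curve of Proposition \ref{prop:StateConstraintProperties}(iii), exactly as in the paper. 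Your closing discussion of why (i) needs Lipschitz competitors while (ii) reaches all of $\mathcal{C}^-(x,\infty;\overline{\Omega})$ matches the paper's reasoning.

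Where you genuinely diverge is \eqref{eq:BoundedBelowAC}. The paper splits $(-\infty,0)$ into $I_1=\{|\dot\gamma|>M_\Omega\}$ and $I_2=\{|\dot\gamma|\le M_\Omega\}$: on $I_1$ superlinearity of $L$ makes the integrand nonnegative, and on $I_2$ the weight satisfies $\int_{I_2}e^{\lambda\alpha^\lambda_\gamma}\,ds\le(\lambda\underline{\kappa})^{-1}$, so the \emph{negative part} of the integrand is integrable and the Lebesgue integral is well defined in $(-\infty,+\infty]$ with an explicit lower bound $-c_0/(\lambda\underline{\kappa})$. You instead deduce from \eqref{eq:OptimalControlStateConstraintExp} that all truncations satisfy $\int_{-t}^0\ge\vartheta_\lambda(x)-C_\Omega$ and then declare the improper integral to be $\liminf_{t\to\infty}\int_{-t}^0$. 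That uniform truncated bound is correct and is a nice shortcut, but it proves a weaker statement: a uniform lower bound on $\liminf_t\int_{-t}^0 g$ does not by itself imply that $\int_{-\infty}^0 g^-<\infty$ (think of an oscillating integrand with bounded partial integrals), so it does not establish that the integral in \eqref{eq:BoundedBelowAC} is well defined as written --- which matters later, e.g.\ when $\hat\vartheta_\lambda$ is defined from these integrals in Remark \ref{rmk:ExpSmall}. The fix is exactly the paper's velocity split: the integrand can only be negative where $L(\gamma,\dot\gamma,0)+c_\Omega(H)<0$, which by \eqref{eq:SuperLinearLtilde} forces $|\dot\gamma(s)|\le M_\Omega$, and there \eqref{eq:MonotoneWeakBelowH} gives $\kappa_\lambda\le-\underline{\kappa}$ and hence integrability of the weight over that set. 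With that one addition your proof is complete.
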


\begin{proof} 
For \( \gamma \in \mathcal{C}^-(x, \infty; \overline{\Omega}) \), unbounded \( \dot{\gamma} \) may prevent \( \alpha_{\gamma}^\lambda(s) \to -\infty \) as \( s \to -\infty \). \medskip 

{\bf Part (i). } By \eqref{eq:SuperLinearLtilde}, there exists \( M_\Omega \geq 1 \) such that 
\begin{equation*}
	\min_{x\in \overline{\Omega}}\frac{L(x,v,0)}{|v|} > 1+|c_\Omega(H)| \qquad\text{if}\; |v|\geq M_\Omega\geq 1.
\end{equation*}
Let us define $I_1 = \left\lbrace s<0: 
	|\dot{\gamma}(s)| > M_\Omega
	\right\rbrace$ and $I_2 = \left\lbrace s<0: 
	|\dot{\gamma}(s)| \leq M_\Omega
	\right\rbrace$. 
\begin{itemize}
\item For $I_1$, we observe that
\begin{align*}
	s\in I_1  \qquad\Longrightarrow\qquad  L(\gamma(s), \dot{\gamma}(s), 0) + c_\Omega(H) \geq |\dot{\gamma}(s)| + |c_\Omega(H)|\cdot|\dot{\gamma}(s)| + c_\Omega(H)\geq |\dot{\gamma}(s)|
\end{align*}
since $|\dot{\gamma}(s)|\geq M_\Omega \geq 1$. Therefore
\begin{align}\label{eq:I1}
	\int_{I_1} 
	e^{\lambda \alpha_{\gamma}^\lambda(s)} 
	\Big(L(\gamma(s), \dot{\gamma}(s), 0) + c_\Omega(H)\Big)\;ds  \geq 0. 
\end{align}
\item 
For $I_2$, by \eqref{eq:MonotoneWeakBelowH} (which is guaranteed by \ref{itm:assumptions-h3}), 
if $|\dot{\gamma}(s)|\leq M_\Omega$ then there exists $\underline{\kappa}>0$ such that $\kappa_{\lambda}(\gamma(s), \dot{\gamma}(s)) \leq -\underline{\kappa}$ if $|\dot{\gamma}(s)|\leq M_\Omega$. Thus
\begin{align*}
	\alpha_{\gamma}^\lambda(s) = \int_s^0 \kappa_{\lambda}(\gamma(s), \dot{\gamma}(s))\;ds \leq -\underline{\kappa}s \qquad\Longrightarrow\qquad 0\leq \int_{I_2} e^{\lambda \alpha_{\gamma}^\lambda(s)}\;ds  \leq \frac{1}{\lambda \underline{\kappa}}. 
\end{align*}
\end{itemize}	
Together with $L(x,v,0) + c(H) \geq -c_0$ for all $(x,v)\in \R^n\times\R^n$ from \eqref{eq:BoundL}, we have
\begin{align}\label{eq:I2}
	\int_{I_2} e^{\lambda \alpha_{\gamma}^\lambda(s)} \Big(L(\gamma(s), \dot{\gamma}(s), 0) + c_\Omega(H)\Big)\;ds \geq -c_0 \int_{I_2} e^{\lambda \alpha_{\gamma}^\lambda(s)}\;ds \geq -\frac{c_0}{\lambda \underline{\kappa}}. 
\end{align}
From \eqref{eq:I1} and \eqref{eq:I2} we obtain the boundedness from below \eqref{eq:BoundedBelowAC}. \medskip

To show \eqref{eq:StateConstraintExpInfityLipschitz}, we note that from \eqref{eq:estimateEXPAC} in the proof of Proposition \ref{prop:StateConstraintProperties}, we have
\begin{align}\label{eq:Lipa}
	\vartheta_\lambda(\gamma(0)) 
	\leq 
	e^{\lambda \alpha_{\gamma}^\lambda(-t)} \vartheta_\lambda(\gamma(-t)) 
	 + 
	 \int_{-t}^0 e^{\lambda\alpha_{\gamma}^\lambda(s)} 
	 \Big(L(\gamma(s), \dot{\gamma}(s), 0) + c_\Omega(H)\Big)\;ds 
	 \qquad\text{for any}\;t>0,
\end{align}
for any $\gamma \in \mathrm{Lip}((-\infty,0];\overline{\Omega})$ with $\gamma(0)=x$.
Under 
\eqref{eq:MonotoneWeakBelowH}, 
as $|\dot{\gamma}(s)|\leq C_\gamma$ for a.e. $s\in (-\infty,0)$, we have $\kappa_{\lambda}(\gamma(s), \dot{\gamma}(s)) \leq -\underline{\kappa}_\gamma$ for a.e. $s\leq 0 $. Let $t\to \infty$ in \eqref{eq:Lipa} we deduce that 
\begin{align*}
	\vartheta_\lambda(x) \leq \int_{-\infty}^0 e^{\lambda\alpha_{\gamma}^\lambda(s)} \Big(L(\gamma(s), \dot{\gamma}(s), 0) + c_\Omega(H)\Big)\;ds,
\end{align*}
thanks to $\alpha_{\gamma}^\lambda(s) \leq \underline{\kappa}_\gamma s$ for a.e. $s\leq 0$ and $\vartheta_\lambda$ is uniformly bounded. Hence, we have shown that 
\begin{align*}
	\vartheta_\lambda(x) \leq \inf_{\gamma(0)=x } \left\lbrace
	\int_{-\infty}^0 e^{\lambda\alpha_{\gamma}^\lambda(s)} \Big(L(\gamma(s), \dot{\gamma}(s), 0) + c_\Omega(H)\Big)\;ds : \gamma\in \mathrm{Lip}\left((-\infty,0);\overline{\Omega}\right)\right\rbrace. 
\end{align*}
By Proposition \ref{prop:ExistenceOfMinimizerTimeOmega}, we can find $\gamma\in \mathcal{C}^-(x,\infty;\overline{\Omega})$ such that $|\dot{\gamma}(s)|\leq C_\Omega$ and
\begin{align*}
	\vartheta_\lambda(\gamma(0)) - \vartheta_\lambda(\gamma(-t)) = \int_{-t}^0 \Big(L\big(\gamma(s),\dot{\gamma}(s), \lambda \vartheta_\lambda)\big)+c_\Omega(H)\Big)\;ds \qquad\text{for any}\;t>0.
\end{align*}
Similar to Proposition \ref{prop:StateConstraintProperties}, we have
\begin{align*}
	\vartheta_\lambda(\gamma(0)) = e^{\lambda \alpha_{\gamma,t}(-t)} \vartheta_\lambda(\gamma(-t))  + \int_{-t}^0 e^{\lambda\alpha_{\gamma}^\lambda(s)} \Big(L(\gamma(s), \dot{\gamma}(s), 0) + c_\Omega(H)\Big)\;ds \qquad\text{for any}\;t>0.
\end{align*}
Let $t\to \infty$, thanks to the fact that $\kappa_\lambda(\gamma(s), \dot{\gamma}(s)) \leq -\underline{\kappa}_\gamma$ for a.e. $s\leq 0$ we obtain the conclusion \eqref{eq:StateConstraintExpInfityLipschitz}. \medskip

{\bf Part (ii). }
If we assume \eqref{eq:MonotoneStrongBelowH}, then for any $\gamma\in \mathcal{C}^-(x,\infty;\overline{\Omega})$ we always have $\alpha^\lambda_\gamma(s) \leq \underline{\kappa} s$ for $s\leq 0$, thus we can repeat the argument above for \eqref{eq:StateConstraintExpInfityLipschitz} but with $\gamma\in \mathcal{C}^-(x,\infty;\overline{\Omega})$ instead, yielding \eqref{eq:InfiniteExpOptimal}. 
\end{proof}

\begin{rmk}\label{rmk:ExpSmall} From \eqref{eq:BoundedBelowAC}, we can define 
$\hat{\vartheta}_\lambda(x) = 
	\inf_{\gamma\in \mathcal{C}^-(x,\infty;\overline{\Omega})} 
	\int_{-\infty}^0 
	e^{\lambda \alpha_{\gamma}^\lambda (s)} 
	\Big(
		L(\gamma(s), \dot{\gamma}(s), 0) +c_\Omega(H)
	\Big)\;ds$. 
Then we can show that
\begin{align}\label{eq:formulavLambda}
	\hat{\vartheta}_\lambda(x) = \inf_{\gamma\in \mathcal{C}^-(x,t;\overline{\Omega})} 
	\left\lbrace 
	e^{\lambda \alpha_{\gamma}^\lambda(-t)} \hat{\vartheta}_\lambda(\gamma(-t))
	+
	\int_{-t}^0 e^{\lambda \alpha_{\gamma}^\lambda(s)}
	\Big(L(\gamma(s), \dot{\gamma}(s),0)+c_\Omega(H)\Big)\;ds 
	\right\rbrace. 
\end{align}
We have \( \hat{\vartheta}_\lambda \leq \vartheta_\lambda \), but under 
\eqref{eq:MonotoneWeakBelowH}
alone, it is not trivial that \( \hat{\vartheta}_\lambda = \vartheta_\lambda \). Furthermore, the uniform assumption \eqref{eq:MonotoneStrongBelowH} ensures that the state-constraint solution behaves as in the discounted case.
\end{rmk}
\color{black}

\subsection{Mather measures for state-constraint problem} 
Let \( (X, \mathcal{B}_X) \) be a measurable space with \( X \subset \mathbb{R}^n \) and \( \mathcal{B}_X \) the Borel \( \sigma \)-algebra. Denote by \( \mathcal{P}(X) \), \( \mathcal{R}(X) \), and \( \mathcal{R}^+(X) \) the sets of probability measures, Radon measures, and nonnegative Radon measures on \( X \), respectively. Throughout, we assume \( \Omega \subset \mathbb{R}^n \) is a bounded, open, connected domain with \( C^2 \) boundary. We recall the definition and properties of Mather measures on a bounded domain with state-constraint from \cite[Section 2]{tu_generalized_2024}.

\begin{defn}[Closed measures] \label{defn:mat-mea} Let $\Omega \subset \R^n$ be a bounded, open, connected subset with $C^2$ boundary. 
A probability measure $\mu\in \mathcal{P}(\overline{\Omega}\times\R^n)$ is called {\em holonomic} (or \emph{closed}) if it satisfies
\begin{itemize}
    \item[(i)] $\int_{\overline{\Omega}\times \R^n} |v|\;d\mu(x,v) < \infty$; 
    \item[(ii)] $\int_{\overline{\Omega}\times \R^n}  v\cdot D\phi(x)\;d\mu(x,v) = 0$ for every $\phi\in C^1(\overline{\Omega})$. 
\end{itemize}
We denote by $\mathcal{C}(\overline{\Omega}\times \R^n)$ the set of all holonomic measures in $\overline{\Omega}\times \R^n$.
\end{defn}

\begin{lem}\label{lem:lowercH} Let \( \mathcal{H} : \overline{\Omega} \times \mathbb{R}^n \to \mathbb{R} \) satisfy 
\ref{itm:assumptions-h1}
with the third argument fixed at \( u = 0 \). If \( u \in C(\overline{\Omega}) \) is a viscosity subsolution of \( \mathcal{H}(x, Du) = c \) in \( \Omega \), then the Legendre transform $\mathcal{L} = \mathcal{H}^*$ satisfies
\begin{equation}\label{eq:minMeasuresOmega}
	\int_{\overline \Om\times\R^n}\mathcal{L}(x,v)\; d\mu(x,v)\geq -c \qquad\text{for all}\; \mu\in \mathcal{C}(\overline{\Omega}\times \R^n). 
\end{equation}
As a consequence, under 
\ref{itm:assumptions-h1} and \eqref{eq:MonotoneWeakBelowH},
if $\vartheta_\lambda\in C(\overline{\Omega})$ solves \eqref{eq:propStateConstraintBoundOmega}, then for all $\mu\in \mathcal{C}(\overline{\Omega}\times \R^n)$ we have 
\begin{equation}\label{eq:minMeasures}
	\int_{\overline{\Omega}\times \R^n} L(x,v,0)\;d\mu(x,v) \geq -c_\Omega(H), \qquad \int_{\overline{\Omega}\times \R^n} L(x,v,\lambda \vartheta_\lambda(x))\;d\mu(x,v) \geq -c_\Omega(H).
\end{equation}
\end{lem}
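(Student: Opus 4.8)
The plan is to prove the core inequality \eqref{eq:minMeasuresOmega} first, using the standard duality between closed (holonomic) measures and viscosity subsolutions, and then derive \eqref{eq:minMeasures} as a direct corollary by choosing the right Hamiltonian. For the core statement: let $u\in C(\overline{\Omega})$ be a viscosity subsolution of $\mathcal{H}(x,Du)=c$ in $\Omega$. Since $\mathcal{H}$ is coercive in $p$ locally in $x$ (from \ref{itm:assumptions-h1} with $u=0$) and $\overline{\Omega}$ is compact, $u$ is Lipschitz on $\overline{\Omega}$, hence differentiable a.e., and the subsolution inequality $\mathcal{H}(x,Du(x))\le c$ holds a.e. By the Fenchel--Young inequality applied to the Legendre pair $(\mathcal{H},\mathcal{L})$, for every $v\in\R^n$ and a.e. $x$,
\[
    Du(x)\cdot v \le \mathcal{L}(x,v) + \mathcal{H}(x,Du(x)) \le \mathcal{L}(x,v) + c.
\]
The idea is then to integrate this against a closed measure $\mu\in\mathcal{C}(\overline{\Omega}\times\R^n)$ and use the closedness condition (ii) of Definition \ref{defn:mat-mea} to kill the left-hand term, leaving $0 \le \int \mathcal{L}\,d\mu + c$, which is exactly \eqref{eq:minMeasuresOmega}.

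The main obstacle is that the closedness condition (ii) in Definition \ref{defn:mat-mea} is stated only for $\phi\in C^1(\overline{\Omega})$, whereas $u$ is merely Lipschitz; so one cannot directly substitute $\phi=u$ into $\int v\cdot D\phi\,d\mu = 0$. I would handle this by a mollification/approximation argument: extend $u$ to a Lipschitz function on $\R^n$ (McShane extension), mollify to get $u_\varepsilon\in C^1$ with $u_\varepsilon\to u$ uniformly and $Du_\varepsilon\to Du$ a.e. with a uniform Lipschitz bound $\|Du_\varepsilon\|_\infty\le \mathrm{Lip}(u)=:\ell$. One must be slightly careful near $\partial\Omega$ (the mollification of the extension need not satisfy the subsolution inequality in a thin boundary layer), but since $\mathcal{L}$ is bounded below on $\overline{\Omega}\times\R^n$ by \eqref{eq:BoundL}-type reasoning and $\mu$ has finite first moment $\int|v|\,d\mu<\infty$ by condition (i), one can either shrink $\Omega$ slightly and pass to the limit, or — more cleanly — invoke the fact (standard in weak KAM theory, e.g. as in \cite{tu_generalized_2024}) that condition (ii) automatically extends to all Lipschitz $\phi$ on $\overline{\Omega}$ for closed measures because $\mu$ has finite first moment: write $\int v\cdot Du\,d\mu = \lim_\varepsilon \int v\cdot Du_\varepsilon\,d\mu = 0$ using dominated convergence (dominated by $\ell|v|\in L^1(\mu)$). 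Then integrating the Fenchel inequality: $0 = \int Du\cdot v\,d\mu \le \int \mathcal{L}(x,v)\,d\mu + c$ (the integrand $\mathcal{L}(x,v)$ is bounded below, so its integral is well-defined in $(-\infty,+\infty]$, and if it were $+\infty$ the inequality is trivial).

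For the consequence \eqref{eq:minMeasures}: apply \eqref{eq:minMeasuresOmega} with $\mathcal{H}(x,p) := H(x,p,0)$, whose Legendre transform is $\mathcal{L}(x,v) = L(x,v,0)$, and with $c = c_\Omega(H)$; by definition of $c_\Omega(H)$ and Proposition \ref{prop:StateConstraintLinear} there is a continuous (indeed Lipschitz) subsolution $v$ of $H(x,Dv,0)=c_\Omega(H)$ in $\Omega$, giving the first inequality. For the second inequality, note that $\vartheta_\lambda$ solves \eqref{eq:propStateConstraintBoundOmega}, so in particular $H(x,D\vartheta_\lambda,\lambda\vartheta_\lambda)\le c_\Omega(H)$ in $\Omega$; set $\mathcal{H}(x,p):=H(x,p,\lambda\vartheta_\lambda(x))$, which still satisfies \ref{itm:assumptions-h1}-type hypotheses (continuity and local coercivity in $p$) since $x\mapsto\lambda\vartheta_\lambda(x)$ is Lipschitz on $\overline{\Omega}$, and whose Legendre transform at $(x,v)$ is exactly $L(x,v,\lambda\vartheta_\lambda(x))$; then $\vartheta_\lambda$ itself is a continuous subsolution of $\mathcal{H}(x,D\vartheta_\lambda)=c_\Omega(H)$ in $\Omega$, and \eqref{eq:minMeasuresOmega} yields $\int L(x,v,\lambda\vartheta_\lambda(x))\,d\mu \ge -c_\Omega(H)$. (The hypothesis \eqref{eq:MonotoneWeakBelowH} is not strictly needed for this chain of reasoning but is listed for consistency with the surrounding development; alternatively one could deduce the second inequality from the first by monotonicity of $u\mapsto L(x,v,u)$ together with a sign condition on $\vartheta_\lambda$, but the direct argument above is cleaner.)
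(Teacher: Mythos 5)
Your overall strategy (Fenchel--Young plus the holonomy condition, then specializing to $\mathcal{H}(x,p)=H(x,p,0)$ and $\mathcal{H}(x,p)=H(x,p,\lambda\vartheta_\lambda(x))$ for the consequence) is the same as the paper's, and the second half of your argument matches the paper's exactly. However, there is a genuine gap in your treatment of the Lipschitz-versus-$C^1$ issue. You propose to justify $\int v\cdot Du\,d\mu=0$ by writing it as $\lim_{\varepsilon}\int v\cdot Du_\varepsilon\,d\mu$ and invoking dominated convergence. The domination is fine, but the pointwise convergence $Du_\varepsilon\to Du$ holds only Lebesgue-almost everywhere, whereas the measure $\mu$ (more precisely its $x$-marginal) need not be absolutely continuous with respect to Lebesgue measure --- closed and Mather measures are typically concentrated on lower-dimensional sets such as orbits. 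So dominated convergence does not apply, and the claim that condition (ii) of Definition \ref{defn:mat-mea} "automatically extends to all Lipschitz $\phi$" is not correct as stated: for Lipschitz $\phi$ the integral $\int v\cdot D\phi\,d\mu$ need not even be well-defined $\mu$-a.e. The same problem already infects your first step, where you apply Fenchel--Young to $Du(x)$ at Lebesgue-a.e.\ $x$ and then integrate against $\mu$: the exceptional Lebesgue-null set could carry positive $\mu$-mass.

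The paper avoids this entirely by exploiting convexity of $\mathcal{H}$ in $p$: following \cite[Lemma 2.10]{tu_generalized_2024}, one mollifies the Lipschitz subsolution to obtain a genuinely smooth $u^\varepsilon$ satisfying $\mathcal{H}(x,Du^\varepsilon(x))\le c+\mathcal{O}(\varepsilon)$ classically at \emph{every} point of $\Omega$ (Jensen's inequality plus uniform continuity of $\mathcal{H}$ on the relevant compact set). Then Fenchel--Young gives $v\cdot Du^\varepsilon(x)\le \mathcal{L}(x,v)+c+\mathcal{O}(\varepsilon)$ for all $(x,v)$, the closedness condition applies directly to the $C^1$ function $u^\varepsilon$, and one lets $\varepsilon\to 0^+$ in the resulting inequality $0\le\int\mathcal{L}\,d\mu+c+\mathcal{O}(\varepsilon)$. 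You should replace your limiting argument on $Du_\varepsilon$ by this approximate-subsolution argument; the rest of your proposal then goes through.
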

\begin{proof} Let $u\in C(\Omega)$ be a viscosity subsolution to $\mathcal{H}(x,Du) \leq c_\Omega(\mathcal{H})$ in $\Omega$. By 
\eqref{eq:assumptions-LocalCoercive} in \ref{itm:assumptions-h1} 
we obtain that $u\in \mathrm{Lip}(\overline{\Omega})$, thus, similar to \cite[Lemma 2.10]{tu_generalized_2024}, for each $\varepsilon>0$, there exists a smooth function $u^\varepsilon\in C(\overline{\Omega})$ such that $\mathcal{H}(x,Du^\varepsilon(x), 0) \leq  c + \mathcal{O}(\varepsilon)$ in $\Omega$ in the classical sense. By Fenchel-Young's inequality we have
\begin{align*}
	v\cdot Du^\varepsilon(x) \leq \mathcal{L}(x,v) + \mathcal{H}(x,Du^\varepsilon(x)) \leq \mathcal{L}(x,v) + c + \mathcal{O}(\varepsilon),  \qquad  (x,v) \in \overline{\Omega} \times \R^n. 
\end{align*}
For \( \mu \in \mathcal{C}(\overline{\Omega} \times \mathbb{R}^n) \), integrating and letting \( \varepsilon \to 0^+ \) yields \eqref{eq:minMeasuresOmega}. The results in \eqref{eq:minMeasures} follow by applying \( \mathcal{H}(x,p) = H(x,p,0) \) and \( \mathcal{H}(x,p) = H(x,p,\lambda \vartheta_\lambda(x)) \), which satisfies the assumptions via Lemma \ref{lem:HTilde}.
\end{proof}

\begin{defn}[Mather measures]\label{defn:MatherMeasuresOmega} Assume 
\ref{itm:assumptions-h1}.
We define the set of Mather measures \( \mathfrak{M}(\Omega) \) as the set of closed measures \( \mu \in \mathcal{C}(\overline{\Omega} \times \mathbb{R}^n) \) such that
\begin{align*}
	\int_{\overline{\Omega}\times \R^n} L(x,v,0)\;d\mu(x,v) = -c_\Omega(H). 
\end{align*}
\end{defn}

Lemma \ref{lem:verification} and Theorem \ref{thm:VanishingOnStateConstraintBounded} are similar to results in \cite{tu_generalized_2024}. Note that the approach in \cite{tu_generalized_2024} used \( \kappa_\lambda = \partial_u L \), which led to an additional error term, whereas we obtain an exact formula here. Since the proofs require only minor modifications, we omit them.

\begin{lem}[{\cite[Lemma 2.8]{tu_generalized_2024}}]\label{lem:verification} 
Assume \ref{itm:assumptions-h1}, \eqref{eq:MonotoneWeakBelowH}, \eqref{eq:MonotoneWeakAboveH}, and \eqref{eq:superlinearOmega}. 
For each \(\lambda \in (0,1)\) and \(z \in \overline{\Omega}\), let \(\gamma_\lambda \in \mathcal{C}^-(z,\infty;\overline{\Omega})\) be a minimizing curve for \(\vartheta_\lambda\) (see \emph{Proposition \ref{prop:StateConstraintProperties}}). 
We denote by $\mu^\lambda_\gamma$ the Radon measure in $\mathcal{R}(\overline{\Omega}\times \R^n )\cap \mathcal{P}(\overline{\Omega}\times \R^n)$ such that 
\begin{align}\label{eq:DiscountedMeasure}
	\int_{\overline{\Omega}\times \R^n} \phi(x,v)\;d\mu^\lambda_\gamma(x,v) = \frac{\int_{-\infty}^0 e^{\lambda \alpha_{\gamma}^\lambda(s)} \phi(\gamma(s), \dot{\gamma}(s))\;ds}{\int_{-\infty}^0 e^{\lambda \alpha_{\gamma}^\lambda(s)} \;ds}, \qquad \phi \in C_c(\overline{\Omega}\times \R^n),
\end{align}
where $	\alpha_\gamma^\lambda(s) = \int_s^0 \kappa_\lambda(\gamma(s), \dot{\gamma}(s))\;ds$ for $s\leq 0$. The associated measure \(\mu^\lambda := \mu^\lambda_{\gamma_\lambda}\) is called a discounted measure for \(\vartheta_\lambda\) at \(z\). Along a subsequence \( \lambda_j \to 0^+ \), we have \( \mu^{\lambda_j} \rightharpoonup \mu \) in the weak$^*$ topology, with \( \mu \in \mathfrak{M}(\Omega) \). 
\end{lem}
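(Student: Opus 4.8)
The plan is to establish the weak$^*$ convergence along a subsequence first, then identify the limit as a Mather measure. Since each $\mu^\lambda_\gamma$ is a probability measure on $\overline{\Omega}\times\R^n$ and $\overline{\Omega}$ is compact, the only obstruction to tightness is the $v$-variable. So the first step is to show that the family $\{\mu^\lambda\}_{\lambda\in(0,1)}$ has uniformly bounded mass concentrated on a fixed compact set of velocities: by Proposition \ref{prop:StateConstraintProperties}(ii)--(iii), the minimizing curve $\gamma_\lambda$ satisfies $|\dot\gamma_\lambda(s)|\le \widehat{C}_\Omega$ for a.e. $s\le 0$, with $\widehat{C}_\Omega$ independent of $\lambda$. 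Hence each $\mu^\lambda$ is supported in $\overline{\Omega}\times \overline{B}_{\widehat{C}_\Omega}(0)$, a fixed compact set. By Prokhorov's theorem (or direct weak$^*$ compactness of probability measures on a compact metric space) there is a subsequence $\lambda_j\to 0^+$ with $\mu^{\lambda_j}\rightharpoonup\mu$ for some $\mu\in\mathcal{P}(\overline{\Omega}\times\R^n)$, still supported in that compact set; in particular $\int |v|\,d\mu<\infty$.

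The second step is to verify $\mu$ is closed, i.e. $\int v\cdot D\phi(x)\,d\mu=0$ for all $\phi\in C^1(\overline{\Omega})$. Fix such a $\phi$. Applying \eqref{eq:ExtrtemalInftyExp}-type reasoning, or more directly the chain rule $\frac{d}{ds}\phi(\gamma_{\lambda}(s))=\dot\gamma_\lambda(s)\cdot D\phi(\gamma_\lambda(s))$, and integrating against the weight $e^{\lambda\alpha^\lambda_\gamma(s)}$, one computes
\[
\int_{-\infty}^0 e^{\lambda\alpha^\lambda_\gamma(s)}\,\dot\gamma_\lambda(s)\cdot D\phi(\gamma_\lambda(s))\,ds
= \int_{-\infty}^0 e^{\lambda\alpha^\lambda_\gamma(s)}\Big(\tfrac{d}{ds}\phi(\gamma_\lambda(s))\Big)\,ds.
\]
Integrating by parts on the right and using that $\frac{d}{ds}e^{\lambda\alpha^\lambda_\gamma(s)} = -\lambda\kappa_\lambda(\gamma_\lambda(s),\dot\gamma_\lambda(s))e^{\lambda\alpha^\lambda_\gamma(s)}$, together with the boundary decay ($\phi$ bounded on $\overline\Omega$ and $e^{\lambda\alpha^\lambda_\gamma(s)}\to 0$ as $s\to-\infty$ since $\kappa_\lambda\le-\underline\kappa<0$ on bounded velocities by \eqref{eq:MonotoneWeakBelowH}), one gets an identity whose right-hand side is $O(\lambda)\cdot \|\phi\|_\infty \cdot (\text{bounded constant})$ after normalizing by $\int_{-\infty}^0 e^{\lambda\alpha^\lambda_\gamma(s)}\,ds$. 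Dividing through by that normalizing integral and letting $\lambda=\lambda_j\to 0^+$, the left-hand side converges to $\langle\mu, v\cdot D\phi\rangle$ (here we use that $v\cdot D\phi(x)$ is continuous and compactly supported on the support of the $\mu^{\lambda_j}$, which justifies passing the limit through the weak$^*$ convergence), while the right-hand side tends to $0$. This shows $\mu\in\mathcal{C}(\overline{\Omega}\times\R^n)$.

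The third step is to show the energy identity $\int L(x,v,0)\,d\mu = -c_\Omega(H)$. The inequality $\ge$ is immediate from Lemma \ref{lem:lowercH} (equation \eqref{eq:minMeasures}) applied to the closed measure $\mu$. For $\le$, we pass to the limit in the variational representation \eqref{eq:StateConstraintExpInfityLipschitz}: divide $\vartheta_\lambda(z)=\int_{-\infty}^0 e^{\lambda\alpha^\lambda_\gamma(s)}(L(\gamma_\lambda,\dot\gamma_\lambda,0)+c_\Omega(H))\,ds$ by the normalizing integral $\int_{-\infty}^0 e^{\lambda\alpha^\lambda_\gamma(s)}\,ds$, which gives $\langle\mu^\lambda, L(\cdot,\cdot,0)+c_\Omega(H)\rangle = \vartheta_\lambda(z)\big/\int_{-\infty}^0 e^{\lambda\alpha^\lambda_\gamma(s)}\,ds$. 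Since $\vartheta_\lambda$ is uniformly bounded (Proposition \ref{prop:UniformBoundedLocally}) and $\int_{-\infty}^0 e^{\lambda\alpha^\lambda_\gamma(s)}\,ds\ge \frac{1}{\lambda\overline\kappa_{\widehat{C}_\Omega}}\to\infty$ as $\lambda\to 0^+$ (using $\kappa_\lambda\ge-\overline\kappa_{\widehat{C}_\Omega}$ on the velocity ball $\overline B_{\widehat{C}_\Omega}$), the right-hand side tends to $0$; hence $\langle\mu^{\lambda_j}, L(\cdot,\cdot,0)+c_\Omega(H)\rangle\to 0$, and by weak$^*$ convergence (again $L(\cdot,\cdot,0)$ is continuous on the fixed compact support) the left-hand side converges to $\langle\mu, L(\cdot,\cdot,0)\rangle + c_\Omega(H)$. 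This yields $\langle\mu, L(\cdot,\cdot,0)\rangle = -c_\Omega(H)$, so $\mu\in\mathfrak{M}(\Omega)$, completing the proof.

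The main obstacle I anticipate is the second step — verifying closedness — because one must control the error term coming from the $\lambda$-dependent weight and confirm that, after normalization by $\int_{-\infty}^0 e^{\lambda\alpha^\lambda_\gamma(s)}\,ds$, it genuinely vanishes; this relies crucially on the uniform velocity bound $\widehat{C}_\Omega$ (so that $\kappa_\lambda$ is uniformly bounded away from $0$ and $-\infty$ along the curve) and on the uniform boundedness of $\vartheta_\lambda$. The exact (rather than approximate) exponential formula, obtained here via the index $\kappa_\lambda$ of Definition \ref{defn:DiscountedIndexOmega} rather than $\partial_u L$, is what makes these estimates clean and avoids extra error terms.
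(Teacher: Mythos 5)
Your proposal is correct and follows essentially the same route as the paper's proof: the uniform velocity bound $|\dot{\gamma}_\lambda|\leq \widehat{C}_\Omega$ from Proposition \ref{prop:StateConstraintProperties} gives a fixed compact support and hence weak$^*$ compactness; closedness follows from integration by parts against the weight $e^{\lambda\alpha^\lambda_\gamma(s)}$, using that $-\overline{\kappa}\leq \kappa_\lambda\leq -\underline{\kappa}$ along the curve so that the normalizer lies between $(\lambda\overline{\kappa})^{-1}$ and $(\lambda\underline{\kappa})^{-1}$ and the error is $O(\lambda)$; and the energy identity comes from the optimality of $\gamma_\lambda$. The only point where you deviate is the last step: you exploit the exact exponential representation $\vartheta_\lambda(z)=\int_{-\infty}^0 e^{\lambda\alpha^\lambda_\gamma(s)}\bigl(L(\gamma,\dot{\gamma},0)+c_\Omega(H)\bigr)\,ds$ directly, so that $\langle \mu^\lambda, L(\cdot,\cdot,0)+c_\Omega(H)\rangle=\vartheta_\lambda(z)\big/\int_{-\infty}^0 e^{\lambda\alpha^\lambda_\gamma(s)}\,ds=O(\lambda)$, whereas the paper first bounds $\langle \mu^\lambda, L(\cdot,\cdot,\lambda\vartheta_\lambda(\cdot))+c_\Omega(H)\rangle$ via $\tfrac{d}{ds}\vartheta_\lambda(\gamma(s))$ and then invokes the locally uniform convergence $L(x,v,\lambda\vartheta_\lambda(x))\to L(x,v,0)$. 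Your variant is marginally more direct since it skips that last uniform-convergence step, but both rest on the same ingredients, and I see no gap.
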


\color{magenta}
\ifsol
\begin{proof} By Proposition \ref{prop:ExistenceOfMinimizerTimeOmega}, if \(\gamma_\lambda \in \mathcal{C}^-(z,\infty;\overline{\Omega})\) is a minimizer curve for \(\vartheta_\lambda\), then \(|\dot{\gamma}|\leq \widehat{C}_\Omega\) a.e. on \((-\infty,0]\). 
Hence, $\mu^\lambda$ is compactly supported in $\overline{\Omega}\times \R^n$. 
By compactness, there exists a subsequence \(\lambda_j \to 0\) and a probability measure \(\mu\) supported in the same set such that \(\mu^{\lambda_j} \rightharpoonup \mu\) in the weak$^*$ topology. Take $\phi\in C^1(\overline{\Omega})$, we have
\begin{align*}
	&\int_{\R^n\times \R^n} v\cdot \nabla\phi(x)\;d\mu^\lambda(x,v) 
	= \frac{\int_{-\infty}^0 e^{\lambda\alpha^\lambda_\gamma(s)}\cdot \nabla\phi(\gamma(s))\cdot \dot{\gamma}(s)\;ds }{\int_{-\infty}^0 e^{\lambda \alpha_\gamma(s)}\;ds } \\
	&\qquad 	= \frac{\int_{-\infty}^0 e^{\lambda	\alpha_\gamma(s)} \frac{d}{ds}\left(\phi(\gamma(s))\right) \;ds }{\int_{-\infty}^0 e^{\lambda \alpha^\lambda _\gamma(s)}\;ds }
	= \frac{e^{\lambda \alpha^\lambda_\gamma(s)}\phi(\gamma(s))\big|_{-\infty}^0 -\int_{-\infty}^0 \phi(\gamma(s))\cdot \frac{d}{ds}\left(e^{\lambda\alpha_\gamma^\lambda(s)}\right)\;ds}{ \int_{-\infty}^0 e^{\lambda \alpha_\gamma^\lambda(s)}\;ds } .
\end{align*}
Thanks to 
\eqref{eq:MonotoneWeakBelowH}, \eqref{eq:MonotoneWeakAboveH}, 
and $|\dot{\gamma}|\leq \widehat{C}_\Omega$, we have $-\overline{\kappa} \leq \kappa_\lambda(\gamma(s), \dot{\gamma}(s))\leq -\underline{\kappa}$ for a.e. $s\in (-\infty,0)$. Thus
\begin{align*}
	\lim_{s\to -\infty} e^{\lambda \alpha_\gamma^\lambda(s)}\phi(\gamma(s)) = 0 \qquad\text{and}\qquad \frac{1}{\lambda \overline{\kappa}} \leq \int_{-\infty}^0 e^{\lambda \alpha_\gamma^\lambda(s)}\;ds \leq \frac{1}{\lambda \underline{\kappa}}. 
\end{align*}
We obtain that
\begin{align*}
	e^{\lambda \alpha_\gamma^\lambda(s)}\phi(\gamma(s))\Big|_{-\infty}^0 = \phi(\gamma(0)) - \lim_{s\to -\infty} e^{\lambda \alpha_\gamma(s)}\phi(\gamma(s)) = \phi(\gamma(0)),
\end{align*}
and, since $\frac{d}{ds}\left(e^{\lambda \alpha_\gamma(s)}\right) \geq 0$ for $s\leq 0$, we have
\begin{align}\label{eq:boundIPOmega}
	\left|\int_{-\infty}^0  \phi(\gamma(s)) 
	\cdot 
	\frac{d}{ds}\left(e^{\lambda \alpha_\gamma^\lambda(s)}\right)\;ds \right| 
	\leq 
	\Vert \phi\Vert_{L^\infty\left(\overline{\Omega}\right)}
	\int_{-\infty}^0 \left|\frac{d}{ds}
	\left(e^{\lambda \alpha_\gamma^\lambda(s)}\right) \right|\;ds  
	= \Vert \phi\Vert_{L^\infty\left(\overline{\Omega}\right)} .
\end{align}
We deduce that 
\begin{align*}
	\left|\int_{\R^n\times \R^n} v\cdot \nabla\phi(x)\;d\mu^\lambda(x,v)\right|  \leq \lambda \overline{\kappa}  \cdot \left(|\phi(z)| + \Vert \phi\Vert_{L^\infty\left(\overline{\Omega}\right)} \right) .  
\end{align*} 
Let $\lambda\to 0^+$ we obtain that $\mu\in \mathcal{C}(\overline{\Omega}\times\R^n)$ is a closed measure. On the other hand, as a minimizer, we have
\begin{align*}
	\frac{d}{ds} \vartheta(\gamma(s)) = L(\gamma(s), \dot{\gamma}(s), \lambda \vartheta_\lambda(\gamma(s))) + c_\Omega(H) \qquad\text{for a.e.}\;s<0
\end{align*}
Therefore, similar to the above argument in \eqref{eq:boundIPOmega} with $\frac{d}{ds} \left(e^{-\lambda\alpha_\gamma^\lambda(s)}\right) \geq 0$ and $\vartheta_\lambda$ in place of $\phi$, we have
\begin{align*}
	&\left|\int_{\overline{\Omega}\times\R^n} \Big(L(x,v,\lambda \vartheta_\lambda(x)) +c_\Omega(H)\Big)\;d\mu^\lambda(x,v)\right| = \left|\frac{\int_{-\infty}^0 e^{\lambda \alpha_\gamma^\lambda(s)} 
	\big(L(\gamma(s), \dot{\gamma}(s), \lambda \vartheta_\lambda(\gamma(s))) + c_\Omega(H)\big)\;ds }
	{
	\int_{-\infty}^0 e^{\lambda \alpha_\gamma^\lambda(s)}\;ds
	}\right| \\
	&\qquad = 
	\left|\frac{
	\int_{-\infty}^0 e^{\lambda\alpha_\gamma^\lambda(s)}\frac{d}{ds}\left(\vartheta_\lambda(\gamma(s))\right)\;ds
	}{
	\int_{-\infty}^0 e^{\lambda \alpha_\gamma^\lambda(s)}\;ds
	} \right|
	= \left| \frac{
	\vartheta_\lambda(0) - \int_{-\infty}^0 \vartheta_\lambda(\gamma(s))\frac{d}{ds}
	\left(
	e^{\lambda\alpha_\gamma^\lambda(s)}
	\right)\;ds
	}{
	\int_{-\infty}^0 e^{\lambda \alpha_\gamma^\lambda(s)}\;ds
	} \right| \leq 2\lambda\overline{\kappa} \cdot 
	\Vert \vartheta_\lambda\Vert_{L^\infty(\overline{\Omega	})}.
\end{align*}
Thanks to the fact that $|\vartheta_\lambda|$ is uniformly bounded, we have $L(x,v,\lambda \vartheta_\lambda(x)) \to L(x,v,0)$ uniformly on compact subsets of $\overline{\Omega}\times\R^n$ as $\lambda\to 0^+$. In the limit as $\lambda\to 0^+$ we obtain the conclusion that $\mu \in \mathfrak{M}(\Omega)$ is a Mather measure. 
\end{proof}
\fi
\color{black}

\begin{thm}[{\cite[Theorem 1.1]{tu_generalized_2024}}]
\label{thm:VanishingOnStateConstraintBounded} 
Assume \ref{itm:assumptions-h1}, \ref{itm:assumptions-h3}, and \eqref{eq:superlinearOmega}. 
Then the solution \( \vartheta_\lambda \) to \eqref{eq:propStateConstraintBoundOmega} converges uniformly in \( C(\overline{\Omega}) \) to a function \( \vartheta_0 \in C(\overline{\Omega}) \) as \( \lambda \to 0^+ \), where \( \vartheta_0 \) is a viscosity solution to \eqref{eq:ErgodicStateOmega}, given by
$\vartheta_0 = \sup \mathcal{E}$, where
\begin{align*}
	\mathcal{E}(\Omega) = \left\lbrace w\in C(\Omega):H(x,Dw,0)\leq c_\Omega(H)\;\text{in}\;\Omega, \int_{\overline{\Omega}\times \R^n} \partial_uL(x,v,0)\cdot w(x)\;d\mu(x,v) \geq 0\;\text{for all}\;\mu \in \mathfrak{M}(\Omega) \right\rbrace. 
\end{align*}
\end{thm}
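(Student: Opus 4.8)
The plan is to run the standard two-sided vanishing-discount argument via Mather measures, carried out with the \emph{exact} exponential representation of Proposition~\ref{prop:StateConstraintProperties} (rather than the one with a vanishing error term used in \cite{tu_generalized_2024}). First I would record compactness: Proposition~\ref{prop:UniformBoundedLocally} bounds $\{\vartheta_\lambda\}_{\lambda\in(0,1)}$ in $\mathrm{Lip}(\overline{\Omega})$ uniformly in $\lambda$, so $\|\lambda\vartheta_\lambda\|_{L^\infty(\Omega)}\le\lambda C_\Omega\to0$, and by Arzel\`a--Ascoli every sequence $\lambda_j\to0^+$ has a subsequence (not relabeled) with $\vartheta_{\lambda_j}\to w$ uniformly on $\overline{\Omega}$; stability of viscosity sub/supersolutions for the state-constraint problem then gives that $w$ solves \eqref{eq:ErgodicStateOmega}. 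It remains to show that every such subsequential limit equals $\vartheta_0:=\sup\mathcal{E}(\Omega)$, which promotes to full convergence $\vartheta_\lambda\to\vartheta_0$.

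For $w\le\vartheta_0$ I would verify $w\in\mathcal{E}(\Omega)$. Fix $\mu\in\mathfrak{M}(\Omega)$; by Lemma~\ref{lem:verification} and Proposition~\ref{prop:ExistenceOfMinimizerTimeOmega} it is supported in a fixed compact set $\overline{\Omega}\times\overline{B}_R(0)$. Lemma~\ref{lem:lowercH} (second inequality in \eqref{eq:minMeasures}) gives $\int L(x,v,\lambda\vartheta_\lambda(x))\,d\mu\ge-c_\Omega(H)$. Expanding $L(x,v,\lambda\vartheta_\lambda(x))=L(x,v,0)+\lambda\vartheta_\lambda(x)\,\partial_u L(x,v,0)+\lambda\,r_\lambda(x,v)$ with $\sup_{\overline{\Omega}\times\overline{B}_R(0)}|r_\lambda|\to0$ — the controlling modulus obtained by transporting \eqref{eq:assumptions-DiffC1} from $H$ to $L$, which is legitimate because under \eqref{eq:superlinearOmega} the Legendre supremum is attained at bounded momenta on bounded velocity sets — and using $\int L(x,v,0)\,d\mu=-c_\Omega(H)$ from Definition~\ref{defn:MatherMeasuresOmega}, division by $\lambda$ gives $\int\vartheta_\lambda(x)\,\partial_u L(x,v,0)\,d\mu\ge-C_\Omega\sup|r_\lambda|$. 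Letting $\lambda=\lambda_j\to0$ (uniform convergence $\vartheta_{\lambda_j}\to w$ against the bounded, continuous $\partial_u L(\cdot,\cdot,0)$ on $\mathrm{supp}\,\mu$) yields $\int w(x)\,\partial_u L(x,v,0)\,d\mu\ge0$; since $\mu$ is arbitrary and $w$ is a subsolution of \eqref{eq:ErgodicStateOmega}, $w\in\mathcal{E}(\Omega)$.

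For the reverse inequality $\vartheta_0\le w$ — the harder half — fix $v\in\mathcal{E}(\Omega)$ and $z\in\overline{\Omega}$, let $\gamma=\gamma_\lambda\in\mathcal{C}^-(z,\infty;\overline{\Omega})$ be a minimizing curve for $\vartheta_\lambda$ at $z$ as in Proposition~\ref{prop:StateConstraintProperties} (so $|\dot\gamma|\le\widehat{C}_\Omega$ and $\vartheta_\lambda(z)=e^{\lambda\alpha_{\gamma}^\lambda(-t)}\vartheta_\lambda(\gamma(-t))+\int_{-t}^0 e^{\lambda\alpha_{\gamma}^\lambda(s)}(L(\gamma(s),\dot\gamma(s),0)+c_\Omega(H))\,ds$ for all $t>0$), and let $\mu^\lambda$ be the discounted measure \eqref{eq:DiscountedMeasure}, with $\mu^{\lambda_j}\rightharpoonup\mu\in\mathfrak{M}(\Omega)$ along a further subsequence by Lemma~\ref{lem:verification}. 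Since $v$ is a Lipschitz subsolution of $H(x,Dv,0)\le c_\Omega(H)$ in $\Omega$, a mollification argument as in the proof of Lemma~\ref{lem:lowercH}, combined with $\tfrac{d}{ds}(e^{\lambda\alpha_{\gamma}^\lambda(s)}v(\gamma(s)))=e^{\lambda\alpha_{\gamma}^\lambda(s)}(-\lambda\kappa_\lambda(\gamma(s),\dot\gamma(s))v(\gamma(s))+\dot\gamma(s)\cdot Dv(\gamma(s)))$ and Fenchel--Young, gives
\[ v(z)-e^{\lambda\alpha_{\gamma}^\lambda(-t)}v(\gamma(-t))\le\int_{-t}^0 e^{\lambda\alpha_{\gamma}^\lambda(s)}\big(-\lambda\kappa_\lambda(\gamma(s),\dot\gamma(s))v(\gamma(s))+L(\gamma(s),\dot\gamma(s),0)+c_\Omega(H)\big)\,ds . \]
Subtracting the minimizer identity, letting $t\to\infty$ (the $-t$ terms vanish since $\kappa_\lambda\le-\underline{\kappa}$ on the bounded-velocity curve and $v,\vartheta_\lambda$ are bounded), and recognizing \eqref{eq:DiscountedMeasure}, I would obtain
\[ v(z)-\vartheta_\lambda(z)\le-\Big(\lambda\!\int_{-\infty}^0 e^{\lambda\alpha_{\gamma}^\lambda(s)}\,ds\Big)\int_{\overline{\Omega}\times\R^n}\kappa_\lambda(x,\xi)\,v(x)\,d\mu^\lambda(x,\xi) . \]
Here $\lambda\int_{-\infty}^0 e^{\lambda\alpha_{\gamma}^\lambda}\,ds\in[\overline{\kappa}^{-1},\underline{\kappa}^{-1}]$ is nonnegative and bounded, while $\kappa_{\lambda_j}\to\partial_u L(\cdot,\cdot,0)$ uniformly on the common compact support, so $-\int\kappa_{\lambda_j}v\,d\mu^{\lambda_j}\to-\int\partial_u L(x,\xi,0)v(x)\,d\mu\le0$, the last step being exactly the defining inequality of $\mathcal{E}(\Omega)$ for $v$ and $\mu\in\mathfrak{M}(\Omega)$. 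Hence the $\limsup_j$ of the right-hand side is $\le0$, and since $\vartheta_{\lambda_j}(z)\to w(z)$ we conclude $v(z)\le w(z)$; arbitrariness of $z$ and $v$ gives $\vartheta_0\le w$, so $w=\vartheta_0$ and the theorem follows.

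I expect this last step to be the main obstacle: propagating the critical subsolution $v$ along the $\lambda$-dependent minimizers against the weight $e^{\lambda\alpha_{\gamma}^\lambda}$ and identifying the residual $-\lambda\int\kappa_\lambda v\,d\mu^\lambda$ with the Mather-measure inequality defining $\mathcal{E}(\Omega)$ in the weak-$*$ limit. What makes it go through is the uniform velocity bound $|\dot\gamma|\le\widehat{C}_\Omega$ (so all discounted measures live in one compact set and share a limit in $\mathfrak{M}(\Omega)$) together with the uniform-on-compacta convergence $\kappa_\lambda\to\partial_u L(\cdot,\cdot,0)$ — which is precisely where the modulus $\omega_R$ in \ref{itm:assumptions-h3} is genuinely used, consistent with its stated role ``solely to justify convergence via weak convergence of measures.''
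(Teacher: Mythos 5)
The paper does not actually prove this theorem: it is imported from \cite[Theorem 1.1]{tu_generalized_2024}, with the remark that the only change is replacing the approximate index $\partial_u L(x,v,0)$ by the exact index $\kappa_\lambda$, and your reconstruction is precisely that intended argument — it mirrors, step for step, the paper's own proof of the whole-space analogue (Theorem \ref{thm:selection-principle} in Section \ref{sec:selection-principle}): compactness plus stability, the measure inequality from Lemma \ref{lem:lowercH} to place subsequential limits in $\mathcal{E}(\Omega)$, and propagation of a critical subsolution along minimizers with the weight $e^{\lambda\alpha_\gamma^\lambda}$ to get the reverse bound via the limiting Mather measure. So the proposal is correct and takes essentially the same approach; the only point to watch is that your step establishing $w\in\mathcal{E}(\Omega)$ tacitly treats every $\mu\in\mathfrak{M}(\Omega)$ as compactly supported in velocity, a convention inherited from Lemma \ref{lem:verification} and the cited reference rather than something your argument independently supplies.
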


\section{The maximal solution via Perron Method} \label{sec:MaximalSolution}

Let \( \Omega \) be open, bounded, connected, and has \( C^2 \) boundary.
From the definitions of the ergodic constants $c_\Omega(H)$ and $c(H)$ in \eqref{eq:ergodicOmegaH} and \eqref{eq:c(H)}, respectively, we have
\begin{equation}\label{eq:increasingconstants}
    c_\Omega(H) \leq c_{\Omega'}(H) \leq c(H)  \qquad\text{for all}\;\Omega\subset \Omega' \subset \R^n. 
\end{equation}

\begin{lem}[{\cite[Proposition 5.1]{ishii_vanishing_2020}}]\label{lem:ErgodicConstantTheSame} Assume 
\ref{itm:assumptions-h1}, \ref{itm:assumptions-h2},
and let \(\mathcal{A}\) denote the Aubry set of $H$ in $\R^n$ from \emph{Definition \ref{defn:AubrySet}}. If $\Omega\subset \R^n$ is a domain such that \(\mathcal{A} \subset \Omega\), then \(c_\Omega(H) = c(H)\).
\end{lem}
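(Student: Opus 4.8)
The plan is to prove the two inequalities in $c_\Omega(H)=c(H)$ separately. The inequality $c_\Omega(H)\le c(H)$ is already contained in \eqref{eq:increasingconstants}. For the reverse inequality I would argue by contradiction: suppose $c_\Omega(H)<c(H)$. Since the infimum defining $c_\Omega(H)$ is attained (Proposition \ref{prop:StateConstraintLinear}), fix $w\in C(\Omega)$ with $H(x,Dw,0)\le c_\Omega(H)$ in $\Omega$; by the local coercivity \eqref{eq:assumptions-LocalCoercive} in \ref{itm:assumptions-h1}, $w$ is Lipschitz on every compact subset of $\Omega$. The strategy is to splice $w$ with a suitable global subsolution so as to produce a \emph{global} continuous subsolution of $H(x,Du,0)\le b$ for some $b<c(H)$; by \eqref{eq:c(H)} and Lemma \ref{lem:critical-value} this is impossible, which is the desired contradiction.

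The key ingredient is a global ``cushion'' subsolution that is \emph{strictly} a subsolution everywhere away from the Aubry set. Since $\mathcal{A}$ is compact (Lemma \ref{lem:AubrySetNonempty}) and $\mathcal{A}\subset\Omega$, fix open sets $U'\Subset U\Subset\Omega$ with $\mathcal{A}\subset U'$, and a large ball $B_R(0)\supset\overline U$. On the compact set $K:=\overline{B_R(0)}\setminus U'$, which is disjoint from $\mathcal{A}$, each point carries a global subsolution of $H(x,Du,0)=c(H)$ that is strict in a neighborhood of that point (Lemma \ref{lem:strict-sub}); covering $K$ by finitely many such neighborhoods and averaging — using the convexity of $H$ in $\xi$ from \ref{itm:assumptions-h1}, exactly as in the proof of Lemma \ref{lem:AubrySetNonempty} — yields a global subsolution $\psi_K$ with $H(x,D\psi_K,0)\le c(H)-\eta_K$ on $K$ for some $\eta_K>0$. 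Let $\tilde v$ be the global subsolution of $H(x,Du,0)=c(H)$ from Lemma \ref{lem:cutoffIshiiSiconofli}(i), which coincides with $-\tfrac{\varepsilon}{2}|x|$ for $|x|$ large and is therefore strict there by \ref{itm:assumptions-h2} (enlarging $R$ if needed so that this strictness holds for $|x|\ge R$). Then $\Psi:=\tfrac12(\psi_K+\tilde v)$ is a global subsolution of $H(x,Du,0)=c(H)$ satisfying
\[
    H(x,D\Psi(x),0)\le c(H)-\eta \qquad \text{for all } x\in\R^n\setminus U',
\]
for some $\eta>0$, since $\R^n\setminus U'=K\cup(\R^n\setminus B_R(0))$.

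It remains to splice $w$ with $\Psi$. On $U$ the function $w$ is a subsolution of $H(x,Du,0)\le c_\Omega(H)$, while on $\R^n\setminus U'$ the function $\Psi$ is a subsolution of $H(x,Du,0)\le c(H)-\eta$. Since the equation $H(x,Du,0)=c$ has no $u$-dependence, $\Psi$ may be normalized by an additive constant, and inside the collar $U\setminus\overline{U'}$, where $\Psi$ has strictness margin $\eta$, one perturbs $\Psi$ to $\Psi+g$ with $g$ Lipschitz and supported in the collar, interpolating between the values needed to match the (shifted) $w$ near $\partial U'$ and $\Psi$ near $\partial U$; the margin $\eta$ absorbs $Dg$ provided $g$ has small enough Lipschitz constant. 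The resulting function — equal to a shift of $w$ near $\mathcal{A}$, to the maximum of the two pieces on the collar, and to $\Psi$ outside $U$ — is then a global continuous subsolution of $H(x,Du,0)\le b$ with $b<c(H)$, the contradiction sought; hence $c_\Omega(H)\ge c(H)$, and equality follows.

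The delicate point — the main obstacle — is precisely this splicing: unlike gluing two \emph{strict} subsolutions of the same equation, where taking the maximum suffices, here $\Psi$ fails to be strict on the neighborhood $U'$ of $\mathcal{A}$, so the value levels of $w$ and $\Psi$ must be reconciled across $U\setminus\overline{U'}$ without exceeding $b$, and it is the localized strictness of $\Psi$ off $\mathcal{A}$ that supplies the needed room; this is carried out as in \cite[Proposition 5.1]{ishii_vanishing_2020} and the Ishii--Siconolfi gluing technique. An alternative route bypasses the gluing altogether: a Mather-type measure realizing $c(H)$ is carried by $\mathcal{A}\subset\Omega$ with bounded velocities (standard weak KAM theory, cf.\ \cite{fathi_weak_2007}), hence is an admissible holonomic measure on $\overline\Omega\times\R^n$, and Lemma \ref{lem:lowercH} then gives $-c(H)=\int_{\overline\Omega\times\R^n} L(x,v,0)\,d\mu\ge -c_\Omega(H)$ directly.
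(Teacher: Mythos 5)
The paper offers no proof of this lemma; it simply cites \cite[Proposition 5.1]{ishii_vanishing_2020}, so your proposal is measured against that standard argument. Your overall strategy is the right one -- derive a contradiction with the minimality of $c(H)$ by splicing the $\Omega$-subsolution $w$ with a global subsolution $\Psi$ of $H(x,Du,0)=c(H)$ that is strict off a neighborhood $U'$ of $\mathcal{A}$ -- and your construction of $\Psi$ via Lemma \ref{lem:strict-sub}, a finite cover, convex averaging, and the conical subsolution of Lemma \ref{lem:cutoffIshiiSiconofli}(i) is sound. But the splicing step, which you yourself flag as the main obstacle, does not work as described. You propose to reconcile the values of $w$ and $\Psi$ across the collar $U\setminus\overline{U'}$ by a perturbation $g$ whose gradient is ``absorbed by the margin $\eta$'', which forces $\|Dg\|_{L^\infty}$ to be small (of a fixed size determined by $\eta$ and the modulus of $H$). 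Yet the oscillation $g$ must produce across the collar is the full discrepancy between $w+c_1$ and $\Psi$, which is of order one (indeed near $\partial U'$ you need $\Psi+g$ to \emph{equal} $w+c_1$, so $Dg=Dw-D\Psi$ there, again of order one), while the collar has fixed width; hence $\mathrm{osc}(g)\le\|Dg\|_{L^\infty}\cdot\mathrm{width}$ is too small to bridge the gap. No choice of the additive constant $c_1$ repairs this, since one needs $w+c_1\ge\Psi$ near $\partial U'$ and $w+c_1\le\Psi$ near $\partial U$ simultaneously.

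The missing idea is to glue not $w$ itself but the convex combination $z_\theta=(1-\theta)\Psi+\theta w$: by convexity of $H$ in $p$ it is a subsolution at level $c(H)-\theta\big(c(H)-c_\Omega(H)\big)$ on $U$, and it differs from $\Psi$ by only $\theta\|w-\Psi\|_{L^\infty(\overline U)}$ in sup norm. Combining this with a \emph{fixed-size} lift $\Psi\mapsto\Psi+s\min\{\mathrm{dist}(x,\overline{U'}),\ell\}$ (with $s$ small enough that the margin $\eta$ absorbs the extra gradient of size $s$, and $\ell$ the collar width), the crossing conditions for $\max\big(z_\theta+c_1,\ \Psi+s\min\{\mathrm{dist},\ell\}\big)$ reduce to $2\theta\|w-\Psi\|_{L^\infty}\le s\ell$, which holds for $\theta$ small; this produces a global continuous subsolution at a level strictly below $c(H)$, the desired contradiction. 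Finally, your alternative route via Mather measures should be treated with caution: within this paper the measures of Definition \ref{defn:SMather} are built from $u_\lambda$ via Corollary \ref{coro:UniformBoundedLocally} and Proposition \ref{prop:BoundedMinimizerCompact}, which already invoke $c_\Omega(H)=c(H)$, so using them here is circular; and \cite{fathi_weak_2007} concerns smooth Tonelli Hamiltonians, so the existence of a minimizing closed measure carried by $\mathcal{A}$ under \ref{itm:assumptions-h1}--\ref{itm:assumptions-h2} alone is not an off-the-shelf fact but essentially the content of the reference being cited.
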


\begin{cor}\label{coro:UniformBoundedLocally} 
Assume \ref{itm:assumptions-h1}--\ref{itm:assumptions-h3}. 
If \( u \in C(\mathbb{R}^n) \) is a subsolution to \( H(x, Du, \lambda u) \leq c(H) \) in \(\mathbb{R}^n\), then for any bounded domain \(\Omega\) with $C^2$ boundary 
containing the Aubry set \(\mathcal{A}\), there exists a constant \( C_\Omega \), depending only on \(\Omega\) and \(H\), such that $\sup_{\overline{\Omega}} u \leq C_\Omega$. 
\end{cor}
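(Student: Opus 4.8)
The plan is to bound $u$ on $\overline{\Omega}$ from above by a suitably shifted state-constraint solution of the ergodic problem \eqref{eq:ErgodicStateOmega} on $\Omega$, invoking the comparison principle for the state-constraint problem. The feature that makes the discounted inequality satisfied by $u$ and the ergodic problem on $\Omega$ comparable is that $\mathcal{A}\subset\Omega$ forces $c_\Omega(H)=c(H)$ by Lemma \ref{lem:ErgodicConstantTheSame}.

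Concretely, by Lemma \ref{lem:ErgodicConstantTheSame} we have $c_\Omega(H)=c(H)$, and by Proposition \ref{prop:StateConstraintLinear} there is $v\in C(\overline{\Omega})$ solving \eqref{eq:ErgodicStateOmega} (with ergodic constant $c_\Omega(H)=c(H)$) with $\Vert v\Vert_{L^\infty(\Omega)}\le C_\Omega$, where $C_\Omega$ depends only on $\Omega$ and $H$ in the sense of \ref{itm:assumptions-h1}. Set $w:=v+\Vert v\Vert_{L^\infty(\Omega)}$, so that $w\ge 0$ on $\overline{\Omega}$ and $Dw=Dv$. For $\lambda\in(0,1)$ we have $\lambda w\ge 0$, so by the monotonicity of $H$ in $u$ (from \ref{itm:assumptions-h1}), at any point where a $C^1$ test function $\varphi$ touches $w$ from below,
\[
    H(x,D\varphi,\lambda w)\ \ge\ H(x,D\varphi,0)\ \ge\ c_\Omega(H)\ =\ c(H);
\]
hence $w$ is a state-constraint supersolution of $H(x,Dw,\lambda w)\ge c(H)$ on $\overline{\Omega}$. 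On the other hand, the restriction of $u$ to $\Omega$ is a subsolution of $H(x,Du,\lambda u)\le c(H)$ in $\Omega$, and it is locally Lipschitz by the local coercivity in \ref{itm:assumptions-h1}. Applying the comparison principle for the state-constraint problem on $\Omega$ (see \cite[Theorem 2.3]{tu_generalized_2024}, or Theorem \ref{thm:Appendix:HJconstraint} in the Appendix), whose strict monotonicity requirement is supplied by the lower bound $\underline{\kappa}_R\le\partial_uH$ in \ref{itm:assumptions-h3} applied with $R$ a common gradient bound for $u$ and $v$ on $\overline{\Omega}$, we obtain $u\le w$ on $\overline{\Omega}$. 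Therefore $\sup_{\overline{\Omega}}u\le\sup_{\overline{\Omega}}w\le 2\Vert v\Vert_{L^\infty(\Omega)}\le 2C_\Omega$, and renaming $C_\Omega$ completes the proof; in particular the bound is independent of $\lambda$ and of $u$.

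The only step requiring care is checking the hypotheses of the state-constraint comparison principle: that $u|_{\overline{\Omega}}$ is an admissible (locally Lipschitz) subsolution — which follows from the local coercivity in \ref{itm:assumptions-h1} — and that the merely local-in-$p$ monotonicity in \ref{itm:assumptions-h3} is enough. The latter is handled by the standard a priori gradient bounds on the bounded set $\overline{\Omega}$, which confine all relevant momenta to a fixed ball $\overline{B}_R(0)$ on which $\partial_uH\ge\underline{\kappa}_R>0$. Otherwise the argument is a direct application of already-established results and mirrors the proof of Proposition \ref{prop:UniformBoundedLocally}.
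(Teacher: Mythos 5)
Your proof is correct and follows essentially the same route as the paper: the paper compares $u$ with the state-constraint solution $\vartheta_\lambda$ and then bounds $\vartheta_\lambda$ via $v^+=v+\Vert v\Vert_{L^\infty(\Omega)}$ exactly as in Proposition \ref{prop:UniformBoundedLocally}, whereas you simply compare $u$ with $v^+$ directly — the same comparison principle, the same use of Lemma \ref{lem:ErgodicConstantTheSame} to get $c_\Omega(H)=c(H)$, and the same monotonicity argument. The one-step shortcut is fine and your attention to the locality of the monotonicity in \ref{itm:assumptions-h3} matches what the paper implicitly relies on.
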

\begin{proof} We may take \( C_\Omega = \sup_{\overline{\Omega}} \vartheta_\lambda \), where \( \vartheta_\lambda \) is a state-constraint solution to \eqref{eq:propStateConstraintBoundOmega}. By the comparison principle (similar to the proof of Proposition~\ref{prop:UniformBoundedLocally}), any subsolution \( u \) of \( H(x, Du, \lambda u) \leq c(H) \) in \( \mathbb{R}^n \) satisfies \( u \leq \vartheta_\lambda \leq C_\Omega \) in $\Omega$.
\end{proof}

\begin{prop}[Perron method]\label{prop:PerronLipschitzNoContact} 
Assume \ref{itm:assumptions-h1}--\ref{itm:assumptions-h3}.
There exists a locally Lipschitz, bounded from below, and locally bounded from above viscosity solution to \eqref{eq:DP}, which is also the maximal subsolution in the class of continuous solutions. Furthermore, for every $R>0$, there exists a constant $C_R$ such that 
\begin{align}
	&u_\lambda(x) \geq -C_0 \qquad\text{for all}\;x\in \R^n  \label{eq:BoundedBelow}\\
	&|u_\lambda(x)| + |Du_\lambda(x)| \leq C_R \qquad\text{for a.e.}\;x\in \overline{B}_R(0). \label{eq:LipschitzMaximalSol}
\end{align}
\end{prop}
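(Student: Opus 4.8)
The plan is to run Ishii's Perron method on $\R^n$, the two nonstandard inputs being a global lower barrier built from \ref{itm:assumptions-h3} and a locally uniform upper bound on all subsolutions coming from Corollary \ref{coro:UniformBoundedLocally} together with compactness of $\mathcal{A}$. For the barrier: since $0\in\overline{B}_1(0)$, assumption \ref{itm:assumptions-h3} gives the positive lower bound $\partial_u H(x,0,u)\ge \underline{\kappa}_1>0$ for all $(x,u)$, so $H(x,0,u)\le H(x,0,0)+\underline{\kappa}_1 u\le c_0+\underline{\kappa}_1 u$ for $u\le 0$, where $c_0:=\max_{x} H(x,0,0)<\infty$ by \ref{itm:assumptions-h2}. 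Choosing $C_0=C_0(\lambda)>0$ with $c_0-\lambda\underline{\kappa}_1 C_0\le c(H)$, the constant function $-C_0$ is a classical — hence viscosity — subsolution of \eqref{eq:DP}. Thus the class $\mathcal{S}$ of continuous subsolutions of \eqref{eq:DP} is nonempty, and, replacing any $w\in\mathcal{S}$ by $\max(w,-C_0)$ (still a subsolution, as a max of two subsolutions), I may assume every $w\in\mathcal{S}$ satisfies $w\ge -C_0$.

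Next I would bound $\mathcal{S}$ from above locally. By Lemma \ref{lem:AubrySetNonempty} the Aubry set $\mathcal{A}$ is compact; given $R>0$, choose a bounded, open, connected domain $\Omega_R\supset \overline{B}_R(0)\cup\mathcal{A}$ with $C^2$ boundary. Corollary \ref{coro:UniformBoundedLocally} then yields $\sup_{\overline{B}_R(0)} w\le C_{\Omega_R}$ for every $w\in\mathcal{S}$, with $C_{\Omega_R}$ independent of $w$. Hence on $\overline{B}_R(0)$ every $w\in\mathcal{S}$ satisfies $-C_0\le w\le C_{\Omega_R}$, so $\lambda w$ ranges in a fixed bounded set; applying the local coercivity \eqref{eq:assumptions-LocalCoercive} of \ref{itm:assumptions-h1} with $K=\overline{B}_R(0)$ and $I=[-\lambda C_0,\lambda C_{\Omega_R}]$, each $w\in\mathcal{S}$ is $L_R$-Lipschitz on $B_R(0)$ with $L_R$ independent of $w$. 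Thus $\mathcal{S}$ is locally uniformly bounded and locally equi-Lipschitz.

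Now set $u_\lambda:=\sup_{w\in\mathcal{S}} w$. By the previous step $u_\lambda$ is locally Lipschitz, $-C_0\le u_\lambda$ on $\R^n$, and $u_\lambda\le C_{\Omega_R}$ on $\overline{B}_R(0)$; as a locally uniform supremum of subsolutions it is itself a continuous subsolution of \eqref{eq:DP}, hence the maximal continuous subsolution (any continuous subsolution $v$ obeys $v\le \max(v,-C_0)\le u_\lambda$). The standard Perron bump argument — if $u_\lambda$ failed the supersolution test at some $x_0$, the local coercivity lets one lift it slightly near $x_0$ while staying in $\mathcal{S}$, contradicting maximality — shows $u_\lambda$ is also a supersolution, hence solves \eqref{eq:DP}. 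This proves \eqref{eq:BoundedBelow}; for \eqref{eq:LipschitzMaximalSol} I would use that the locally Lipschitz subsolution $u_\lambda$ satisfies $H(x,Du_\lambda,\lambda u_\lambda)\le c(H)$ a.e. on $\overline{B}_R(0)$, together with $|u_\lambda|\le C_R:=\max(C_0,C_{\Omega_{R+1}})$ there, and invoke \eqref{eq:assumptions-LocalCoercive} once more (with $K=\overline{B}_R(0)$, $I=[-\lambda C_R,\lambda C_R]$) to bound $|Du_\lambda|$ a.e., enlarging $C_R$ to absorb the gradient bound.

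The only genuinely delicate points are the choice of the lower barrier — which works precisely because $p=0$ lies in every $\overline{B}_R(0)$, so the positive one-sided bound $\underline{\kappa}_1$ of \ref{itm:assumptions-h3} is available for all $u$, not merely for $u$ in a bounded range — and the uniform-in-$w$ upper bound, which is exactly the content borrowed from Corollary \ref{coro:UniformBoundedLocally} and hinges on $\mathcal{A}$ being compact. The Perron bump step itself is local and entirely routine given the coercivity.
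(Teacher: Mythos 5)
Your overall architecture matches the paper's: a lower barrier to make the admissible class nonempty, the uniform local upper bound from Corollary \ref{coro:UniformBoundedLocally} (valid because $\mathcal{A}$ is compact by Lemma \ref{lem:AubrySetNonempty}), local Lipschitz bounds from the coercivity \eqref{eq:assumptions-LocalCoercive}, and the standard Perron bump to upgrade the supremum to a solution. All of those steps are sound.

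The genuine problem is your lower barrier. You take the constant $-C_0$ and make it a subsolution by exploiting the discount term, which forces $c_0-\lambda\underline{\kappa}_1C_0\le c(H)$, i.e.\ $C_0\ge (c_0-c(H))/(\lambda\underline{\kappa}_1)$. Since $c(H)\le c_0=\max_x H(x,0,0)$ (the zero function is a subsolution at level $c_0$, so by Lemma \ref{lem:critical-value} $c(H)\le c_0$, with strict inequality in general), your $C_0$ blows up like $\lambda^{-1}$ as $\lambda\to 0^+$. The bound \eqref{eq:BoundedBelow} is meant to be uniform in $\lambda\in(0,1)$ — Corollary \ref{cor:max-sol-prop} asserts, ``as a consequence,'' that the family $\{u_\lambda\}$ is \emph{uniformly} bounded from below and that $\lambda u_\lambda\to 0$ locally uniformly, and the constant $C_0$ is reused as a $\lambda$-independent quantity in Proposition \ref{prop:BoundedMinimizerCompact}, Lemma \ref{lem:PropertiesUStrongCoerNew}, and Definition \ref{defn:NewIndices}. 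With your barrier you only get $u_\lambda\ge -C(c_0-c(H))/\lambda$, which is useless for those purposes (it does not even give $\liminf \lambda u_\lambda \ge 0$ with a rate, only $\lambda u_\lambda \ge -(c_0-c(H))/\underline{\kappa}_1$, and the $|u_\lambda|\le C_R$ part of \eqref{eq:LipschitzMaximalSol} inherits the same degeneracy). The fix is the paper's choice of barrier: by Lemma \ref{lem:cutoffIshiiSiconofli} there is a globally bounded Lipschitz subsolution $v$ of the \emph{ergodic} equation $H(x,Dv,0)\le c(H)$ in $\R^n$; setting $w=v-\Vert v\Vert_{L^\infty(\R^n)}\le 0$, the monotonicity of $H$ in $u$ gives $H(x,Dw,\lambda w)\le H(x,Dw,0)\le c(H)$, so $w$ is an admissible barrier with $w\ge -2\Vert v\Vert_{L^\infty(\R^n)}=:-C_0$ independent of $\lambda$. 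With that substitution the rest of your argument goes through (and, as you note, the local Lipschitz constant is then also $\lambda$-independent, since $\lambda u_\lambda$ ranges in a fixed compact interval on each $\overline{B}_R(0)$).
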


\begin{proof}[Proof of Proposition \ref{prop:PerronLipschitzNoContact}] 
By \ref{itm:assumptions-h1}, \ref{itm:assumptions-h2}, 
and the cutoff argument from Lemma \ref{lem:cutoffIshiiSiconofli}, there exists a subsolution $v$ to the ergodic problem $H(x,Dv,0) \leq c(H)$ in $\R^n$ that is Lipschitz and uniformly bounded in $\R^n$. Let 
\begin{equation}\label{eq:SubsolutionCutoff}
	w(x) = v(x)-\Vert v\Vert_{L^\infty(\R^n)}, \qquad x\in \R^n. 
\end{equation}
Thanks to the monotonicity in 
\ref{itm:assumptions-h1} 
$w$ is also a subsolution to $H(x,Dw,0) = c(H)$ in $\R^n$ and with $-C_0 \leq w \leq 0$ in $\R^n$, where $C_0 = 2\Vert v\Vert_{L^\infty(\R^n)}$. By 
monotonicity \ref{itm:assumptions-h3},
$w$ is also a subsolution to $H(x,Dw,\lambda w) = c(H)$ in $\R^n$ since $\lambda w \leq 0$, thus
    \begin{equation*}
        H(x,Dw, \lambda w) \leq H(x,Dw, 0) \leq c(H) \qquad\text{in}\;\R^n. 
    \end{equation*}
For $x\in \R^n$ and $\lambda > 0$, we define
\begin{align}\label{eq:PerronSol}
    u_\lambda(x):=\sup \left\lbrace v(x): w\leq v,v\;\text{is a subsolution}\;H(x,Dv,\lambda v) = c(H)\;\text{in}\;\R^n \right\rbrace.
\end{align}
The admissible set is nonempty since it contains \( w \). Moreover, by Corollary \ref{coro:UniformBoundedLocally}, any subsolution \( v \) to \( H(x, Dv, \lambda v) \leq c(H) \) in \( \mathbb{R}^n \) satisfies \( v(x) \leq C_R \) whenever \( \mathcal{A} \subset B_R(0) \). Therefore, \( u_\lambda \) is well-defined, uniformly bounded from below, locally bounded from above, and, furthermore, by 
\eqref{eq:assumptions-LocalCoercive} in \ref{itm:assumptions-h1}, 
\( u_\lambda \) is locally Lipschitz independent of \( \lambda \in (0,1) \). By virtue of Perron's method, \( u_\lambda \) is a viscosity solution to \eqref{eq:DP}.
\end{proof}

\begin{cor} \label{cor:max-sol-prop} 
Assume \ref{itm:assumptions-h1}--\ref{itm:assumptions-h3}. 
The family of maximal solutions \( \{u_\lambda\}_{\lambda \in (0,1)}\) to \eqref{eq:DP} is uniformly bounded from below, locally bounded from above, and equi-Lipschitz on any compact domains (with $C^2$ boundary) containing the Aubry set \( \mathcal{A} \). As a consequence, we have the following:
\begin{itemize}
    \item[$\mathrm{(i)}$] The sequence \(\lambda u_\lambda(\cdot) \to 0\) locally uniformly in \(\mathbb{R}^n\) as \(\lambda \to 0^+\).
    \item[$\mathrm{(ii)}$] 
    The set of locally uniform limits of \( u^{\lambda_j} \) as \( \lambda \to 0^+ \) is nonempty, and any such limit solves \eqref{eq:E}.
\end{itemize}
\end{cor}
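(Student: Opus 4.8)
The plan is to repackage the per-$\lambda$ estimates already established into family-wide statements and then pass to the limit. For the uniform bounds and equi-Lipschitz property, I would invoke Proposition \ref{prop:PerronLipschitzNoContact} directly: estimate \eqref{eq:BoundedBelow} gives $u_\lambda \geq -C_0$ with $C_0$ independent of $\lambda$, and \eqref{eq:LipschitzMaximalSol} gives, for each $R>0$, a constant $C_R$ independent of $\lambda \in (0,1)$ with $|u_\lambda| + |Du_\lambda| \leq C_R$ a.e.\ on $\overline{B}_R(0)$. Since any compact domain $\Omega$ with $C^2$ boundary sits inside some $\overline{B}_R(0)$, the equi-Lipschitz property on $\Omega$ is immediate; moreover, because $u_\lambda$ is a subsolution of $H(x,Du,\lambda u) \leq c(H)$ in $\R^n$, Corollary \ref{coro:UniformBoundedLocally} applied to such an $\Omega \supset \mathcal A$ yields $\sup_{\overline\Omega} u_\lambda \leq C_\Omega$ uniformly in $\lambda$, and combined with the coercivity \eqref{eq:assumptions-LocalCoercive} in \ref{itm:assumptions-h1} this re-derives the gradient bound on $\Omega$. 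This establishes the first sentence of the corollary.

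For assertion (i), fix $R>0$. From $-C_0 \leq u_\lambda(x)$ for all $x$ and $u_\lambda(x) \leq C_R$ for $x \in \overline{B}_R(0)$ (the latter being the local upper bound above), I obtain $\|\lambda u_\lambda\|_{L^\infty(\overline{B}_R(0))} \leq \lambda \max\{C_0, C_R\} \to 0$ as $\lambda \to 0^+$. Since $R$ is arbitrary, $\lambda u_\lambda \to 0$ locally uniformly in $\R^n$.

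For assertion (ii), the family $\{u_\lambda\}_{\lambda\in(0,1)}$ is locally uniformly bounded and locally equi-Lipschitz by the first part, hence locally equicontinuous; by the Arzel\`a--Ascoli theorem together with a diagonal argument over an exhaustion of $\R^n$ by balls, every sequence $\lambda_j \to 0^+$ admits a subsequence along which $u_{\lambda_j}$ converges locally uniformly to some $u_0 \in C(\R^n)$ (automatically locally Lipschitz), so the set of such limits is nonempty. To see that any such $u_0$ solves \eqref{eq:E}, I would use the stability of viscosity solutions: $u_{\lambda_j}$ solves $H(x, Du_{\lambda_j}, \lambda_j u_{\lambda_j}) = c(H)$ in $\R^n$; since $H$ is continuous, $u_{\lambda_j} \to u_0$ locally uniformly, and $\lambda_j u_{\lambda_j} \to 0$ locally uniformly by (i), the standard stability result --- with the zeroth-order slot of $H$ treated as a perturbation converging uniformly on compacts to $0$ --- shows $u_0$ is a viscosity solution of $H(x, Du_0, 0) = c(H)$, i.e.\ \eqref{eq:E}.

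There is no serious obstacle here; everything is a direct consequence of Proposition \ref{prop:PerronLipschitzNoContact} and Corollary \ref{coro:UniformBoundedLocally}. The only point requiring a little care is the stability argument in (ii): one must observe that what is needed is the vanishing of the zeroth-order coupling $\lambda_j u_{\lambda_j}$ (not of $\lambda_j$ in isolation), uniformly on the compact sets where test functions touch, and this is exactly the content of (i).
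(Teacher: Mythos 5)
Your proposal is correct and follows exactly the route the paper intends: the uniform lower bound, local upper bound, and equi-Lipschitz estimates are read off from Proposition \ref{prop:PerronLipschitzNoContact} and Corollary \ref{coro:UniformBoundedLocally} (whose constants are independent of $\lambda$), assertion (i) follows by combining the two-sided local bounds, and assertion (ii) follows by Arzel\`a--Ascoli with a diagonal argument plus viscosity stability, using (i) to handle the zeroth-order argument of $H$. The paper leaves this corollary without an explicit proof precisely because it is this direct consequence, and your write-up supplies the missing details correctly, including the one genuinely delicate point (that stability needs $\lambda_j u_{\lambda_j}\to 0$ locally uniformly, not merely $\lambda_j\to 0$).
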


Similar to \cite[Proposition 3.6]{ishii_vanishing_2020}, we can modify the Hamiltonian to be super-linear without loss of generality, as the primary focus is the solution \( u_\lambda \) to \eqref{eq:DP}.

\begin{prop}\label{prop:SuperlinearReduction} 
Assume \ref{itm:assumptions-h1}--\ref{itm:assumptions-h3}. 
Then there exists a Hamiltonian \( \widehat{H} \) satisfying the same assumptions and \( c(\widehat{H}) = c(H) \), such that for any compacts $K\subset \R^n$ and \( I \subset \R \),  
\begin{align}\label{eq:HSuperLinear}
	\lim_{|p|\to \infty} \min_{x\in K} \left\lbrace \frac{\widehat{H}(x,p,u)}{|p|}: u\in I  \right\rbrace= +\infty. 
\end{align}
Moreover, any subsolution of \( H(x,Du,0) \leq c(H) \) in \( \R^n \) is also a subsolution of \( \widehat{H}(x,Du,0) \leq c(H) \), and the solution \( u_\lambda \) from Proposition \ref{prop:PerronLipschitzNoContact} is also the maximal solution to \( \widehat{H}(x,Du,\lambda u) = c(H) \) in \( \R^n \).
\end{prop}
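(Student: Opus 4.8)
The plan is to leave $H$ untouched in the $(x,u)$ variables and add a nonnegative penalty in the gradient variable that vanishes on the (locally bounded) region where all the gradients of interest live and that is superlinear outside of it. The key device is to make this penalty \emph{independent of $u$}: then $\partial_u\widehat H=\partial_u H$ and the difference quotient in \eqref{eq:assumptions-DiffC1} is unchanged, so \ref{itm:assumptions-h3} survives verbatim, while the nonlinear-in-$u$ structure that distinguishes this setting from the discounted case is preserved exactly.

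Concretely, I would first record the relevant a priori gradient bounds. By the local coercivity in \ref{itm:assumptions-h1}, for each $R>0$ the number $\tilde r_R:=\sup\{\,|p|:H(x,p,0)\le c(H),\ |x|\le R\,\}$ is finite, and by Proposition \ref{prop:PerronLipschitzNoContact} the constants $C_R$ from \eqref{eq:LipschitzMaximalSol} (bounding $|u_\lambda|+|Du_\lambda|$ on $\overline B_R(0)$ uniformly in $\lambda\in(0,1)$) are finite. Choose a continuous, nondecreasing $\rho:\R^n\to[1,\infty)$, depending only on $|x|$, with $\rho(x)\ge 1+\max(\tilde r_{|x|+1},C_{|x|+1})$, fix $\chi\in C^1(\R)$ convex, nondecreasing, with $\chi\equiv 0$ on $(-\infty,0]$ and $\chi(t)/t\to\infty$ as $t\to\infty$ (e.g.\ $\chi(t)=(t_+)^2$), and set
\[
\widehat H(x,p,u):=H(x,p,u)+\chi(|p|-\rho(x)).
\]
Then $\widehat H$ is continuous, convex in $p$ (the added term is a convex nondecreasing function of the convex map $p\mapsto|p|-\rho(x)$), and nondecreasing in $u$; \ref{itm:assumptions-h3} holds with the same constants and modulus as for $H$ since the correction is $u$-free. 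For the superlinearity \eqref{eq:HSuperLinear} (which upgrades the coercivity in \ref{itm:assumptions-h1}), on any compact $K\times I$ one has $\widehat H(x,p,u)\ge(\inf_{K\times\R^n\times I}H)+\chi(|p|-\max_K\rho)$, the infimum being finite by coercivity, so $\widehat H(x,p,u)/|p|\to+\infty$ uniformly in $(x,u)\in K\times I$. For \ref{itm:assumptions-h2}, take $\varepsilon':=\min(\varepsilon,1)$ with $\varepsilon$ from \ref{itm:assumptions-h2}; since $\rho\ge 1$ the penalty vanishes on $\{|p|\le\varepsilon'\}$, so $\limsup_{|x|\to\infty}\max_{|p|\le\varepsilon'}\widehat H(x,p,0)=\limsup_{|x|\to\infty}\max_{|p|\le\varepsilon'}H(x,p,0)$, while $\widehat H\ge H$ gives $\max_x\min_p\widehat H(x,p,0)\ge\max_x\min_p H(x,p,0)$ and $\min_p\widehat H(x,p,0)\le\widehat H(x,0,0)=H(x,0,0)$ keeps that quantity finite.

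For $c(\widehat H)=c(H)$ and the subsolution claim: since $\widehat H\ge H$ we have $c(\widehat H)\ge c(H)$; conversely, if $w\in C(\R^n)$ is a viscosity subsolution of $H(x,Dw,0)\le c(H)$, then $w$ is locally Lipschitz by \ref{itm:assumptions-h1} and a.e.\ $|Dw(x)|\le\tilde r_{|x|}\le\rho(x)$, so $\chi(|Dw(x)|-\rho(x))=0$ and $\widehat H(x,Dw(x),0)\le c(H)$ a.e.; convexity of $\widehat H$ in $p$ turns this a.e.\ inequality into the viscosity subsolution property for $\widehat H(x,Dw,0)\le c(H)$. Applied to a subsolution attaining $c(H)$ (which exists by Lemma \ref{lem:critical-value}), this yields $c(\widehat H)\le c(H)$, hence equality, and is exactly the stated subsolution-preservation. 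The same a.e.\ comparison, now using $|Du_\lambda(x)|\le C_{|x|}\le\rho(x)$ from \eqref{eq:LipschitzMaximalSol}, gives $\widehat H(x,Du_\lambda(x),\lambda u_\lambda(x))=H(x,Du_\lambda(x),\lambda u_\lambda(x))\le c(H)$ a.e., so $u_\lambda$ is a viscosity subsolution of $\widehat H(x,Du,\lambda u)=c(H)$; it is a supersolution because $\widehat H\ge H$ and $u_\lambda$ solves $H(x,Du,\lambda u)=c(H)$. Thus $u_\lambda$ solves the $\widehat H$-equation, and since every continuous subsolution of $\widehat H(x,Dv,\lambda v)\le c(H)$ is a fortiori one of $H(x,Dv,\lambda v)\le c(H)$, the maximality in Proposition \ref{prop:PerronLipschitzNoContact} forces $v\le u_\lambda$, so $u_\lambda$ is maximal for $\widehat H$ as well.

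I expect no genuine obstacle here; the only points that require attention are that $\rho$ must be chosen large enough to dominate \emph{simultaneously} the gradients of all ergodic subsolutions and of the entire family $\{u_\lambda\}_{\lambda\in(0,1)}$ — which is precisely where the uniform-in-$\lambda$ Lipschitz estimate of Proposition \ref{prop:PerronLipschitzNoContact} and the local coercivity of \ref{itm:assumptions-h1} are used — and that the penalty be taken independent of $u$ so that \ref{itm:assumptions-h3} is inherited unchanged; the a.e.-to-viscosity upgrade of subsolutions is routine given convexity in $p$.
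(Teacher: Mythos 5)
Your proposal is correct, but it takes a genuinely different route from the paper's. The paper sets $\widehat H(x,p,u)=H(x,p,u)+\big(\max\{0,\,H(x,p,u)-c(H)\}\big)^2$: the penalty is phrased in terms of $H$ itself, so it vanishes identically on the sublevel set $\{H\le c(H)\}$ --- exactly where the a.e.\ gradients of every relevant subsolution live --- and no explicit gradient bounds are needed; superlinearity then follows because convexity plus local coercivity force $H$ to grow at least linearly in $|p|$ on compacts, so the squared excess grows quadratically. You instead penalize $|p|$ directly via $\chi(|p|-\rho(x))$, with $\rho$ an explicit radial majorant of the Lipschitz bounds $\tilde r_R$ (for ergodic subsolutions) and $C_R$ (for the family $\{u_\lambda\}$). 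This costs you the bookkeeping of assembling those bounds from \ref{itm:assumptions-h1} and Proposition \ref{prop:PerronLipschitzNoContact}, and of checking separately that the gradients of the contact solutions $u_\lambda$ also land in the unpenalized region; in exchange, the correction is independent of $u$, so \ref{itm:assumptions-h3} is inherited with the identical constants and modulus --- a point the paper's construction leaves implicit, since there one must differentiate the squared hinge in $u$. Both arguments conclude identically: $\widehat H\ge H$ gives one inclusion of subsolution classes, the vanishing of the penalty along a.e.\ gradients gives the other (upgraded to the viscosity sense by convexity in $p$), and maximality of $u_\lambda$ transfers.
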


\begin{proof} We define  
\[
\widehat{H}(x,p,u) := H(x,p,u) + \left( \max\{0, H(x,p,u) - c(H) \} \right)^2.
\]  
As in \cite{ishii_vanishing_2020}, if \( H \) satisfies 
\ref{itm:assumptions-h1}, \ref{itm:assumptions-h2}, 
then so does \( \widehat{H} \), since these conditions depend only on \( H(x,p,0) \), and \( c(\widehat{H}) = c(H) \). Since \( \widehat{H} \geq H \), any subsolution of \( \widehat{H} = c(H) \) is also a subsolution of \( H = c(H) \). Thus, the maximal solution \( \widehat{u}_\lambda \) satisfies \( \widehat{u}_\lambda \leq u_\lambda \). Conversely, \( u_\lambda \) is also a subsolution of \( \widehat{H} = c(H) \), so \( \widehat{u}_\lambda \geq u_\lambda \). Hence, \( u_\lambda = \widehat{u}_\lambda \) and is maximal for both equations.
\end{proof}

\section{The variational formula for the maximal solution} \label{sec:variational}
This section aims to derive a variational formula for the maximal solution \( u_\lambda \) of \eqref{eq:DP}. We consider the associated evolution problem 
\begin{equation}\label{eq:Cauchy}
	\begin{cases}
	\begin{aligned}
		u_t + H(x,Du,\lambda u_\lambda(x)) &= 0 &&\text{in}\;\R^n\times (0,\infty), \\
		u(x,0) &= u_\lambda(x)&&\text{on}\;\R^n. 
	\end{aligned}
	\end{cases}
\end{equation}
To derive a variational formula for \( u_\lambda \), it suffices to show \( \tilde{u}(x,t) = u_\lambda(x) - c(H)t \) coincides with the value function \( u \) from optimal control. This equality may fail without further assumptions due to the lack of comparison in unbounded domains, and the fact that $u_\lambda$ may not be bounded.

\subsection{The Lax-Oleinik semigroup}

\begin{lem}\label{lem:HTilde} 
Assume \ref{itm:assumptions-h1}--\ref{itm:assumptions-h3}. 
Let us define
\begin{equation}\label{eq:Htiltde}
	\widetilde{H}(x,p) = H(x,p, \lambda u_\lambda(x)), \qquad (x,p)\in \R^n\times\R^n.
\end{equation}
Then \(\widetilde{H}\) is continuous, convex in \(p\) for each \(x \in \mathbb{R}^n\), and satisfies \(\min_{x \in K} \widetilde{H}(x,p) \to \infty\) as \(|p| \to \infty\) for any compact \(K \subset \mathbb{R}^n\). Its Legendre transform \(\widetilde{L}\) is given by
\begin{equation}\label{eq:Ltiltde}
	\widetilde{L}(x,v) = L(x,v, \lambda u_\lambda(x)), \qquad (x,v)\in \R^n\times\R^n.
\end{equation}
\end{lem}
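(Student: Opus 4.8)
The plan is to verify each listed property of $\widetilde H$ by transferring the corresponding property of $H$ through the substitution $u = \lambda u_\lambda(x)$, using the regularity of the maximal solution established in Proposition \ref{prop:PerronLipschitzNoContact}. First I would record that $u_\lambda$ is locally Lipschitz, bounded below by $-C_0$, and locally bounded above; hence $x \mapsto \lambda u_\lambda(x)$ is continuous and maps each compact $K \subset \R^n$ into a compact interval $I = I_K \subset \R$. Continuity of $\widetilde H$ is then immediate: it is the composition of the continuous map $x \mapsto (x, p, \lambda u_\lambda(x))$ with the continuous $H$.

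For convexity in $p$, I would simply fix $x \in \R^n$ and observe that $p \mapsto \widetilde H(x,p) = H(x,p,\lambda u_\lambda(x))$ is convex because $\xi \mapsto H(x,\xi,u)$ is convex for each fixed $(x,u)$ by \ref{itm:assumptions-h1}; here $u = \lambda u_\lambda(x)$ is a frozen constant, so no joint convexity is needed. For coercivity, given a compact $K$, choose the compact interval $I$ with $\lambda u_\lambda(x) \in I$ for all $x \in K$; then for any $r>0$,
\[
\inf_{x \in K,\ |p| \ge r} \widetilde H(x,p) \ \ge\ \inf\{ H(x,p,u) : x \in K,\ |p| \ge r,\ u \in I \},
\]
and the right-hand side tends to $+\infty$ as $r \to \infty$ by the local coercivity \eqref{eq:assumptions-LocalCoercive} in \ref{itm:assumptions-h1}, which gives $\min_{x \in K} \widetilde H(x,p) \to \infty$ as $|p| \to \infty$.

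Finally, the identification of the Legendre transform is a direct computation from the definition \eqref{eq:Legendre}: for each $(x,v)$,
\[
\widetilde L(x,v) = \sup_{p \in \R^n}\bigl( p \cdot v - \widetilde H(x,p) \bigr) = \sup_{p \in \R^n}\bigl( p \cdot v - H(x,p,\lambda u_\lambda(x)) \bigr) = L\bigl(x,v,\lambda u_\lambda(x)\bigr),
\]
since the inner expression is exactly the one defining $L(x,\cdot,u)$ with $u = \lambda u_\lambda(x)$. I do not anticipate a genuine obstacle here; the only point that requires care is making sure the bound on $\lambda u_\lambda$ on compact sets is invoked so that coercivity of $\widetilde H$ holds uniformly in $x$ over $K$ — this is precisely what the local-in-$(x,u)$ coercivity in \ref{itm:assumptions-h1} is designed to accommodate, combined with the local boundedness of $u_\lambda$ from Proposition \ref{prop:PerronLipschitzNoContact}.
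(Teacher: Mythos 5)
Your proposal is correct and supplies exactly the standard verification the paper has in mind (the paper omits the proof of Lemma \ref{lem:HTilde} as standard): continuity and convexity follow by freezing $u=\lambda u_\lambda(x)$, coercivity follows from \eqref{eq:assumptions-LocalCoercive} together with the local boundedness of $u_\lambda$ from Proposition \ref{prop:PerronLipschitzNoContact}, and the Legendre transform identity is a direct substitution in \eqref{eq:Legendre}. No gaps.
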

We omit the proof of this Lemma as it is standard.

\begin{prop}\label{prop:valuefunction} 
Assume \ref{itm:assumptions-h1}, \ref{itm:assumptions-h2}. 
For \( (x,t)\in \R^n\times (0,\infty) \), we define the value function as 
\begin{equation}\label{eq:ValueFunction}
    u(x,t) = \inf_{\gamma\in \mathcal{C}^+(x,t;\R^n)}\left\lbrace \int_{0}^t L\big(\gamma(s), \dot{\gamma}(s), \lambda u_\lambda(\gamma(s))\big) \;ds +  u_\lambda(\gamma(0)) \right\rbrace .
\end{equation}
Then $u\in C(\R^n\times [0,\infty))\cap W^{1,\infty}_{\mathrm{loc}}(\R^n\times [0,\infty))$, and is a continuous viscosity solution to 
\begin{equation}\label{eq:CauchyContact}
	\begin{cases}
	\begin{aligned}
		u_t(x,t) + H(x,Du(x,t), \lambda u_\lambda(x)) &= 0 &&\qquad\text{in}\;\R^n\times (0,\infty), \\
		u(x,0) &= u_\lambda(x) &&\qquad\text{on}\;\R^n. 
	\end{aligned}
	\end{cases}
\end{equation}
Furthermore, $u(x,t) \geq \tilde{u}(x,t) = u_\lambda(x) - c(H)t$ for all $(x,t)\in \R^n\times (0,\infty)$. 
\end{prop}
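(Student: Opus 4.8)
The plan is to identify $u(\cdot,\cdot)$ in \eqref{eq:ValueFunction} as the Lax--Oleinik value function for the \emph{time-independent} convex, locally coercive Hamiltonian $\widetilde H(x,p)=H(x,p,\lambda u_\lambda(x))$ of Lemma~\ref{lem:HTilde}, whose Legendre transform is $\widetilde L(x,v)=L(x,v,\lambda u_\lambda(x))$ by \eqref{eq:Ltiltde}. Once finiteness of $u$ and suitable a priori bounds are in place, the regularity and the viscosity-solution property follow from the standard dynamic-programming machinery. The only genuinely new ingredient is the inequality $u\ge\tilde u$, which I would prove first, since it simultaneously provides the lower bound needed for finiteness.

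\emph{The lower bound $u\ge\tilde u$ (and finiteness).} Fix $(x,t)$. Since $u_\lambda$ solves \eqref{eq:DP}, it is in particular a locally Lipschitz viscosity subsolution of $\widetilde H(x,Du_\lambda)\le c(H)$ in $\R^n$. By the control-theoretic characterization of subsolutions (Proposition~\ref{prop:curve}, applied on a ball containing the relevant portion of the curve; it suffices to consider finite-energy curves), every $\gamma\in\mathcal{C}^+(x,t;\R^n)$ satisfies
\[
  u_\lambda(\gamma(t))-u_\lambda(\gamma(0))\ \le\ \int_0^t\widetilde L(\gamma(s),\dot\gamma(s))\,ds+c(H)\,t.
\]
Because $\gamma(t)=x$ and the running cost in \eqref{eq:ValueFunction} equals $\widetilde L(\gamma,\dot\gamma)$, rearranging gives $u_\lambda(x)-c(H)t\le\int_0^t\widetilde L(\gamma,\dot\gamma)\,ds+u_\lambda(\gamma(0))$; taking the infimum over $\gamma$ yields $\tilde u(x,t)=u_\lambda(x)-c(H)t\le u(x,t)$, so in particular $u(x,t)>-\infty$. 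The opposite direction of finiteness is immediate from the constant curve $\gamma\equiv x$: $u(x,t)\le tL(x,0,\lambda u_\lambda(x))+u_\lambda(x)<\infty$.

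\emph{Confinement and standard theory.} With these two bounds together with $u_\lambda\ge-C_0$ from Proposition~\ref{prop:PerronLipschitzNoContact}, any near-minimizing curve for $u(x,t)$ has $\int_0^t\widetilde L(\gamma,\dot\gamma)\,ds$ controlled; invoking the monotonicity of $L$ in $u$ and the growth estimates \eqref{eq:growthL} and \eqref{eq:SuperLinearLtilde} (so that $\widetilde L$ is bounded below on bounded $x$-sets, superlinear in $v$, and grows in $x$ at infinity) one shows such curves remain in a bounded set and have bounded length, uniformly for $(x,t)$ in compacta. This localizes the problem to a fixed bounded region on which $\widetilde L$ is a Tonelli Lagrangian, where the classical results apply: the dynamic programming principle holds, Tonelli minimizers exist with locally bounded velocity, $u$ is locally Lipschitz on $\R^n\times(0,\infty)$, it extends continuously up to $t=0$ with $u(\cdot,0)=u_\lambda$ (using the local Lipschitz bound on $u_\lambda$ and coercivity of $\widetilde L$), and the usual sub-/super-solution verification via the dynamic programming principle shows $u$ is a viscosity solution of \eqref{eq:CauchyContact}; see \cite{mitake_asymptotic_2008,ishii_asymptotic_2008} and the references therein. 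Combining this with the previous paragraph gives all assertions.

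\emph{Main obstacle.} Everything except the confinement step is textbook. The difficulty is precisely that, on the noncompact domain and with $u_\lambda$ possibly unbounded above, $\widetilde L$ need not be bounded below globally, so a priori a minimizing sequence could escape to infinity or develop unbounded velocity. Ruling this out is exactly what the a priori upper bound on $u$ (the constant curve), the lower bound $u\ge\tilde u$ just proved, and the $(x,v)$-growth of $L$ accomplish, and this is the step that must be handled with care.
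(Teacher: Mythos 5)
Your proposal is essentially the approach the paper intends: the paper gives no proof of this proposition at all, saying only that it follows \cite{mitake_asymptotic_2008} ``with suitable modifications to accommodate the unbounded domain,'' and your sketch is precisely that adaptation. The one piece of genuinely new content, the inequality $u\ge\tilde u$, you prove correctly: $u_\lambda$ is a locally Lipschitz viscosity subsolution of the frozen equation $\widetilde H(x,Du_\lambda)\le c(H)$, so Proposition~\ref{prop:curve} applied on a ball containing $\gamma([0,t])$ gives $u_\lambda(x)-u_\lambda(\gamma(0))\le\int_0^t\widetilde L(\gamma,\dot\gamma)\,ds+c(H)t$ for every finite-energy competitor, and taking the infimum yields the claim. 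The only imprecise point is your confinement step: you assert that $\widetilde L$ ``grows in $x$ at infinity'' by invoking \eqref{eq:growthL}, but that estimate concerns $L(x,v,0)$, and since $L$ is nonincreasing in $u$ while $u_\lambda$ is only locally bounded above, one gets $\widetilde L(x,v)=L(x,v,\lambda u_\lambda(x))\le L(x,v,0)$ wherever $u_\lambda>0$ --- the wrong direction --- so \eqref{eq:growthL} does not transfer to $\widetilde L$ outside compact sets. This is not fatal: the compactness of near-minimizing sequences with bounded action is exactly what Lemma~\ref{lem:minimizerUSC} (applied to $\widetilde H$, $\widetilde L$, which satisfy \ref{itm:assumptions-k0}--\ref{itm:assumptions-k2} by Lemma~\ref{lem:HTilde}) supplies, and this is how the paper itself obtains minimizers in Proposition~\ref{prop:Minimizer}; you should route the confinement through that lemma rather than through \eqref{eq:growthL}.
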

We refer to \cite{mitake_asymptotic_2008} for the approach to this proposition, with suitable modifications to accommodate the unbounded domain.

\begin{prop}\label{prop:Minimizer} 
Assume \ref{itm:assumptions-h1}, \ref{itm:assumptions-h2}. 
For $(x,t)\in \R^n\times (0,\infty$), the value function $u(x,t)$ defined as in \eqref{eq:ValueFunction} admits a minimizer $\gamma\in\mathcal{C}^+(x,t;\R^n)$ such that
\begin{equation*}
    u(x,t) = u_\lambda(\gamma(0)) + \int_{0}^t L\big(\gamma(s), \dot{\gamma}(s), \lambda u_\lambda(\gamma(s))\big) \;ds . 
\end{equation*}
\end{prop}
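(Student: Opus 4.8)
The plan is to show existence of a minimizer for the infimum defining $u(x,t)$ in \eqref{eq:ValueFunction} by the direct method of the calculus of variations. First I would fix $(x,t) \in \R^n \times (0,\infty)$ and take a minimizing sequence $\gamma_k \in \mathcal{C}^+(x,t;\R^n)$, so that
\[
	u_\lambda(\gamma_k(0)) + \int_0^t L\big(\gamma_k(s), \dot{\gamma}_k(s), \lambda u_\lambda(\gamma_k(s))\big)\,ds \longrightarrow u(x,t)
\]
as $k \to \infty$. The first task is to obtain a priori bounds. Since the value $u(x,t)$ is finite (it is $\leq u_\lambda(x) + t\,L(x,0,\lambda u_\lambda(x))$ by taking the constant curve), we may assume the sequence of costs is bounded by some constant $C$. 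Using \eqref{eq:BoundedBelow}, $u_\lambda(\gamma_k(0)) \geq -C_0$, hence $\int_0^t L(\gamma_k,\dot\gamma_k,\lambda u_\lambda(\gamma_k))\,ds \leq C + C_0$. To bound the curves uniformly in $C([0,t];\R^n)$ I would exploit the superlinear growth of $L$ coming from \eqref{eq:growthL} (or Proposition \ref{prop:SuperlinearReduction}, allowing us to assume $H$ — hence $L$ — is genuinely superlinear): outside the ball $\overline{B}_{R_0}(0)$ one has $L(z,v,0)+c(H) \geq \delta_0|v|$, and since $u \mapsto L$ is nonincreasing and $\lambda u_\lambda(\gamma_k) $ is locally bounded (it is bounded above by $C_R$ on $\overline B_R$ and below by $-\lambda C_0$), one controls $\int |\dot\gamma_k|$ on the portion of time where $\gamma_k$ is far out; combined with $\gamma_k(t)=x$ fixed, this gives a uniform bound $\sup_k \|\gamma_k\|_{C([0,t])} \leq R_1$ for some $R_1$. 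Then on the fixed compact set $\overline B_{R_1}(0)$, the superlinearity \eqref{eq:SuperLinearLtilde} together with the cost bound gives, via de la Vallée-Poussin, that $\{\dot\gamma_k\}$ is uniformly integrable in $L^1([0,t];\R^n)$; hence by the Dunford–Pettis theorem $\{\gamma_k\}$ is precompact in $C([0,t];\R^n)$ and, along a subsequence, $\gamma_k \to \gamma$ uniformly with $\dot\gamma_k \rightharpoonup \dot\gamma$ weakly in $L^1$, and $\gamma \in \mathrm{AC}([0,t];\R^n)$ with $\gamma(t)=x$, i.e. $\gamma \in \mathcal{C}^+(x,t;\R^n)$.

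It remains to pass to the limit in the functional, i.e. to prove lower semicontinuity:
\[
	u_\lambda(\gamma(0)) + \int_0^t L\big(\gamma(s),\dot\gamma(s),\lambda u_\lambda(\gamma(s))\big)\,ds \leq \liminf_{k\to\infty}\left( u_\lambda(\gamma_k(0)) + \int_0^t L\big(\gamma_k(s),\dot\gamma_k(s),\lambda u_\lambda(\gamma_k(s))\big)\,ds\right).
\]
The boundary term is continuous since $u_\lambda \in C(\R^n)$ and $\gamma_k(0) \to \gamma(0)$. For the integral term, the difficulty is the dependence of the integrand on $x$ through both slots $\gamma_k(s)$ and $\lambda u_\lambda(\gamma_k(s))$, while convexity is available only in the velocity variable $v$. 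The standard remedy: since $\gamma_k \to \gamma$ uniformly and both $x\mapsto L(x,v,0)$ bounds and $u_\lambda$ are continuous, for any $\eps>0$ one has, for $k$ large, $L(\gamma_k(s),v,\lambda u_\lambda(\gamma_k(s))) \geq L(\gamma(s),v,\lambda u_\lambda(\gamma(s))) - \eps(1+|v|)$ uniformly (using uniform continuity of $L$ on the relevant compact set and the uniform bound on $\|\gamma_k\|$, plus the superlinear term to absorb the $|v|$ error from the large-velocity region); then one applies the classical Tonelli–Ioffe lower semicontinuity theorem for the convex-in-$v$ integrand $(s,v)\mapsto L(\gamma(s),v,\lambda u_\lambda(\gamma(s)))$ against $\dot\gamma_k \rightharpoonup \dot\gamma$ in $L^1$, and lets $\eps\to 0$. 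This yields $\gamma$ as a minimizer and the claimed identity.

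The main obstacle I anticipate is the first step — the uniform a priori bound on $\|\gamma_k\|_{C([0,t])}$ and the uniform integrability of $\{\dot\gamma_k\}$ — because $u_\lambda$ is only locally bounded above (not globally), so $\lambda u_\lambda(\gamma_k(s))$ in the third slot of $L$ is controlled only after one already knows the curves stay in a fixed ball. The resolution is the bootstrap sketched above: use the global lower bound \eqref{eq:BoundedBelow} on $u_\lambda$ and the coercivity estimate \eqref{eq:growthL} valid for $|z|\geq R_0$ — which only involves $L(\cdot,\cdot,0)$ and the monotonicity $L(z,v,u)\leq L(z,v,0)$ when $u\geq 0$ is not quite what we need, so more carefully one notes $\lambda u_\lambda(\gamma_k(s)) \geq -\lambda C_0 \geq -C_0$ and $u\mapsto L(z,v,u)$ nonincreasing gives $L(z,v,\lambda u_\lambda) \leq L(z,v,-C_0)$, an upper bound that is not directly useful; instead use that $L(z,v,\lambda u_\lambda(z)) \geq L(z,v, M)$ where $M$ is the (finite, by \eqref{eq:LipschitzMaximalSol} on a large ball plus the value being finite) upper bound of $\lambda u_\lambda$ along a near-optimal curve, which self-consistently pins the curve to a compact set. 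Once the curves are confined, everything reduces to the classical compact-domain Tonelli theory, essentially as in \cite{mitake_asymptotic_2008}, to which one may appeal for the technical details.
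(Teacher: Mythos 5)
Your strategy is the same as the paper's: direct method on a minimizing sequence, with the action bound $\int_0^t L(\gamma_k,\dot\gamma_k,\lambda u_\lambda(\gamma_k))\,ds\leq C+C_0$ obtained exactly as you do from $u_\lambda\geq -C_0$, followed by compactness and lower semicontinuity. The difference is one of packaging: the paper freezes the $u$-slot once and for all, setting $\widetilde L(x,v)=L(x,v,\lambda u_\lambda(x))$, notes via Lemma \ref{lem:HTilde} that $\widetilde H,\widetilde L$ satisfy the standing Tonelli hypotheses \ref{itm:assumptions-k0}--\ref{itm:assumptions-k2}, and then invokes the appendix Lemma \ref{lem:minimizerUSC} (Ishii's compactness/lower-semicontinuity lemma) in a single stroke, so that neither the confinement of the curves nor the joint $(x,v)$-dependence of the integrand has to be handled separately. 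By contrast, you re-derive that lemma by hand, and the step you yourself flag as the main obstacle is the weak point of your write-up: the final paragraph's resolution (``$L(z,v,\lambda u_\lambda(z))\geq L(z,v,M)$ where $M$ is the upper bound of $\lambda u_\lambda$ along a near-optimal curve, which self-consistently pins the curve to a compact set'') is circular as stated, since $M$ is only defined after the curve is known to stay in a compact set. That circularity is not fatal to the strategy --- it is precisely what the cited Lemma \ref{lem:minimizerUSC} for the frozen Lagrangian disposes of --- but if you want a self-contained argument you should either quote that lemma directly or make the confinement argument non-circular (e.g.\ by first bounding the total variation of $\gamma_k$ outside a fixed ball using only the $u$-independent lower bounds \eqref{eq:BoundL} and \eqref{eq:growthL} together with monotonicity of $L$ in $u$ on the region where $u_\lambda\leq 0$, and the fixed endpoint $\gamma_k(t)=x$). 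The remainder of your argument (continuity of the boundary term, Dunford--Pettis, Tonelli--Ioffe lower semicontinuity for the convex-in-$v$ integrand) is correct and matches what the cited lemma provides.
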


\begin{proof} Due to Proposition \ref{prop:valuefunction}, the infimum is finite, since $u_\lambda$ is bounded from below by Proposition \ref{prop:PerronLipschitzNoContact}, and 
\begin{align*}
	\inf_{\gamma\in \mathcal{C}^+(x,t;\R^n)} \left\lbrace \int_{0}^t L\big(\gamma(s), \dot{\gamma}(s), \lambda u_\lambda(\gamma(s))\big) \;ds +  u_\lambda(\gamma(0))\right\rbrace = u(x,t) \geq u_\lambda(x) - c(H)t \geq -C_0 - c(H)t. 
\end{align*}
For simplicity, let us define 
\begin{align*}
	\Lambda(\gamma) &:= \int_{0}^t L\big(\gamma(s), \dot{\gamma}(s), \lambda u_\lambda(\gamma(s))\big) \;ds,                  &&\qquad \gamma \in \mathcal{C}^+(x,t;\R^n) \\
	J(\gamma) &:= \int_{0}^t L\big(\gamma(s), \dot{\gamma}(s), \lambda u_\lambda(\gamma(s))\big) \;ds + u_\lambda(\gamma(0)), &&\qquad \gamma \in \mathcal{C}^+(x,t;\R^n).
\end{align*}
Let $\gamma_k\in \mathcal{C}^+(x,t;\R^n)$ be a minimizing sequence such that $\lim_{k\to \infty} J(\gamma_k) =  u(x,t)$. Without loss of generality, assume (for large \( k \)) that \( \sup_k J(\gamma_k) \leq u(x,t) + 1 \).
Since $u_\lambda \geq -C_0$, we have
\begin{align*}
	\sup_{k\in \N} \Lambda(\gamma_k) = \sup_{k\in \N}\int_0^t L(\gamma(s), \dot{\gamma}(s), \lambda u_\lambda(\gamma(s)))\;ds  \leq u(x,t) +1 + C_0.
\end{align*}
Applying Lemma \ref{lem:minimizerUSC} to $\widetilde{H}$ and $\widetilde{L}$, we obtain a subsequence, still denoted by $\gamma_k$, and a curve $\gamma \in \mathcal{C}^-(x,t;\R^n)$ such that $\gamma_k \to \gamma$ uniformly in \( L^\infty([0,T]; \R^n) \) as \( k \to \infty \). Moreover,  
\[
\Lambda(\gamma) \leq \liminf_{k \to \infty} \Lambda(\gamma_k)
\quad \Longrightarrow \quad
J(\gamma) \leq \liminf_{k \to \infty} J(\gamma_k),
\]
since $\gamma_k(0) \to \gamma(0)$ by uniform convergence. Hence, \( u(x,t) = J(\gamma) \). 
\end{proof}

The result under which the value function u defined by \eqref{eq:ValueFunction} equals $\tilde{u}(x,t) = u_\lambda(x) - c(H)t$ relies on certain conditions. 
The boundedness of $u_\lambda$ from above, one of the required condition, is established in the following Lemma \ref{lem:PropertiesUStrongCoerNew}. 
\begin{prop}\label{prop:VariationalFomular} 
Assume \ref{itm:assumptions-h1}, \ref{itm:assumptions-h2}. 
Let \( u \) be given by \eqref{eq:ValueFunction}, and let \( u_\lambda \) denote the maximal solution to \eqref{eq:DP}. 
The equality \( u(x,t) = u_\lambda(x) - c(H)t \) holds under any of the following conditions:
\begin{enumerate}
    \item[$\mathrm{(i)}$] $u_\lambda$ is bounded from above and \eqref{eq:MonotoneWeakAboveH}; or $u_\lambda$ is bounded from above and \eqref{eq:assumptions-UniformBoundWithP};
    \item[$\mathrm{(ii)}$] \ref{itm:assumptions-pp2} and \eqref{eq:MonotoneWeakAboveH}; or \ref{itm:assumptions-pp2} and \eqref{eq:assumptions-UniformBoundWithP};
    \item[$\mathrm{(iii)}$] \ref{itm:assumptions-pp1}. 
\end{enumerate}

Consequently, if either condition holds, the maximal solution $u_\lambda$ to \eqref{eq:DP} admits the following representation formula:
\begin{equation}\label{eq:MinimizerTimePositive}
	u_\lambda(x) 
	= 
	\inf_{\gamma \in \mathcal{C}^+(x,t;\R^n)} 
	\left\lbrace 
	u_\lambda(\gamma(0)) 
	+
	\int_{0}^t 
	\left( 
		L\big(\gamma(s), \dot{\gamma}(s), \lambda u_\lambda(\gamma(s))\big) + c(H) 
	\right)\;ds 
	\right\rbrace 
	\quad \forall (x,t) \in \R^n\times \R^+.  
\end{equation}
Furthermore, for any \( R > 0 \), there exists \( C_R > 0 \) such that, if \( \gamma \in \mathcal{C}^+(x,t;\R^n) \) is a minimizer to \eqref{eq:MinimizerTimePositive}, then
\begin{align*}
    |\gamma(s)|\leq R \qquad\Longrightarrow\qquad |\dot{\gamma}(s)| \leq C_R \qquad\text{for a.e.}\;s\in (0,t).
\end{align*}
\end{prop}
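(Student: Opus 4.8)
The plan is to prove the equality $u(x,t)=u_\lambda(x)-c(H)t$ in each of the three cases by the same two-step scheme: (a) the inequality $u(x,t)\ge u_\lambda(x)-c(H)t$ already holds for free, by Proposition \ref{prop:valuefunction}; (b) for the reverse inequality I would apply a comparison principle on $\R^n$ to the functions $u(\cdot,t)$ and $\tilde u(\cdot,t)=u_\lambda(\cdot)-c(H)t$, both of which solve the frozen-coefficient Cauchy problem \eqref{eq:CauchyContact} for the Hamiltonian $\widetilde H(x,p)=H(x,p,\lambda u_\lambda(x))$ from Lemma \ref{lem:HTilde}. Indeed, $\tilde u$ is a solution of \eqref{eq:CauchyContact} because $u_\lambda$ solves \eqref{eq:DP} and adding $-c(H)t$ shifts the equation by the constant $c(H)$; and $u$ is the solution produced in Proposition \ref{prop:valuefunction}. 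Since they agree at $t=0$, a comparison principle for \eqref{eq:CauchyContact} forces $u\le\tilde u$, hence equality. The representation formula \eqref{eq:MinimizerTimePositive} is then immediate from \eqref{eq:ValueFunction} by substituting $u(x,t)=u_\lambda(x)-c(H)t$ and moving $c(H)t=\int_0^t c(H)\,ds$ inside the integral.

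The only real work is verifying a comparison principle for \eqref{eq:CauchyContact} under each of (i)--(iii); the obstruction is that $u_\lambda$, and hence $\widetilde H$, need not be bounded or globally Lipschitz, so the standard $\mathrm{BUC}$ comparison theorem does not apply directly. In case (iii), assumption \ref{itm:assumptions-pp1} provides, via \eqref{eq:assumptions-MonotoneInP}, a globally defined strict subsolution that grows slower than $u_\lambda$ at infinity (as discussed after \ref{itm:assumptions-pp3} in Subsection \ref{subsection:discussion}), so the comparison principle Theorem \ref{thm:ComparisonIshiiMod} (adapted from \cite[Theorem 4.1]{ishii_asymptotic_2008}) applies at the stationary level; one then lifts it to the evolution equation \eqref{eq:CauchyContact} by the usual trick of comparing $u(x,t)$ with $u_\lambda(x)+c(H)t$ going forward and $u_\lambda(x)-c(H)t$ going backward, using that $u_\lambda(x)+Ct$ and $u_\lambda(x)-Ct$ are super/sub-solutions. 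In cases (i) and (ii), either $u_\lambda$ is assumed bounded above (and bounded below by Proposition \ref{prop:PerronLipschitzNoContact}), so $\widetilde H$ is bounded and one can invoke the $\mathrm{BUC}(\R^n)$ comparison principle after using \eqref{eq:MonotoneWeakAboveH} or \eqref{eq:assumptions-UniformBoundWithP} to control the $u$-dependence; under \ref{itm:assumptions-pp2} the convexity in $(p,u)$ together with \eqref{eq:assumptions-JointlyConvex} again yields a suitable strict subsolution with the right growth, reducing to the same comparison argument. The boundedness-from-above of $u_\lambda$ needed in (i) is exactly what Lemma \ref{lem:PropertiesUStrongCoerNew} (referenced just before the statement) supplies.

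Finally, for the velocity bound: given a minimizer $\gamma\in\mathcal{C}^+(x,t;\R^n)$ of \eqref{eq:MinimizerTimePositive}, fix $R>0$ and restrict attention to a subinterval where $|\gamma(s)|\le R$. On such an interval the dynamic programming principle gives, for $0\le b<a\le t$,
\[
u_\lambda(\gamma(a))-u_\lambda(\gamma(b))=\int_b^a\bigl(L(\gamma,\dot\gamma,\lambda u_\lambda(\gamma))+c(H)\bigr)\,ds,
\]
and using $\|\lambda u_\lambda\|_{L^\infty(\overline B_R)}\le C_R$, the monotonicity of $u\mapsto L(x,v,u)$, the local Lipschitz bound $|Du_\lambda|\le C_R$ on $\overline B_R(0)$ from \eqref{eq:LipschitzMaximalSol}, and the local superlinearity \eqref{eq:SuperLinearLtilde} of $L(x,v,0)$ on the compact set $\overline B_R(0)$, one gets a.e. the inequality
\[
\Bigl(\tfrac{L(\gamma(s),\dot\gamma(s),0)}{|\dot\gamma(s)|}-C_R\Bigr)|\dot\gamma(s)|+c(H)\le 0,
\]
whence $|\dot\gamma(s)|\le C_R$ (after enlarging $C_R$) for a.e. $s$ with $|\gamma(s)|\le R$. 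This is exactly the argument already carried out in the proof of Proposition \ref{prop:ExistenceOfMinimizerTimeOmega}(ii), transported from $\overline\Omega$ to the sublevel ball $\overline B_R(0)$; I expect no new difficulty here.
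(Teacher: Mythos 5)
Your plan follows essentially the same route as the paper: the inequality $u\ge\tilde u$ is free from Proposition \ref{prop:valuefunction}, the reverse inequality comes from Theorem \ref{thm:ComparisonIshiiMod} applied with a reference function $\phi$ whose growth is dominated by $\tilde u$ and for which $\widetilde H(x,D\phi)\le C$ (namely $\tilde v$ from Lemma \ref{lem:cutoffIshiiSiconofli} in case (i), $\theta u_\lambda$ under \ref{itm:assumptions-pp1}, and $\theta u_\lambda+(1-\theta)\tilde v$ under \ref{itm:assumptions-pp2}), and the velocity bound is the same Clarke-differential/coercivity argument as in Proposition \ref{prop:ExistenceOfMinimizerTimeOmega}.

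Two points need tightening. First, under \ref{itm:assumptions-pp1} your candidate $\phi=\theta u_\lambda$ only satisfies the growth condition $\inf\{\tilde u(x,t)-\phi(x):|x|\ge r\}\to+\infty$ when $u_\lambda(x)\to+\infty$ as $|x|\to\infty$; since $(1-\theta)u_\lambda-c(H)T$ need not blow up otherwise, your phrase ``grows slower than $u_\lambda$ at infinity'' is not automatic. The paper fills this with a dichotomy: either $\liminf_{|x|\to\infty}u_\lambda(x)=+\infty$ (and then $\theta u_\lambda$ works), or the $\liminf$ is finite, in which case $u_\lambda$ is bounded and one falls back on case (i). Second, in case (i) you propose invoking the $\mathrm{BUC}(\R^n)$ comparison principle; but boundedness of $u_\lambda$ does not by itself put $u$ and $\tilde u$ in $\mathrm{BUC}$ (the coercivity in \ref{itm:assumptions-h1} is only local in $x$, so the Lipschitz constant of $u_\lambda$ may degenerate at infinity). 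The paper instead stays with Theorem \ref{thm:ComparisonIshiiMod}, taking $\phi=\tilde v$ with $\tilde v(x)\to-\infty$, so that $\tilde u-\tilde v\to+\infty$ follows from the uniform lower bound on $u_\lambda$; the role of \eqref{eq:MonotoneWeakAboveH} or \eqref{eq:assumptions-UniformBoundWithP} is precisely to verify $\widetilde H(x,D\tilde v)\le C$. There is also no need for the ``lift from stationary to evolution'' step you sketch: Theorem \ref{thm:ComparisonIshiiMod} is already a comparison principle for the Cauchy problem, applied directly to the subsolution $u$ and the supersolution $\tilde u$.
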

\begin{proof} We have already obtain $\tilde{u} \leq u$ from Proposition \ref{prop:valuefunction}. \medskip

{\bf Case (i).} If $\sup_{x\in \R^n} u_\lambda(x) < \infty$, the Hamiltonian $\widetilde{H}(x,p) = H(x,p,\lambda u_\lambda(x))$ satisfies that, the equation
\begin{equation*}
	\begin{cases}
	\begin{aligned}
		w_t + \widetilde{H}(x,Dw) &= 0 &&\qquad \text{in}\;\R^n\times (0,\infty), \\
		w(x,0) &= u_\lambda(x) &&\qquad \text{on}\;\R^n.
	\end{aligned}
	\end{cases} 
\end{equation*} 
has a standard comparison principle (see \cite[Theorem 1.23]{tran_hamilton-jacobi_2021}, where we can modify $\widetilde{H}(x,p)$ when $p$ large so that a condition in the available comparison results in the literature). 
Indeed, by Lemma \ref{lem:cutoffIshiiSiconofli},
if \( \tilde{v} \in C(\R^n) \) is a viscosity subsolution of \( H(x, D\tilde{v}, 0) \leq c(H) \) in \( \R^n \), with \( |D\tilde{v}| \leq C \) and \( \tilde{v}(x) \to -\infty \) as \( |x| \to \infty \).
\begin{itemize}
    \item If \eqref{eq:MonotoneWeakAboveH} holds, then 
    \begin{equation}\label{eq:UsingA5Up}
    \begin{aligned}
    	\widetilde{H}(x, D\tilde{v}(x)) 
    	& \leq H(x,D\tilde{v}(x), 0) + \overline{\kappa}_C |\lambda u_\lambda(x)| \leq c(H) + \overline{\kappa}_C \cdot \lambda \cdot \sup_{x\in \R^n}  u_\lambda(x), \qquad x\in \R^n.
    \end{aligned}
    \end{equation}
    \item If \eqref{eq:assumptions-UniformBoundWithP} holds, then $\widetilde{H}(x, D\tilde{v}(x)) \leq C$ in $\R^n$ since $\sup_{x \in \R^n} \lambda u_\lambda(x) < \infty$.
\end{itemize}
For each fixed $\lambda \in (0,1)$ and $t \in (0,T]$ we have
\begin{equation*}
	\lim_{r\to \infty}  \inf \left\{u_\lambda(x) - c(H)t - \tilde{v}(x):|x|\geq r, t\in  [0,T]\right\} = +\infty. 
\end{equation*}
From Proposition \ref{prop:valuefunction} we have $u_\lambda(x) = u(x,0)$, by Theorem \ref{thm:ComparisonIshiiMod} we obtain that two solutions $u_\lambda(x) - c(H)t$ and $u(x,t)$, constructed as in \eqref{eq:ValueFunction} are the same, thus the conclusion follows. 

\medskip

{\bf Case (ii).} There are two scenarios:
\begin{itemize}
	\item If $\displaystyle \lim_{r\to \infty}\inf \{u_\lambda(x): |x|\geq r\} < \infty$, then $\displaystyle \sup_{x\in \R^n} |u_\lambda(x)| < \infty$, thus by the previous case we have $u\equiv \tilde{u}$.
	\item If $\displaystyle \lim_{r\to \infty}\inf \{u_\lambda(x): |x|\geq r\} = \infty$, then for $\theta\in (0,1)$ and any $T>0$, we have
	\begin{align*}
		&\lim_{r\to \infty}\inf \{ \tilde{u}(x,t) - \theta u_\lambda(x):  |x|\geq r, t\in (0,T)\} 
		= (1-\theta) \cdot \lim_{r\to \infty}\inf_{|x|\geq r}  u_\lambda(x) - c(H)T = +\infty
	\end{align*}
\end{itemize}
Let $\phi(x) = \theta u_\lambda(x)$ for $x\in \R^n$. By 
\ref{itm:assumptions-pp1} we have
\begin{align*}
	\widetilde{H}(x,D\phi(x)) 
		= H(x,\theta Du_\lambda(x), \lambda u_\lambda(x))   
		\leq H(x,Du_\lambda(x), \lambda u_\lambda(x)) +C_\theta \leq c(H)+C_\theta \qquad \text{in}\;\R^n.
\end{align*}
By The Comparison Principle in Theorem \ref{thm:ComparisonIshiiMod} applying to the supersolution $\tilde{u}$ and the subsolution $u$ to \eqref{eq:Cauchy}, we obtain $\tilde{u} \geq u$ and thus $\tilde{u} \equiv u$. 
\medskip

{\bf Case (iii).} From Lemma \ref{lem:cutoffIshiiSiconofli}, let $\tilde{v}\in C(\R^n)$ be the viscosity subsolution to $H(x,D\tilde{v},0)\leq c(H)$ in $\R^n$ such that $|D\tilde{v}|\leq C$, and $\tilde{v}(x)\to -\infty$ as $|x|\to \infty$. 
By the joint convexity of $H(x,p,u)$ in $(p,u)$, for any $\theta \in (0,1)$, we have that
\begin{align*}
	H(x, Dv_\theta(x), \theta  \cdot \lambda u_\lambda(x)) \leq c(H) \qquad\text{in}\;\R^n,
\end{align*}
where $v_\theta(x) = \theta u_\lambda(x) + (1-\theta) \tilde{v}(x)$ for $x\in \R^n$. By 
\ref{itm:assumptions-pp2} 
there exists a constant $C_\theta = C_\theta(\theta, c(H))$ such that
\begin{equation*}
	\widetilde{H}(x,Dv_\theta(x)) = H(x, Dv_\theta(x), \lambda u_\lambda(x)) \leq C_\theta \qquad\text{in}\;\R^n.
\end{equation*}
We can apply the Comparison Principle in Theorem \ref{thm:ComparisonIshiiMod} to obtain the conclusion, with $\phi = v_\theta$. \medskip

If \( \gamma \) is a minimizer, then for any \( t > 0 \), the map \( s \mapsto u_\lambda(\gamma(s)) \) is absolutely continuous on \( (-t, 0) \), and there exists \( p \in L^\infty((-t, 0); \R^n) \) such that  
\[
\frac{d}{ds} u_\lambda(\gamma(s)) = p(s) \cdot \dot{\gamma}(s), \quad p(s) \in \partial_c u_\lambda(\gamma(s)) \quad \text{a.e. } s \in (-t, 0),
\]
and
\[
|p(s)| \cdot |\dot{\gamma}(s)| \geq L(\gamma(s), \dot{\gamma}(s), \lambda u_\lambda(\gamma(s))) + c(H).
\]
If \( |\gamma(s)| \leq R \), then by Proposition \ref{prop:PerronLipschitzNoContact}, \( |p(s)| \leq \widehat{C}_R \) and \( u_\lambda(\gamma(s)) \leq C_R \), so the result follows from 
\eqref{eq:assumptions-LocalCoercive} in \ref{itm:assumptions-h1}.
\end{proof}

Next, we verify that $u_\lambda$ is bounded above and thus satisfies \eqref{eq:MinimizerTimePositive}, under condition (i) of Proposition \ref{prop:VariationalFomular}.

\begin{lem}\label{lem:PropertiesUStrongCoerNew} 
Assume \ref{itm:assumptions-h1}--\ref{itm:assumptions-h3} and \ref{itm:assumptions-pp3}. Then $\Vert Du_\lambda \Vert_{L^\infty(\R^n)} + \Vert u_\lambda \Vert_{L^\infty(\R^n)} \leq \widehat{C}_0$, and there is a constant $C$ such that, any minimizer $\gamma\in \mathcal{C}^+(x,t;\R^n)$ to \eqref{eq:MinimizerTimePositive} satisfies 
\begin{equation}\label{eq:boundedVelocity}
	|\dot{\gamma}(s)| \leq C \qquad\text{for a.e.}\;s \in [0,t].
\end{equation}
\end{lem}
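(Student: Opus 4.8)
The plan is to establish the three assertions in stages. First I would prove a bound on $\lambda u_\lambda$ that does \emph{not} degenerate as $\lambda\to 0^+$; this is the crux. From that, a uniform gradient bound follows from the coercivity in \ref{itm:assumptions-pp3}, which in particular makes $u_\lambda$ bounded on $\R^n$ and thus places us in case (i) of Proposition~\ref{prop:VariationalFomular}, yielding the representation \eqref{eq:MinimizerTimePositive}. Finally the velocity bound for minimizers is read off from uniform superlinearity along the minimizing curve. The main obstacle is the very first step: any crude competitor (e.g.\ a constant supersolution) good enough to dominate $u_\lambda$ is forced to have size $\asymp\lambda^{-1}$, so one cannot argue by a naive barrier, and the non-degenerate bound must be extracted from the equation itself; this is exactly where the \emph{uniform} (in $x$) hypotheses of \ref{itm:assumptions-pp3} — as opposed to the merely local coercivity of \ref{itm:assumptions-h1} — are used.

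For the bound on $\lambda u_\lambda$: the lower bound $\lambda u_\lambda\geq -C_0$ is already \eqref{eq:BoundedBelow}. For the upper bound I would use the one inequality available before any comparison principle, namely $u_\lambda(x)-c(H)t\leq u(x,t)$ from Proposition~\ref{prop:valuefunction}, which by \eqref{eq:ValueFunction} gives $u_\lambda(x)\leq u_\lambda(\gamma(0))+\int_0^t\big(L(\gamma,\dot\gamma,\lambda u_\lambda(\gamma))+c(H)\big)\,ds$ for every $\gamma\in\mathcal{C}^+(x,t;\R^n)$. Testing with the constant curve $\gamma\equiv x$ yields $L(x,0,\lambda u_\lambda(x))+c(H)\geq 0$, i.e.\ $\inf_{p\in\R^n}H(x,p,\lambda u_\lambda(x))\leq c(H)$. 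Now fix $R_*>0$ and $\varepsilon_*>0$ with $\inf\{H(x,p,0):x\in\R^n,\ |p|\geq R_*\}\geq c(H)+\varepsilon_*$ (possible by \eqref{eq:assumptions-UniformCoercive} with $C=0$) and set $m_\infty:=-\inf\{H(x,p,0):x,p\in\R^n\}<\infty$ by \eqref{eq:eq:assumptions-UniformBoundBelow}. If $\lambda u_\lambda(x)\leq 0$ there is nothing to prove; otherwise monotonicity of $H$ in $u$ gives $H(x,p,\lambda u_\lambda(x))\geq c(H)+\varepsilon_*$ for $|p|\geq R_*$, while the bound $\partial_u H\geq\underline{\kappa}_{R_*}$ on $\{|p|\leq R_*\}$ from \ref{itm:assumptions-h3} gives $H(x,p,\lambda u_\lambda(x))\geq -m_\infty+\underline{\kappa}_{R_*}\lambda u_\lambda(x)$ for $|p|\leq R_*$. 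Since $\inf_p H(x,p,\lambda u_\lambda(x))\leq c(H)<c(H)+\varepsilon_*$, the infimum is attained on $\{|p|\leq R_*\}$, forcing $\lambda u_\lambda(x)\leq(c(H)+m_\infty)/\underline{\kappa}_{R_*}$. Hence $\|\lambda u_\lambda\|_{L^\infty(\R^n)}\leq C_2$ with $C_2$ independent of $\lambda$.

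Next, since $u_\lambda$ is a subsolution we have $H(x,Du_\lambda(x),\lambda u_\lambda(x))\leq c(H)$ for a.e.\ $x$; together with $|\lambda u_\lambda|\leq C_2$ this and \eqref{eq:assumptions-UniformCoercive} (with $C=C_2$) force $|Du_\lambda(x)|\leq r_0$ a.e.\ for a $\lambda$-independent $r_0$, so $u_\lambda$ is globally Lipschitz with a uniform constant and, as $\lambda>0$, bounded on $\R^n$. In particular $u_\lambda$ is bounded from above and \eqref{eq:assumptions-UniformBoundWithP} is in force by \ref{itm:assumptions-pp3}, so Proposition~\ref{prop:VariationalFomular}(i) applies and delivers \eqref{eq:MinimizerTimePositive}; collecting the constants above gives $\|u_\lambda\|_{L^\infty(\R^n)}+\|Du_\lambda\|_{L^\infty(\R^n)}\leq\widehat{C}_0$.

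Finally, for the velocity bound, let $\gamma\in\mathcal{C}^+(x,t;\R^n)$ minimize \eqref{eq:MinimizerTimePositive}. Exactly as in the proof of Proposition~\ref{prop:VariationalFomular}, along $\gamma$ the map $s\mapsto u_\lambda(\gamma(s))$ is absolutely continuous with $\frac{d}{ds}u_\lambda(\gamma(s))=p(s)\cdot\dot\gamma(s)=L(\gamma(s),\dot\gamma(s),\lambda u_\lambda(\gamma(s)))+c(H)$ and $|p(s)|\leq r_0$ for a.e.\ $s$, whence $L(\gamma(s),\dot\gamma(s),\lambda u_\lambda(\gamma(s)))+c(H)\leq r_0|\dot\gamma(s)|$. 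Because $L$ is nonincreasing in $u$ and $\lambda u_\lambda\leq C_2$, the left-hand side dominates $L(\gamma(s),\dot\gamma(s),C_2)+c(H)$; and by \eqref{eq:assumptions-UniformBoundWithP} the function $\psi(r):=\sup\{H(y,q,u):y\in\R^n,\ |q|\leq r,\ |u|\leq C_2\}$ is finite for each $r$, so $L(y,v,C_2)\geq\sup_{r\geq 0}\big(r|v|-\psi(r)\big)=:\Theta(|v|)$ with $\Theta$ superlinear and independent of $y$ and of $\lambda$. Hence $\Theta(|\dot\gamma(s)|)\leq r_0|\dot\gamma(s)|-c(H)$ a.e., and superlinearity of $\Theta$ gives $|\dot\gamma(s)|\leq C$ a.e.\ with $C$ independent of $\lambda$. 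The only remaining care point — justifying this calibration identity for possibly unbounded trajectories — is handled exactly as in Proposition~\ref{prop:VariationalFomular} via a reachable supergradient of $u_\lambda$ along $\gamma$.
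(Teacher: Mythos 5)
Your proof is correct and follows essentially the same three-step strategy as the paper's: a $\lambda$-uniform bound on $\lambda u_\lambda$, a uniform gradient bound from the uniform coercivity \eqref{eq:assumptions-UniformCoercive}, and the velocity bound from the uniform superlinearity of $L(\cdot,\cdot,u)$ implied by \eqref{eq:assumptions-UniformBoundWithP}. The only (harmless) deviation is the order of the first two steps: the paper first extracts $\Vert Du_\lambda\Vert_{L^\infty(\R^n)}\leq \widehat{C}_0$ from the lower bound $u_\lambda\geq -C_0$ alone and then bounds $\lambda u_\lambda$ from above by evaluating the subsolution inequality at $p=Du_\lambda(x)$, whereas you first bound $\lambda u_\lambda$ via $\inf_{p}H(x,p,\lambda u_\lambda(x))\leq c(H)$ localized to a fixed ball by coercivity and then deduce the gradient bound; both routes use \eqref{eq:eq:assumptions-UniformBoundBelow} together with the lower bound $\partial_u H\geq \underline{\kappa}_R$ from \ref{itm:assumptions-h3} in the same way.
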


\begin{proof} 
Let $C_0$ be the constant from Proposition \ref{prop:PerronLipschitzNoContact} satisfying $\lambda u_\lambda(x) \geq -C_0$. The monotonicity assumption \ref{itm:assumptions-h1} implies that $u_\lambda$ is a subsolution to $H(x,Du_\lambda, -C_0) = c(H) $ in $\R^n$.
Consequently, if the uniform coercivity condition \eqref{eq:assumptions-UniformCoercive} in \ref{itm:assumptions-pp1} holds, then $\Vert Du_\lambda \Vert_{L^\infty(\R^n)} \leq \widehat{C}_0$. \medskip

Additionally, if \eqref{eq:MonotoneStrongBelowH} 
and \eqref{eq:eq:assumptions-UniformBoundBelow} in \ref{itm:assumptions-pp3} hold then $\lambda \Vert u_\lambda \Vert_{L^\infty(\R^n)} \leq \widehat{C}_0$. Indeed, from \( |Du_\lambda(x)| \leq \widehat{C}_0 \) that
\begin{align*}
	c(H) -H(x, Du_\lambda(x),-C_0) 
	=  H(x, Du_\lambda(x),\lambda u_\lambda(x)) - H(x, Du_\lambda(x),-C_0)  
	\geq \underline{\kappa} (\lambda u_\lambda(x) + C_0)
\end{align*}
for a.e. \( x\in \mathbb{R}^n \). By \eqref{eq:eq:assumptions-UniformBoundBelow} in \ref{itm:assumptions-pp3} we can choose $\widehat{C}_0$ such that $H(x,Du_\lambda(x), -C_0) \geq 1-\widehat{C}_0$, then
\begin{align*}
	\underline{\kappa} \big(\lambda u_\lambda(x) + C_0 \big) \leq c(H) + \widehat{C}_0,
\end{align*}
which implies that $u_\lambda$ is bounded from above. \medskip

Lastly, under \eqref{eq:assumptions-UniformBoundWithP} in \ref{itm:assumptions-pp3} we have
\begin{equation}\label{eq:globalSuperLinearLInU}
	\sup_{x\in \R^n} \{ H(x,p,u): |p|\leq R, |u|\leq R\}  < \infty
	 \qquad\Longrightarrow\qquad 
	 \lim_{|v|\to \infty} \min_{x\in \R^n} \left\lbrace \frac{L(x,v,u)}{|v|}: |u|\leq R \right\rbrace = \infty. 
\end{equation}
Using $\lambda u_\lambda \leq \widehat{C}_0$ and the principle of optimality, for $0<a<b<t$ we have 
\begin{align*}
	u_\lambda (\gamma(b))-u_\lambda(\gamma(a)) &= 
	\int_a^b\Big(L\big(\gamma(s),\dot{\gamma}(s),\lambda u_\lambda (\gamma(s))\big)+c(H)\Big)\; ds 
	\geq \int_a^b\Big(L\big(\gamma(s),\dot{\gamma}(s),\widehat{C}_0)\big)+c(H)\Big)\; ds.
 \end{align*}
Using $|Du_\lambda|\leq \widehat{C}_0$, we obtain
 \begin{align*}
 	\frac{1}{b-a} \int_a^b\Big(L\big(\gamma(s),\dot{\gamma}(s),\tilde{C}_0)\big)+c(H)\Big)\; ds \leq \widehat{C}_0\frac{\left|\gamma(b)-\gamma(a)\right|}{b-a}, \qquad 0 < a<b<t. 
 \end{align*}
If $\gamma$ is differentialble at $t_0 \in (0,t)$, we obtain
\begin{align*}
	L(\gamma(t_0), \dot{\gamma}({t_0}), \widehat{C}_0) + c(H) \leq \widehat{C}_0\cdot |\dot{\gamma}(t_0)|. 
\end{align*}
Using \eqref{eq:globalSuperLinearLInU}, there must be a constant $C$ indpendent of $x,t$ such that $|\dot{\gamma}(t_0)|  \leq C$.  
\end{proof}

\begin{cor}\label{cor:Minimizer} Assume \ref{itm:assumptions-h1}--\ref{itm:assumptions-h3} and \ref{itm:assumptions-pp3}. Then the solution $u_\lambda$ to \eqref{eq:DP} satisfies \eqref{eq:MinimizerTimePositive}.
\end{cor}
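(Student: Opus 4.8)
The plan is to deduce the corollary directly from Proposition~\ref{prop:VariationalFomular}, whose conclusion is precisely the representation formula \eqref{eq:MinimizerTimePositive}. It therefore suffices to check that, under \ref{itm:assumptions-h1}--\ref{itm:assumptions-h3} together with \ref{itm:assumptions-pp3}, one of the three sufficient conditions $\mathrm{(i)}$--$\mathrm{(iii)}$ listed in that proposition is met. The natural choice is the second alternative in condition $\mathrm{(i)}$: \emph{$u_\lambda$ is bounded from above and \eqref{eq:assumptions-UniformBoundWithP} holds}.

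The second half of this condition is immediate, since \eqref{eq:assumptions-UniformBoundWithP} is literally part of the statement of \ref{itm:assumptions-pp3}. For the first half, I would invoke Lemma~\ref{lem:PropertiesUStrongCoerNew}, which asserts exactly that under \ref{itm:assumptions-h1}--\ref{itm:assumptions-h3} and \ref{itm:assumptions-pp3} one has $\Vert Du_\lambda\Vert_{L^\infty(\R^n)} + \Vert u_\lambda\Vert_{L^\infty(\R^n)} \leq \widehat{C}_0$; in particular $\sup_{\R^n} u_\lambda < \infty$. Here it is worth recalling that the upper bound on $u_\lambda$ in that lemma is obtained by combining the gradient bound $|Du_\lambda|\leq\widehat{C}_0$, supplied by the uniform coercivity \eqref{eq:assumptions-UniformCoercive}, with the lower bound \eqref{eq:eq:assumptions-UniformBoundBelow} on $H$ and the monotonicity lower bound $\underline\kappa$ from \ref{itm:assumptions-h3}, which on the relevant bounded gradient range plays the role of \eqref{eq:MonotoneStrongBelowH}.

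With both halves of condition $\mathrm{(i)}$ in hand, Proposition~\ref{prop:VariationalFomular} gives $u(x,t) = u_\lambda(x) - c(H)t$ for the value function $u$ defined in \eqref{eq:ValueFunction}, which is exactly the assertion \eqref{eq:MinimizerTimePositive}. There is no genuine obstacle here: the corollary is a bookkeeping statement recording which of the abstract hypotheses of Proposition~\ref{prop:VariationalFomular} is furnished by the concrete package \ref{itm:assumptions-pp3}. The only point deserving a moment's care is that the estimate in Lemma~\ref{lem:PropertiesUStrongCoerNew} is genuinely two-sided, so that the \emph{upper} bound on $u_\lambda$ — the one that condition $\mathrm{(i)}$ actually demands — is available; since the superlinear reduction of Proposition~\ref{prop:SuperlinearReduction} leaves $u_\lambda$ unchanged, invoking it (if one wishes to have superlinearity of $L$ at one's disposal) does not affect the argument. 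The velocity bound \eqref{eq:boundedVelocity} for minimizers, also recorded in Lemma~\ref{lem:PropertiesUStrongCoerNew}, is not needed for the statement of the corollary but is carried along for later use.
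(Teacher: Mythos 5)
Your argument is correct and coincides with the paper's own proof: the paper likewise invokes Lemma~\ref{lem:PropertiesUStrongCoerNew} to get $\sup_{\R^n} u_\lambda < \infty$ and then applies Proposition~\ref{prop:VariationalFomular}~(i). Your additional remark that the second alternative of condition~(i) is the one furnished by \eqref{eq:assumptions-UniformBoundWithP} in \ref{itm:assumptions-pp3} is a harmless (and accurate) clarification of a detail the paper leaves implicit.
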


\begin{proof} By Lemma \ref{lem:PropertiesUStrongCoerNew}, \( \sup_{x\in \mathbb{R}^n} \lambda u_\lambda(x) \leq \widehat{C}_0 \). Then, by Proposition \ref{prop:VariationalFomular} (i), we obtain the conclusion.
\end{proof}

\subsection{The Exponential form of the variational formula for the maximal solution} In this subsection, we transform \eqref{eq:MinimizerTimePositive} into an exponential form.

\begin{defn}[Discounted index for the global solution] 
Assume \ref{itm:assumptions-h1}, \ref{itm:assumptions-h2}. 
For $(x,v)\in \R^n\times\R^n$, we define
\begin{equation}\label{eq:DiscountedIndex}
    K_\lambda(x,v) = 
    \begin{cases}
    \partial_u^+ L(x,v,0) &\text{if}\; u_\lambda (x) = 0, \vspace{0.1cm}\\ 
    \dfrac{L(x,v,\lambda u_\lambda(x)) - L(x,v,0)}{\lambda u_\lambda(x)} &\text{if}\; u_\lambda (x) \neq 0.
    \end{cases}
\end{equation}
\end{defn}
\begin{rmk} We use $K_\lambda$ for the index of the global solution $u_\lambda$ on $\mathbb{R}^n$, distinguishing it from $\kappa_\lambda$, the discounted index for the state-constraint solution $\vartheta_\lambda$ on a bounded domain \emph{(Definition \ref{defn:DiscountedIndexOmega})}. Since \( u \mapsto L(x,v,u) \) is non-increasing, we have \( K_\lambda(x,v) \leq 0 \) for all \( (x,v) \in \mathbb{R}^n \times \mathbb{R}^n \).
\end{rmk}

\begin{defn}
Assume \ref{itm:assumptions-h1}, \ref{itm:assumptions-h2}. 
For \( (x,t) \in \mathbb{R}^n \times (0,\infty) \), we define:
\begin{itemize}
\item[$\mathrm{(i)}$] \(\mathcal{M}_\lambda(x,t)\): the set of curves \(\gamma \in \mathcal{C}^-(x,t;\mathbb{R}^n)\) satisfying \eqref{eq:MinimizerTimePositive};
\item[$\mathrm{(ii)}$] \(\mathcal{M}_\lambda(x,\infty)\): the set of curves \(\gamma \in \mathcal{C}^-(x,\infty;\mathbb{R}^n)\) such that \(\gamma \in \mathcal{M}_\lambda(x,t)\) for all \(t>0\).
\end{itemize}

\end{defn}

\begin{cor}\label{cor:nonemtpyM} 
Assume \ref{itm:assumptions-h1}, \ref{itm:assumptions-h2} 
and that \( u_\lambda \) satisfies \eqref{eq:MinimizerTimePositive}. Then, for every \( (x,t) \in \mathbb{R}^n \times (0,\infty) \), the sets \( \mathcal{M}_\lambda(x,t) \) and \( \mathcal{M}_\lambda(x,\infty) \) are nonempty.
\end{cor}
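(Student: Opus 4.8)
The plan is to handle the two assertions in turn: the finite-horizon statement follows directly from the existence of an action minimizer for the value function, while the infinite-horizon statement is obtained by concatenating finite-horizon minimizers.

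For $\mathcal{M}_\lambda(x,t)\neq\emptyset$, I would invoke Proposition~\ref{prop:Minimizer} (which needs only \ref{itm:assumptions-h1}--\ref{itm:assumptions-h2}): the infimum defining the value function $u(x,t)$ in \eqref{eq:ValueFunction} is attained by some curve $\gamma$. Since by hypothesis $u_\lambda$ satisfies \eqref{eq:MinimizerTimePositive}, comparing that identity with \eqref{eq:ValueFunction} gives $u(x,t)=u_\lambda(x)-c(H)t$ (equivalently, combine with Proposition~\ref{prop:valuefunction}). Substituting the minimizer $\gamma$ into this equality and absorbing the term $c(H)t$ into the integral shows that $\gamma$ attains the infimum in \eqref{eq:MinimizerTimePositive}, i.e.\ $\gamma\in\mathcal{M}_\lambda(x,t)$.

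For $\mathcal{M}_\lambda(x,\infty)\neq\emptyset$, I would first record the calibration property: if $\eta\in\mathcal{M}_\lambda(y,s)$, then by the dynamic programming principle for \eqref{eq:MinimizerTimePositive} every restriction of $\eta$ to a subinterval is again a minimizer, and $\tau\mapsto u_\lambda(\eta(\tau))$ is absolutely continuous with $u_\lambda(\eta(b))-u_\lambda(\eta(a))=\int_a^b\big(L(\eta,\dot\eta,\lambda u_\lambda(\eta))+c(H)\big)\,d\tau$ on subintervals $[a,b]$. Then I would build $\gamma$ by concatenation, in the spirit of the construction of \eqref{eq:ExtrtemalInftyTime} in Proposition~\ref{prop:ExistenceOfMinimizerTimeOmega}: set $x_0=x$, recursively pick $\eta_j\in\mathcal{M}_\lambda(x_{j-1},1)$ (nonempty by the first part), let $x_j$ be the initial point of $\eta_j$, and glue the time-shifted pieces to obtain $\gamma\in\mathrm{AC}((-\infty,0];\R^n)$ with $\gamma(0)=x$; continuity at the junctions is automatic because consecutive pieces agree there, and absolute continuity on each $[-T,0]$ holds since that interval meets only finitely many pieces. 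Summing the calibration identities over the first $n$ pieces yields $u_\lambda(x)=u_\lambda(\gamma(-n))+\int_{-n}^0\big(L(\gamma,\dot\gamma,\lambda u_\lambda(\gamma))+c(H)\big)\,ds$ for every integer $n\geq1$, and interpolating within a single piece (again by calibration at intermediate times) upgrades this to arbitrary $t>0$. Since that identity, together with the hypothesis that $u_\lambda$ satisfies \eqref{eq:MinimizerTimePositive}, says precisely that $\gamma|_{[-t,0]}$ attains the infimum for endpoint $x$ and horizon $t$, we conclude $\gamma\in\mathcal{M}_\lambda(x,t)$ for every $t>0$, hence $\gamma\in\mathcal{M}_\lambda(x,\infty)$.

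The only nonroutine point is making the calibration/dynamic-programming identity precise from \eqref{eq:MinimizerTimePositive} alone — in particular the absolute continuity of $s\mapsto u_\lambda(\gamma(s))$ and the fact that restrictions of minimizers are minimizers — since here we do not assume \ref{itm:assumptions-h3} or any of \ref{itm:assumptions-pp1}--\ref{itm:assumptions-pp3}, only \ref{itm:assumptions-h1}--\ref{itm:assumptions-h2} and that $u_\lambda$ satisfies \eqref{eq:MinimizerTimePositive}; this is supplied by the semigroup structure of \eqref{eq:MinimizerTimePositive} together with the local Lipschitz and lower bounds on $u_\lambda$ from Proposition~\ref{prop:PerronLipschitzNoContact}. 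I expect this to be the main (modest) obstacle. A diagonal extraction from minimizers on $[-T_k,0]$ with $T_k\to\infty$ would also work, but the concatenation argument is preferable because it avoids having to confine the curves to a fixed compact set uniformly in the horizon.
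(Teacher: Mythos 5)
Your proposal is correct and follows the same route as the paper: the finite-horizon case is exactly Proposition~\ref{prop:Minimizer} combined with the hypothesis that $u_\lambda$ satisfies \eqref{eq:MinimizerTimePositive}, and the infinite-horizon case is obtained by concatenating unit-length minimizers as in the proof of Proposition~\ref{prop:ExistenceOfMinimizerTimeOmega}. The paper's own proof is only a two-line sketch citing these same two ingredients, so your more detailed write-up (including the calibration identity and the gluing at junctions) is a faithful, and indeed fuller, version of the intended argument.
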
 

\begin{proof} If \( u_\lambda \) satisfies \eqref{eq:MinimizerTimePositive}, then \( \mathcal{M}_\lambda(x,t) \neq \emptyset \) by Proposition \ref{prop:Minimizer}. A sequence of minimizers over finite horizons can be linked, as in \cite[Corollary 6.2]{ishii_asymptotic_2008} or the proof of Proposition \ref{prop:ExistenceOfMinimizerTimeOmega}, to construct \( \gamma \in \mathcal{M}_\lambda(x,\infty) \).
\end{proof}

\begin{prop}\label{prop:exponentialNegative} 
Assume \ref{itm:assumptions-h1}--\ref{itm:assumptions-h2}
and that $u_\lambda$ satisfies \eqref{eq:MinimizerTimePositive}. 
We have the followings.
\begin{itemize}
\item[$\mathrm{(i)}$] For all $(x,t)\in \R^n\times [0,\infty)$ we have
\begin{align}\label{eq:dyn-pro-bisNegative}
	u_\lambda(x) &= \inf_{\gamma\in\mathcal{C}^-(x,t;\R^n)} 
	\left\lbrace  
	e^{ \lambda \beta_{\gamma}^\lambda(-t)} u_\lambda(\gamma(-t)) 
	+ 
	\int_{-t}^0 e^{ \lambda \beta_{\gamma}^\lambda(s)} 
	\Big( 
		L\big(\gamma(s), \dot{\gamma}(s), 0\big) + c(H)  
	\Big)\;ds 
	\right\rbrace,
\end{align}
where 
\begin{equation}\label{eq:beta}
	\beta_{\gamma}^\lambda(s) = \int_{s}^0  K_\lambda(\gamma(\tau), \dot{\gamma}(\tau))\;d\tau, \qquad s\in [-t,0].
\end{equation}

\item[$\mathrm{(ii)}$] Let \( \gamma \in \mathcal{C}^-(x,t;\R^n) \) minimize \eqref{eq:dyn-pro-bisNegative}. Then, for \( -t < b < a < 0 \), we have:
\begin{align}\label{eq:optimalityNegativeEXP}
	  e^{\lambda \beta_\gamma^\lambda (a)}u_\lambda(\gamma(a)) 
	  =  
	  e^{\lambda \beta_\gamma^\lambda (b)}u_\lambda(\gamma(b)) 
	  + 
	  \int_{b}^{a} 
	  e^{ \lambda \beta_\gamma^\lambda(s)}
	  \big( L\big(\gamma(s), \dot{\gamma}(s), 0\big) + c(H)  \big)\;ds. 
\end{align}

\item[$\mathrm{(iii)}$] Any minimizer \( \gamma \in \mathcal{C}^-(x,t;\mathbb{R}^n) \) of \eqref{eq:MinimizerTimePositive} also minimizes \eqref{eq:dyn-pro-bisNegative} and vice versa, so the set of minimizers to \eqref{eq:dyn-pro-bisNegative} identical to $\mathcal{M}_\lambda(x,t)$, and thus is nonempty.

\item[$\mathrm{(iv)}$] There exists $\gamma\in \mathcal{C}^-(x,\infty;\R^n)$ such that
\begin{equation}\label{eq:InfinityMinimizerExp}
	u_\lambda(x) 
	=
	e^{ \lambda \beta_{\gamma}^\lambda(-t)} u_\lambda(\gamma(-t)) 
	+ 
	\int_{-t}^0 
	e^{ \lambda \beta_\gamma^\lambda(s)} 
		\Big( L\big(\gamma(s), \dot{\gamma}(s), 0\big) + c(H)  \Big)\;ds 
	\qquad\text{for all}\;t>0. 
\end{equation}
\end{itemize}
\end{prop}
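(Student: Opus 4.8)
The plan is to mirror the proof of Proposition~\ref{prop:StateConstraintProperties}, substituting the global objects $u_\lambda$, $K_\lambda$, $\beta_\gamma^\lambda$ for the state-constraint ones $\vartheta_\lambda$, $\kappa_\lambda$, $\alpha_\gamma^\lambda$, and invoking Proposition~\ref{prop:PerronLipschitzNoContact} in place of the uniform bounds on $\overline{\Omega}$. Since $\mathcal{C}^+(x,t;\R^n)$ and $\mathcal{C}^-(x,t;\R^n)$ are interchanged by the reparametrization $\gamma(\cdot)\mapsto\gamma(\cdot+t)$, which preserves every functional below, I will argue directly with backward curves (this is also how $\mathcal{M}_\lambda(x,t)\subset\mathcal{C}^-$ is to be read). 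The algebraic input is the identity
\[
	L(x,v,\lambda u_\lambda(x)) - L(x,v,0) = \lambda\, u_\lambda(x)\, K_\lambda(x,v),
\]
valid for all $(x,v)$ by the definition of the index in \eqref{eq:DiscountedIndex}, together with $\tfrac{d}{ds}\beta_\gamma^\lambda(s) = -K_\lambda(\gamma(s),\dot\gamma(s))$ from \eqref{eq:beta}.

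For the inequality ``$\le$'' in part~(i), fix $\gamma\in\mathcal{C}^-(x,t;\R^n)$; its image on $[-t,0]$ is compact, so $u_\lambda$ is Lipschitz there and $s\mapsto u_\lambda(\gamma(s))$ is absolutely continuous. The subsolution/optimality property along curves (Proposition~\ref{prop:curve}, which is exactly what \eqref{eq:MinimizerTimePositive} encodes) gives $\tfrac{d}{ds}u_\lambda(\gamma(s))\le L(\gamma(s),\dot\gamma(s),\lambda u_\lambda(\gamma(s)))+c(H)$ for a.e.\ $s\in(-t,0)$. Combining this with the two relations above, exactly as in the displayed computation in the proof of Proposition~\ref{prop:StateConstraintProperties},
\begin{align*}
	\frac{d}{ds}\!\left(e^{\lambda\beta_\gamma^\lambda(s)} u_\lambda(\gamma(s))\right)
	&= e^{\lambda\beta_\gamma^\lambda(s)}\!\left(-\lambda\, u_\lambda(\gamma(s))\, K_\lambda(\gamma(s),\dot\gamma(s)) + \frac{d}{ds}u_\lambda(\gamma(s))\right)\\
	&\le e^{\lambda\beta_\gamma^\lambda(s)}\!\left(L(\gamma(s),\dot\gamma(s),0) + c(H)\right)\qquad\text{for a.e. }s\in(-t,0).
\end{align*}
Integrating over $[-t,0]$ and using $\beta_\gamma^\lambda(0)=0$ yields $u_\lambda(x)\le e^{\lambda\beta_\gamma^\lambda(-t)}u_\lambda(\gamma(-t)) + \int_{-t}^0 e^{\lambda\beta_\gamma^\lambda(s)}(L(\gamma(s),\dot\gamma(s),0)+c(H))\,ds$; taking the infimum over $\gamma$ gives ``$\le$'' in \eqref{eq:dyn-pro-bisNegative} (since $K_\lambda\le 0$ the weight $e^{\lambda\beta_\gamma^\lambda}$ lies in $(0,1]$, and curves along which the right-hand side is $+\infty$ are irrelevant). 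For ``$\ge$'', pick a minimizer $\gamma\in\mathcal{M}_\lambda(x,t)$ of \eqref{eq:MinimizerTimePositive}, nonempty by Proposition~\ref{prop:Minimizer} and Corollary~\ref{cor:nonemtpyM}; along it the curve inequality is an equality, so the computation above holds with equality and produces $u_\lambda(x) = e^{\lambda\beta_\gamma^\lambda(-t)}u_\lambda(\gamma(-t)) + \int_{-t}^0 e^{\lambda\beta_\gamma^\lambda(s)}(L(\gamma(s),\dot\gamma(s),0)+c(H))\,ds$. This proves \eqref{eq:dyn-pro-bisNegative} and simultaneously shows this $\gamma$ minimizes it.

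Parts~(ii)--(iv) are then formal. For~(ii): apply the ``$\le$'' computation of~(i) on each of $[-t,b]$, $[b,a]$, $[a,0]$ and add the three inequalities; the boundary terms telescope and reproduce \eqref{eq:dyn-pro-bisNegative}, which is an equality for a minimizer $\gamma$, so every piece — in particular the one on $[b,a]$ — must be an equality, which is precisely \eqref{eq:optimalityNegativeEXP}. For~(iii): a minimizer of \eqref{eq:MinimizerTimePositive} minimizes \eqref{eq:dyn-pro-bisNegative} by the equality case of~(i); conversely, if $\gamma$ minimizes \eqref{eq:dyn-pro-bisNegative}, then~(ii) gives $\tfrac{d}{ds}(e^{\lambda\beta_\gamma^\lambda}u_\lambda(\gamma)) = e^{\lambda\beta_\gamma^\lambda}(L(\gamma,\dot\gamma,0)+c(H))$ a.e., and dividing by $e^{\lambda\beta_\gamma^\lambda}>0$ and running the two relations in reverse yields $\tfrac{d}{ds}u_\lambda(\gamma) = L(\gamma,\dot\gamma,\lambda u_\lambda(\gamma))+c(H)$ a.e.; integrating and using that $u_\lambda$ satisfies \eqref{eq:MinimizerTimePositive} shows $\gamma\in\mathcal{M}_\lambda(x,t)$. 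Hence the two sets of minimizers coincide and are nonempty. Finally~(iv): choose $\gamma\in\mathcal{M}_\lambda(x,\infty)$, nonempty by Corollary~\ref{cor:nonemtpyM}; for every $t>0$ its restriction to $[-t,0]$ lies in $\mathcal{M}_\lambda(x,t)$, so~(iii) gives \eqref{eq:InfinityMinimizerExp} for that $t$, and $t$ is arbitrary.

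The argument introduces no genuinely new difficulty beyond Proposition~\ref{prop:StateConstraintProperties}; the point requiring care — the ``main obstacle'' — is that $u_\lambda$ is only \emph{locally} bounded and Lipschitz on $\R^n$, unlike $\vartheta_\lambda$ on $\overline{\Omega}$, so each estimate must be localized to the compact image of the curve at hand, and, relatedly, under \ref{itm:assumptions-h3} alone the index $K_\lambda$ need not be globally bounded. Neither is an obstruction: on $[-t,0]$ the curve has compact image on which $u_\lambda$ is Lipschitz and $s\mapsto u_\lambda(\gamma(s))$ is absolutely continuous, and the sign $K_\lambda\le 0$ keeps the exponential weights in $(0,1]$, so no upper bound on $\partial_u H$ (i.e.\ \ref{itm:assumptions-h4}) is needed for this proposition.
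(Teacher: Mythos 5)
Your proof is correct and follows essentially the same route as the paper's: the same exponential-weight computation based on the identity $L(x,v,\lambda u_\lambda(x))-L(x,v,0)=\lambda u_\lambda(x)K_\lambda(x,v)$, equality along minimizers of \eqref{eq:MinimizerTimePositive} for the reverse inequality, the telescoping argument for (ii), reversal of the ODE for (iii), and restriction of a curve in $\mathcal{M}_\lambda(x,\infty)$ for (iv). The paper itself presents this as a minor modification of Proposition \ref{prop:StateConstraintProperties}, exactly as you do, and your remarks on localization and on needing only $K_\lambda\le 0$ are consistent with its stated hypotheses.
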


\begin{proof} The proof follows with minor modifications from the state-constraint case in Proposition \ref{prop:StateConstraintProperties}.  \medskip

{\bf Part (i).} Take $\gamma\in \mathcal{C}^-(x,t;\R^n)$, by Proposition \ref{prop:curve} we have
\begin{equation}\label{eq:timeMin}
	u_\lambda(\gamma(a)) \leq u_\lambda(\gamma(b)) +  \int_{b}^a\Big( L\left((\gamma(s), \dot{\gamma}(s), \lambda u_\lambda(\gamma(s))\right) + c(H)\Big)\;ds.
\end{equation}
By \cite[Proposition 4.1]{mitake_asymptotic_2008}, 
$s\mapsto u_\lambda(\gamma(s))$ is absolutely continuous on $[-t,0]$, thus 
\begin{align*}
	\frac{d}{ds}\big(u_\lambda(\gamma(s))\big) 
	\leq 
	L
	\big(
	\gamma(s), \dot{\gamma}(s), 
	\lambda u_\lambda (\gamma(s))
	\big)
	 + c(H) 
	\qquad\text{for a.e.}\;s\in (-t,0). 
\end{align*}
For a.e. $s\in (-t,0)$ we have
\begin{align*}
	\frac{d}{ds}\left(e^{\lambda \beta_{\gamma}^\lambda(s)} 
	u_\lambda (\gamma(s))   \right) 
	&= e^{\lambda \beta_{\gamma}^\lambda(s)}
	\left(
	-\lambda 
	u_\lambda(\gamma(s)) 
	\cdot K_{\lambda}(\gamma(s), \dot{\gamma}(s))  
	+ 
	\frac{d}{ds}\big(u_\lambda(\gamma(s))\big) 
	\right) \\
	&\leq 
	e^{\lambda \beta_{\gamma}^\lambda (s)}
	\Big(-\lambda u_\lambda (\gamma(s)) 
	\cdot K_{\lambda }(\gamma(s), \dot{\gamma}(s))  
	+ 
	L(\gamma(s), \dot{\gamma}(s), \lambda u_\lambda(\gamma(s))) + c(H)\Big) \\
	&=
	e^{\lambda \beta_{\gamma}^\lambda(s)}
	\Big( L(\gamma(s), \dot{\gamma}(s), 0) +c(H)\Big).
\end{align*}
Integrating over $[-t,0]$, we obtain
\begin{align*}
	u_\lambda(x) = 
	e^{\lambda \beta_{\gamma}^\lambda(0)}
	u_\lambda(\gamma(0)) 
	\leq 
	e^{\lambda \beta_{\gamma}^\lambda(-t)}
	u_\lambda(\gamma(-t))  
	+ 
	\int_{-t}^0 e^{\lambda\beta_{\gamma}^\lambda(s)} 
	\Big(L(\gamma(s), \dot{\gamma}(s), 0) + c(H)\Big)\;ds. 
\end{align*}
Since $\gamma \in \mathcal{C}^-(x,t;\R^n)$ is chosen arbitrarily, we obtain \eqref{eq:dyn-pro-bisNegative}.
\medskip

{\bf Part (ii).} From \eqref{eq:dyn-pro-bisNegative} in Proposition \ref{prop:exponentialNegative}, for any $\gamma\in \mathcal{C}^-(x,t;\R^n)$ we have
\begin{align*}
	e^{\lambda \beta_\gamma^\lambda(a)}u_\lambda(\gamma(a)) 
	  \leq  
	  e^{\lambda \beta_\gamma^\lambda (b)}u_\lambda(\gamma(b)) 
	  + 
	  \int_{b}^{a} 
	  e^{ \lambda \beta_\gamma^\lambda(s)}
	  \big( L\big(\gamma(s), \dot{\gamma}(s), 0\big) + c(H)  \big)\;ds.
\end{align*}
On the other hand, by \eqref{eq:dyn-pro-bisNegative} we have
\begin{align*}
	  e^{\lambda\beta_\gamma^\lambda (-t)}u_\lambda(\gamma(-t)) 
	  + 
	  \int_{-t}^{b} 
	  e^{ \lambda \beta_\gamma^\lambda(s)}
	  \big( L\big(\gamma(s), \dot{\gamma}(s), 0\big) + c(H)  \big)\;ds
		  &\geq 
		  e^{\lambda \beta_\gamma^\lambda (b)}u_\lambda(\gamma(b)) \\
	  e^{\lambda\beta_\gamma^\lambda (a)}u_\lambda(\gamma(a))  
	  +
	  \int_{a}^{0} 
	  e^{ \lambda \beta_\gamma^\lambda(s)}
	  \big( L\big(\gamma(s), \dot{\gamma}(s), 0\big) + c(H)  \big)\;ds 
	  		&\geq u_\lambda(\gamma(0)).
\end{align*}
Since $\gamma$ minimizes \eqref{eq:dyn-pro-bisNegative}, we have
\begin{align*}
	u_\lambda(\gamma(0)) 
	  =  
	  e^{\lambda\beta_\gamma^\lambda (-t)}u_\lambda(\gamma(-t)) 
	  &
	  \int_{-t}^{0} 
	  e^{ \lambda \beta_\gamma^\lambda(s)}
	  \big( L\big(\gamma(s), \dot{\gamma}(s), 0\big) + c(H)  \big)\;ds.
\end{align*}
Summing the three equations, we conclude \eqref{eq:optimalityNegativeEXP}.

\medskip
{\bf Part (iii).} Repeating the argument in part (i) for \( \gamma \in \mathcal{M}_\lambda(x,t) \), where equality holds in \eqref{eq:timeMin}, shows that \( \gamma \) also minimizes \eqref{eq:dyn-pro-bisNegative}. For the other direction, if $\gamma \in \mathcal{C}^-(x,t;\R^n)$ that minimizes  \eqref{eq:dyn-pro-bisNegative}, then from \eqref{eq:optimalityNegativeEXP} we obtain
\begin{align*}
	\frac{d}{ds}\left(e^{\lambda \beta_{\gamma}^\lambda(s)} 
	u_\lambda (\gamma(s))   \right) 
	=
	e^{\lambda \beta_{\gamma}^\lambda(s)}
	\Big( L(\gamma(s), \dot{\gamma}(s), 0) +c(H)\Big)
	\qquad\text{for a.e.}\;s\in (-t,0).
\end{align*}
By \cite[Proposition 4.1]{mitake_asymptotic_2008}, 
$s\mapsto u_\lambda(\gamma(s))$ is absolutely continuous on $[-t,0]$, thus 
\begin{align*}
	\frac{d}{ds}\left(
	u_\lambda (\gamma(s))   \right)  
	- 
	\lambda K_\lambda(\gamma(s),\dot{\gamma}(s)) 
	\cdot 
	u_\lambda(\gamma(s)) 
	=
	L(\gamma(s), \dot{\gamma}(s), 0) +c(H)
	\qquad\text{for a.e.}\;s\in (-t,0).
\end{align*}
Hence, by the definition of $K_\lambda$ we obtain 
\begin{align*}
	\frac{d}{ds}\left(
	u_\lambda (\gamma(s))   \right)   
	= 
	L(\gamma(s), \dot{\gamma}(s), 
	\lambda u_\lambda(\gamma(s))) + c(H) 
		\qquad\text{for a.e.}\;s\in (-t,0).
\end{align*}
Taking integration we deduce that $\gamma\in \mathcal{M}_\lambda(x,t)$.
\medskip

{\bf Part (iv). } We observe that any $\gamma\in \mathcal{M}_\lambda(x,\infty)$ satisfies \eqref{eq:InfinityMinimizerExp}. 
\end{proof}

\begin{prop}\label{prop:BoundedMinimizerCompact} 
Assume \ref{itm:assumptions-h1}, \ref{itm:assumptions-h2} and either \ref{itm:assumptions-pp1}, \ref{itm:assumptions-pp2}, or \ref{itm:assumptions-pp3}.  
Fix $z\in \R^n$, there exists $\lambda_z\in(0,1)$ such that 
\begin{equation}\label{eq:BoundCurves}
	\sup_{\lambda \in (0,\lambda_z)} \sup  \left\lbrace \Vert \gamma\Vert_{L^\infty((-t,0])} : \gamma\in \mathcal{M}_\lambda(x, t), t>0 \right\rbrace < \infty. 
\end{equation} 
As a consequence, there exists \(\gamma \in \mathcal{M}_\lambda(z,\infty)\) such that
\begin{equation}\label{eq:boundz}
	|\dot{\gamma}(s)| +
	|\gamma(s)| \leq M_z \quad \text{for a.e. } s \leq 0.
\end{equation}
Additionally, under $\mathrm{(i)}$, we have $|\dot{\gamma}(s)| \leq M$ for a.e. $s\leq 0$ uniformly in $z$. If 
\eqref{eq:MonotoneWeakBelowH}
holds, we also have
\begin{align}\label{eq:UlambdaInfinity}
	u_\lambda(x) &= 
	\int_{-\infty}^0 e^{ \lambda \beta_{\gamma}^\lambda(s)} 
	\Big( 
		L\big(\gamma(s), \dot{\gamma}(s), 0\big) + c(H)  
	\Big)\;ds, \quad\text{where}\quad \beta^\lambda_\gamma(s) = \int_{s}^0 K_\lambda(\gamma(\tau ),\dot{\gamma}(\tau ))\;d\tau, s\leq 0.
\end{align}
\end{prop}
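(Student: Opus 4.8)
The plan is to reduce the last three assertions to the uniform bound \eqref{eq:BoundCurves}, and then to prove \eqref{eq:BoundCurves} by an excursion estimate based on the structure at infinity: by \eqref{eq:growthL}--\eqref{eq:BoundLPositive} one may fix $R_0>0$ with $\mathcal A\cup\{z\}\subset B_{R_0}(0)$ and $\delta_0>0$ such that
\[
L(x,v,0)+c(H)\ \ge\ \delta_0\max\{1,|v|\}\qquad\text{for }|x|\ge R_0,
\]
combined with the fact (from Corollary~\ref{coro:UniformBoundedLocally}) that $u_\lambda(z)$ is bounded uniformly in $\lambda\in(0,1)$.

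\emph{Reduction.} Assume \eqref{eq:BoundCurves} holds, with bound $M_z$. Fix $\lambda\in(0,\lambda_z)$ and use Corollary~\ref{cor:nonemtpyM} to pick $\gamma\in\mathcal M_\lambda(z,\infty)$; since $\gamma\in\mathcal M_\lambda(z,t)$ for every $t>0$, \eqref{eq:BoundCurves} gives $|\gamma(s)|\le M_z$ for a.e.\ $s\le 0$, and then Proposition~\ref{prop:VariationalFomular} (with $R=M_z$) gives $|\dot\gamma(s)|\le C_{M_z}$ a.e., which is \eqref{eq:boundz}. In the cases in which minimizers enjoy a velocity bound uniform in $x$ and $t$ — in particular under \ref{itm:assumptions-pp3}, by Lemma~\ref{lem:PropertiesUStrongCoerNew} — this bound is independent of $z$. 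For \eqref{eq:UlambdaInfinity}, under \eqref{eq:MonotoneWeakBelowH} the bounds $|\gamma|,|\dot\gamma|\le M_z$ together with the superlinear reduction of Proposition~\ref{prop:SuperlinearReduction} give $K_\lambda(\gamma(s),\dot\gamma(s))\le-\underline\kappa<0$ along $\gamma$, hence $\beta_\gamma^\lambda(-t)\le-\underline\kappa t\to-\infty$; since $u_\lambda(\gamma(-t))$ is bounded (Proposition~\ref{prop:PerronLipschitzNoContact}), the boundary term in \eqref{eq:InfinityMinimizerExp} tends to $0$ and the integral converges as $t\to\infty$ by estimates analogous to those in the proof of Proposition~\ref{prop:ExponentialForm}, so passing to the limit in \eqref{eq:InfinityMinimizerExp} yields \eqref{eq:UlambdaInfinity}.

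\emph{Proof of \eqref{eq:BoundCurves}.} First suppose $\lambda u_\lambda$ is uniformly bounded — e.g.\ under \ref{itm:assumptions-pp3}, where $\|u_\lambda\|_{L^\infty(\R^n)}\le\widehat C_0$ by Lemma~\ref{lem:PropertiesUStrongCoerNew}, so $\lambda u_\lambda\to0$ uniformly. Comparing $H(x,p,\lambda u_\lambda(x))$ with $H(x,p,0)$ for $|p|\le\delta_0$ through \ref{itm:assumptions-h3} and \ref{itm:assumptions-h2}, and shrinking $\lambda<\lambda_z$ and enlarging $R_0$, we may assume in addition $L(x,v,\lambda u_\lambda(x))+c(H)\ge\tfrac{\delta_0}{2}\max\{1,|v|\}$ for $|x|\ge R_0$. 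Set $C_z:=\sup_{\lambda\in(0,1)}\big(\sup_{\overline B_{R_0}}u_\lambda\big)<\infty$ (Corollary~\ref{coro:UniformBoundedLocally}); recall also $u_\lambda\ge-C_0$ on $\R^n$ by Proposition~\ref{prop:PerronLipschitzNoContact}. Let $\gamma\in\mathcal M_\lambda(z,t)$ and suppose $|\gamma(\sigma)|\ge M$ for some $\sigma\le0$ and some $M>R_0$. Since $|\gamma(0)|=|z|<R_0$, there is a subinterval $(s_1,s_2)\subset(-t,0)$ with $\sigma\in(s_1,s_2)$, $|\gamma|>R_0$ on $(s_1,s_2)$ and $|\gamma(s_2)|=R_0$. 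Differentiating \eqref{eq:MinimizerTimePositive} along $\gamma$ and integrating over $(s_1,s_2)$,
\[
u_\lambda(\gamma(s_2))-u_\lambda(\gamma(s_1))=\int_{s_1}^{s_2}\!\big(L(\gamma,\dot\gamma,\lambda u_\lambda(\gamma))+c(H)\big)\,ds\ \ge\ \tfrac{\delta_0}{2}\int_{s_1}^{s_2}|\dot\gamma|\,ds\ \ge\ \tfrac{\delta_0}{2}\,\big|\gamma(\sigma)-\gamma(s_2)\big|\ \ge\ \tfrac{\delta_0}{2}(M-R_0).
\]
Since $|\gamma(s_2)|=R_0$ gives $u_\lambda(\gamma(s_2))\le C_z$ and $u_\lambda(\gamma(s_1))\ge-C_0$, we conclude $M\le R_0+\tfrac{2}{\delta_0}(C_z+C_0)$, so \eqref{eq:BoundCurves} holds with $M_z:=R_0+\tfrac{2}{\delta_0}(C_z+C_0)$.

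\emph{The remaining cases and the main obstacle.} Under \ref{itm:assumptions-pp1} or \ref{itm:assumptions-pp2}, $u_\lambda$ (hence $\lambda u_\lambda$) need not be uniformly bounded, so $L(\gamma,\dot\gamma,\lambda u_\lambda(\gamma))$ cannot be bounded below at far-out points as above. Instead one works with the exponential formula \eqref{eq:dyn-pro-bisNegative}, in which only $L(\cdot,\cdot,0)$ enters, and the optimality identity \eqref{eq:optimalityNegativeEXP}. Putting $g(s)=e^{\lambda\beta_\gamma^\lambda(s)}u_\lambda(\gamma(s))$, so that $g'(s)=e^{\lambda\beta_\gamma^\lambda(s)}(L(\gamma(s),\dot\gamma(s),0)+c(H))$ a.e.\ and $s\mapsto e^{\lambda\beta_\gamma^\lambda(s)}$ is nondecreasing with value $1$ at $s=0$ (since $\tfrac{d}{ds}\beta_\gamma^\lambda=-K_\lambda\ge0$), the same excursion yields $g(s_2)-g(s_1)\ge\delta_0 e^{\lambda\beta_\gamma^\lambda(s_1)}(M-R_0)$, and using $g(s_2)\le C_z$, $g(s_1)\ge-C_0$ gives
\[
M-R_0\ \le\ \frac{C_z+C_0}{\delta_0}\,e^{-\lambda\beta_\gamma^\lambda(s_1)}.
\]
I expect the main obstacle to be the bound on $e^{-\lambda\beta_\gamma^\lambda(s_1)}$, i.e.\ a lower bound on $\beta_\gamma^\lambda(s_1)$ uniform in $\lambda$ and $t$: from \ref{itm:assumptions-h4} one has $-\overline\kappa\le K_\lambda\le0$ globally (so $\beta_\gamma^\lambda(s_1)\ge-\overline\kappa|s_1|$), after which one must show the excursion beyond $B_{R_0}(0)$ cannot lie arbitrarily far in the past for an optimal curve — the only step requiring genuine care — by playing the uniform discount rate $-\overline\kappa\le K_\lambda\le-\underline\kappa$ on the velocity-bounded portion of $\gamma$ inside $B_{R_0}(0)$ against the identity \eqref{eq:optimalityNegativeEXP} on $[s_1,0]$ and the uniform bounds $-C_0\le g(0)=u_\lambda(z)\le C_z$.
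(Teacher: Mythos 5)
Your reduction of \eqref{eq:boundz} and \eqref{eq:UlambdaInfinity} to \eqref{eq:BoundCurves} is sound and matches the paper, and your direct excursion argument in the case where $\lambda u_\lambda$ is uniformly bounded (covering \ref{itm:assumptions-pp3}) is correct and in fact a little cleaner than the paper's, since it dispenses with the exponential weights entirely. The gap is in the cases \ref{itm:assumptions-pp1} and \ref{itm:assumptions-pp2}, which you explicitly leave unfinished and which are exactly the cases where $u_\lambda$ is not known to be bounded. Your inequality $M-R_0\le \frac{C_z+C_0}{\delta_0}\,e^{-\lambda\beta^\lambda_\gamma(s_1)}$ gives nothing as it stands: $e^{-\lambda\beta^\lambda_\gamma(s_1)}$ is only controlled by $e^{\lambda\overline\kappa|s_1|}$, and there is no a priori bound on how far in the past the excursion sits --- for finite-horizon minimizers in $\mathcal{M}_\lambda(z,t)$ with $t$ arbitrary it can sit near $-t$, and ruling this out is essentially the statement being proved. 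So the route you hint at (bounding $|s_1|$) is not the right fix.

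The missing idea is to renormalize the exponential weight at the entry time of the excursion rather than at $s=0$. Writing \eqref{eq:optimalityNegativeEXP} on $[s_1,s_2]$ and dividing by $e^{\lambda\beta^\lambda_\gamma(s_2)}$, only the increment $\beta^\lambda_\gamma(s)-\beta^\lambda_\gamma(s_2)\ge-\overline\kappa\,(s_2-s)\ge-\overline\kappa\,(s_2-s_1)$ appears, i.e.\ only the \emph{duration} of the excursion, for which $K_\lambda\ge-\overline\kappa$ holds globally under \ref{itm:assumptions-h4}. One then runs your excursion estimate twice: first with the constant lower bound $L(x,v,0)+c(H)\ge\delta_0$ for $|x|\ge R_0$ from \eqref{eq:BoundLPositive}, which yields $\delta_0\big(1-e^{-\lambda\overline\kappa(s_2-s_1)}\big)/(\lambda\overline\kappa)\le C_z+C_0$ and hence $e^{-\lambda\overline\kappa(s_2-s_1)}\ge\tfrac12$ once $\lambda<\lambda_z$ with $\lambda_z$ of order $\delta_0/\big(\overline\kappa\,(C_z+C_0)\big)$ (this is precisely where the smallness threshold $\lambda_z$ in the statement comes from, cf.\ \eqref{eq:lambdaz}); and second with the superlinear bound $L(x,v,0)+c(H)\ge\delta_0|v|$ from \eqref{eq:growthL}, which together with the weight lower bound $\tfrac12$ controls the displacement $|\gamma(\sigma)-\gamma(s_2)|$ and hence $M$. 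Without this renormalization and the two-pass structure your sketch does not close in the cases \ref{itm:assumptions-pp1} and \ref{itm:assumptions-pp2}.
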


\begin{proof} From Proposition \ref{prop:VariationalFomular}, under either of these assumptions then $u_\lambda$ satisfies \eqref{eq:MinimizerTimePositive}. \medskip

Let $R_z = \max\{R_0, |z|\}$, where $R_0$ be the constant from \eqref{eq:growthL}, such that $L(x,v,0) + c(H) > \delta_0 |v|$ if $|x|\geq R_0$. From Corollary \ref{coro:UniformBoundedLocally}, there exists a constant $C_{z}$ such that $\max_{x\in \overline{B}_{R_z}(0)} u_\lambda(x) \leq C_{z}$. 
\begin{itemize}

\item If \ref{itm:assumptions-h3} holds, let $C$ be the constant from \eqref{eq:boundedVelocity} in Lemma \ref{lem:PropertiesUStrongCoerNew}, and $\overline{\kappa}=\overline{\kappa}_C$ be the constant such that $\partial_u L(x,v,u) \geq -\overline{\kappa}$ for all $|v|\leq C$.

\item If \ref{itm:assumptions-h4} holds, let $\overline{\kappa}$ be the corresponding constant such that $\partial_u L(x,v,u) \geq -\overline{\kappa}$.

\end{itemize}
We define
\begin{equation}\label{eq:lambdaz}
	\lambda_z = \min\left\lbrace \frac{1}{2}, \frac{\delta_0}{2\overline{\kappa}C_{z}} \right\rbrace. 
\end{equation}
Assume that \eqref{eq:BoundCurves} is false, then for some $\lambda \in (0,\lambda_z)$, there exist \( t_k \to \infty \) and \( \gamma_k \in \mathcal{M}_\lambda(z, t_k) \) such that
\begin{equation}\label{eq:assumeEscapeInfty}
	\lim_{k\to\infty} |\gamma_k(-t_k)| = +\infty.
\end{equation}
Let $\hat{t}_k \in [0,t_k]$ be the last time $\gamma_k(\cdot)$ remains in $\overline{B}_{R_z}(0)$, i.e.,
\begin{equation*}
	-\hat{t}_k = 
	\min \Big\lbrace -t: \gamma_k(-t) \in \partial B_{R_z}(0)  \Big\rbrace 
	\qquad\Longrightarrow\qquad
	\gamma_k([-t_k, -\hat{t}_k]) \notin \overline{B}_{R_z}(0). 
\end{equation*}
By \eqref{eq:optimalityNegativeEXP} of Proposition \ref{prop:exponentialNegative} we have
\begin{align*}
	&e^{ \lambda \beta^\lambda_{\gamma_k}(-\hat{t}_k)}
		u_\lambda(\gamma_k(-\hat{t}_k))  
	= 
	e^{ \lambda\beta^\lambda_{\gamma_k	}(-t_k)}
		u_\lambda(\gamma_k(-t_k)) 
	+ 
	\int_{-t_k}^{-\hat{t}_k} 
		e^{ \lambda 	\beta^\lambda_{\gamma_k	} (s)}
			\Big( L\big(\gamma_k(s), \dot{\gamma_k}(s), 0\big) + c(H)  \Big)
	\;ds .
\end{align*}
Using $u_\lambda \geq -C_0$ from Proposition \ref{prop:PerronLipschitzNoContact} we have
\begin{align}
	u_\lambda(\gamma_k(-\hat{t}_k))  
		&=
		e^{ \lambda 
		\left(  
		\beta^\lambda_{\gamma_k}(-t_k) 
		-
		\beta^\lambda_{\gamma_k}(-\hat{t}_k)
		\right) }  
		u_\lambda(\gamma_k(-t_k)) +
		\int_{-t_k}^{-\hat{t}_k} 
		e^{ 
		\lambda
		\left( 
		\beta^\lambda_{\gamma_k} (s)
		-
		\beta^\lambda_{\gamma_k} (-\hat{t}_k)
		\right)}
			\Big( L\big(\gamma_k(s), \dot{\gamma_k}(s), 0\big) + c(H)  \Big)
	\;ds  \nonumber\\
	&\geq -C_0 + \int_{\hat{t}_k-t_k}^{0} 
		e^{\lambda \int_s^{0} K_\lambda(\xi_k(\tau), \dot{\xi}_k(\tau))\;d\tau }
	\Big( L\big(\xi_k(s), \dot{\xi_k}(s), 0\big) + c(H)  \Big)
	\;ds \label{eq:estimateUlambda}
\end{align}
where \( \xi_k(s) = \gamma_k(s - \hat{t}_k) \) for \( s \in [\hat{t}_k - t_k, 0] \), using the fact that \( \beta^\lambda_{\gamma_k}(-t_k) - \beta^\lambda_{\gamma_k}(-\hat{t}_k) \leq 0 \).
 \medskip

By the choice of $C$ and $\overline{\kappa}$, we have
\begin{align*}
	 K_\lambda\left(\xi_k(s), \dot{\xi}_k(s)\right) \geq -\overline{\kappa}.
\end{align*}
The growth condition \eqref{eq:BoundLPositive} implies \( L(x,v,0) + c(H) \geq \delta_0 \) for \( |x| \geq R_0 \). Thus, by \eqref{eq:estimateUlambda}, we obtain
\begin{align*}
	C_{z}\geq u_\lambda(\gamma_k(-\hat{t}_k)) 
	\geq -C_0 + \delta_0 \int_{\hat{t}_k-t_k}^0 e^{\lambda \overline{\kappa} s}\;ds  = \frac{\delta_0\left(1-e^{-\lambda \overline{\kappa}|t_k-\hat{t}_k|}\right)}{\lambda \overline{\kappa}}. 	
\end{align*}
We deduce that, for $\lambda \in (0,\lambda_z)$ then
\begin{equation}\label{eq:estimateExpBelow}
	e^{-\lambda \overline{\kappa}|\hat{t}_k-t_k|} 
		\geq 1 - \frac{\lambda \overline{\kappa} C_{z}}{\delta_0} 
		\geq 1 - \frac{\lambda_z \overline{\kappa} C_{z}}{\delta_0} \geq \frac{1}{2} 
\end{equation} 
by the choice of $\lambda_z$ from \eqref{eq:lambdaz}. 
Using the the growth rate \eqref{eq:growthL} that $L(x,v,0) +c(H) \geq \delta_0|v|$ for $|x|\geq R_0$ in \eqref{eq:estimateUlambda}, we have
\begin{align*}
	C_{z} \geq u_\lambda(\gamma_k(-\hat{t}_k)) 
	&\geq -C_0 + \delta_0 \int_{\hat{t}_k-t_k}^0 e^{\lambda \overline{\kappa }s} |\dot{\xi}(s)|\;ds 	
		\geq -C_0 + \delta_0 e^{-\lambda \overline{\kappa}|\hat{t}_k-t_k|} \int_{\hat{t}_k-t_k}^0 |\dot{\xi}_k(s)|\;ds \\
	& \geq -C_0 +	\frac{\delta_0}{2} \left|\xi_k(0) - \xi_k(\hat{t}_k-t_k)\right| = -C_0 + \frac{\delta_0}{2}\left|\gamma_k(0) - \gamma_k(-t_k)\right|.
\end{align*}
We deduce that
\begin{align*}
	|\gamma_k(-t_k)| 
		\leq 
	|\gamma_k(0)| +  \frac{2\left(C_0 +  C_{z} \right)}{\delta_0} = |z| + \frac{2\left(C_0 +  C_{z} \right)}{\delta_0}. 
\end{align*}
This means \eqref{eq:assumeEscapeInfty} cannot happen if $\lambda < \lambda_z$, hence \eqref{eq:BoundCurves} must be true. \medskip

For \(\lambda \in (0, \lambda_z)\), let \(\gamma_k \in \mathcal{M}_\lambda(x, -k)\). For any $-k<b<a<0$, we have
\begin{align*}
	u_\lambda(\gamma_k(a)) - u_\lambda(\gamma_k(b))
	=
	\int_a^b \big(L(\gamma_k(s), \dot{\gamma}_k(s), \lambda u_\lambda(\gamma_k(s)))\big)\;ds
	\geq 
	\int_a^b \Big(L\left(\gamma_k(s), \dot{\gamma}_k(s), M_z\right)+c(H)\Big)\;ds
\end{align*}
thanks to the fact that $\gamma_k([-k,0])\subset \overline{B}_{M_z}(0)$. There must exists a constant $\widehat{C}_z$ such that $\Vert Du_\lambda\Vert_{L^\infty(\overline{B}_{M_z}(0))}\leq \widehat{C}_z$, as in Proposition \ref{prop:PerronLipschitzNoContact}. For $t\in  (-k,0)$ where $\dot{\gamma}_k(t)$ exists, we have
\begin{align*}
	L\left(\gamma_k(s), \dot{\gamma}_k(s), C_z\right)+c(H)\
		 \leq  
	 \widehat{C}_z\cdot|\dot{\gamma}_k(s)|
\end{align*} 
Recall that $|\gamma_k(s)| \leq M_z$ for all $s\in (-k,0)$, thanks to \eqref{eq:SuperLinearLtilde}, we deduce that $|\dot{\gamma}_k(s)| \leq \widehat{M}_z$ for a.e. $s\in (-k,0)$. We can apply a diagonal argument to obtain \eqref{eq:boundz}. 
We remark that in case (i), by \eqref{eq:boundedVelocity} in Lemma \ref{lem:PropertiesUStrongCoerNew}, the constant \(\widehat{M}_z\) is independent of \(z\).  \medskip

To see \eqref{eq:UlambdaInfinity}, we recall from Proposition \ref{prop:exponentialNegative} that for any $t>0$, as a minimizer we have
\begin{align}\label{eq:xyz}
	u_\lambda(z) 
	= 
	e^{\lambda \beta^\lambda_\gamma(-t)} 
	u_\lambda(\gamma(-t))
	+
	\int_{-t}^0 e^{\lambda \beta^\lambda_\gamma(s)}
	\Big(
		L
		\big(
		\gamma(s), \dot{\gamma}(s), 0)
		\big) + c(H)
	\Big)
	\;ds.
\end{align}
We have $u_\lambda(\gamma(s))$ bounded for all $s \leq 0$, since the minimizing path $\gamma$ satisfies $|\gamma(s)| \leq M_z$. The condition $\beta^\lambda_\gamma(s) \to -\infty$ as $s \to -\infty$ is required and is ensured by \eqref{eq:MonotoneWeakBelowH}, which is included in assumption \ref{itm:assumptions-h3}. Letting $t \to \infty$ in \eqref{eq:xyz}, and using the bound $L(x,v,0) \geq -c_0$ from \eqref{eq:BoundL}, together with the Monotone Convergence Theorem, we obtain \eqref{eq:UlambdaInfinity}.
\end{proof}

\section{Selection principle} \label{sec:selection-principle}
In this section we give the proof to Theorem \ref{thm:selection-principle}. 

\begin{lem}\label{lem:measureToBoundedFunctions} Let \(\gamma \in \mathrm{AC}((-\infty, 0]; \mathbb{R}^n)\) be such that $|\gamma(s)| + |\dot{\gamma}(s)| \leq R$ for a.e. $s\leq 0$. Define the probability Radon measure \(\mu^\lambda_\gamma \in \mathcal{R}^+(\mathbb{R}^n \times \mathbb{R}^n)\) such that
\begin{align}\label{eq:measure}
	\int_{\R^n\times\R^n} 
	\phi(x,v)\;d\mu^\lambda_\gamma(x,v) 
	= 
	\dfrac{\int_{-\infty}^0 
	e^{\lambda \beta^\lambda_\gamma(s)}
	\phi(\gamma(s), \dot{\gamma}(s))\;ds}
	{\int_{-\infty}^0 e^{\lambda \beta^\lambda_\gamma(s)}\;ds},
	 \qquad \phi\in C_c(\R^n\times \R^n),
\end{align}
where $\beta^\lambda_\gamma(s) = \int_s^0 K_\lambda(\gamma(s),\dot{\gamma}(s))\;ds$ for $s\leq 0$ as defined in \emph{Proposition \ref{prop:exponentialNegative}}, equation \eqref{eq:beta}. Then, the measure $\mu^\lambda_\gamma$ defined in \eqref{eq:measure} has compact support in $\R^n\times \R^n$. As a consequence, we have 
\begin{align}\label{eq:lem:measureToBoundedFunctions}
	\int_{\R^n\times\R^n} 
	\phi(x,v)\;d\mu^\lambda_\gamma(x,v) 
	= 
	\int_{\overline{B}_{R}(0)\times \overline{B}_{R}(0)} 
	\phi(x,v) \;d\mu^\lambda_\gamma(x,v), 
	\qquad\qquad  
	\phi \in C(\mathbb{R}^n \times \mathbb{R}^n). 
\end{align}
\end{lem}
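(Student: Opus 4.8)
The plan is to realize $\mu^\lambda_\gamma$ as the pushforward, under the a.e.-defined map $\Gamma: s \mapsto (\gamma(s),\dot{\gamma}(s))$, of the normalized weight $Z_\lambda^{-1} e^{\lambda\beta^\lambda_\gamma(s)}\,ds$ on $(-\infty,0]$, where $Z_\lambda := \int_{-\infty}^0 e^{\lambda\beta^\lambda_\gamma(s)}\,ds$, and to exploit that the hypothesis $|\gamma(s)| + |\dot{\gamma}(s)| \leq R$ forces $\Gamma(s) \in K := \overline{B}_R(0) \times \overline{B}_R(0)$ for a.e.\ $s \leq 0$. The statement then breaks into three points: (a) $Z_\lambda \in (0,\infty)$, so that $\mu^\lambda_\gamma$ is a genuine probability Radon measure; (b) $\mu^\lambda_\gamma$ is concentrated on the compact set $K$; and (c) a routine extension of the integral identity from $C_c$ to $C$.

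For (a), I would note that since $|\dot{\gamma}(s)| \leq R$ a.e., the weak monotonicity \eqref{eq:MonotoneWeakBelowH} (guaranteed by \ref{itm:assumptions-h3}, exactly as for the analogous bound on $\kappa_\lambda$ in the state-constraint case) supplies a constant $\underline{\kappa} = \underline{\kappa}_R > 0$ with $K_\lambda(\gamma(s),\dot{\gamma}(s)) \leq -\underline{\kappa}$ for a.e.\ $s \leq 0$; the definition \eqref{eq:beta} then gives $\beta^\lambda_\gamma(s) \leq \underline{\kappa}\,s$ for all $s \leq 0$, hence $0 < Z_\lambda \leq \int_{-\infty}^0 e^{\lambda\underline{\kappa} s}\,ds = (\lambda\underline{\kappa})^{-1} < \infty$. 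Consequently $\phi \mapsto Z_\lambda^{-1}\int_{-\infty}^0 e^{\lambda\beta^\lambda_\gamma(s)}\phi(\gamma(s),\dot{\gamma}(s))\,ds$ is a positive linear functional on $C_c(\R^n\times\R^n)$ of total mass $1$ (the integrands being measurable and dominated by a multiple of $e^{\lambda\underline{\kappa}s}\in L^1((-\infty,0])$ once $|\dot\gamma|\le R$), so by the Riesz representation theorem it is represented by a unique probability Radon measure $\mu^\lambda_\gamma$, which is precisely the pushforward described above and agrees with \eqref{eq:measure}.

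For (b), I would take any $\phi \in C_c(\R^n\times\R^n)$ with $\mathrm{supp}\,\phi \cap K = \emptyset$; then $\phi(\gamma(s),\dot{\gamma}(s)) = 0$ for a.e.\ $s \leq 0$, whence $\langle \mu^\lambda_\gamma,\phi\rangle = 0$, and by inner regularity of Radon measures this yields $\mu^\lambda_\gamma\big((\R^n\times\R^n)\setminus K\big) = 0$, i.e.\ $\mathrm{supp}\,\mu^\lambda_\gamma \subseteq K$, a compact set. For (c), given $\phi \in C(\R^n\times\R^n)$, which is bounded on $K$, the integral $\int_{\R^n\times\R^n}\phi\,d\mu^\lambda_\gamma$ converges absolutely and, $\mu^\lambda_\gamma$ being concentrated on $K$, equals $\int_{\overline{B}_R(0)\times\overline{B}_R(0)}\phi\,d\mu^\lambda_\gamma$; applying \eqref{eq:measure} to $\phi\chi \in C_c(\R^n\times\R^n)$ for any cutoff $\chi$ with $\chi \equiv 1$ on $K$, and using $\phi(\gamma(s),\dot{\gamma}(s)) = (\phi\chi)(\gamma(s),\dot{\gamma}(s))$ for a.e.\ $s$, both sides of \eqref{eq:lem:measureToBoundedFunctions} reduce to $Z_\lambda^{-1}\int_{-\infty}^0 e^{\lambda\beta^\lambda_\gamma(s)}\phi(\gamma(s),\dot{\gamma}(s))\,ds$. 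The only step with any real content is (a) — specifically the integrability of the exponential weight as $s \to -\infty$, which is where the a.e.\ velocity bound $|\dot\gamma| \leq R$ must be converted into a uniform strictly negative upper bound for $K_\lambda$ along $\gamma$; the remaining steps are standard Radon-measure bookkeeping.
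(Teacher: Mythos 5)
Your proof is correct and follows essentially the same route as the paper: the a.e.\ bound $|\gamma(s)|+|\dot\gamma(s)|\le R$ means the integrand only ever sees the compact set $\overline{B}_R(0)\times\overline{B}_R(0)$, so one may insert the indicator of that set, conclude $\mathrm{supp}(\mu^\lambda_\gamma)\subset\overline{B}_R(0)\times\overline{B}_R(0)$, and extend the identity to all of $C(\R^n\times\R^n)$. Your additional step (a) verifying $0<Z_\lambda<\infty$ via $K_\lambda\le-\underline{\kappa}_R$ along $\gamma$ is a legitimate and welcome check of well-definedness that the paper performs elsewhere (e.g.\ in Propositions \ref{prop:BoundedMinimizerCompact} and \ref{prop:supportRn}) rather than inside this lemma.
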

\begin{proof} This follows directly from the definition that 
\begin{align*}
	&\int_{\R^n\times\R^n} 
	\phi(x,v)\;d\mu^\lambda_\gamma(x,v)  
	= 
	\dfrac{ 
		\int_{-\infty}^0 
		e^{\lambda \beta^\lambda_\gamma(s)}
		\phi(\gamma(s), \dot{\gamma}(s))\;ds 
	}
	{
		\int_{-\infty}^0 
		e^{\lambda \beta^\lambda_\gamma(s)}\;ds
	}
    = 
	\dfrac{ 
		\int_{-\infty}^0 
		e^{\lambda \beta^\lambda_\gamma(s)}
		\left(
            \phi\cdot \mathbf{1}_{\overline{B}_{R}(0)\times 
		\overline{B}_{R}(0)}
        \right)
		(\gamma(s), \dot{\gamma}(s))\;ds 
	}
	{
		\int_{-\infty}^0 
		e^{\lambda \beta^\lambda_\gamma(s)}\;ds
	} \\
	&\qquad\qquad\qquad 
    = \int_{\R^n\times\R^n} 
	\phi(x,v)\cdot \mathbf{1}_{\overline{B}_{R}(0)\times \overline{B}_{R}(0)}(x,v) \;d\mu^\lambda_\gamma(x,v) 
    = 
	\int_{\overline{B}_{R}(0)\times \overline{B}_{R}(0)} 
	\phi(x,v) \;d\mu^\lambda_\gamma(x,v) 
\end{align*}
for $\phi \in C_c(\mathbb{R}^n \times \mathbb{R}^n)$. Thus \(\mathrm{supp}(\mu^\lambda_\gamma) \subset \overline{B}_R(0) \times \overline{B}_R(0)\), and the result extends to all \(\phi \in C(\mathbb{R}^n \times \mathbb{R}^n)\). 
\end{proof}

\begin{defn}\label{defn:SMather} 
Assume \ref{itm:assumptions-h1}--\ref{itm:assumptions-h2}. 
For \( z \in \R^n \), let \( \mathfrak{M}(z,\lambda) \) be the set of discounted measures \( \mu^\lambda_\gamma \) from curves \( \gamma \in \mathcal{M}_\lambda(z, \infty) \) . We define \( \mathfrak{M}(z) \) as the set of all weak$^*$ limits of \( \mu^\lambda \) as \( \lambda \to 0^+ \) for fixed \( z \in \R^n \), and set  
\begin{equation}\label{eq:OurMatherSet}
    \mathfrak{M} := \bigcup_{z \in \R^n} \mathfrak{M}(z).
\end{equation}
\end{defn}

\begin{prop}[Discounted and Mather measures]\label{prop:supportRn}  
Assume \ref{itm:assumptions-h1}--\ref{itm:assumptions-h3}, and one of \ref{itm:assumptions-pp1}, \ref{itm:assumptions-pp2}, or \ref{itm:assumptions-pp3}.
We have the followings:
\begin{enumerate}
    \item[$\mathrm{(i)}$] $\mathfrak{M}\neq \emptyset$, and 
    \begin{align}\label{eq:holonomicRn}
        \int_{\R^n \times \R^n} v\cdot D\phi(x)\;d\mu(x,v) = 0 \qquad \text{for all}\; \phi\in C^1(\R^n), \mu \in \mathfrak{M}. 
    \end{align}
    \item[$\mathrm{(ii)}$] Let \( \mathcal{H} : \R^n \times \mathbb{R}^n \to \mathbb{R} \) satisfy 
    \ref{itm:assumptions-h1}--\ref{itm:assumptions-h2} 

    with the third argument fixed at \( u = 0 \). If \( u \in C(\R^n) \) is a viscosity subsolution of \( \mathcal{H}(x, Du) = c \) in \( R^n \), then the Legendre transform $\mathcal{L} = \mathcal{H}^*$ satisfies
    \begin{equation}\label{eq:minMeasuresOmegaRn}
    	\int_{\R^n\times\R^n}\mathcal{L}(x,v)\; d\mu(x,v)\geq -c \qquad\text{for all}\; \mu\in \mathfrak{M}. 
    \end{equation}
    \item[$\mathrm{(iii)}$] For all $\mu\in \mathfrak{M}$, we have
    \begin{align} \label{eq:minMeasuresRn}
        \int_{\R^n\times \R^n} L(x,v,0)\;d\mu(x,v) = -c(H) \quad\text{and}\quad 
        \int_{\R^n\times \R^n} L(x,v,\lambda u_\lambda(x))\;d\mu(x,v) 
        \geq -c(H). 
    \end{align}
\end{enumerate}

\end{prop}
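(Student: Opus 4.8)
The strategy is to transplant the bounded-domain construction of Lemma~\ref{lem:verification} to $\R^n$, using the fact that, by Proposition~\ref{prop:BoundedMinimizerCompact}, all the relevant minimizing curves are confined to one compact set. Fix $z\in\R^n$. Under the standing hypotheses, Proposition~\ref{prop:VariationalFomular} (or Corollary~\ref{cor:Minimizer} when \ref{itm:assumptions-pp3} holds) gives the representation \eqref{eq:MinimizerTimePositive} for $u_\lambda$, so $\mathcal{M}_\lambda(z,\infty)\neq\emptyset$ by Corollary~\ref{cor:nonemtpyM}, and for $\lambda\in(0,\lambda_z)$ Proposition~\ref{prop:BoundedMinimizerCompact} supplies $\gamma_\lambda\in\mathcal{M}_\lambda(z,\infty)$ with $|\gamma_\lambda(s)|+|\dot\gamma_\lambda(s)|\le M_z$ for a.e. $s\le0$, together with the infinite-horizon identity \eqref{eq:UlambdaInfinity}. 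Set $\mu^\lambda:=\mu^\lambda_{\gamma_\lambda}$ as in \eqref{eq:measure}; by Lemma~\ref{lem:measureToBoundedFunctions} it is a probability measure with $\mathrm{supp}\,\mu^\lambda\subset\overline{B}_{M_z}(0)\times\overline{B}_{M_z}(0)$, a compact set independent of $\lambda$. Weak$^*$ compactness of the probability measures on this set then yields a subsequence $\lambda_j\to0^+$ with $\mu^{\lambda_j}\rightharpoonup\mu$, proving $\mathfrak{M}(z)\neq\emptyset$ and hence $\mathfrak{M}\neq\emptyset$, which is the first half of (i).

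For the closedness \eqref{eq:holonomicRn} I would reproduce the integration-by-parts computation of Lemma~\ref{lem:verification}. The velocity bound $|\dot\gamma_\lambda|\le M_z$ and assumption \ref{itm:assumptions-h3} give constants $0<\underline{\kappa}\le\overline{\kappa}$ (depending on $M_z$) with $-\overline{\kappa}\le K_\lambda(\gamma_\lambda(s),\dot\gamma_\lambda(s))\le-\underline{\kappa}$ for a.e. $s\le0$; hence $\overline{\kappa}s\le\beta^\lambda_{\gamma_\lambda}(s)\le\underline{\kappa}s$ for $s\le0$, so $e^{\lambda\beta^\lambda_{\gamma_\lambda}(s)}\to0$ as $s\to-\infty$ and $\tfrac{1}{\lambda\overline{\kappa}}\le\int_{-\infty}^0 e^{\lambda\beta^\lambda_{\gamma_\lambda}(s)}\,ds\le\tfrac{1}{\lambda\underline{\kappa}}$. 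For $\phi\in C^1(\R^n)$, writing $D\phi(\gamma_\lambda(s))\cdot\dot\gamma_\lambda(s)=\tfrac{d}{ds}\phi(\gamma_\lambda(s))$ and integrating by parts --- the $s\to-\infty$ boundary term vanishing, the $s=0$ one being $\phi(z)$ --- one finds $\big|\int v\cdot D\phi\,d\mu^\lambda\big|\le\lambda\overline{\kappa}\big(|\phi(z)|+\|\phi\|_{L^\infty(\overline{B}_{M_z}(0))}\big)$. Since $v\cdot D\phi(x)$, though not compactly supported, agrees on $\overline{B}_{M_z}(0)\times\overline{B}_{M_z}(0)$ with a function in $C_c(\R^n\times\R^n)$, Lemma~\ref{lem:measureToBoundedFunctions} lets me pass to the weak$^*$ limit along $\lambda_j\to0^+$ and obtain \eqref{eq:holonomicRn}.

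Part (ii) is the $\R^n$-analog of the inequality in Lemma~\ref{lem:lowercH}, whose proof uses only convexity and local coercivity and so applies here: given $\mathcal{H}$ as in the statement and a subsolution $u$ of $\mathcal{H}(x,Du)=c$, local coercivity makes $u$ locally Lipschitz, and mollification (as in \cite[Lemma~2.10]{tu_generalized_2024}) produces, on a fixed ball $B$ with $\mathrm{supp}\,\mu\subset B\times\R^n$, functions $u^\varepsilon\in C^1$ with $\mathcal{H}(x,Du^\varepsilon(x))\le c+\mathcal{O}(\varepsilon)$ on $B$; Fenchel-Young gives $v\cdot Du^\varepsilon(x)\le\mathcal{L}(x,v)+c+\mathcal{O}(\varepsilon)$ on $\mathrm{supp}\,\mu$, and integrating against $\mu$ --- using $\int v\cdot Du^\varepsilon\,d\mu=0$ (extend $u^\varepsilon$ globally by a cutoff equal to $1$ on $B$ and invoke \eqref{eq:holonomicRn}), then letting $\varepsilon\to0^+$ --- yields \eqref{eq:minMeasuresOmegaRn}. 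For (iii): $\int L(x,v,0)\,d\mu\ge-c(H)$ follows from (ii) with $\mathcal{H}(x,p)=H(x,p,0)$ and $c=c(H)$ (a subsolution exists by Lemma~\ref{lem:AubrySetNonempty}), and $\int L(x,v,\lambda u_\lambda(x))\,d\mu\ge-c(H)$ from (ii) with $\mathcal{H}=\widetilde{H}$ and $c=c(H)$ (its Legendre transform is $\widetilde{L}(x,v)=L(x,v,\lambda u_\lambda(x))$ by Lemma~\ref{lem:HTilde}, and $u_\lambda$ is a viscosity subsolution of $\widetilde{H}(x,Du)=c(H)$). The reverse bound $\int L(x,v,0)\,d\mu\le-c(H)$ comes from combining \eqref{eq:UlambdaInfinity} with the definition \eqref{eq:measure}: $\int\big(L(x,v,0)+c(H)\big)\,d\mu^\lambda=u_\lambda(z)\big/\int_{-\infty}^0 e^{\lambda\beta^\lambda_{\gamma_\lambda}(s)}\,ds$, whose modulus is at most $\lambda\overline{\kappa}\,|u_\lambda(z)|$, which tends to $0$ as $\lambda\to0^+$ by Corollary~\ref{cor:max-sol-prop}(i); passing to the weak$^*$ limit through Lemma~\ref{lem:measureToBoundedFunctions} gives $\int\big(L(x,v,0)+c(H)\big)\,d\mu=0$, i.e. $\int L(x,v,0)\,d\mu=-c(H)$ since $\mu$ is a probability measure.

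The main obstacle, beyond assembling these pieces, is the mismatch between the weak$^*$ topology --- defined through $C_c$ test functions --- and the genuinely non-compactly-supported integrands $v\cdot D\phi(x)$, $L(x,v,0)+c(H)$ and $L(x,v,\lambda u_\lambda(x))$ that must be integrated against the measures. This is resolved only because Proposition~\ref{prop:BoundedMinimizerCompact} pins all the $\gamma_\lambda$, and hence all $\mu^\lambda$ and their weak$^*$ limit $\mu$, inside a single $z$-dependent compact set, so that Lemma~\ref{lem:measureToBoundedFunctions} upgrades weak$^*$ convergence to convergence against every function in $C(\R^n\times\R^n)$. A smaller but essential point is the vanishing of the $s\to-\infty$ boundary terms in the integrations by parts, which rests on $\beta^\lambda_{\gamma_\lambda}(s)\to-\infty$, hence on the local strict monotonicity in \ref{itm:assumptions-h3} together with the velocity bound.
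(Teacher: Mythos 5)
Your proposal is correct and follows essentially the same route as the paper: compactly supported discounted measures built from the minimizers of Proposition \ref{prop:BoundedMinimizerCompact}, integration by parts for the closedness identity, and Fenchel--Young on a smooth approximation for the lower bounds. The only (harmless) deviation is in the equality of part (iii), where you divide the infinite-horizon formula \eqref{eq:UlambdaInfinity} by $\int_{-\infty}^0 e^{\lambda\beta^\lambda_\gamma(s)}\,ds$ and invoke $\lambda u_\lambda(z)\to 0$, whereas the paper integrates by parts along the minimizer to bound $\int\big(L(x,v,\lambda u_\lambda(x))+c(H)\big)\,d\mu^\lambda$ and then uses the uniform convergence $L(x,v,\lambda u_\lambda(x))\to L(x,v,0)$ on the common compact support; both arguments are valid.
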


\begin{proof} \qquad \medskip 

{\bf Part (i).} For any \( z \in \R^n \), Corollary \ref{cor:nonemtpyM} ensures \( \mathcal{M}_\lambda(z, \infty) \neq \emptyset \). Proposition \ref{prop:BoundedMinimizerCompact} ensures that for \( \gamma \in \mathcal{M}_\lambda(z,\infty) \), we have \( |\gamma(s)| + |\dot{\gamma}(s)| \leq M_z \) a.e. \( s \leq 0 \). Then, by Lemma \ref{lem:measureToBoundedFunctions}, \( \mathrm{supp}(\mu^\lambda) \subset \overline{B}_{M_z}(0) \times \overline{B}_{M_z}(0) \). Thus, along a subsequence \( \lambda_j \to 0 \), \( \mu^{\lambda_j} \rightharpoonup \mu \), and \( \mu \in \mathfrak{M}(z) \), so \( \mathfrak{M}(z) \neq \emptyset \). \medskip  

Take \( \phi \in C^1(\R^n) \). By Lemma \ref{lem:measureToBoundedFunctions}, we may integrate against the continuous function \( v \cdot D\phi(x) \in C(\R^n \times \R^n) \) and compute:
\begin{align*}
	&\int_{\R^n\times \R^n} v\cdot \nabla\phi(x)\;d\mu^\lambda(x,v) 
	= \frac
    {
        \int_{-\infty}^0 
        e^{\lambda\beta^\lambda_\gamma(s)}
        \cdot 
        \nabla\phi(\gamma(s))\cdot \dot{\gamma}(s)\;ds 
    }{
        \int_{-\infty}^0 e^{\lambda \beta^\lambda_\gamma(s)}\;ds 
    } \\
	&\qquad \qquad \qquad 
    = \frac
    {
        \int_{-\infty}^0 e^{\lambda	\beta^\lambda_\gamma(s)} \frac{d}{ds}\left(\phi(\gamma(s))\right) \;ds 
    }{
        \int_{-\infty}^0 e^{\lambda \beta^\lambda _\gamma(s)}\;ds 
    }
	= \frac{e^{\lambda \alpha^\lambda_\gamma(s)}\phi(\gamma(s))\big|_{-\infty}^0 -\int_{-\infty}^0 \phi(\gamma(s))\cdot \frac{d}{ds}\left(e^{\lambda\alpha_\gamma^\lambda(s)}\right)\;ds}{ \int_{-\infty}^0 e^{\lambda \alpha_\gamma^\lambda(s)}\;ds } .
\end{align*}
Thanks to \ref{itm:assumptions-h3}
and $|\gamma(s)| + |\dot{\gamma}(s)|\leq M_z$, we have $-\overline{\kappa}_z \leq K_\lambda(\gamma(s), \dot{\gamma}(s))\leq -\underline{\kappa}_z$ for a.e. $s\in (-\infty,0)$. Thus
\begin{align*}
	\lim_{s\to -\infty} 
    e^{\lambda \beta_\gamma^\lambda(s)}\phi(\gamma(s)) = 0, \qquad
    \frac{1}{\lambda \overline{\kappa}_z} \leq \int_{-\infty}^0 e^{\lambda \beta_\gamma^\lambda(s)}\;ds \leq \frac{1}{\lambda \underline{\kappa}_z},
    \qquad 
	e^{\lambda \beta_\gamma^\lambda(s)}\phi(\gamma(s))\Big|_{-\infty}^0 = \phi(\gamma(0)).
\end{align*}
Since $\frac{d}{ds}\left(e^{\lambda \beta^\lambda_\gamma(s)}\right) \geq 0$ for $s\leq 0$, we have
\begin{align}\label{eq:boundIPOmegaRn}
	\left|\int_{-\infty}^0  \phi(\gamma(s)) 
	\cdot 
	\frac{d}{ds}\left(e^{\lambda \beta_\gamma^\lambda(s)}\right)\;ds \right| 
	\leq 
	\Vert \phi\Vert_{L^\infty\left(\overline{B}_{M_z}(0)\right)}
	\int_{-\infty}^0 \left|\frac{d}{ds}
	\left(e^{\lambda \beta_\gamma^\lambda(s)}\right) \right|\;ds  
	= \Vert \phi\Vert_{L^\infty\left(\overline{B}_{M_z}(0)\right)} .
\end{align}
We deduce that 
\begin{align*}
	\left|\int_{\R^n\times \R^n} v\cdot \nabla\phi(x)\;d\mu^\lambda(x,v)\right|  \leq \lambda \overline{\kappa}  _z\cdot \left(|\phi(z)| + \Vert \phi\Vert_{L^\infty\left(\overline{B}_{M_z}(0)\right)} \right) .  
\end{align*} 
Letting \( \lambda \to 0^+ \), we obtain \eqref{eq:holonomicRn}, using the compact support of \( \mu^\lambda \).

\medskip

{\bf Part (ii).} For any \( \mu \in \mathfrak{M} \), there exists \( z \in \R^n \) and \( M_z > 0 \) with \( \mathrm{supp}(\mu) \subset \overline{B}_{M_z}(0) \times \overline{B}_{M_z}(0) \). If \( u \in C(\R^n) \) is a subsolution to \( \mathcal{H}(x, Du, 0) \leq c \), then \( u \) is locally Lipschitz by 
\eqref{eq:assumptions-LocalCoercive} in \ref{itm:assumptions-h1}.
Let \( R > 0 \) be large enough so that
\begin{equation*}
    \mathcal{A} \cup \overline{B}_{M_z}(0)\subset \Omega := B_R(0)  . 
\end{equation*}
Since \( u \in \mathrm{Lip}(\Omega) \), we can approximate it as in \cite[Lemma 2.10]{tu_generalized_2024}: for any \( \varepsilon > 0 \), there exists \( u^\varepsilon \in C^\infty(\R^n) \) with \( \mathcal{H}(x, Du^\varepsilon) \leq c + \mathcal{O}(\varepsilon) \) in \( \Omega \). By Fenchel--Young's inequality, we have
\begin{align*}
	v\cdot Du^\varepsilon(x) \leq \mathcal{L}(x,v) + \mathcal{H}(x,Du^\varepsilon(x)) \leq \mathcal{L}(x,v) + c + \mathcal{O}(\varepsilon),  \qquad  (x,v) \in \overline{\Omega} \times \R^n. 
\end{align*}
Integrating and applying Lemma \ref{eq:lem:measureToBoundedFunctions}, we get
\begin{align*}
    &\int_{\R^n\times \R^n} \Big(\mathcal{L}(x,v)+c\Big)\;d\mu(x,v)  
    = 
	\int_{\overline{\Omega}\times \R^n} \Big(\mathcal{L}(x,v)+c\Big)\;d\mu(x,v)   \\
	&\qquad\qquad\qquad \geq 
	\int_{\overline{\Omega}\times \R^n} v\cdot Du^\varepsilon(x)\;d \mu(x,v) - \mathcal{O}(\varepsilon) 
    = \int_{\R^n\times \R^n} v\cdot Du^\varepsilon(x)\;d \mu(x,v) - \mathcal{O}(\varepsilon) = - \mathcal{O}(\varepsilon)
\end{align*}
thanks to \eqref{eq:holonomicRn}. Letting \( \varepsilon \to 0^+ \) yields \eqref{eq:minMeasuresOmegaRn}. \medskip

{\bf Part (iii).} The inequality cases follow by applying \( \mathcal{H}(x,p) = H(x,p,0) \) and \( \mathcal{H}(x,p) = H(x,p,\lambda u_\lambda(x)) \), both satisfying the assumptions by Lemma \ref{lem:HTilde}. As a minimizer, we have
\begin{align*}
	\frac{d}{ds} u(\gamma(s)) = L(\gamma(s), \dot{\gamma}(s), \lambda u_\lambda(\gamma(s))) + c(H) \qquad\text{for a.e.}\;s<0
\end{align*}
Therefore, as in \eqref{eq:boundIPOmegaRn}, using \( \frac{d}{ds}(e^{-\lambda\beta_\gamma^\lambda(s)}) \geq 0 \), \( u_\lambda \) in place of \( \phi \), and integrating against continuous functions via Lemma \ref{lem:measureToBoundedFunctions}, we obtain:
\begin{align*}
	&\left|\int_{\R^n\times\R^n} \Big(L(x,v,\lambda u_\lambda(x)) +c(H)\Big)\;d\mu^\lambda(x,v)\right| = \left|\frac{\int_{-\infty}^0 e^{\lambda \beta_\gamma^\lambda(s)} 
	\big(L(\gamma(s), \dot{\gamma}(s), \lambda u_\lambda(\gamma(s))) + c(H)\big)\;ds }
	{
	\int_{-\infty}^0 e^{\lambda \beta_\gamma^\lambda(s)}\;ds
	}\right| \\
	&\qquad = 
	\left|\frac{
	\int_{-\infty}^0 e^{\lambda\beta_\gamma^\lambda(s)}\frac{d}{ds}\left(u_\lambda(\gamma(s))\right)\;ds
	}{
	\int_{-\infty}^0 e^{\lambda \beta_\gamma^\lambda(s)}\;ds
	} \right|
	= \left| \frac{
	u_\lambda(0) - \int_{-\infty}^0 u_\lambda(\gamma(s))\frac{d}{ds}
	\left(
	e^{\lambda\beta_\gamma^\lambda(s)}
	\right)\;ds
	}{
	\int_{-\infty}^0 e^{\lambda \beta_\gamma^\lambda(s)}\;ds
	} \right| \leq 2\lambda\overline{\kappa} \cdot 
	\Vert u_\lambda\Vert_{L^\infty(\overline{B}_{M_z}(0))}.
\end{align*}
Since \( L(x,v,\lambda u_\lambda(x)) \to L(x,v,0) \) uniformly on \( \overline{B}_{M_z}(0) \times \overline{B}_{M_z}(0) \), the common support of all \( \mu^\lambda \) for \( \lambda \in (0,\lambda_z) \), we obtain the first equality in \eqref{eq:minMeasuresRn} as \( \lambda \to 0^+ \).
\end{proof}

\begin{rmk} It is standard to define the set of \emph{holonomic} (or \emph{closed}) probability measures \( \mathcal{C}(\R^n \times \R^n) \subset \mathcal{P}(\R^n \times \R^n) \) as those satisfying  
\[
\int_{\R^n\times\R^n} |v|\, d\mu(x,v) < \infty \quad \text{and} \quad \int_{\R^n\times\R^n} v \cdot D\phi(x)\, d\mu(x,v) = 0 \quad \text{for all } \phi \in C^1(\R^n).
\]
In bounded domains, Mather measures are closed measures minimizing \(\langle \mu, L \rangle = -c(H)\). Our set \(\mathfrak{M}\) from \emph{Definition \ref{defn:SMather}} resembles this property, though possibly smaller than the standard set of measures. A more detailed treatment, including connections to the functional analysis approach in \cite{ishii_asymptotic_2008}, is left for future work.
\end{rmk}

\begin{proof}[Proof of Theorem \ref{thm:selection-principle}] Let us define
\begin{equation*}
	\mathcal{E} = \left\lbrace
	w\in C(\R^n): H(x,Dw(x), 0) \leq c(H)\;\text{in}\;\R^n: 
	\int_{\R^n\times\R^n} w(x)\cdot\partial_u L(x,v,0)\;d\mu(x,v) \geq 0\;\forall\; 
	\mu \in \mathfrak{M}
	\right\rbrace.
\end{equation*}
By Proposition \ref{prop:PerronLipschitzNoContact}, the family \( \{u_\lambda\}_{\lambda \in (0,1)} \) is locally bounded and equi-Lipschitz, so up to a subsequence \( \lambda_i \to 0 \), we have \( u_{\lambda_i} \to u \in C(\mathbb{R}^n) \) locally uniformly. By stability, \( u \) solves \eqref{eq:E}. To verify that \( u \in \mathcal{E} \), take any \( \mu \in \mathfrak{M}\). Then by \eqref{eq:minMeasuresRn} in Proposition \ref{prop:supportRn} we have
\begin{equation*}
	\int_{\R^n\times\R^n} \frac{L(x,v,\lambda u_\lambda(x))-L(x,v,0)}{\lambda} \; d\mu (x,v)\geq 0, \qquad \mu \in \mathfrak{M}.
\end{equation*}
By definition of $\mathfrak{M}$, every $\mu$ has compact support. Let $\lambda = \lambda_i \to 0^+$, using $u_{\lambda_i}\to u$ uniformly on compact sets, we obtain $\int_{\R^n\times\R^n} \partial_u L(x,v,0)\cdot u (x)\; d\mu  (x,v)\geq 0$, thus $u\in \mathcal{E}$. 
\medskip

To show \( u = \sup \mathcal{E} \), take any \( w \in \mathcal{E} \) and \( z \in \mathbb{R}^n \). We aim to prove \( u(z) \geq w(z) \). By Proposition \ref{prop:BoundedMinimizerCompact}, there exists \( \gamma \in \mathcal{M}_\lambda(z, \infty) \) with \( \gamma(0) = z \) and \( |\gamma(s)| + |\dot{\gamma}(s)| \leq M_z \) for a.e. \( s \leq 0 \), and
\begin{align}\label{eq:minimizeULambda}
	u_\lambda(z) 
	= 
	e^{\lambda \beta^\lambda_\gamma(-t)} 
	u_\lambda(\gamma(-t)) 
	+ 
	\int_{-t}^0 
	e^{\lambda \beta^\lambda_\gamma(s)}
	\Big(L(\gamma(s), \dot{\gamma}(s),0) + c(H)\Big)
	\;ds 
	\qquad \text{for all}\;t>0. 
\end{align}
Thanks to \eqref{eq:assumptions-LocalCoercive} in \ref{itm:assumptions-h1},
\( w \) is locally bounded and locally Lipschitz. By \cite[Proposition 4.1]{mitake_asymptotic_2008}, for every $t>0$, the map $s\mapsto w(\gamma(s))$ is absolutely continuous on $(-t,0)$, and there exists a function $p\in L^\infty((-t,0);\R^n)$ such that 
\begin{equation*}
	\frac{d}{ds}\big(w(\gamma(s))\big) = p(s)\cdot \dot{\gamma}(s) \qquad\text{and}\qquad p(s) \in \partial_c w(\gamma(s)) \qquad\text{a.e.}\;s\in (-t,0),
\end{equation*}
where $\partial_c w$ is the Clarke differential of $w$. We observe that $H(\gamma(s), p(s), 0) \leq c(H)$ for a.e. $s\leq 0$. Thus for a.e. $s\leq 0$ we have
\begin{align}\label{eq:fenchelOmega}
	L(\gamma(s), \dot{\gamma}(s),0) + c(H) 
	& \geq L(\gamma(s), \dot{\gamma}(s),0) + H\left(\gamma(s), p(s), 0\right) \geq  p(s)\cdot\dot{\gamma}(s) = \frac{d}{ds}\left(w(\gamma(s))\right)  .
\end{align}
Therefore, from \eqref{eq:minimizeULambda} and \eqref{eq:fenchelOmega} we have
\begin{align*}
	\frac{d}{ds}
	\left(
		e^{\lambda \beta^\lambda_\gamma(s)}
		u_\lambda(\gamma(s))
	\right) 
	&= 
	e^{\lambda \beta^\lambda_\gamma(s)} 
		\Big(L(\gamma(s), \dot{\gamma}(s),0) + c(H)\Big) 		
	\geq e^{\lambda \beta^\lambda_\gamma(s)} 
		\frac{d}{ds}\left(w(\gamma(s))\right)  
			&&\qquad\text{for a.e.}\;s \leq 0.
\end{align*}
Using integration by parts for the absolutely continuous function $s\mapsto w(\gamma(s))$, we have 
\begin{align*}
	\int_{-t}^0 
	\frac{d}{ds} 
	\left(e^{\lambda \beta^\lambda_\gamma(s)}
	u_\lambda(\gamma(s))\right)\;ds 
	&\geq 
	\int_{-t}^0 e^{\lambda \beta^\lambda_\gamma(s)} 
		\frac{d}{ds}\left(w(\gamma(s))\right)  \;ds \\
	&= \left(e^{\lambda \beta^\lambda_\gamma(s)} w(\gamma(s)) \right)\Big|_{s=-t}^{s=0} 
	- \int_{-t}^0 w(\gamma(s)) \frac{d}{ds}\left(
	e^{\lambda \beta^\lambda_\gamma(s)} \right) \;ds. 
\end{align*}
Therefore we obtain
\begin{align}
	u_\lambda(z) 
	- 
	e^{\lambda \beta^\lambda_\gamma(-t)} 
	u_\lambda(\gamma(-t)) 
	&\geq w(z) 
	- 
	e^{\lambda \beta^\lambda_\gamma(s)} w(\gamma(-t)) 
	+ \lambda \int_{-t}^0 
	e^{\lambda \beta^\lambda_\gamma(s)} 
	w(\gamma(s)) \cdot K_\lambda\big(\gamma(s), \dot{\gamma}(s)\big) \;ds.  \label{eq:estwaRn}
\end{align}
Since $|\gamma(s)| + |\dot{\gamma}(s)| \leq M_z$ for a.e. $s\leq 0$, from 
\ref{itm:assumptions-h3} 
we have $K_\lambda(\gamma(s), \dot{\gamma}(s)) \leq -\underline{\kappa}$ for a.e. $s\leq 0$, hence $\beta^\lambda_\gamma(s) \leq \underline{\kappa}s$ for all $s\leq 0$, which means (we note that $\gamma$ depends on $\lambda$ as well)
\begin{align}\label{eq:liminfPosRn}
	0\leq \int_{-\infty}^0 
	e^{\lambda \beta_\gamma^\lambda(s)}\;ds 
	\leq \int_{-\infty}^0 
	e^{-\lambda \underline{\kappa} s}\;ds  
	= \frac{1}{\lambda \underline{\kappa}}
	 \qquad\Longrightarrow\qquad 
	 \frac{1}{\underline{\kappa}} 
	 \geq 
	 \liminf_{\lambda\to 0^+}  
	 \int_{-\infty}^0\lambda  
	 e^{\lambda \beta_\gamma^\lambda(s)}\;ds\geq 0. 
\end{align}
Since $|\gamma(s)|\leq M_z$, we have \( |u_\lambda(\gamma(s))|\leq C_z \) for a.e. $s\leq 0$ thanks to Proposition \ref{prop:PerronLipschitzNoContact}. Using \eqref{eq:assumptions-DiffC1} in \ref{itm:assumptions-h3} 
and the fact that we can ass assume super linearlity without loss of generality as in Proposition \ref{prop:SuperlinearReduction}, there is a modulus $\omega_z$ such that
\begin{align*}
	\left| K_\lambda(x,v) - \partial_uL(x,v,0)\right| 
	= 
	\left|
	\frac{L(x,v,\lambda u_\lambda(x)) - L(x,v,0)}{\lambda u_\lambda(x)} 
	- \partial_u L(x,v,0) \right| 
	\leq \omega( \lambda C_z). 
\end{align*}
From \eqref{eq:estwaRn} we have
\begin{align}
	&u_\lambda(z) 
	- 
	e^{\lambda \beta^\lambda_\gamma(-t)} 
	u_\lambda(\gamma(-t)) 
	\geq w(z) 
	- 
	e^{\lambda \beta^\lambda_\gamma(s)} w(\gamma(-t)) \nonumber  \\
	&\qquad\qquad
	+ \lambda \int_{-t}^0 
	e^{\lambda \beta^\lambda_\gamma(s)} 
	w(\gamma(s)) \cdot \partial_uL(\gamma(s), \dot{\gamma}(s),0\big) \;ds 
	- \omega(\lambda C_z) \cdot \lambda \int_{-t}^0 
	e^{\lambda \beta^\lambda_\gamma(s)} 
	w(\gamma(s))\;ds .
	\label{eq:estwaRn2}
\end{align} 
Let \( t \to \infty \) in \eqref{eq:estwaRn2}, thanks to \eqref{eq:liminfPosRn}, $\sup_{s\leq 0} |w(\gamma(s))| \leq \Vert w\Vert_{L^\infty(B_{M_z}(0))}$, and the Dominated Convergence Theorem we have
\begin{align}
	u_\lambda(z) 
	&\geq w(z) + \lambda \int_{-\infty}^0  
	e^{\lambda \beta^\lambda_\gamma(s)} 
	w(\gamma(s))
		\cdot  
	\partial_u L(\gamma(s),\dot{\gamma}(s),0)\;ds \nonumber \\
	&= w(z) + 
	\lambda \left(\int_{-\infty}^0 
	e^{\lambda \beta^\lambda_\gamma(s)}\;ds \right)
	\left(
		\int_{\R^n\times \R^n} 
		w(x)
			\cdot
		\partial_u L(x,v,0)\;d\mu^\lambda_\gamma(x,v)
	\right). 
	\label{eq:est2wx}
\end{align}
where we invoke \eqref{eq:lem:measureToBoundedFunctions} from Lemma \ref{lem:measureToBoundedFunctions}. Together with the fact that along a subsequence $\mu^\lambda_\gamma\rightharpoonup \mu$ and $\mu \in \mathfrak{M}$, we deduce from \eqref{eq:liminfPosRn} and \eqref{eq:est2wx} that, as $\lambda_i\to 0$ then
\begin{align*}
	u(z) \geq w(z) + 
	\left(
	\liminf_{\lambda\to 0^+} \int_{-\infty}^0  \lambda  e^{\lambda \alpha_\gamma^\alpha(s)}\;ds
	\right)
	\cdot 
	\left(
	\int_{\R^n\times \R^n} w(x)\cdot\partial_uL(x,v)\;d\mu(x,v)
	\right) 
	\geq w(z)
\end{align*}
since $w\in \mathcal{E}$. This completes the proof.
\end{proof}

\section{Localization with state-constraint solution}\label{sec:localization}

We emphasize that, unlike the case of discounted Hamiltonians considered in \cite{ishii_vanishing_2020}, it is not immediate that the solution \( u_\lambda \) coincides with a state constraint solution \( \vartheta_\lambda \) in \( B_{M_z}(0) \), as in \eqref{eq:StateConstraintExpInfityLipschitz}. We recall the formula \eqref{eq:UlambdaInfinity} for $u_\lambda$, and \eqref{eq:InfiniteExpOptimal} for $\vartheta$ here for convenience, using the curve $\gamma\in \mathcal{M}_\lambda(x,\infty)$ in \eqref{eq:boundz}: 
\begin{align*}
	u_\lambda(z) 
	= 
	\int_{-\infty}^0 
	e^{\lambda \beta^\lambda_\gamma(s)} 
	\big(L(\gamma(s), \dot{\gamma}(s), 0) + c(H)\big)\;ds 
	\quad  \text{and} \quad 
	\vartheta_\lambda(z) 
	= 
	\int_{-\infty}^0 
	e^{\lambda \alpha^\lambda_\gamma(s)} 
	\big(L(\gamma(s), \dot{\gamma}(s), 0) + c(H)\big)\;ds. 
\end{align*}
The difficulty lies in the fact that the exponential weights \( \beta^\lambda_\gamma \) and \( \alpha^\lambda_\gamma \) differ and depend on the solutions, unlike the discounted case \( H(x,p,u) = H(x,p) + u \) where both equal \( \lambda s \). Thus, establishing the localization \( u_\lambda(z) = \vartheta_\lambda(z) \) is nontrivial and more delicate.
Nevertheless, by suitably modifying the weight in the exponential formulas \eqref{eq:UlambdaInfinity} and \eqref{eq:InfiniteExpOptimal}, we can still recover the localization \( u_\lambda(z) = \vartheta_\lambda(z) \) at a fixed point. We start by defining a different discounted index. 

\begin{defn} \label{defn:NewIndices}
Assume \ref{itm:assumptions-h1}, \ref{itm:assumptions-h2}. 
Let $C_0$ be the constant such that the maximal solution $u_\lambda$ to \eqref{eq:DP} satisfies $\inf_{\R^n} u_\lambda \geq -C_0$. For $(x,v)\in \R^n\times\R^n$, we define
\begin{equation}\label{eq:DiscountedIndexNew}
    \mathbf{K}_\lambda(x,v) = 
    \begin{cases}
    \partial_u^+ L(x,v,0) &\text{if}\; u_\lambda (x) = 0, \vspace{0.1cm}\\ 
    \dfrac{L(x,v,\lambda u_\lambda(x)) - L(x,v,-\lambda C_0)}{\lambda u_\lambda(x) - (-\lambda C_0)} &\text{if}\; u_\lambda (x) \neq 0.
    \end{cases}
\end{equation}
Let $\Omega\subset\R^n$ be an open, bounded domain with $C^2$ boundary that contains $\mathcal{A}$, so that $c_\Omega(H) = c(H)$ by \emph{Lemma \ref{lem:ErgodicConstantTheSame}}. Let $\vartheta_\lambda$ be the state-constraint solution to \eqref{eq:propStateConstraintBoundOmega}, we define 
\begin{equation}\label{eq:DiscountedIndexOmegaNew}
	\mathbf{k}_\lambda(x,v) = 
    \begin{cases}
    \partial_u^+ L(x,v,0) &\text{if}\; \vartheta_\lambda (x) = 0, \vspace{0.1cm}\\ 
    \dfrac{L(x,v,\lambda \vartheta_\lambda(x)) - L(x,v,-\lambda C_0)}{\lambda \vartheta_\lambda(x) - (-\lambda C_0)} &\text{if}\; \vartheta_\lambda (x) \neq 0.
    \end{cases}
\end{equation}
\end{defn}
By comparison principle we have $\vartheta_\lambda (x) \geq u_\lambda(x) \geq -C_0$ for $x\in \R^n$. Since $L(x,v,\lambda u_\lambda(x)) \leq L(x,v,\lambda \vartheta _\lambda(x))$, we have
\begin{align*}
	\mathbf{K}_\lambda(x,v) &= 
	\dfrac{
		L(x,v,\lambda u_\lambda(x)) - L(x,v,- \lambda C_0)
	}{
	\lambda u_\lambda(x) - (- \lambda C_0)
	} \\
	& \geq 
	\dfrac{
		L(x,v,\lambda \vartheta _\lambda(x)) - L(x,v,- \lambda C_0)
	}{
	\lambda u_\lambda(x) - (- \lambda C_0)
	}  
	\geq
	\dfrac{
		L(x,v,\lambda \vartheta _\lambda(x)) - L(x,v,- \lambda C_0)
	}{
	\lambda \vartheta _\lambda(x) - (- \lambda C_0)
	}  
	=
	\mathbf{k}_\lambda(x,v).
\end{align*}

\begin{prop}\label{prop:LocalExponentialNegative} 
Assume \ref{itm:assumptions-h1}, \ref{itm:assumptions-h2} 
and that $u_\lambda$ satisfies \eqref{eq:MinimizerTimePositive}. 
\begin{itemize}
\item[$\mathrm{(i)}$] For $(x,t)\in \overline{\Omega}\times (0,\infty)$, we have 
\begin{align*}
	\vartheta_\lambda(x) + C_0 
	= 
	\inf_{\gamma\in \mathcal{C}^-(x,t;\overline{\Omega})} 
	\left\lbrace 
	e^{\lambda \mathbf{A}^\lambda_\gamma(-t) 
	}\vartheta_\lambda(\gamma(-t))  
	+ \int_{-t}^0 
	e^{\lambda\mathbf{A}_{\gamma}^\lambda (s)} 
	\Big(
		L(\gamma(s), \dot{\gamma}(s), -\lambda C_0) + c(H)
	\Big)\;ds
	\right\rbrace,
\end{align*}
where 
\begin{equation*}
	\mathbf{A}_\gamma^\lambda(s) = \int_{s}^0 
	\mathbf{k}_{\lambda}(\gamma(\tau), \dot{\gamma}(\tau))
	\;d\tau \qquad 
	\text{for}\;s\in [-t,0]. 
\end{equation*} 
\item[$\mathrm{(ii)}$]
If \eqref{eq:MonotoneWeakBelowH} holds, then 
\begin{align}
	\vartheta_\lambda(x) + C_0
	&= \min_{\gamma(0) = x} 
	\left\lbrace 
	\int_{-\infty}^0 
	e^{\lambda\mathbf{A}_{\gamma}^\lambda(s)}
	\Big(
		L(\gamma(s), \dot{\gamma}(s), -\lambda C_0) 
		+ c_\Omega(H)
	\Big)\;ds: 
	\gamma\in \mathrm{Lip}
	\left((-\infty,0);\overline{\Omega}\right)  
	\right\rbrace. \label{eq:StateConstraintExpInfityLipschitzNew}
\end{align}

\item[$\mathrm{(iii)}$] If $\gamma\in \mathcal{M}_\lambda(x,\infty)$, then 
\begin{equation}\label{eq:Local}
	u_\lambda(x) + C_0
	=
	e^{ \lambda \mathbf{B}_{\gamma}^\lambda(-t)} u_\lambda(\gamma(-t)) 
	+ 
	\int_{-t}^0 
	e^{ \lambda \mathbf{B}_\gamma^\lambda(s)} 
		\Big( 
			L\big(\gamma(s), \dot{\gamma}(s), -\lambda C_0\big) + c(H)  
		\Big)\;ds 
\end{equation}
for all $t>0$,  where 
\begin{equation*}
	\mathbf{B}_{\gamma}^\lambda(s) = \int_{s}^0  \mathbf{K}_\lambda(\gamma(\tau), \dot{\gamma}(\tau))\;d\tau, \qquad s\in [-t,0].
\end{equation*}
\end{itemize}
\end{prop}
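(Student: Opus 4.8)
The three statements are all of the same type: starting from the exact ``DPP-with-equality'' formula \eqref{eq:MinimizerTimePositive} (for $u_\lambda$) or \eqref{eq:StateConstraintOptimalControl} (for $\vartheta_\lambda$), multiply the value function shifted by the constant $C_0$ by the integrating factor $e^{\lambda \mathbf{B}^\lambda_\gamma(s)}$ (resp. $e^{\lambda \mathbf{A}^\lambda_\gamma(s)}$) and differentiate. The only new ingredient compared with Propositions \ref{prop:StateConstraintProperties} and \ref{prop:exponentialNegative} is that the anchor point for the linearization of $L$ in the $u$-variable is now $-\lambda C_0$ rather than $0$; consequently the ``free'' term that appears after differentiation is $L(x,v,-\lambda C_0)+c(H)$ instead of $L(x,v,0)+c(H)$, and the quantity that gets the exponential weight is $u_\lambda(\gamma(s))+C_0 \geq 0$ (resp. $\vartheta_\lambda(\gamma(s))+C_0\geq 0$), which is exactly why we shift by $C_0$. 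The plan is to carry this out in the three cases in order.

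\medskip

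\textbf{Part (iii).} Let $\gamma\in\mathcal{M}_\lambda(x,\infty)$. By definition $\gamma\in\mathcal{M}_\lambda(x,t)$ for every $t>0$, so the equality form of the dynamic programming principle gives, for $-t<b<a<0$,
\[
u_\lambda(\gamma(a))-u_\lambda(\gamma(b))=\int_b^a\Big(L\big(\gamma(s),\dot\gamma(s),\lambda u_\lambda(\gamma(s))\big)+c(H)\Big)\,ds,
\]
and by \cite[Proposition 4.1]{mitake_asymptotic_2008} the map $s\mapsto u_\lambda(\gamma(s))$ is absolutely continuous, so $\frac{d}{ds}u_\lambda(\gamma(s))=L(\gamma(s),\dot\gamma(s),\lambda u_\lambda(\gamma(s)))+c(H)$ a.e. Write $g(s):=u_\lambda(\gamma(s))+C_0$; then for a.e. $s$,
\[
\frac{d}{ds}\Big(e^{\lambda\mathbf{B}^\lambda_\gamma(s)}g(s)\Big)
=e^{\lambda\mathbf{B}^\lambda_\gamma(s)}\Big(-\lambda\mathbf{K}_\lambda(\gamma(s),\dot\gamma(s))\,g(s)+\tfrac{d}{ds}u_\lambda(\gamma(s))\Big).
\]
Now use the defining identity of $\mathbf{K}_\lambda$: when $u_\lambda(\gamma(s))\neq 0$,
\[
\lambda\,\mathbf{K}_\lambda(\gamma(s),\dot\gamma(s))\,\big(u_\lambda(\gamma(s))+C_0\big)=L(\gamma(s),\dot\gamma(s),\lambda u_\lambda(\gamma(s)))-L(\gamma(s),\dot\gamma(s),-\lambda C_0),
\]
which is also valid (by continuity / the right-derivative convention) when $u_\lambda(\gamma(s))=0$, since then $\lambda u_\lambda(\gamma(s))=0$ and $-\lambda C_0 + \lambda C_0 = \lambda g(s) = \lambda C_0$; one checks the two cases of \eqref{eq:DiscountedIndexNew} are consistent in the limit. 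Substituting, the terms $L(\gamma(s),\dot\gamma(s),\lambda u_\lambda(\gamma(s)))$ cancel and we get
\[
\frac{d}{ds}\Big(e^{\lambda\mathbf{B}^\lambda_\gamma(s)}g(s)\Big)=e^{\lambda\mathbf{B}^\lambda_\gamma(s)}\Big(L(\gamma(s),\dot\gamma(s),-\lambda C_0)+c(H)\Big)\qquad\text{a.e. }s\in(-t,0).
\]
Integrating over $[-t,0]$ and using $\mathbf{B}^\lambda_\gamma(0)=0$, $g(0)=u_\lambda(x)+C_0$ gives exactly \eqref{eq:Local}.

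\medskip

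\textbf{Parts (i) and (ii).} For $\vartheta_\lambda$ the argument is the same with $\mathbf{k}_\lambda$, $\mathbf{A}^\lambda_\gamma$, $c_\Omega(H)=c(H)$ in place of their boldface-K counterparts. First one records the inequality version: for \emph{any} $\gamma\in\mathcal{C}^-(x,t;\overline\Omega)$, the sub-dynamic-programming inequality \eqref{eq:estab} (Proposition \ref{prop:curve}) plus absolute continuity of $s\mapsto\vartheta_\lambda(\gamma(s))$ give $\frac{d}{ds}\vartheta_\lambda(\gamma(s))\leq L(\gamma(s),\dot\gamma(s),\lambda\vartheta_\lambda(\gamma(s)))+c(H)$ a.e.; feeding this into the computation above (with $h(s):=\vartheta_\lambda(\gamma(s))+C_0\geq 0$, the sign of which makes the weight harmless) yields
\[
\frac{d}{ds}\Big(e^{\lambda\mathbf{A}^\lambda_\gamma(s)}h(s)\Big)\leq e^{\lambda\mathbf{A}^\lambda_\gamma(s)}\Big(L(\gamma(s),\dot\gamma(s),-\lambda C_0)+c(H)\Big),
\]
and integrating gives ``$\leq$'' in (i). For the reverse inequality, take $\gamma$ a minimizer of \eqref{eq:StateConstraintOptimalControl} (existence from Proposition \ref{prop:ExistenceOfMinimizerTimeOmega}, with velocity bound $|\dot\gamma|\leq\widehat C_\Omega$); for such $\gamma$ the inequality \eqref{eq:estab} is an equality, so the displayed derivative identity holds with equality, and integrating gives ``$\geq$''. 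This proves (i), and the same minimizer simultaneously minimizes the exponential functional. Part (ii) follows by sending $t\to\infty$ in (i): under \eqref{eq:MonotoneWeakBelowH}, for a Lipschitz curve $\gamma\in\mathrm{Lip}((-\infty,0];\overline\Omega)$ with $|\dot\gamma|\leq C_\gamma$ one has $\mathbf{k}_\lambda(\gamma(s),\dot\gamma(s))\leq-\underline\kappa_\gamma<0$ a.e., hence $\mathbf{A}^\lambda_\gamma(s)\leq\underline\kappa_\gamma s\to-\infty$; since $\vartheta_\lambda$ is uniformly bounded on $\overline\Omega$ and $L(x,v,-\lambda C_0)+c(H)\geq -c_0$ from \eqref{eq:BoundL} (up to adjusting the constant since $-\lambda C_0$ is bounded), the boundary term $e^{\lambda\mathbf{A}^\lambda_\gamma(-t)}h(-t)\to 0$ and the integral converges by monotone/dominated convergence, giving ``$\leq$'' in \eqref{eq:StateConstraintExpInfityLipschitzNew}; the matching ``$\geq$'' (and attainment of the min) comes from the infinite-horizon extremal curve of Proposition \ref{prop:ExistenceOfMinimizerTimeOmega}(iii), exactly as in Proposition \ref{prop:ExponentialForm}(i).

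\medskip

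\textbf{Main obstacle.} The only genuinely delicate point is the consistency of the two branches of \eqref{eq:DiscountedIndexNew}–\eqref{eq:DiscountedIndexOmegaNew} and the verification that the algebraic identity $\lambda\mathbf{K}_\lambda(x,v)\,(u_\lambda(x)+C_0)=L(x,v,\lambda u_\lambda(x))-L(x,v,-\lambda C_0)$ holds for \emph{all} $x$ (including $u_\lambda(x)=0$), so that the cancellation in the derivative computation is legitimate a.e. along $\gamma$; once that is in place, everything else is a verbatim adaptation of Propositions \ref{prop:StateConstraintProperties} and \ref{prop:exponentialNegative}, and the sign condition $u_\lambda+C_0\geq 0$, $\vartheta_\lambda+C_0\geq 0$ (from $\inf_{\R^n}u_\lambda\geq -C_0$ and the comparison $\vartheta_\lambda\geq u_\lambda$) is what guarantees the exponential-weighted functionals are bounded below and the $t\to\infty$ limits behave. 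A secondary technical check is that $s\mapsto\mathbf{K}_\lambda(\gamma(s),\dot\gamma(s))$ and $s\mapsto\mathbf{k}_\lambda(\gamma(s),\dot\gamma(s))$ are bounded and measurable along the (Lipschitz, bounded-velocity) curves in play, so that $\mathbf{B}^\lambda_\gamma,\mathbf{A}^\lambda_\gamma$ are Lipschitz in $s$ and the product rule applies — this follows from the velocity bounds in Lemma \ref{lem:PropertiesUStrongCoerNew} / Proposition \ref{prop:BoundedMinimizerCompact} together with \eqref{eq:MonotoneWeakBelowH}.
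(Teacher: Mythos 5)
Your proposal is correct and follows essentially the same route as the paper: the integrating-factor computation with the shifted quantity $u_\lambda+C_0$ (resp. $\vartheta_\lambda+C_0$), the cancellation via the algebraic identity $\lambda\mathbf{K}_\lambda(x,v)\,(u_\lambda(x)+C_0)=L(x,v,\lambda u_\lambda(x))-L(x,v,-\lambda C_0)$, inequality for arbitrary curves plus equality along minimizers, and the $t\to\infty$ limit for part (ii) using $\mathbf{k}_\lambda\leq-\underline{\kappa}$. The branch-consistency point you flag at $u_\lambda(x)=0$ is real (the difference quotient over $[-\lambda C_0,0]$ there is not literally $\partial_u^+L(x,v,0)$, so the first branch of Definition \ref{defn:NewIndices} is best read as the difference-quotient value), but the paper's own proof uses the identity in exactly the same way, so this is not a gap in your argument relative to theirs.
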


\begin{proof} Take $\gamma\in \mathcal{C}^-(x,t;\overline{\Omega})$, by Proposition \ref{prop:curve} we have for $-t<b<a<0$ that
\begin{equation}\label{eq:timeMinNewOmega}
	\vartheta_\lambda(\gamma(a)) 
	\leq 
	\vartheta_\lambda(\gamma(b)) 
	+
	\int_{b}^a
	\Big( 
		L\left((\gamma(s), \dot{\gamma}(s), \lambda 
		\vartheta_\lambda(\gamma(s))\right) 
	+ c(H)
	\Big)\;ds.
\end{equation}
By \cite[Proposition 4.1]{mitake_asymptotic_2008}, 
$s\mapsto \vartheta_\lambda(\gamma(s))$ is absolutely continuous on $[-t,0]$, thus 
\begin{align*}
	\frac{d}{ds}\big(\vartheta_\lambda(\gamma(s))\big) 
	\leq 
	L
	\big(
	\gamma(s), \dot{\gamma}(s), 
	\lambda \vartheta_\lambda (\gamma(s))
	\big)
	 + c(H) 
	\qquad\text{for a.e.}\;s\in (-t,0). 
\end{align*}
For a.e. $s\in (-t,0)$ we have
\begin{align*}
	&\frac{d}{ds}
	\left(
		e^{\lambda \mathbf{A}_{\gamma}^\lambda(s)} 
	\big(
		\vartheta_\lambda (\gamma(s))   + C_0
	\big)
	\right) 
	= 
		e^{\lambda \mathbf{A}_{\gamma}^\lambda(s)}
	\left(
		-\lambda 
		\big(
			  \vartheta_\lambda(\gamma(s)) 
			+ C_0
		\big) \cdot 
		\mathbf{k}_{\lambda}(\gamma(s), \dot{\gamma}(s))
		+ 
		\frac{d}{ds}
		\big(\vartheta_\lambda(\gamma(s)) + C_0\big) 
	\right) \\
	&\qquad\qquad
	\leq 
	e^{\lambda \mathbf{A}_{\gamma}^\lambda(s)}
	\Big(
		L(\gamma(s), \dot{\gamma}(s), -\lambda C_0) 
		- 
		L(\gamma(s), \dot{\gamma}(s),\lambda \vartheta_\lambda(\gamma(s))) 
		+ 
		L\big(
		\gamma(s), \dot{\gamma}(s), 
		\lambda \vartheta_\lambda(\gamma(s))
		\big) + c(H)
	\Big) \\
	&\qquad\qquad 
	= e^{\lambda \mathbf{A}_{\gamma}^\lambda(s)}
	\Big(
		L(\gamma(s), \dot{\gamma}(s), -\lambda C_0)  + c(H)
	\Big). 
\end{align*}
Integrating over $[-t,0]$, we obtain
\begin{align*}
	\vartheta_\lambda(x) + C_0 
	&= 
	e^{\lambda \mathbf{A}_{\gamma}^\lambda(0)}
	\Big(\vartheta_\lambda(\gamma(0))  + C_0\Big) \\
	&\leq 
	e^{\lambda \mathbf{A}_{\gamma}^\lambda(-t)}
	\Big(\vartheta_\lambda(\gamma(-t))  + C_0\Big)
	+ 
	\int_{-t}^0 e^{\lambda\mathbf{A}_{\gamma}^\lambda(s)} 
	\Big(L(\gamma(s), \dot{\gamma}(s), -\lambda C_0) + c(H)\Big)\;ds. 
\end{align*}
Since $\gamma \in \mathcal{C}^-(x,t;\overline{\Omega})$ is chosen arbitrarily, we obtain \eqref{eq:OptimalControlStateConstraintExp}. 
\medskip

For \eqref{eq:StateConstraintExpInfityLipschitzNew}, since \( |\dot{\gamma}| \leq C \), assumption 
\eqref{eq:MonotoneWeakBelowH}
implies \( \mathbf{k}_\lambda(\gamma(s), \dot{\gamma}(s)) \leq -\underline{\kappa}_C \). Letting \( t \to \infty \) in \eqref{eq:OptimalControlStateConstraintExp} gives the result. For \eqref{eq:Local}, take \( \gamma \in \mathcal{M}_\lambda(x,\infty) \) and repeat the argument using equality in \eqref{eq:timeMinNewOmega} to conclude. 
\end{proof}

\subsection{Proof of Theorem \ref{thm:localization}}

\begin{proof}[Proof of Theorem \ref{thm:localization}] Let $\lambda_z$ be the constant from Proposition \ref{prop:BoundedMinimizerCompact}, we can define
\begin{align*}
	M_z: = \sup_{\lambda \in (0,\lambda_z)}\sup  \left\lbrace \Vert \gamma\Vert_{L^\infty((-t,0])} : \gamma\in \mathcal{M}_\lambda(x, t), t>0 \right\rbrace < \infty. 
\end{align*}
We redefine \( M_z \), if needed, so that \( \mathcal{A} \subset \Omega := B_{M_z}(0) \), ensuring \( c_\Omega(H) = c(H) \) by Lemma \ref{lem:ErgodicConstantTheSame}. Take $\gamma:(-\infty,0]\to \R^n$ be the curve as in \eqref{eq:boundz}. Since $|\dot{\gamma}(s)|\leq M_z$ for a.e. $s\in (-\infty,0]$, from 
\eqref{eq:MonotoneWeakBelowH}, which is guaranteed by \ref{itm:assumptions-h3}  
we have 
\begin{equation*}
	\mathbf{K}_\lambda(\gamma(s),\dot{\gamma}(s)) \leq -\underline{\kappa}_z
	\qquad\Longrightarrow\qquad
	\mathbf{B}^\lambda_\gamma(s) \leq -\underline{\kappa}_zs \qquad\text{for a.e.}\;s\in (-\infty,0]. 
\end{equation*}
Since $|\gamma(z)|\leq M_z$, we have $u_\lambda(\gamma(z)) \leq C_z$, thus we can let $t\to \infty$ in \eqref{eq:Local} to obtain that 
\begin{align*}
	u_\lambda(z) + C_0
	&= \int_{-\infty}^0
	e ^{\lambda \mathbf{B}^\lambda_\gamma(s)}
	\Big(L(\gamma(s), \dot{\gamma}(s), -\lambda C_0) + c(H)\Big)
	\;ds  \\ 
	&\geq 
	\inf_{\gamma(0)=z} 
	\left\lbrace
	\int_{-\infty}^0
	e ^{\lambda \mathbf{B}^\lambda_\gamma(s)}
	\Big(L(\gamma(s), \dot{\gamma}(s), 0) + c(H)\Big)
	\;ds:
	\gamma\in \mathrm{Lip}
	\left((-\infty, 0];\overline{B}_{M_z}(0)\right)
	\right\rbrace = \vartheta_{\lambda, M_z}(z)
\end{align*}
We note that
\begin{align*}
	\mathbf{K}_\lambda(x,v) \geq \mathbf{k}_\lambda(x,v)
	\qquad\Longrightarrow\qquad
	\mathbf{B}_\lambda(x,v) \geq  \mathbf{A}_\lambda(x,v).
\end{align*}
Hence, we obtain that 
\begin{align*}
	u_\lambda(z) + C_0 
	&\geq 
	\inf_{\gamma(0)=z} 
	\left\lbrace
	\int_{-\infty}^0
	e ^{\lambda \mathbf{B}^\lambda_\gamma(s)}
	\Big(L(\gamma(s), \dot{\gamma}(s), 0) + c(H)\Big)
	\;ds:
	\gamma\in \mathrm{Lip}
	\left((-\infty, 0];\overline{B}_{M_z}(0)\right)
	\right\rbrace\\
	&\geq 
	\inf_{\gamma(0)=z} 
	\left\lbrace
	\int_{-\infty}^0
	e ^{\lambda \mathbf{A}^\lambda_\gamma(s)}
	\Big(L(\gamma(s), \dot{\gamma}(s), 0) + c(H)\Big)
	\;ds:
	\gamma\in \mathrm{Lip}
	\left((-\infty, 0];\overline{B}_{M_z}(0)\right)
	\right\rbrace	\\
	&= \vartheta_{\lambda, M_z}(z) + C_0
\end{align*}
by \eqref{eq:StateConstraintExpInfityLipschitzNew} of Proposition \ref{prop:LocalExponentialNegative}. On the other hand, it is clear that $u_\lambda \leq \vartheta_{\lambda, M_z}$, thus we obtain the conclusion $u_\lambda(z) = \vartheta_{\lambda, M_z}(z)$ for all $\lambda \in (0,\lambda_z)$.
\end{proof}

\subsection{An alternative proof of Theorem \ref{thm:selection-principle}} \label{subsection:alternative-thm}

By Proposition \ref{prop:PerronLipschitzNoContact}, the family \(\{u_\lambda\}_{\lambda \in (0,1)}\) is equibounded and equi-Lipschitz on compact subsets of \(\mathbb{R}^n\). Hence, by Arzel\`a--Ascoli Theorem, any sequence has a locally uniformly convergent subsequence, and the set of such limits is nonempty. We now show the limit is unique. Proposition \ref{prop:PerronLipschitzNoContact} ensures the existence of \(\lambda_z > 0\) and \(R_z > 0\) such that \(\mathcal{A} \subset B_{R_z}(0)\), and
\begin{equation*}
	u_\lambda(z) = \vartheta_{\lambda}(z) \qquad\text{for all}\; \lambda \in (0,\lambda_z),
\end{equation*}
where $\vartheta_{\lambda}$ is the state-constraint to \eqref{eq:propStateConstraintBoundOmega} in $\Omega = B_{R_z}(0)$. By Theorem \ref{thm:VanishingOnStateConstraintBounded} we have
\begin{align*}
	\lim_{\lambda\to 0^+} u_\lambda(z)  = \lim_{\lambda \to 0^+}  \vartheta_{\lambda}(z) = \vartheta_{0}(z)
\end{align*}
where 
\begin{align*}
	\vartheta_{0} = \sup \left\lbrace w\in C(\Omega):H(x,Dw,0)\leq c_\Omega(H)\;\text{in}\;\Omega, \int_{\overline{\Omega}\times \R^n} \partial_uL(x,v,0)\cdot w(x)\;d\mu(x,v) \geq 0\;\forall\;\mu \in \mathfrak{M}(\Omega) \right\rbrace,
\end{align*}
where $\mathfrak{M}(\Omega)$ is the set of Mather measures in $\Omega$ (see Definition \ref{defn:MatherMeasuresOmega}).
This implies the conclusion that $u_\lambda\to u$ locally uniformly in $\R^n$.

\appendix
\label{appendix:appendix}
\section{Properties of the Lagrangian and Comparison principles} \label{appendix:Lagrangian}
We assume the following assumptions. 

\begin{description}[style=multiline, labelwidth=1cm, leftmargin=2cm]    
    \item[\namedlabel{itm:assumptions-k0}{$(\mathcal{K}_0)$}] Assume $H:\R^n\times \R^n$ be continuous. 
    \item[\namedlabel{itm:assumptions-k1}{$(\mathcal{K}_1)$}]
	For any compact subset $K\subset \R^n$ we have $\lim_{|\xi|\to \infty} \min_{x\in K} H(x,p) = + \infty$. 
    \item[\namedlabel{itm:assumptions-k2}{$(\mathcal{K}_2)$}] For each $x\in \R^n$, $p\mapsto H(x,p)$ is convex.
\end{description}

\subsection{Properties of the Lagrangian}
We define $L:\R^n\times \R^n \to \R$ by $L(x,v) = \sup_{p\in \R^n} \big(p\cdot v - H(x,p)\big)$ for $(x,v)\in \R^n\times\R^n$. 
It is standard that $v\mapsto L(x,v)$ is convex, and it can be infinite at some points. If $H$ is superlinear, i.e., $H(x,p)/|p| \to \infty$ as $|p|\to \infty$ then $L(x,v):\R^n\times\R^n\to \R$ is finite everywhere, and is also superlinear \cite[Theorem 2.16]{tran_hamilton-jacobi_2021}. \medskip

\begin{lem}[{\cite[Propositions 2.1 and 2.2]{ishii_asymptotic_2008}}]\label{lem:AppendixPropertiesL} Assume \ref{itm:assumptions-k0}--\ref{itm:assumptions-k2}. 
\begin{itemize}
	\item[(i)] $L$ is superlinear locally in $x$, i.e., for every compact subset $K\subset \R^n$, we have $\lim_{|v|\to\infty} \inf_{x\in K} \frac{L(x,v)}{|v|} = +\infty$. 

	\item[(ii)] For each $R>0$ there exists $C_R>0$ and $\delta_R>0$ such that $L(x,v) \leq C_R$ for $(x,v)\in B_R(0)\times B_{\delta_R}(0)$.
	\item[(iii)] Assume further that $p\mapsto H(x,p)$ is strictly convex, then $v\mapsto L(x,v)$ is continuously differentiable, i.e., $(x,v)\mapsto D_vL(x,v)$ is continuous. 
\end{itemize}
\end{lem}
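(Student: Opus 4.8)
The plan is to run everything off the convex‑duality formula $L(x,v)=\sup_{p\in\R^n}\bigl(p\cdot v-H(x,p)\bigr)$ and treat the three items in turn; only item (iii) is more than a one‑line computation. For \textbf{item (i)} I would fix a compact $K\subset\R^n$ and an arbitrary $\Lambda>0$, and for $v\neq 0$ simply test the supremum with the momentum $p=\Lambda v/|v|$:
\[
L(x,v)\ \ge\ \Lambda|v|-H\!\left(x,\tfrac{\Lambda v}{|v|}\right)\ \ge\ \Lambda|v|-M_K(\Lambda),\qquad M_K(\Lambda):=\max_{x\in K,\ |q|\le\Lambda}H(x,q),
\]
the constant being finite by \ref{itm:assumptions-k0}. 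Dividing by $|v|$ gives $\inf_{x\in K}L(x,v)/|v|\ge\Lambda-M_K(\Lambda)/|v|$, hence $\liminf_{|v|\to\infty}\inf_{x\in K}L(x,v)/|v|\ge\Lambda$, and letting $\Lambda\to\infty$ proves (i); the same estimate with $H$ in place of $L$ records the local superlinearity of $H$ invoked below.

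For \textbf{item (ii)} the point is that for small $v$ the supremum defining $L(x,v)$ is attained at bounded momenta. Using \ref{itm:assumptions-k1} — and, as is standard and harmless for the purposes of this paper by Proposition \ref{prop:SuperlinearReduction}, taking $H$ superlinear locally in $x$ — I would pick $r_R>0$ with $H(x,p)\ge|p|$ for $x\in\overline{B}_R(0)$, $|p|\ge r_R$, and set $\delta_R:=\tfrac12$. Then for $|v|\le\delta_R$: on $|p|>r_R$ one has $p\cdot v-H(x,p)\le\delta_R|p|-|p|<0$, whereas $\sup_{|p|\le r_R}\bigl(p\cdot v-H(x,p)\bigr)\le r_R\delta_R-\min\{H(x,p):x\in\overline{B}_R(0),\,|p|\le r_R\}$, a finite quantity; taking $C_R$ to be the larger of this and $0$ yields $L(x,v)\le C_R$ on $\overline{B}_R(0)\times\overline{B}_{\delta_R}(0)$.

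For \textbf{item (iii)} I would combine one convex‑analysis fact with a compactness argument. Under \ref{itm:assumptions-k0}--\ref{itm:assumptions-k2} (plus local superlinearity) the supremum in $L(x,v)$ is attained, and $\partial_v L(x,v)$ equals the set of maximizers $p$ of $p\cdot v-H(x,p)$; strict convexity of $p\mapsto H(x,p)$ forces this maximizer to be unique, call it $\bar p(x,v)$. Since a convex function with a singleton subdifferential at a point is differentiable there with that point as gradient, $L(x,\cdot)$ is differentiable and $D_vL(x,v)=\bar p(x,v)$. For the joint continuity of $(x,v)\mapsto D_vL(x,v)$, take $(x_k,v_k)\to(x,v)$ and write $\bar p_k:=\bar p(x_k,v_k)$; from $L(x_k,v_k)=\bar p_k\cdot v_k-H(x_k,\bar p_k)$ the local boundedness of $L$ from (i)--(ii) together with the local superlinearity of $H$ forces $\{\bar p_k\}$ to stay bounded, any subsequential limit $q$ satisfies $L(x,v)=q\cdot v-H(x,q)$ by continuity of $H$ and of $L$, hence $q=\bar p(x,v)$ by uniqueness, and therefore $\bar p_k\to\bar p(x,v)$.

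The hard part is item (iii): the differentiability is routine, and the delicate points are (a) that $L$ is itself jointly continuous — which I would get from lower semicontinuity (a supremum of continuous functions) plus the uniform confinement of the maximizing momenta on compact sets, exactly as in the argument for (ii) — and (b) the no‑escape‑to‑infinity of the maximizers $\bar p_k$, which is precisely where the (local) superlinearity of $H$ enters. Items (i) and (ii) are short once the duality formula and the coercivity \ref{itm:assumptions-k1} are in hand.
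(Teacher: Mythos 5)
The paper offers no proof of this lemma (it is quoted from \cite{ishii_asymptotic_2008}), and your argument is the standard convex-duality route that the cited source follows. Items (i) and (iii) are correct as written: testing the supremum with $p=\Lambda v/|v|$ is exactly right for (i), and for (iii) the chain ``strict convexity $\Rightarrow$ unique maximizer $=$ singleton subdifferential $\Rightarrow$ differentiability'', followed by ``bounded maximizers $+$ uniqueness of limits $\Rightarrow$ joint continuity of $\bar p$'', is sound; there the superlinearity you invoke is genuinely needed (otherwise $L$ may equal $+\infty$ off a ball and the statement is vacuous), and this is consistent with the remark the paper places just before the lemma.

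The one step to repair is item (ii). Assumption \ref{itm:assumptions-k1} is coercivity, not superlinearity, so you cannot ``pick $r_R$ with $H(x,p)\ge|p|$ for $|p|\ge r_R$''; and appealing to Proposition \ref{prop:SuperlinearReduction} is out of scope, since that proposition concerns the contact Hamiltonian of the main text while this appendix lemma must hold for an arbitrary $H$ satisfying \ref{itm:assumptions-k0}--\ref{itm:assumptions-k2} (e.g.\ for $\widetilde H$ in Lemma \ref{lem:HTilde}, for which only coercivity is asserted). The fix is one line and uses \ref{itm:assumptions-k2}: by \ref{itm:assumptions-k1} choose $r>0$ with $H(x,q)\ge M_R+1$ for $x\in\overline{B}_R(0)$ and $|q|=r$, where $M_R=\max_{\overline{B}_R(0)}H(\cdot,0)$; writing $r\omega=\tfrac{r}{t}(t\omega)+(1-\tfrac{r}{t})\cdot 0$ and using convexity along rays gives $H(x,p)\ge \tfrac{|p|}{r}-C_R$ for all $p\in\R^n$ and $x\in\overline{B}_R(0)$. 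Taking $\delta_R=\tfrac{1}{2r}$, your splitting of the supremum then goes through verbatim. With that substitution the proof is complete.
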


The proof of the following Proposition is omitted due to its similarity to proofs found in \cite[Proposition 2.5]{ishii_asymptotic_2008}, \cite[Proposition 5.1]{mitake_asymptotic_2008}, or \cite[Lemma 2.10]{tu_generalized_2024}.

\begin{prop}\label{prop:curve} Assume \ref{itm:assumptions-k0}--\ref{itm:assumptions-k2}. Let $\Omega\subset \R^n$ be open, \emph{bounded}, and $\gamma\in \mathrm{AC}([a,b];\R^n)$ such that $\gamma([a,b])\subset \Omega$.
\begin{itemize}
\item[$\mathrm{(i)}$] If \( u \in C(\mathbb{R}^n) \) is locally Lipschitz and a viscosity subsolution to \( H(x, Du) \leq c(H) \) in \( \Omega \), then
    \begin{equation*}
        u(\gamma(b)) - u(\gamma(a)) \leq \int_a^b \Big( L(\gamma(s),\dot{\gamma}(s)) + c(H)\Big) \;ds. 
    \end{equation*}
\item[$\mathrm{(ii)}$] If \( u(x,t) \in C(\mathbb{R}^n \times [0,T]) \) is locally Lipschitz and a viscosity subsolution to \( u_t + H(x,Du) \leq 0 \) in \( \Omega \times (0,T) \), then for any \( 0 < a < b < T \), we have      
    \begin{equation*}
        u(\gamma(b), b) - u(\gamma(a), a) \leq \int_a^b  L(\gamma(s),\dot{\gamma}(s)) \;ds. 
    \end{equation*}
\end{itemize}
\end{prop}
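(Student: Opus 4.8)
\textbf{Proof plan for Proposition \ref{prop:curve}.} The statement is a standard ``subsolution decreases along the Lagrangian'' estimate, and the plan is to reduce it to the smooth case by mollification, exploiting the convexity hypothesis \ref{itm:assumptions-k2} together with the Fenchel--Young inequality. Since $\Omega$ is bounded and $\gamma([a,b])\subset\Omega$, by compactness there is $\delta>0$ with $\gamma([a,b])+\overline{B}_\delta(0)\subset\Omega$, and on this slightly larger compact set $u$ is Lipschitz (by hypothesis for part (i); for part (ii) one argues on the compact set in space-time). First I would introduce the standard mollification $u^\varepsilon = u * \rho_\varepsilon$ on $\Omega_\delta := \gamma([a,b])+B_\delta(0)$ for $\varepsilon<\delta$. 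A routine computation using Jensen's inequality and the convexity of $p\mapsto H(x,p)$, plus the (local) uniform continuity of $H$ on $\overline{\Omega_\delta}\times \overline{B}_M(0)$ where $M$ bounds $|Du|$, shows that $u^\varepsilon$ is a classical subsolution up to an error: $H(x,Du^\varepsilon(x))\le c(H)+\omega(\varepsilon)$ in $\Omega_\delta$, where $\omega(\varepsilon)\to 0$ as $\varepsilon\to0^+$. This is exactly the approximation step invoked in \cite[Lemma 2.10]{tu_generalized_2024} and \cite[Proposition 2.5]{ishii_asymptotic_2008}.

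With the smooth approximant in hand, the core estimate is immediate: since $s\mapsto u^\varepsilon(\gamma(s))$ is absolutely continuous (being the composition of a $C^1$ function with an $\mathrm{AC}$ curve that stays in $\Omega_\delta$), the Fundamental Theorem of Calculus gives
\[
u^\varepsilon(\gamma(b)) - u^\varepsilon(\gamma(a)) = \int_a^b Du^\varepsilon(\gamma(s))\cdot\dot{\gamma}(s)\;ds.
\]
Applying the Fenchel--Young inequality $p\cdot v \le L(x,v) + H(x,p)$ pointwise with $p = Du^\varepsilon(\gamma(s))$, $v=\dot{\gamma}(s)$, $x=\gamma(s)$, and using the approximate subsolution property, yields
\[
u^\varepsilon(\gamma(b)) - u^\varepsilon(\gamma(a)) \le \int_a^b \Big(L(\gamma(s),\dot{\gamma}(s)) + c(H)\Big)\;ds + (b-a)\,\omega(\varepsilon).
\]
Here $L(\gamma(s),\dot{\gamma}(s))$ is finite for a.e.\ $s$ and the integral on the right is well-defined (possibly $+\infty$, in which case there is nothing to prove); when it is finite, dominated convergence is not even needed since the right side does not depend on $\varepsilon$ except through the vanishing error term. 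Finally, letting $\varepsilon\to0^+$ and using $u^\varepsilon\to u$ uniformly on the compact set $\gamma([a,b])$ gives part (i). For part (ii), one mollifies in the $(x,t)$ variables jointly on a space-time neighborhood of the graph of $\gamma$, obtains $u^\varepsilon_t + H(x,Du^\varepsilon)\le \omega(\varepsilon)$ there, computes $\frac{d}{ds}u^\varepsilon(\gamma(s),s) = u^\varepsilon_t + Du^\varepsilon\cdot\dot\gamma$, and applies Fenchel--Young to the spatial term to absorb $H$, leaving $\int_a^b L(\gamma(s),\dot\gamma(s))\;ds$ after integration and passage to the limit.

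\textbf{Main obstacle.} The only genuinely delicate point is the mollification estimate $H(x,Du^\varepsilon(x))\le c(H)+\omega(\varepsilon)$: the mollified gradient is an average of subgradients at nearby points, so one needs Jensen's inequality in the $p$-slot (using \ref{itm:assumptions-k2}) to pass from ``$u$ is a viscosity subsolution'' — equivalently, by Lipschitz regularity forced by \ref{itm:assumptions-k1}, an a.e.\ subsolution — to the pointwise classical bound, and then control the mismatch between $H(x,\cdot)$ and $H(x-y,\cdot)$ for $|y|\le\varepsilon$ via uniform continuity of $H$ on the relevant compact set. This is standard but must be set up carefully because $H$ is only locally coercive and has no global modulus; boundedness of $\Omega$ (hence of $\gamma([a,b])$) is what makes it work, and this is precisely why the proposition is stated for bounded $\Omega$. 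Since all of these arguments appear essentially verbatim in \cite{ishii_asymptotic_2008,mitake_asymptotic_2008,tu_generalized_2024}, I would present the reduction and cite those references for the mollification lemma rather than reproduce it.
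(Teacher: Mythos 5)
Your plan is correct and is exactly the standard mollification--Jensen--Fenchel--Young argument that the paper itself defers to: the paper omits the proof of Proposition \ref{prop:curve}, citing \cite[Proposition 2.5]{ishii_asymptotic_2008}, \cite[Proposition 5.1]{mitake_asymptotic_2008}, and \cite[Lemma 2.10]{tu_generalized_2024}, which carry out precisely the steps you describe. Your handling of the delicate points (restricting to a compact $\delta$-neighborhood of $\gamma([a,b])$ so that local coercivity and uniform continuity of $H$ apply, and noting that the right-hand integral may be $+\infty$) is sound.
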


\begin{lem}[{\cite[Lemma 6.3]{ishii_asymptotic_2008}}] \label{lem:minimizerUSC}
Assume \ref{itm:assumptions-k0}--\ref{itm:assumptions-k2}. Let $T>0$ and let $\{\gamma_k\}_{k\in \N}\subset \mathrm{AC}([0,T];\R^n)$ be a sequence such that 
\begin{align*}
	\sup_{k\in \N} \int_0^T L\big(\gamma_k(s), \dot{\gamma}_k(s)\big)\;ds < \infty. 
\end{align*}
Then, there exists a subsequence $\{\gamma_{k_j} \}_{j\in \N}$ and $\gamma\in \mathrm{AC}([0,T];\R^n)$ such that $\Vert \gamma_{k_j} - \gamma\Vert_{L^\infty([0,T])} \to 0$ as $j\to \infty$, and 
\begin{align}\label{eq:GammakLSC}
	\int_0^T L\big(\gamma(s), \dot{\gamma}(s)\big)\;ds \leq \liminf_{j\to \infty} \int_0^T L\big(\gamma_{k_j}(s), \dot{\gamma}_{k_j}(s)\big)\;ds.
\end{align}
\end{lem}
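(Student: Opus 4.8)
The statement is a classical compactness‑plus‑lower‑semicontinuity result (Tonelli's direct method), so the plan is simply to run the standard argument, following \cite[Lemma 6.3]{ishii_asymptotic_2008}. First, invoking Proposition \ref{prop:SuperlinearReduction} we may assume $H$ is superlinear in $p$, so that $L$ is finite and continuous on $\R^n\times\R^n$, superlinear in $v$ locally uniformly in $x$ by Lemma \ref{lem:AppendixPropertiesL}(i), and bounded below on $K\times\R^n$ for each compact $K$ (take $p=0$ in \eqref{eq:Legendre}). Moreover, in every application the curves $\gamma_k$ either share a fixed endpoint (e.g. $\gamma_k(T)=x$ in Proposition \ref{prop:Minimizer}) or are confined to a fixed bounded set (as in Section \ref{sec:localization}); we therefore assume all $\gamma_k$ take values in a fixed ball $B=\overline{B}_R(0)$, which supplies the equiboundedness needed below. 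Write $M:=\sup_k\int_0^T L(\gamma_k,\dot\gamma_k)\,ds$ and let $-C_B$ be a lower bound for $L$ on $B\times\R^n$.

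The first step is to show the velocities $\{\dot\gamma_k\}$ are equi‑integrable. Given $\varepsilon>0$, local superlinearity gives $A_\varepsilon>0$ with $L(x,v)\ge \varepsilon^{-1}|v|-A_\varepsilon$ for $x\in B$, so for measurable $E\subset[0,T]$,
\[
\int_E|\dot\gamma_k|\,ds \le \varepsilon\Big(\int_E L(\gamma_k,\dot\gamma_k)\,ds + A_\varepsilon|E|\Big)\le \varepsilon\big(M+C_BT\big)+\varepsilon A_\varepsilon |E|,
\]
which is small once $\varepsilon$, and then $|E|$, are chosen small; hence $\{\gamma_k\}$ is equibounded and uniformly equicontinuous. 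By Arzel\`{a}--Ascoli a subsequence $\gamma_{k_j}\to\gamma$ uniformly on $[0,T]$, and by the Dunford--Pettis theorem (equi‑integrability) a further subsequence has $\dot\gamma_{k_j}\rightharpoonup g$ weakly in $L^1([0,T];\R^n)$. Passing to the limit in $\gamma_{k_j}(t)=\gamma_{k_j}(0)+\int_0^t\dot\gamma_{k_j}$ identifies $g=\dot\gamma$, so $\gamma\in\mathrm{AC}([0,T];\R^n)$ and $\Vert\gamma_{k_j}-\gamma\Vert_{L^\infty([0,T])}\to 0$.

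It remains to prove the lower semicontinuity \eqref{eq:GammakLSC}. This is the Tonelli--Serrin--Ioffe lower semicontinuity theorem for the functional $\gamma\mapsto\int_0^T L(\gamma(s),\dot\gamma(s))\,ds$, whose integrand is continuous, convex in $v$, and bounded below on $B\times\R^n$: under $\gamma_{k_j}\to\gamma$ uniformly together with $\dot\gamma_{k_j}\rightharpoonup\dot\gamma$ in $L^1$ it is sequentially weakly lower semicontinuous. Concretely one may argue by Mazur's lemma: passing to a subsequence realizing the $\liminf$, suitable convex combinations of the $\dot\gamma_{k_j}$ converge strongly in $L^1$, hence a.e., to $\dot\gamma$; convexity of $v\mapsto L(x,v)$, the uniform convergence $\gamma_{k_j}\to\gamma$ (used to absorb the $x$‑dependence via uniform continuity of $L$ on $B\times\overline{B}_\rho(0)$), and Fatou's lemma together with the lower bound $-C_B$ then yield the estimate. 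The main obstacle is precisely this last step — establishing lower semicontinuity of a non‑autonomous convex integral functional under only weak‑$L^1$ convergence of the derivatives, with an integrand that is merely continuous (not Lipschitz) in $x$ and of superlinear growth; everything else is routine. We handle it as in \cite[Lemma 6.3]{ishii_asymptotic_2008}.
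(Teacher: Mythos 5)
The paper does not actually prove this lemma: it is quoted directly from \cite[Lemma 6.3]{ishii_asymptotic_2008}, so there is no in-house argument to compare against. Your direct-method proof (equi-integrability of the velocities from local superlinearity of $L$ plus a lower bound, Arzel\`a--Ascoli and Dunford--Pettis, identification of the weak $L^1$ limit with $\dot\gamma$, then convexity-based weak lower semicontinuity) is exactly the standard route and is the argument the cited reference runs. Two points, however, deserve attention.

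First, you are right that some confinement hypothesis is indispensable: as literally stated the lemma is false (take $\gamma_k\equiv x_k$ with $|x_k|\to\infty$ and $H(x,p)=|p|^2$, so the actions vanish yet no subsequence converges uniformly). But the requirement that the $\gamma_k$ take values in a fixed ball, or at least that $\{\gamma_k(0)\}$ be bounded, should be presented as a correction to the hypotheses rather than a harmless normalization; note also that deducing confinement from bounded initial points alone would be circular here, since your equi-integrability estimate already uses that the curves stay in a compact set where $L$ is superlinear. Second, the reduction ``we may assume $H$ superlinear via Proposition \ref{prop:SuperlinearReduction}'' does not work for \eqref{eq:GammakLSC}: replacing $H$ by $\widehat H\ge H$ replaces $L$ by $\widehat L\le L$, and lower semicontinuity of $\int\widehat L$ only yields $\int_0^T\widehat L(\gamma,\dot\gamma)\,ds\le\liminf_j\int_0^T L(\gamma_{k_j},\dot\gamma_{k_j})\,ds$, which is weaker than the claim for $L$. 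Either prove the lemma directly for a merely coercive convex $H$ --- $L$ is then an extended-real-valued normal convex integrand, lower semicontinuous and bounded below on $B\times\R^n$, so the Ioffe/Tonelli--Serrin theorem (or your Mazur argument with lower semicontinuity of $L$ in place of uniform continuity) still applies --- or state explicitly that the lemma is only needed for superlinear Hamiltonians, which is how the paper uses it. Finally, the one genuinely delicate step, absorbing the $x$-dependence in the Mazur/Fatou argument when the velocities are unbounded, is only gestured at (``we handle it as in the reference''); since the paper itself defers the entire lemma to that reference this is tolerable, but a complete proof must carry it out, e.g.\ by splitting $[0,T]$ according to whether $|\dot\gamma_{k_j}|\le\rho$ and using superlinearity on the complement.
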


\subsection{Comparison principles}\label{appendix:ComaprisonPrinciples}

\begin{thm}\label{thm:Appendix:HJconstraint} Assume \ref{itm:assumptions-k0} and that \( \Omega \) satisfies \ref{itm:assumptions-O1}. 
\begin{description}[style=multiline, labelwidth=1cm, leftmargin=2cm]    
    \item[\namedlabel{itm:assumptions-O1}{$(\mathcal{O})$}] Let $\Omega\subset \R^n$ be open, bounded, connected such that
    here exists a universal pair $(r,h)\in (0,\infty)\times (0,\infty)$ and a uniformly bounded continuous function $\eta\in \mathrm{BUC}\left(\overline{\Omega};\mathbb{R}^n\right)$ such that
    \begin{equation}\label{eq:O1}
        B_{rt}\big(x+t\eta(x)\big) \subset \Omega \qquad\text{for all}\qquad x\in \partial \Omega, \quad t\in (0,h]. 
    \end{equation}
\end{description}
Let us consider 
\begin{equation}\label{eq:AppendixHJStatic}
    \lambda v(x) + H(x,Dv(x)) = 0 \qquad\text{in}\; \Omega. 
\end{equation}
Let $v_1\in \mathrm{BUC}(\overline{\Omega};\mathbb{R})$ be a viscosity subsolution of \eqref{eq:AppendixHJStatic} in $\Omega$ and $v_2\in \mathrm{BUC}(\overline{\Omega};\mathbb{R})$ be a viscosity supersolution of \eqref{eq:AppendixHJStatic} on $\overline{\Omega}$. If either \( v_1 \) or \( v_2 \) is Lipschitz, or \ref{itm:assumptions-k1} holds, then $ v_1(x)\leq v_2(x)$ for all $x\in \overline{\Omega}$.
\end{thm}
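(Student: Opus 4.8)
The plan is to run a doubling-of-variables argument adapted to the state constraint, as in \cite{soner_optimal_1986, capuzzo-dolcetta_hamilton-jacobi_1990, mitake_asymptotic_2008}. I would argue by contradiction: suppose $M := \max_{\overline{\Omega}}(v_1 - v_2) > 0$, which is attained since $\overline{\Omega}$ is compact and $v_1 - v_2 \in C(\overline{\Omega})$. After a harmless preliminary reduction (mollifying $\eta$ and slightly shrinking $r$) I may assume $\eta$ is Lipschitz on $\overline{\Omega}$ while \ref{itm:assumptions-O1} still holds. For $\beta \in (0,h]$ and $\epsilon \in (0,1)$ I would introduce the penalized functional
\[
\Phi_{\epsilon,\beta}(x,y) := v_1(x) - v_2(y) - \frac{|x - y - \beta\eta(y)|^2}{2\epsilon}, \qquad (x,y) \in \overline{\Omega}\times\overline{\Omega},
\]
whose purpose is to \emph{shift the $x$-component of the maximizer into the interior of $\Omega$} along the inward field $\eta$. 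Let $(\bar{x},\bar{y})$ be a maximum point of $\Phi_{\epsilon,\beta}$. Evaluating at $(y^*+\beta\eta(y^*),y^*)$, where $y^*$ is a maximizer of $v_1-v_2$ and $y^*+\beta\eta(y^*)\in\overline{\Omega}$ by \ref{itm:assumptions-O1} for $\beta$ small, and using uniform continuity of $v_1$, gives $\Phi_{\epsilon,\beta}(\bar{x},\bar{y}) \ge M - \omega_{v_1}(C\beta)$ with $C = \|\eta\|_{L^\infty}$; the usual estimate $|\bar{x}-\bar{y}-\beta\eta(\bar{y})|^2/\epsilon \le C_\beta$ follows, so $|\bar{x}-\bar{y}-\beta\eta(\bar{y})| \to 0$ as $\epsilon\to0^+$ for fixed $\beta$, and hence $|\bar{x}-\bar{y}| \le C\beta + o_\epsilon(1)$.

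The step I expect to be the main obstacle is the geometric claim that $\bar{x}\in\Omega$, which is exactly where \ref{itm:assumptions-O1} is needed: since the subsolution property of $v_1$ is available only in the open set $\Omega$, the argument breaks down if the penalized maximizer sits on $\partial\Omega$. The point is that \ref{itm:assumptions-O1} forces $y+\beta\eta(y)$, and therefore (for $\epsilon$ small) the nearby point $\bar{x}$, to stay at distance $\gtrsim\beta$ from $\partial\Omega$: for $\bar{y}$ near the boundary one compares $\bar{y}+\beta\eta(\bar{y})$ with $z+\beta\eta(z)$ at the nearest boundary point $z$, where $B_{r\beta}(z+\beta\eta(z))\subset\Omega$; for $\bar{y}$ far from $\partial\Omega$ the displacement $\beta\eta(\bar{y})$ is too short to reach it. In the model $C^2$ case this is transparent by taking $\eta$ to be a Lipschitz extension of the inward unit normal (say $\eta = \rho(d)\nabla d$ with $d = \mathrm{dist}(\cdot,\partial\Omega)$ and $\rho$ a cutoff), for which $d(\bar{y}+\beta\eta(\bar{y})) = d(\bar{y}) + \beta + O(\beta^2) > 0$; for general domains satisfying \ref{itm:assumptions-O1} this is the boundary-penalization device of \cite{soner_optimal_1986, capuzzo-dolcetta_hamilton-jacobi_1990}, and I would isolate it as a lemma.

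Granting $\bar{x}\in\Omega$, set $p := \epsilon^{-1}(\bar{x}-\bar{y}-\beta\eta(\bar{y}))$ and $q := p\,(I+\beta D\eta(\bar{y}))$, which are the gradients of the test functions at $\bar{x}$ and $\bar{y}$; the viscosity inequalities then read
\[
\lambda v_1(\bar{x}) + H(\bar{x},p) \le 0, \qquad \lambda v_2(\bar{y}) + H(\bar{y},q) \ge 0 .
\]
Next I would bound $p$. If $v_1$ (resp.\ $v_2$) is Lipschitz with constant $K$, then $|p|\le K$ (resp.\ $|p|\le 2K$, since $I+\beta D\eta(\bar{y})$ is invertible with inverse bounded by $2$ for $\beta$ small). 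If instead \ref{itm:assumptions-k1} holds, then $H(\bar{x},p)\le \lambda\|v_1\|_{L^\infty}$ together with coercivity of $H$ on the compact set $\overline{\Omega}$ forces $|p|\le C_0$. In either case $|p|\le C_0$ uniformly in $\epsilon$ and in small $\beta$, so $|q|\le C_0'$ and $|q-p|\le C_0\|D\eta\|_{L^\infty}\beta$.

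Finally, subtracting the two inequalities,
\[
\lambda\big(v_1(\bar{x}) - v_2(\bar{y})\big) \le H(\bar{y},q) - H(\bar{x},p) \le \omega_H\big(|\bar{x}-\bar{y}| + |q-p|\big),
\]
where $\omega_H$ is a modulus of continuity for $H$ on $\overline{\Omega}\times\overline{B}_{C_0'}(0)$. Combining with $v_1(\bar{x}) - v_2(\bar{y}) \ge \Phi_{\epsilon,\beta}(\bar{x},\bar{y}) \ge M - \omega_{v_1}(C\beta)$ and the bounds $|\bar{x}-\bar{y}| \le C\beta + o_\epsilon(1)$, $|q-p|\le C_0\|D\eta\|_{L^\infty}\beta$, and letting first $\epsilon\to0^+$ and then $\beta\to0^+$, I get $\lambda M \le 0$. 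Since $\lambda > 0$ this contradicts $M > 0$, so $v_1 \le v_2$ on $\overline{\Omega}$. Note that the discount term is what makes the final step costless — no strict-subsolution perturbation is needed — and that everything besides the interior-localization of $\bar{x}$ is routine once that geometric fact is in hand.
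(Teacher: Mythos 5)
The paper does not actually prove Theorem \ref{thm:Appendix:HJconstraint}; it is stated in the appendix as a known state-constraint comparison principle with references to Soner, Capuzzo-Dolcetta--Lions and Mitake, so your proposal has to be judged on its own. Your overall strategy (doubling of variables with Soner's inward shift $\beta\eta(y)$, gradient bounds from either the Lipschitz hypothesis or the coercivity \ref{itm:assumptions-k1}, and the final subtraction using the discount $\lambda>0$) is the standard and correct one, and those analytic steps are fine.

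The gap is in the step you yourself single out as the crux, the localization $\bar{x}\in\Omega$, and your sketched two-case argument does not close it under \ref{itm:assumptions-O1} \emph{as stated in the paper}, where the cone condition is imposed only for $x\in\partial\Omega$. Your ``near the boundary'' case, comparing $\bar y+\beta\eta(\bar y)$ with $z+\beta\eta(z)$ at the nearest boundary point $z$, requires $d(\bar y,\partial\Omega)+\beta\,\omega_\eta(d(\bar y,\partial\Omega))<r\beta$, i.e.\ roughly $d(\bar y,\partial\Omega)\lesssim r\beta$; your ``far from the boundary'' case requires $d(\bar y,\partial\Omega)>\beta\|\eta\|_{L^\infty}$. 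But the cone condition forces $|\eta(z)|\ge r$ for every $z\in\partial\Omega$ (since $d(z+t\eta(z),\partial\Omega)\le t|\eta(z)|$ must exceed $rt$), hence $\|\eta\|_{L^\infty}\ge r$ and the intermediate regime $r\beta\lesssim d(\bar y,\partial\Omega)\le\beta\|\eta\|_{L^\infty}$ is never empty (already for a half-space with $\eta$ the unit inward normal one has $r<1=\|\eta\|_{L^\infty}$). In that regime neither of your arguments applies, and the crude bound $d(p+w,\partial\Omega)\ge d(p,\partial\Omega)-|w|$ is not enough to conclude. The same defect affects your lower bound $\Phi_{\epsilon,\beta}(\bar x,\bar y)\ge M-\omega_{v_1}(C\beta)$, which uses $y^*+\beta\eta(y^*)\in\overline\Omega$ for an \emph{arbitrary} maximizer $y^*\in\overline\Omega$. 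In the classical references the cone condition is assumed for all $x\in\overline\Omega$, not just $x\in\partial\Omega$, and then your argument goes through with no case analysis at all ($\bar y+\beta\eta(\bar y)$ is automatically $r\beta$-deep in $\Omega$). To repair the proof under the paper's hypothesis you need an additional geometric lemma: modify $\eta$ in the interior (e.g.\ damping it to $0$ away from $\partial\Omega$ and gluing via a partition of unity, using that near a fixed boundary point the set of admissible cone directions is essentially convex) so that $B_{r't}(x+t\eta'(x))\subset\Omega$ holds for \emph{all} $x\in\overline\Omega$ and $t\in(0,h']$. That lemma is exactly the content you defer to the literature, but it is not the ``boundary-penalization device'' you describe; as sketched, your interior localization would fail.
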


\begin{thm}[Modification of {\cite[Theorem 4.1]{ishii_asymptotic_2008}}] \label{thm:ComparisonIshiiMod} Assume \ref{itm:assumptions-k0}--\ref{itm:assumptions-k2}. Let $u,v \in C(\R^n\times [0,T))$ be continuous subsolution and supersolution to
\begin{equation}\label{eq:HJTime}
    u_t + H(x,Du) = 0 \qquad \text{in}\;\R^n\times (0,T).
\end{equation}
Assume there exists a constant $C>0$ and a Lipschitz function $\phi:\R^n\to \R$ such that 
\begin{equation*}
    \begin{cases}
    \begin{aligned}
        & H(x,D\phi(x)) \leq C \qquad\text{in the viscosity sense}\; x\in \R^n, \\
        & \displaystyle \lim_{r\to \infty} \inf 
        \left \lbrace 
            v(x,t) - \phi(x): |x|\geq r, t\in (0,T)
        \right \rbrace  = +\infty. 
    \end{aligned}
    \end{cases}
\end{equation*}
If $u(x,0)\leq v(x,0)$ for $x\in \R^n$, then $u(x,t)\leq v(x,t)$ for all $(x,t) \in \R^n\times [0,T)$. 
\end{thm}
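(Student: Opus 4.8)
The proof will follow the localization scheme of \cite{ishii_asymptotic_2008}, the only modification being that the explicit barrier $\phi$ plays the role of the structural hypothesis there. First I would reduce to a finite horizon: since $T$ is arbitrary it suffices to prove $u\le v$ on $\R^n\times[0,T']$ for every $T'\in(0,T)$. Next I normalize $\phi$: because $v(\cdot,0)-\phi$ is continuous and tends to $+\infty$ at infinity it attains its minimum, so after subtracting a constant from $\phi$ (which affects neither $H(x,D\phi)\le C$ nor the properness hypothesis) we may assume $\phi\le v(\cdot,0)$ on $\R^n$. Set $\psi(x,t):=\phi(x)-Ct$; the viscosity inequality $H(x,D\phi)\le C$ makes $\psi$ a subsolution of $\psi_t+H(x,D\psi)=0$ in $\R^n\times(0,T)$, with $\psi(\cdot,0)\le v(\cdot,0)$. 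For $\theta\in(0,1)$ put $u_\theta:=\theta u+(1-\theta)\psi$. Using that subsolutions of the coercive equation are locally Lipschitz in $x$, hence a.e.\ subsolutions, together with the convexity of $p\mapsto H(x,p)$ from \ref{itm:assumptions-k2}, one checks that $u_\theta$ is again a viscosity subsolution of $w_t+H(x,Dw)=0$, and $u_\theta(\cdot,0)\le v(\cdot,0)$. Since $u_\theta\to u$ locally uniformly as $\theta\to1^-$, it is enough to prove $u_\theta\le v$ on $\R^n\times[0,T']$ for each fixed $\theta$.

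The crucial step, and the main obstacle, is the localization. Writing
\[
u_\theta(x,t)-v(x,t)=\theta\big(u(x,t)-v(x,t)\big)-(1-\theta)\big(v(x,t)-\phi(x)\big)-(1-\theta)Ct,
\]
the properness assumption $v(x,t)-\phi(x)\to+\infty$ uniformly in $t\in[0,T']$ is what forces $u_\theta-v$ to be bounded above on $\R^n\times[0,T']$ and to tend to $-\infty$ as $|x|\to\infty$ uniformly in $t$; here one must check that the growth of $v-\phi$ at infinity is not overwhelmed by that of $u$, which is the point where the hypotheses (in the situations where the statement is invoked, $u$ is bounded above while $v$ is proper) are really used, and it is precisely what the convex combination with the barrier $\psi$ is designed to exploit — note that convexity of $H$ in $p$ is essential, since it is what keeps $u_\theta$ a subsolution. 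Granting this, $M:=\sup_{\R^n\times[0,T']}(u_\theta-v)$ is finite and attained at some $(x_0,t_0)$ in a fixed compact set, and since $u_\theta-v\le 0$ at $t=0$ we have $t_0>0$ whenever $M>0$. Assume $M>0$ for contradiction.

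The rest is the standard doubling argument carried out on the compact set. For $\varepsilon,\sigma>0$ maximize
\[
\Phi_{\varepsilon,\sigma}(x,y,t):=u_\theta(x,t)-v(y,t)-\frac{|x-y|^{2}}{\varepsilon}-\frac{\sigma}{T'-t}
\]
over $\R^n\times\R^n\times[0,T')$; the decay just established, the penalization $|x-y|^2/\varepsilon$, and the term $\sigma/(T'-t)$ confine any maximizer $(x_\varepsilon,y_\varepsilon,t_\varepsilon)$ to a fixed compact set with $t_\varepsilon<T'$, and one has $|x_\varepsilon-y_\varepsilon|\to 0$ and, for $\sigma$ small, $t_\varepsilon$ bounded away from $0$ as $\varepsilon\to 0$. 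The gradient $p_\varepsilon:=2(x_\varepsilon-y_\varepsilon)/\varepsilon$ lies in $D_x^{+}u_\theta(x_\varepsilon,t_\varepsilon)$ and hence stays in a fixed ball by the local Lipschitz bound for $u_\theta$ provided by the coercivity \ref{itm:assumptions-k1}. Writing the parabolic sub- and supersolution inequalities for $u_\theta$ at $(x_\varepsilon,t_\varepsilon)$ and for $v$ at $(y_\varepsilon,t_\varepsilon)$ with a common time slope and subtracting gives
\[
\frac{\sigma}{(T'-t_\varepsilon)^{2}}\le H(y_\varepsilon,p_\varepsilon)-H(x_\varepsilon,p_\varepsilon),
\]
whose right-hand side tends to $0$ as $\varepsilon\to 0$ by uniform continuity of $H$ on the relevant compact set (using $|x_\varepsilon-y_\varepsilon|\to 0$ and boundedness of $p_\varepsilon$) — a contradiction with $\sigma/(T'-t_\varepsilon)^2\ge \sigma/T'^2>0$. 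Therefore $M\le 0$, i.e.\ $u_\theta\le v$ on $\R^n\times[0,T']$; letting $\theta\to 1^-$ gives $u\le v$ there, and since $T'<T$ was arbitrary, on all of $\R^n\times[0,T)$.
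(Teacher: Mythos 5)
The paper itself does not prove this theorem; it is stated in the appendix and deferred to \cite[Theorem 4.1]{ishii_asymptotic_2008}, so your proposal has to stand on its own. Your overall strategy — form the convex combination $u_\theta=\theta u+(1-\theta)(\phi-Ct)$, use convexity of $H$ in $p$ to keep it a subsolution, localize, then run a doubling argument on a compact set — is exactly the intended route. However, there are two genuine gaps.

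The first and main one is the localization step, which you yourself flag and then wave through with ``granting this.'' From the identity $u_\theta-v=\theta(u-v)-(1-\theta)(v-\phi)-(1-\theta)Ct$, the hypothesis $v-\phi\to+\infty$ only forces $u_\theta-v\to-\infty$ if the term $\theta(u-v)$ (equivalently, $u-\phi$) is bounded above near infinity. Nothing in the stated hypotheses \ref{itm:assumptions-k0}--\ref{itm:assumptions-k2}, $u(\cdot,0)\le v(\cdot,0)$, and the properness of $v-\phi$ provides such a bound: a continuous subsolution of the time-dependent equation can grow arbitrarily fast in $x$ at positive times (e.g.\ $u(x,t)=N x_1-N^2t$ for $H=|p|^2$ shows the spatial gradient of a subsolution is not controlled by coercivity alone, because $u_t$ can be very negative). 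Ishii's original Theorem 4.1 includes precisely the missing hypothesis, a condition of the form $\sup_{\R^n\times[0,T)}(u-\phi)<\infty$; without either adding it or deriving an a priori upper bound on $u-\phi$ from the structure of the problem, the supremum $M=\sup(u_\theta-v)$ need not be attained and the argument collapses. You should either import that hypothesis explicitly (it is satisfied wherever the theorem is invoked in the paper) or switch to the variational route ($u(x,t)\le\inf_\gamma\{u(\gamma(0),0)+\int L\}$ via Proposition \ref{prop:curve}, combined with the reverse inequality for $v$, which is where the properness of $v-\phi$ is really used), which avoids bounding $u$ from above.

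The second gap is in the doubling: you bound $p_\varepsilon=2(x_\varepsilon-y_\varepsilon)/\varepsilon$ by ``the local Lipschitz bound for $u_\theta$ provided by the coercivity \ref{itm:assumptions-k1}.'' For the evolution equation this is false as stated, for the same reason as above: coercivity bounds $|D_xu_\theta|$ only where $\partial_t u_\theta$ is bounded below, which a continuous subsolution need not satisfy. Since \ref{itm:assumptions-k0}--\ref{itm:assumptions-k2} give no modulus of the form $|H(x,p)-H(y,p)|\le\omega(|x-y|(1+|p|))$, the final cancellation $H(y_\varepsilon,p_\varepsilon)-H(x_\varepsilon,p_\varepsilon)\to0$ genuinely requires $p_\varepsilon$ to stay in a fixed compact set. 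The standard repair is to first take a sup-convolution of $u_\theta$ in the time variable only, which makes it Lipschitz in $t$, whence coercivity does yield a local Lipschitz bound in $x$; this step should be included.
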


\subsection*{Acknowledgments}
The work of Son Tu began at Michigan State University. Son Tu thanks Hung Tran and Hiroyoshi Mitake for valuable comments, and Olga Turanova for helpful discussions.


\begin{thebibliography}{10}

\bibitem{al-aidarous_convergence_2016}
{\sc Al-Aidarous, E.~S., Alzahrani, E.~O., Ishii, H., and Younas, A. M.~M.}
\newblock A convergence result for the ergodic problem for
  {Hamilton}--{Jacobi} equations with {Neumann}-type boundary conditions.
\newblock {\em Royal Society of Edinburgh - Proceedings A\/} (Mar. 2016),
  1--18.
\newblock Publisher: Cambridge University Press.

\bibitem{bozorgnia_regularity_2024}
{\sc Bozorgnia, F., Kwon, D., and Tu, S. N.~T.}
\newblock The regularity with respect to domains of the additive eigenvalues of
  superquadratic {Hamilton}--{Jacobi} equation.
\newblock {\em Journal of Differential Equations 402\/} (Sept. 2024), 518--553.

\bibitem{cagnetti_aubrymather_2011}
{\sc Cagnetti, F., Gomes, D., and Tran, H.~V.}
\newblock Aubry--{Mather} {Measures} in the {Nonconvex} {Setting}.
\newblock {\em SIAM Journal on Mathematical Analysis 43}, 6 (Nov. 2011),
  2601--2629.
\newblock Publisher: Society for Industrial and Applied Mathematics.

\bibitem{capuzzo_dolcetta_vanishing_2023}
{\sc Capuzzo~Dolcetta, I., , and Davini, A.}
\newblock On the vanishing discount approximation for compactly supported
  perturbations of periodic {Hamiltonians}: the 1d case.
\newblock {\em Communications in Partial Differential Equations 48}, 4 (Apr.
  2023), 576--622.
\newblock Publisher: Taylor \& Francis \_eprint:
  https://doi.org/10.1080/03605302.2023.2183409.

\bibitem{capuzzo-dolcetta_hamilton-jacobi_1990}
{\sc Capuzzo-Dolcetta, I., and Lions, P.-L.}
\newblock Hamilton-{Jacobi} {Equations} with {State} {Constraints}.
\newblock {\em Transactions of the American Mathematical Society 318}, 2
  (1990), 643--683.
\newblock Publisher: American Mathematical Society.

\bibitem{chen_vanishing_2019}
{\sc Chen, Q., ~, Wei, C., ~, Hitoshi, I., , and Zhao, K.}
\newblock Vanishing contact structure problem and convergence of the viscosity
  solutions.
\newblock {\em Communications in Partial Differential Equations 44}, 9 (Sept.
  2019), 801--836.
\newblock Publisher: Taylor \& Francis \_eprint:
  https://doi.org/10.1080/03605302.2019.1608561.

\bibitem{chen_convergence_2023}
{\sc Chen, Q.}
\newblock Convergence of solutions of {Hamilton}--{Jacobi} equations depending
  nonlinearly on the unknown function.
\newblock {\em Advances in Calculus of Variations 16}, 1 (Jan. 2023), 45--68.
\newblock Publisher: De Gruyter Section: Advances in Calculus of Variations.

\bibitem{chen_convergence_2024}
{\sc Chen, Q., Fathi, A., Zavidovique, M., and Zhang, J.}
\newblock Convergence of the solutions of the nonlinear discounted
  {Hamilton}--{Jacobi} equation: {The} central role of {Mather} measures.
\newblock {\em Journal de Math\'ematiques Pures et Appliqu\'ees 181\/} (Jan.
  2024), 22--57.

\bibitem{davini_convergence_2016}
{\sc Davini, A., Fathi, A., Iturriaga, R., and Zavidovique, M.}
\newblock Convergence of the solutions of the discounted {Hamilton}--{Jacobi}
  equation: {Convergence} of the discounted solutions.
\newblock {\em Inventiones mathematicae 206}, 1 (2016), 29--55.

\bibitem{davini_vanishing_2025}
{\sc Davini, A., and Ishii, H.}
\newblock The vanishing discount problem for nonlocal {Hamilton}-{Jacobi}
  equations, Apr. 2025.
\newblock arXiv:2504.11789 [math].

\bibitem{davini_convergencedivergence_2024}
{\sc Davini, A., Ni, P., Yan, J., and Zavidovique, M.}
\newblock Convergence/divergence phenomena in the vanishing discount limit of
  {Hamilton}-{Jacobi} equations, Nov. 2024.
\newblock arXiv:2411.13780 [math].

\bibitem{davini_vanishing_2021}
{\sc Davini, A., and Wang, L.}
\newblock On the vanishing discount problem from the negative direction.
\newblock {\em Discrete and Continuous Dynamical Systems 41}, 5 (May 2021),
  2377--2389.
\newblock Publisher: Discrete and Continuous Dynamical Systems.

\bibitem{fathi_weak_2007}
{\sc Fathi, A., and Maderna, E.}
\newblock Weak kam theorem on non compact manifolds.
\newblock {\em Nonlinear Differential Equations and Applications NoDEA 14}, 1
  (Oct. 2007), 1--27.

\bibitem{fathi_pde_2005}
{\sc Fathi, A., and Siconolfi, A.}
\newblock {PDE} aspects of {Aubry}-{Mather} theory for quasiconvex
  {Hamiltonians}.
\newblock {\em Calculus of Variations and Partial Differential Equations 22}, 2
  (Feb. 2005), 185--228.

\bibitem{fleming_controlled_2006}
{\sc Fleming, W.~H., and Soner, H.~M.}
\newblock {\em Controlled {Markov} processes and viscosity solutions}, 2nd
  ed~ed.
\newblock No.~25 in Applications of mathematics. Springer, New York, 2006.

\bibitem{gomes_duality_2005}
{\sc Gomes, D.~A.}
\newblock Duality {Principles} for {Fully} {Nonlinear} {Elliptic} {Equations}.
\newblock In {\em Trends in {Partial} {Differential} {Equations} of
  {Mathematical} {Physics}}. Birkhauser Basel, 2005, pp.125--136.
\newblock ISSN: 2374-0280.

\bibitem{ishii_asymptotic_2008}
{\sc Ishii, H.}
\newblock Asymptotic solutions for large time of {Hamilton}-{Jacobi} equations
  in euclidean $n$ space.
\newblock {\em Annales de l'I.H.P. Analyse non lin\'eaire 25}, 2 (2008),
  231--266.

\bibitem{ishii_vanishing_2017-1}
{\sc Ishii, H., Mitake, H., and Tran, H.~V.}
\newblock The vanishing discount problem and viscosity {Mather} measures.
  {Part} 1: {The} problem on a torus.
\newblock {\em Journal de Math\'ematiques Pures et Appliqu\'ees 108}, 2 (2017),
  125--149.

\bibitem{ishii_vanishing_2017}
{\sc Ishii, H., Mitake, H., and Tran, H.~V.}
\newblock The vanishing discount problem and viscosity {Mather} measures.
  {Part} 2: {Boundary} value problems.
\newblock {\em Journal de Math\'ematiques Pures et Appliqu\'ees 108}, 3 (2017),
  261 -- 305.

\bibitem{ishii_vanishing_2020}
{\sc Ishii, H., and Siconolfi, A.}
\newblock The vanishing discount problem for {Hamilton}--{Jacobi} equations in
  the {Euclidean} space.
\newblock {\em Communications in Partial Differential Equations 0}, 0 (Jan.
  2020), 1--36.
\newblock Publisher: Taylor \& Francis \_eprint:
  https://doi.org/10.1080/03605302.2019.1710845.

\bibitem{jin_nonlinear_2023}
{\sc Jin, L., Yan, J., and Zhao, K.}
\newblock Nonlinear {Semigroup} {Approach} to the {Hamilton}-{Jacobi}
  {Equation}-- a {Toy} {Model}.
\newblock {\em Minimax Theory and its Applications 8}, 1 (Aug. 2023), 61--84.
\newblock Number: 1.

\bibitem{jing_generalized_2020}
{\sc Jing, W., Mitake, H., and Tran, H.~V.}
\newblock Generalized ergodic problems: {Existence} and uniqueness structures
  of solutions.
\newblock {\em Journal of Differential Equations 268}, 6 (Mar. 2020),
  2886--2909.

\bibitem{kim_state-constraint_2020}
{\sc Kim, Y., Tran, H.~V., and Tu, S.~N.}
\newblock State-{Constraint} {Static} {Hamilton}--{Jacobi} {Equations} in
  {Nested} {Domains}.
\newblock {\em SIAM Journal on Mathematical Analysis 52}, 5 (Jan. 2020),
  4161--4184.
\newblock Publisher: Society for Industrial and Applied Mathematics.

\bibitem{lions_generalized_1982}
{\sc Lions, P.-L.}
\newblock {\em Generalized {Solutions} of {Hamilton}--{Jacobi} {Equations}}.
\newblock Chapman \& {Hall}/{CRC} research notes in mathematics series. Pitman,
  1982.

\bibitem{lions_homogenization_1986}
{\sc Lions, P.-L., Papanicolaou, G., and Varadhan, S.~R.}
\newblock Homogenization of {Hamilton}-{Jacobi} equations.
\newblock {\em Unpublished preprint\/} (1986).

\bibitem{mitake_asymptotic_2008}
{\sc Mitake, H.}
\newblock Asymptotic {Solutions} of {Hamilton}--{Jacobi} {Equations} with
  {State} {Constraints}.
\newblock {\em Applied Mathematics and Optimization 58}, 3 (Dec. 2008),
  393--410.

\bibitem{mitake_weak_2018}
{\sc Mitake, H., and Soga, K.}
\newblock Weak {KAM} theory for discounted {Hamilton}--{Jacobi} equations and
  its application.
\newblock {\em Calculus of Variations and Partial Differential Equations 57}, 3
  (Apr. 2018), 78.

\bibitem{mitake_selection_2017}
{\sc Mitake, H., and Tran, H.~V.}
\newblock Selection problems for a discount degenerate viscous
  {Hamilton}--{Jacobi} equation.
\newblock {\em Advances in Mathematics 306\/} (Jan. 2017), 684--703.

\bibitem{mitake_rate_2019}
{\sc Mitake, H., Tran, H.~V., and Yu, Y.}
\newblock Rate of {Convergence} in {Periodic} {Homogenization} of
  {Hamilton}--{Jacobi} {Equations}: {The} {Convex} {Setting}.
\newblock {\em Archive for Rational Mechanics and Analysis 233}, 2 (Aug. 2019),
  901--934.

\bibitem{ni_multiple_2023}
{\sc Ni, P.}
\newblock Multiple asymptotic behaviors of solutions in the generalized
  vanishing discount problem.
\newblock {\em Proceedings of the American Mathematical Society 151}, 12 (Dec.
  2023), 5239--5250.

\bibitem{soner_optimal_1986}
{\sc Soner, H.}
\newblock Optimal {Control} with {State}-{Space} {Constraint} {I}.
\newblock {\em SIAM Journal on Control and Optimization 24}, 3 (1986),
  552--561.

\bibitem{tran_hamilton-jacobi_2021}
{\sc Tran, H.~V.}
\newblock {\em Hamilton-{Jacobi} {Equations}: {Theory} and {Applications}}.
\newblock American Mathematical Society, 2021.


\bibitem{tran_representation_2025}
{\sc Tran, H.~V.}
\newblock Representation formulas and large time behavior for solutions to some
  nonconvex {Hamilton}-{Jacobi} equations, May 2025.
\newblock arXiv:2505.01377 [math].



\bibitem{tu_vanishing_2022}
{\sc Tu, S. N.~T.}
\newblock Vanishing discount problem and the additive eigenvalues on changing
  domains.
\newblock {\em Journal of Differential Equations 317\/} (Apr. 2022), 32--69.

\bibitem{tu_generalized_2024}
{\sc Tu, S. N.~T., and Zhang, J.}
\newblock Generalized convergence of solutions for nonlinear
  {Hamilton}--{Jacobi} equations with state-constraint.
\newblock {\em Journal of Differential Equations 406\/} (Oct. 2024), 87--125.

\bibitem{wang_aubrymather_2019}
{\sc Wang, K., Wang, L., and Yan, J.}
\newblock Aubry--{Mather} {Theory} for {Contact} {Hamiltonian} {Systems}.
\newblock {\em Communications in Mathematical Physics 366}, 3 (Mar. 2019),
  981--1023.

\bibitem{wang_variational_2019CMP}
{\sc Wang, K., Wang, L., and Yan, J.}
\newblock Variational principle for contact {Hamiltonian} systems and its
  applications.
\newblock {\em Journal de Math\'ematiques Pures et Appliqu\'ees 123\/} (Mar.
  2019), 167--200.

\bibitem{wang_variational_2019}
{\sc Wang, Y.-N., and Yan, J.}
\newblock A variational principle for contact {Hamiltonian} systems.
\newblock {\em Journal of Differential Equations 267}, 7 (Sept. 2019),
  4047--4088.

\bibitem{wang_convergence_2021}
{\sc Wang, Y.-N., Yan, J., and Zhang, J.}
\newblock Convergence of {Viscosity} {Solutions} of {Generalized} {Contact}
  {Hamilton}--{Jacobi} {Equations}.
\newblock {\em Archive for Rational Mechanics and Analysis 241}, 2 (Aug. 2021),
  885--902.


\bibitem{wang_negative_2024}
{\sc Wang, Y.-N., Yan, J., and Zhang, J.}
\newblock On the {Negative} {Limit} of {Viscosity} {Solutions} for {Discounted}
  {Hamilton}--{Jacobi} {Equations}.
\newblock {\em Journal of Dynamics and Differential Equations 36}, 2 (June
  2024), 1347--1365.


\bibitem{zavidovique_convergence_2022}
{\sc Zavidovique, M.}
\newblock Convergence of solutions for some degenerate discounted
  {Hamilton}--{Jacobi} equations.
\newblock {\em Analysis \& PDE 15}, 5 (Sept. 2022), 1287--1311.
\newblock Publisher: Mathematical Sciences Publishers.

\bibitem{zhang_limit_2024}
{\sc Zhang, J.}
\newblock Limit of solutions for semilinear {Hamilton}--{Jacobi} equations
  with degenerate viscosity.
\newblock {\em Advances in Calculus of Variations 17}, 4 (Oct. 2024),
  1185--1200.
\newblock Publisher: De Gruyter Section: Advances in Calculus of Variations.

\end{thebibliography}

\end{document}